\titleformat{\section}{\centering\normalfont\scshape}{\thesection.}{.5em}{#1}
\titleformat{\subsection}[runin]{\normalfont\itshape}{\textnormal{\thesubsection.}}{.5em}{#1.}
\titleformat{\subsubsection}[runin]{\normalfont\itshape}{\thesubsubsection.}{.5em}{#1.}
\titlespacing{\section}{0em}{1em}{0.5em}
\titlespacing{\subsection}{0em}{.5em}{0.5em}
\def\La{\Lambda}
\def\bbone{{\mathbbm 1}}
\newcommand{\sH}{\mathscr H}
\newcommand{\ci}[1]{_{{}_{\!\scriptstyle{#1}}}}
\newcommand{\Be}{\begin{equation}}
\newcommand{\Ee}{\end{equation}}
\newcommand{\Bm}{\begin{multline}}
\newcommand{\Em}{\end{multline}}
\newcommand{\Bea}{\begin{eqnarray}}
\newcommand{\Eea}{\end{eqnarray}}
\newcommand{\Beas}{\begin{eqnarray*}}
\newcommand{\Eeas}{\end{eqnarray*}}
\newcommand{\Benu}{\begin{enumerate}}
\newcommand{\Eenu}{\end{enumerate}}
\newcommand{\Bi}{\begin{itemize}}
\newcommand{\Ei}{\end{itemize}}
\def\dyad{{\text{\rm dyad}}}
\def\sh{{\text{\rm sh}}}
\def\intslash{\rlap{\kern  .32em $\mspace {.5mu}\backslash$ }\int}
\def\qsl{{\rlap{\kern  .32em $\mspace {.5mu}\backslash$ }\int_{Q_x}}}
\def\vth{\vartheta}
\def\N{\mathbb N}
\def\emph#1{{\it #1 }}
\def\cf{{\it cf}}
\def\supp{{\text{\rm supp}}}
\def\inn#1#2{\langle#1,#2\rangle}
\def\lc{\lesssim}
\def\gc{\gtrsim}
\def\eps{\varepsilon}
\def\ka{\kappa}
\def\la{\lambda}             \def\La{\Lambda}
              \def\Om{\Omega}
\def\fA{{\mathfrak {A}}}
\def\fM{{\mathfrak {M}}}
\def\fP{{\mathfrak {P}}}
\def\fQ{{\mathfrak {Q}}}
\def\fS{{\mathfrak {S}}}
\def\fn{{\mathfrak {n}}}
\def\fz{{\mathfrak {z}}}
\def\bbN{{\mathbb {N}}}
\def\bbR{{\mathbb {R}}}
\def\bbZ{{\mathbb {Z}}}
\def\cA{{\mathcal {A}}}
\def\cG{{\mathcal {G}}}
\def\cM{{\mathcal {M}}}
\def\cQ{{\mathcal {Q}}}
\def\cS{{\mathcal {S}}}
\def\cU{{\mathcal {U}}}
\def\be#1{\begin{equation}\label{ #1}}
\def\endeq{\end{equation}}
\def\endal{\end{align}}
\def\bas{\begin{align*}}
\def\eas{\end{align*}}
\def\bi{\begin{itemize}}
\def\ei{\end{itemize}}
\def\eps{\varepsilon}
\def\emph#1{{\it #1}}
\def\textbf#1{{\bf #1}}
\def\beq{\begin{equation}}
\def\endeq{\end{equation}}
\numberwithin{equation}{section}
\newcommand{\R}{\mathbb{R}}
\newcommand{\Z}{\mathbb{Z}}
\newcommand{\ud}{\, \mathrm{d}}
\theoremstyle{plain}
\newtheorem{theorem}{Theorem}[section]
\newtheorem{lemma}[theorem]{Lemma}
\newtheorem{proposition}[theorem]{Proposition}
\newtheorem{corollary}[theorem]{Corollary}
\newtheorem*{key estimate}{Key estimate}
\theoremstyle{definition}
\newtheorem*{key example}{Key example}
\theoremstyle{remark}
\newtheorem*{remark}{Remark}
\title[
%$L^p-L^q$ 
Variation bounds for spherical averages]{
%$L^p-L^q$ 
Variation bounds  for spherical averages}
\author[D. Beltran, R. Oberlin, L. Roncal, A. Seeger, B. Stovall]{David Beltran  \ \  \ \ Richard Oberlin \ \  \ \ Luz Roncal \\ \\    Andreas Seeger \ \ \ \ Betsy Stovall}
\address{David Beltran: Department of Mathematics, University of Wisconsin, 480 Lincoln Drive, Madison, WI, 53706, USA.}
\email{dbeltran@math.wisc.edu}
\address{Richard Oberlin: 
Department of Mathematics, 
Florida State University, 
Tallahassee, FL 32306-4510, USA} 
\email{roberlin@math.fsu.edu}
\address{Luz Roncal: Basque Center for Applied Mathematics (BCAM), 48009, Bilbao, Spain and Ikerbasque, Basque Foundation for Science, 48011 Bilbao, Spain}
\email{lroncal@bcamath.org}
\address{Andreas Seeger: Department of Mathematics, University of Wisconsin, 480 Lincoln Drive, Madison, WI, 53706, USA.}
\email{seeger@math.wisc.edu}
\address{Betsy Stovall: Department of Mathematics, University of Wisconsin, 480 Lincoln Drive, Madison, WI, 53706, USA.}
\email{stovall@math.wisc.edu}
\date{\today}
\subjclass[2010]{Primary 42B15, 42B25}
\keywords{spherical averages, variation norm, $L^p\to L^q$ estimates.}
\begin{document}

\begin{abstract}
We consider $r$-variation operators for the family of spherical means, with special emphasis on $L^p\to L^q$ estimates.
\end{abstract}

\maketitle

\section{Introduction}
Given a subset $E \subset \R$ and a family of complex valued functions $t \mapsto a_t$ defined on $E$, the $r$-variation of $a=\{a_t\}_{t \in E}$ is defined by
$$
| a |_{V_r(E)} := \sup_{N \in \N} \,\, \sup_{\substack{t_1 < \cdots < t_N \\ t_j \in E}} \Big(\sum_{j=1}^{N-1} |a_{t_{j+1}} - a_{t_j}|^r \Big)^{1/r}
$$
for all $1 \leq r < \infty$, and replacing the $\ell^r$-sum by a $\sup$ in the case $r=\infty$. When $E=\bbR$ we simply use the notation $V_r$ for $V_r(\R)$. A norm on the space $V_r(E)$ is given by $\|a \|_{V_r(E)}:=\|a\|_\infty+|a|_{V_r(E)}$.  Variation norms have received considerable attention in analysis as they are used to strengthen pointwise convergence results for families of operators $\{A_t\}$.
Of particular interest is L\'epingle's inequality on the $r$-variation of martingales for $r>2$  \cite{Lepingle1976} (see also \cite{PisierXu}, \cite{Bourgain-ergodic1989}, \cite{JSW}, \cite{MirekSteinZorin-jumps}) and its consequences on families of operators in ergodic theory and harmonic analysis; see e.g. the papers \cite{JonesKaufmanRosenblattWierdl}, \cite{JSW}, \cite{OSTTW}, \cite{MirekSteinTrojan2017}, \cite{GuoRoosYung}, \cite{MSZK-apde}, \cite{MSZK-advances}, which contain many other references.

In this paper we focus on local and global $r$-variation estimates for the family of spherical averages
${A}=\{A_t\}_{t>0}$, given by 
$$
A_t f(x) = \int_{S^{d-1}} f(x-ty)  \ud \sigma (y)
$$
where  $\ud \sigma$ denotes the normalized surface measure on the unit sphere $S^{d-1}$. By a classical result of Stein \cite{Stein1976} ($d \geq 3$) and Bourgain \cite{Bourgain1986} ($d=2$)  the spherical maximal function $Sf(x):=\sup_{t>0} |A_tf(x)|$ defines a  bounded operator on $L^p(\R^d)$ if and only if $p>\frac{d}{d-1}$. Thus, for $p$ in this range, we have $\lim_{t\to 0} A_tf(x)=f(x) $ a.e. for all $f\in L^p(\bbR^d)$. A strengthening of this result 
can be obtained  by considering the variation norm operator $V_rA$ given by 
\[V_rA f(x) \equiv V_r{[Af]}  (x):=| {Af}(x) |_{V_r((0,\infty))};\] 
note that $V_r[Af] (x) \geq \sup_t|A_t f(x)-A_{t_0} f(x)|$ for all $x \in \R^d$, $t_0\in \bbR$. In this context, Jones,  Wright and one of the authors  \cite{JSW} obtained an almost optimal result, namely  $V_r{A}$ is bounded on $L^p(\R^d)$ for all $r>2$ if $\frac{d}{d-1}<p \leq 2d$, and both the condition $r>2$ and the $p$-range  are sharp. 
In the range $p>2d$, it was shown in \cite{JSW} that $V_rA$ is  $L^p$ bounded if $r>p/d$, and fails to be bounded if $r<p/d$, but no information was known for the critical case $r=p/d$, $p>2d$.  Here we show an endpoint result for $V_{p/d}A$ in three and higher dimensions.
\begin{theorem} \label{endptvarthm} Let $d\ge 3$, $p>2d$.  Then the operator $V_{p/d}A$ is of restricted weak type $(p,p)$, i.e. maps $L^{p,1}(\bbR^d)$ to $L^{p,\infty}(\bbR^d)$.
\end{theorem}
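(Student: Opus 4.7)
The plan is to combine a time/frequency dyadic decomposition with an endpoint argument exploiting the restricted-type nature of the target $L^{p,1}\to L^{p,\infty}$.

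I would first reduce to a single dyadic time scale. Writing
\[
V_{p/d}Af(x) \le V_{p/d}(\{A_{2^k}f(x)\}_k) + \Big(\sum_k V_{p/d}((A_tf(x))_{t\in[2^k,2^{k+1}]})^{p/d}\Big)^{d/p},
\]
and noting that $p/d > 2$ and $p > d/(d-1)$, the long variation over dyadic times is $L^p$-bounded by standard jump inequalities combined with the Stein/Bourgain maximal bound. By dilation invariance the short variations reduce to estimating $V_{p/d}(A_tf)_{t\in[1,2]}$ on $L^{p,1}\to L^{p,\infty}$.

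For the single-scale estimate, Littlewood--Paley decompose $f = P_0 f + \sum_{j\ge 1} P_j f$; the low-frequency piece is elementary. For $j \ge 1$, apply the embedding $B^{1/r}_{r,1}(\R) \hookrightarrow V_r(\R)$ to reduce the $r$-variation of $t \mapsto A_tP_jf(x)$ on $[1,2]$ to a fractional Sobolev-type norm in $t$. Interpolating $B^{1/r}_{r,1}$ between $L^r_t$ and $W^{1,r}_t$, and using that $\partial_tA_tP_j$ gains a factor $2^j$ at frequency $2^j$, this reduces matters to the frequency-localized $L^p\to L^q$ bounds $\|A_tP_jf\|_{L^q}\lesssim 2^{-j\kappa(p,q)}\|f\|_{L^p}$, which in dimension $d\ge 3$ with $p > 2d$ follow from sharp local smoothing/decoupling estimates. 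This yields a strong-type bound on $V_rAP_j$ from $L^p$ to $L^p$ with decay $2^{-j\delta(p,r)}$ valid for $r > p/d$, but with $\delta\downarrow 0$ and constant blowing up as $r\downarrow p/d$.

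To reach the critical $r = p/d$ with restricted weak type, I would work with the jump-inequality formulation. By the pointwise equivalence $V_r F \lesssim \sup_\lambda \lambda (N_\lambda F)^{1/r}$ for $r > 2$ (here $r = p/d > 2$), where $N_\lambda F$ counts $\lambda$-jumps of $F$, it suffices to establish, for each frequency scale $2^j$ and each dyadic $\lambda > 0$,
\[
\big\|\lambda (N_\lambda(A_tP_j\mathbbm{1}_E)_{t\in[1,2]})^{d/p}\big\|_{L^{p,\infty}(\R^d)} \lesssim 2^{-j\varepsilon} |E|^{1/p},
\]
with $\varepsilon > 0$ summable in $j$ and in dyadic $\lambda$. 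Equivalently, one shows $|\{x : N_\lambda > \beta\}| \lesssim 2^{-j\varepsilon}\lambda^{-p}\beta^{-d}|E|$. Proving this single-scale jump inequality directly---rather than passing through strong-type $V_r$ bounds for $r > p/d$---exploits the specific structure of characteristic-function inputs together with the frequency-localized estimates, and yields the claimed restricted weak type after summation in $j$ and $k$.

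The main obstacle is the single-scale jump inequality at the critical endpoint. Since $V_{p/d} \ge V_r$ for $r > p/d$, strong-type bounds at $r > p/d$ do not imply bounds on $V_{p/d}$; one must work at the sharp endpoint where the strong-type estimates fail. This requires a careful combination of the local smoothing / $L^p\to L^q$ improving estimates for the spherical average in dimension $d\ge 3$ (where $p > 2d$ provides the needed room in the $L^p\to L^q$ diagram) with the geometric constraints on how often $t\mapsto A_tP_j\mathbbm{1}_E(x)$ can take large jumps---the latter accounting for the $\beta^{-d}$ dependence in the jump inequality and reflecting the scaling that makes $r = p/d$ the critical variation index.
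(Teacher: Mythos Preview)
Your overall decomposition into long and short variation, and the use of the Besov embedding $B^{1/r}_{r,1}\hookrightarrow V_r$ for the short piece, matches the paper. But the endpoint mechanism you propose is flawed in two places.

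First, the claimed pointwise inequality $V_r F \lesssim \sup_\lambda \lambda\,(N_\lambda F)^{1/r}$ is false; the jump--variation inequality goes the other way, $\lambda\,(N_\lambda F)^{1/r}\le |F|_{V_r}$. There is no pointwise converse, and the known passage from jump bounds to $V_r$ bounds (Pisier--Xu, Bourgain, Mirek--Stein--Zorin) proceeds by real interpolation and yields $V_r$ only for $r$ strictly larger than the jump exponent, so it cannot reach the critical $r=p/d$ either. Your proposed single-scale jump inequality therefore does not control $V_{p/d}$.

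Second, and more substantively, you have correctly diagnosed that the frequency-localized strong-type bounds have decay $2^{-j\delta}$ with $\delta\downarrow 0$ as $r\downarrow p/d$, but you have not identified the right tool to sum at the endpoint. The paper's argument is this: for fixed $r$ one proves, via a Fefferman--Stein sharp-function argument (needed to handle the $\ell^r$-sum over dilation scales~$k$, which your ``dilation invariance'' reduction glosses over), the strong-type bound
\[
\Big\|\Big(\sum_{k}\|\cA_j^k f\|_{B^{1/r}_{r,1}}^r\Big)^{1/r}\Big\|_{L^p}\lesssim 2^{-j(d/p-1/r)}\|f\|_p
\]
for all $p$ in an open interval containing $rd$. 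The exponent $d/p-1/r$ changes sign across $p=rd$, so Bourgain's interpolation lemma (Lemma~\ref{lem:Binterpol}) applied to the family $\{\fA_j\}_{j\ge 1}$ immediately yields the restricted weak type $(rd,rd)$ inequality for the sum $\sum_j\fA_j$. This is the missing idea: the restricted weak type does not come from a direct endpoint estimate on characteristic functions, but from interpolating strong-type bounds at two nearby exponents with opposite-sign exponential rates in $j$.
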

We conjecture that a similar endpoint result holds true in two dimensions, but this remains open. 

Our main focus will be on $L^p\to L^q$ results when $p<q$ for  local $r$-variation operators, that is, 
when the variation is taken over a compact subinterval $I$ of $(0,\infty)$; without loss of generality we take $I=[1,2]$. Scaling reasons quickly reveal that one needs to consider compact intervals for $L^p \to L^q$ bounds to hold if $p<q$.  While this is an interesting problem in itself, it is also motivated by a question posed by Lacey \cite{LaceySpherical} concerning sparse domination for the global $V_rA$ operator (see also \cite[Problem 3.1]{aimPL}). See Theorem  \ref{cor:sparse} below.

Results for the local variation operators are  meant to improve on existing  $L^p\to L^q$ results for the spherical local maximal function $S^I f(x):=\sup_{1 \leq t \leq 2} A_tf(x)$, which we will now review. 
 Schlag \cite{Schlag1997} (see also \cite{SS1997}) showed that if $d\ge 2$
 %if $d=2$,  
 there are $L^p(\R^d)\to L^q(\R^d)$ bounds if $(1/p,1/q)$ lies in the interior of $\fQ_d$, which denotes the quadrangle formed by the vertices
\Be\label{vertices-quadrangle} \begin{aligned}
&Q_1=(0,0),   & Q_2&=(\tfrac{d-1}d, \tfrac{d-1}d), \\
&Q_3=(\tfrac{d-1}{d}, \tfrac 1d), & Q_4&=( \tfrac{d(d-1)}{d^2+1}, \tfrac{d-1}{d^2+1}).
\end{aligned}
\Ee
Moreover, $S^I$ fails to be bounded from $L^p(\R^d)$ to $L^q(\R^d)$ outside the closure of $\fQ_d$. Note that $Q_2$ coincides with $Q_3$ when $d=2$, so the quadrangle becomes a triangle in two dimensions.

The boundary segment $p=q$ amounts to the classical results of Stein and Bourgain for $S$. $L^p$-boundedness fails  at the endpoint
$Q_2$ but Bourgain showed in dimensions $d\ge 3$ that $S$ is of restricted weak type at $Q_2$, i.e. bounded from $L^{\frac{d}{d-1},1}$ to $L^{\frac{d}{d-1},\infty}$  in dimensions $d\ge 3$ (and any better Lorentz estimate fails). The restricted weak type estimate at $Q_2$  fails in two dimensions \cite{STW} (even though it is true for radial functions \cite{Leckband1987}).  For the remaining boundary cases  Lee \cite{Lee2003} showed that $S^I$ is of restricted weak type at $Q_4$, and also at $Q_3$ in  dimensions  $d \ge 3$.
The two-dimensional restricted weak type endpoint result at $Q_4$ was also shown in \cite{Lee2003}, and relied  on the deep work by Tao \cite{TaoCone} on  endpoint bilinear Fourier extension bounds for the cone. The restricted weak type inequalities imply  $L^p\to L^q$ boundedness  on $[Q_1,Q_4)$ and on $(Q_3,Q_4)$, however on $(Q_2,Q_3)$ the operator is of restricted strong type and no better (the necessity follows from the standard counterexample; for the positive result one uses  real interpolation on a vertical line, with a constant target exponent). Incidentally,  for the local operator $S^I$ this also implies restricted strong type at $Q_2$, which improves over the restricted weak type of $S$ at $Q_2$.

Here we explore the existence of $L^p(\R^d)\to L^q(\R^d)$ inequalities for \[V_r^IA f(x):=| Af(x) |_{V_r([1,2])}. \]
In two dimensions the values of $r$ are restricted to $r>2$ (see \S\ref{sec:sharpness}) but in higher dimensions all $r \in [1,\infty]$ may occur. For our sparse domination inequality for the global $V_r$, the version for $r>2$ is most relevant because  L\'epingle's result requires the restriction $r>2$ (see \cite{Qian1998}); indeed  this necessary condition can  be shown to carry over to other results for the global $V_r$.

We start stating our results for $d \geq 3$. We first focus on the range $r>\frac{d^2+1}{d(d-1)}$ which is the reciprocal of the $1/p$ coordinate of the point $Q_4$ in \eqref{vertices-quadrangle}. Note that this large range includes $r>2$, so the following sharp $L^p\to L^q$ results for $V_r^I A$ will yield, in particular, satisfactory results for  the sparse domination problem in dimension $d \geq 3$.
\begin{theorem}\label{dge3thm}
Suppose $d \geq 3$ and $r> \frac{d^2+1}{d(d-1)} $. 
Let $\fP_d(r)$ be the pentagon (Figure~\ref{fig:big r}) with vertices
\begin{align*}
&P(r)= (\tfrac 1{r},\tfrac 1{rd} ), \quad  Q_1(r)=(\tfrac 1{rd}, \tfrac 1{rd}),
\quad Q_2= (\tfrac{d-1}{d}, \tfrac {d-1}{d} ) \\
& \qquad \quad Q_3=(\tfrac{d-1}{d} ,\tfrac{1}{d}) , \quad Q_4= (\tfrac{d(d-1)}{d^2+1},\tfrac{d-1}{d^2+1}).
\end{align*}
Then

(i) $V_r^IA: L^p\to L^q$ is bounded for all  $(\tfrac 1p,\tfrac 1q)$ in the interior of $\fP_d(r)$  and unbounded for all $(\tfrac 1p,\tfrac 1q)\notin \fP_d(r)$.

(ii) $V_r^IA: L^p\to L^q$ is bounded for all 
$(\tfrac 1p,\tfrac 1q)$ on the half open line segment $[Q_1(r), Q_2)$,
on the closed line segment $[P(r), Q_1(r)]$, on the half open line segment $[P(r), Q_4)$, and on the open line segment
$(Q_4,Q_3)$.

(iii) $V_r^IA :L^{p,1} \to L^q$ is bounded (i.e. of  restricted strong type $(p,q)$)  if $(\tfrac 1p, \tfrac 1q)$ belongs to the half open line segment
$[Q_2,Q_3)$. $V_r^I A$  fails to be of strong type on $[Q_2,Q_3]$.

(iv) $V_r^IA: L^{p,1}\to L^{q,\infty}$  is bounded (i.e. of restricted weak type $(p,q)$)  if $(\tfrac 1p,\tfrac 1q) \in \{Q_3, \, Q_4\}$.
\end{theorem}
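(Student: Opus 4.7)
The plan is to prove frequency-localized $L^{p_0}\to L^{q_0}$ bounds for $V_r^I[A(P_k f)]$ at each vertex $V=(1/p_0,1/q_0)$ of the pentagon $\fP_d(r)$, with appropriate Lorentz refinements at the restricted-type endpoints, and then to sum over dyadic scales and interpolate. Decomposing $A_t f = A_t(P_{\le 0}f) + \sum_{k\ge 1}A_t(P_k f)$ via Littlewood-Paley projections $P_k$, the low-frequency contribution is pointwise bounded by the Hardy-Littlewood maximal function and is harmless. The goal at each vertex is a bound of the form
\begin{equation*}
\|V_r^I[A(P_k f)]\|_{L^{q_0}} \leq C\,2^{-k\varepsilon(V)}\|f\|_{L^{p_0}},\qquad k\ge 1,
\end{equation*}
with $\varepsilon(V)>0$ (or its restricted-type variant), so that summation in $k$ produces the desired estimate.

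The main tool is the Sobolev-type embedding $\|g\|_{V_r([1,2])}\le\|g\|_{L^\infty_t}+\|\partial_t g\|_{L^r_t}$, which splits the variation norm into a local maximal function part and an $L^r_t$-averaged derivative part. For the maximal piece one invokes the sharp endpoint $L^p\to L^q$ bounds for $S^I$ due to Schlag and Lee at $Q_2, Q_3, Q_4$, the hardest being at $Q_4$ where Tao's bilinear extension inequality for the cone is essential. For the derivative piece one combines the Fourier decay $|\widehat{\sigma}(\xi)|\lesssim|\xi|^{-(d-1)/2}$ with mixed-norm local smoothing estimates for the wave equation, using Minkowski's inequality to exchange $L^r_t$ and $L^{q_0}_x$ when the ordering of the exponents permits. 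The segment $[P(r),Q_4]$ lies on the Sobolev line $1/q=(1/p)/d$ intrinsic to the spherical average, and the balance between the $2^k$ cost of the derivative $\partial_t$ and the gain $2^{-k(d-1)/2}$ from $\widehat{\sigma}$ is precisely what pins down the threshold $r>(d^2+1)/(d(d-1))$; at the new endpoint $P(r)=(1/r,1/(rd))$ this is exactly what makes the frequency-localized estimate summable.

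With the vertex estimates in place, real and complex interpolation produce (i) on the interior of $\fP_d(r)$, strong-type bounds in (ii) on the half-open segments $[Q_1(r),Q_2)$, $[P(r),Q_1(r)]$, $[P(r),Q_4)$, and $(Q_4,Q_3)$, restricted strong-type bounds in (iii) on $[Q_2,Q_3)$, and restricted weak-type bounds in (iv) at $Q_3$ and $Q_4$. The sharpness statements (failure of boundedness outside $\fP_d(r)$ and failure of strong type on $[Q_2,Q_3]$) follow from standard counterexamples: Knapp-type caps for the Sobolev-line constraint along $[P(r),Q_4]$, indicator functions of small balls and thin annuli for the horizontal, vertical, and diagonal constraints, and specific oscillatory examples for the failure of strong type along $[Q_2,Q_3]$. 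The main obstacle is the endpoint $Q_4$: one must recast Tao's deep bilinear cone estimate in a frequency-localized, variation-sensitive form that simultaneously controls the local maximal function and the $L^r_t$-averaged derivative, and verify that $r>(d^2+1)/(d(d-1))$ is precisely the threshold permitting the dyadic summation to converge.
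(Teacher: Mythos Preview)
Your proposed Sobolev embedding $\|g\|_{V_r([1,2])}\le\|g\|_{L^\infty_t}+\|\partial_t g\|_{L^r_t}$ is correct but too lossy, and the plan fails at the vertex $P(r)$ because of it. The paper instead uses the sharp embedding $B^{1/r}_{r,1}\hookrightarrow V_r$, which at frequency $2^j$ costs only $2^{j/r}$, whereas your $\partial_t$ costs a full $2^j$. At $P(r)=(1/r,1/(rd))$ the Stein--Tomas input gives exactly $\|\cA_j f\|_{L^{rd}(L^r)}\lesssim 2^{-j/r}\|f\|_r$, so the Besov route yields $O(1)$ per scale, while your derivative piece grows like $2^{j(1-1/r)}$ and cannot be summed for any $r>1$. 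The same mismatch causes your scheme to miss the full range at $Q_3$ (you would need $r>\tfrac{d-1}{d-2}$ rather than $r>\tfrac{d}{d-1}$). No amount of local smoothing recovers this: the best $L^p(L^p)$ gain from the $t$-integration is $1/p$ derivatives, which is precisely what the Besov embedding already encodes, not the full derivative your splitting demands.

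You have also misidentified the hard endpoint. For $d\ge 3$ the point $Q_4$ requires only restricted weak type, and this follows from Bourgain's interpolation trick applied to single-scale bounds coming from the Stein--Tomas $L^2$ restriction theorem; Tao's bilinear cone estimate plays no role here (it enters only for $d=2$). The genuinely delicate vertex is $P(r)$, where one needs a \emph{strong type} bound at a point where the single-scale estimates have no slack. The paper handles this by a Fefferman--Stein sharp-function argument: one proves a multi-scale inequality $\big\|\sum_j\|\cA_j f_j\|_{L^r_t}\big\|_{L^q_x}\lesssim(\sum_j 2^{-jd}\|f_j\|_p^q)^{1/q}$ via $\|G\|_q\lesssim\|G^\#\|_q$, controlling $G^\#$ by an auxiliary maximal operator $\fM_{r,n}$ for which an $\varepsilon$-gain in $n$ is extracted by interpolation between the Stein--Tomas endpoint and the $L^\infty$ endpoint. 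This mechanism is entirely absent from your plan and is not recoverable from maximal-function bounds plus a derivative.
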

For an explicit description of the various conditions at the boundary see \S\ref{sec:edges}.

%%%%%%%%%%%%%%%%%%%%%%%%%%%%
%%
%   figure big r
%%
%%%%%%%%%%%%%%%%%%%%%%%%%%%%%

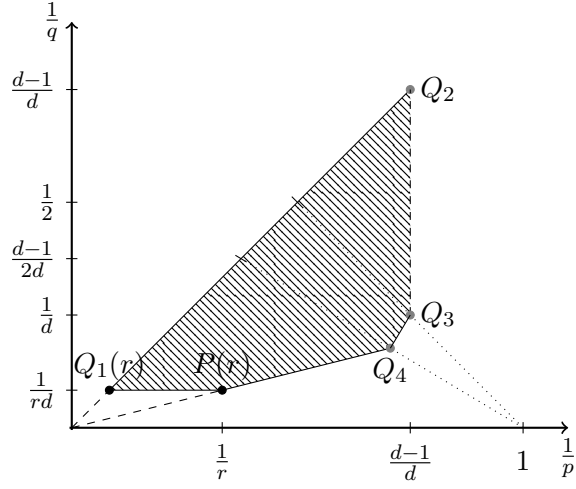
\begin{figure}[h]
\begin{tikzpicture}[scale=1.5] 

\begin{scope}[scale=2]
\draw[thick,->,black] (-.01,0) -- (2.2,0) node[below] {$ \frac 1 p$};
\draw[thick,->] (0,-.01) -- (0,1.80) node[left] {$ \frac 1 q$};

\draw (2,-0.025) -- (2,.025) node[below= 0.25cm] {$ 1$};

\draw (.025,1) -- (-.025,1) node[left] {$ \frac{1}{2}$};

\draw[ dotted]  (2,0)  -- (1,1);

\node[circle,draw=black, fill=black, inner sep=0pt,minimum size=3pt] (b) at (2/3,1/6) {}; %Dot at P(r).

\draw (2/3,1/6) node[above] {\textcolor{black}{$P(r)$}}; %Label P(r)

\draw (2/3,-0.025) -- (2/3, .025); %x-axis mark P(r)
\draw (2/3,0) node[below = 0.08cm] {\textcolor{black}{$ \frac{1}{r}$} }; %x-axis label P(r)

\node[circle,draw=black, fill=black, inner sep=0pt,minimum size=3pt] (b) at (1/6,1/6) {}; %dot at Q_1(r)

\draw (1/6,1/6) node[above] {\textcolor{black}{$Q_1(r)$}}; %label at Q_1(r)

\draw (-0.025,1/6) -- (.025,1/6); %y-axis mark Q_1(r)
\draw (0,1/6) node[left = 0.08cm] {\textcolor{black}{$ \frac{1}{rd}$} }; %y-axis label Q_1(r)
\draw[color=black] (2/3,1/6) -- (1/6,1/6) ; %Solid line from Q_1(r) to P(r)
\draw[color=black] (1/6,1/6) -- (3/2,3/2); %Solid  line from Q_1(r) to Q_2
\draw[color=black] (2/3,1/6) -- (24/17, 6/17); %Solid line from P(r) to Q_4
\draw[ dashed] (0,0) -- (1/6,1/6); %dashed line from (0,0) to Q_1(r)

\fill[pattern=north west lines, pattern color=black] (1/6, 1/6) %Q_1(r)
-- (2/3,1/6) %P(r)
-- (24/17, 6/17) %Q_4
-- (3/2,1/2) %Q_3
-- (3/2,3/2) %Q_2
-- (1/6,1/6); %Q_1(r)

%%%% conjectured point

\draw (3/4-.025,3/4+.015) -- (3/4+.025,3/4-.015);
\draw (1-.025,1+.025) -- (1+.025,1-.025);

\draw (-0.025,3/4) -- (.025,3/4); %y-axis tick at (d-1)/2d
\draw (0,3/4) node[left = 0.08cm] {\textcolor{black}{$ \frac{d-1}{2d}$} }; %y-axis label at (d-1)/2d

\draw[ dotted, color=black] (2,0) -- (3/4,3/4) ; %dotted line from (1,0) to (d-1)/2d

\draw[dashed] (0,0) -- (2/3,1/6); %dashed line (0,0) to P(r)

\node[circle,draw=gray, fill=gray, inner sep=0pt,minimum size=3pt] (b) at (24/17, 6/17 ) {}; %dot at Q_4

\draw (24/17, 6/17 ) node[below] {\textcolor{black}{$Q_4$}}; %label Q_4

%%%%% lower triangle

%\draw[dashed] (0,0) -- (1/8,1/8);
%\draw[dashed] (0,0) -- (1,1/8);

%\fill[pattern=north west lines, pattern color=gray] (1/8, 1/8) -- (1,1/8) -- (0,0) --  (1/8,1/8);

%%%%% endpoint spherical

\draw (-0.025,3/2) -- (.025,3/2); %y-axis tick at (d-1)/d
\draw (0,3/2) node[left = 0.08cm] {\textcolor{black}{$ \frac{d-1}{d}$} }; %y-axis label at (d-1)/d
%\draw[dashed] (1,1) -- (6.75/4,6.75/4);
\node[circle,draw=gray, fill=gray, inner sep=0pt,minimum size=3pt] (b) at (3/2,3/2) {}; %dot at Q_2
\draw (3/2,3/2) node[right] {\textcolor{black}{$Q_2$}}; %label Q_2

\draw[dashed, color=black]  (3/2,1/2) -- (3/2,3/2); %dashed line from Q_3 to Q_2

\node[circle,draw=gray, fill=gray, inner sep=0pt,minimum size=3pt] (b) at (3/2,1/2) {}; %dot at Q_3

\draw (3/2,1/2) node[right] {\textcolor{black}{$Q_3$}}; %label Q_3

%%%%% at the line q=p'

%\node[circle,draw=black, fill=black, inner sep=0pt,minimum size=3pt] (b) at (3/2,1/2) {}; %dot at Q_3 [redundant]
\draw (3/2, -0.025) -- (3/2, .025); %x-axis tick (d-1)/d
\draw (3/2, 0) node[below = 0.08cm] {\textcolor{black}{$ \frac{d-1}{d}$} }; %x-axis label (d-1)/d
\draw (-0.025,1/2) -- (.025,1/2);%y-axis tick 1/d
\draw (0,1/2) node[left = 0.08cm] {\textcolor{black}{$ \frac{1}{d}$} }; %y-axis label 1/d

%%% closing the trapezoid/subpentagon

\draw[color=black]  (3/2,1/2) -- (24/17, 6/17); %line from Q_3 to Q_4
%\fill[pattern=north west lines, pattern color=orange] (1,1/8) -- (1 + 1/2 + 1/10, 1/8+1/16 + 1/80) -- (6.75/4,1.25/4) -- (6.75/4,6.75/4)  -- (1,1) --(1,1/8);

\end{scope}

\end{tikzpicture}

\caption{The pentagon $\mathfrak{P}_d(r)$ for $r > \frac{d^2+1}{d^2-d}$ and $d \geq 3$ (Theorem~\ref{dge3thm}).  The outer (dashed) quadrangle shows the region of boundedness as $r \to \infty$, i.e.\ for the maximal operator.  Shown with $d=4$ and $r=3$.}
\label{fig:big r}
%\vspace{1cm}
\end{figure}

We leave open what exactly happens  at  the points $Q_3$ and $Q_4$; it is not even known whether the local maximal function is of restricted strong type at $Q_3$ and whether it is any better than restricted weak type at $Q_4$. If we take $r=\infty$ we recover the known theorem for the local spherical maximal operator. Note that both $P(r)$ and $Q_1(r)$ tend to $Q_1=(0,0)$ as $r\to \infty$.

Theorem \ref{dge3thm} covers an interesting consequence for a sharp strong type estimate at the lower edge $q^{-1}=p^{-1}/d$  of the type set for the maximal function.

\begin{corollary}\label{cor:lower boundary edge}
Let $d\ge 3$ and let $\frac{d^2+1}{d(d-1)}<p<\infty$. Then $V^I_rA:  L^p\to L^{pd}$ is bounded if and only if $r\ge p$.
\end{corollary}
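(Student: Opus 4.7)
The plan is to deduce the corollary directly from Theorem~\ref{dge3thm} by locating the point $(1/p, 1/(pd))$ relative to the pentagon $\fP_d(r)$. A key observation is that both vertices $P(r) = (1/r, 1/(rd))$ and $Q_4 = (\tfrac{d(d-1)}{d^2+1}, \tfrac{d-1}{d^2+1})$ lie on the diagonal line $\{y = x/d\}$, so the edge $[P(r), Q_4]$ is a sub-segment of this line. The point $(1/p, 1/(pd))$ also lies on this line, and under the hypothesis $p > \tfrac{d^2+1}{d(d-1)}$ its first coordinate $1/p$ is strictly less than $\tfrac{d(d-1)}{d^2+1}$, putting it strictly to the left of $Q_4$.

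For sufficiency ($r \geq p \Rightarrow$ boundedness), the assumption $r \geq p$ gives $1/r \leq 1/p$, so $(1/p, 1/(pd)) \in [P(r), Q_4)$, and Theorem~\ref{dge3thm}(ii) yields the desired $L^p \to L^{pd}$ estimate.

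For necessity ($r < p \Rightarrow$ unboundedness) I would split into two subcases. If $r > \tfrac{d^2+1}{d(d-1)}$, Theorem~\ref{dge3thm}(i) applies with this $r$: since $1/p < 1/r$, the point $(1/p, 1/(pd))$ lies on the line $\{y = x/d\}$ strictly beyond $P(r)$ in the direction of the origin, hence outside $\fP_d(r)$ (below the horizontal edge $[Q_1(r), P(r)]$ at height $1/(rd)$, or to the left of $Q_1(r)$ when $1/p < 1/(rd)$). The remaining case $r \leq \tfrac{d^2+1}{d(d-1)}$ is the only real obstacle, since Theorem~\ref{dge3thm} does not cover such $r$; however, using $p > \tfrac{d^2+1}{d(d-1)}$ I can choose some intermediate $r' \in (\tfrac{d^2+1}{d(d-1)}, p)$, for which the previous subcase yields unboundedness of $V^I_{r'} A : L^p \to L^{pd}$. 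The pointwise monotonicity $V^I_r A f \geq V^I_{r'} A f$ valid for $r \leq r'$ (an elementary consequence of the contractive inclusion $\ell^r \hookrightarrow \ell^{r'}$ of sequence spaces) then transfers this unboundedness to $V^I_r A$. No new harmonic-analytic input beyond Theorem~\ref{dge3thm} is required.
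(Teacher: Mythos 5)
Your argument is correct and matches the paper's intent: the corollary is read off from Theorem~\ref{dge3thm}, with sufficiency coming from the segment $[P(r),Q_4)$ in part (ii) and necessity from the unboundedness outside $\fP_d(r)$ in part (i) (which in the paper ultimately rests on the example of \S\ref{sec:d/qge1/r}, i.e.\ the condition $d/q\ge 1/r$, valid for all $r$ and hence not needing a case split). Your monotonicity step $V_{r'}^I A f\le V_r^I Af$ for $r\le r'$ is a valid way to cover the range $r\le \frac{d^2+1}{d(d-1)}$ when treating Theorem~\ref{dge3thm} as a black box.
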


When the value of $r$ is between the reciprocal of the $1/p$ coordinate of $Q_4$ and $Q_3$, that is, $\tfrac{d}{d-1} < r \leq \tfrac{d^2+1}{d(d-1)}$, we obtain the following. 

\begin{theorem}\label{thm:intermediate r}
Suppose $d\ge 3 $ and 
$\frac{d}{d-1}< r\le \frac{d^2+1}{d(d-1)} $.
%or $d=3$ and $r> 4/3$. 
Let $\fP_d(r)$ be the pentagon (Figure~\ref{fig:intermediate r}) with vertices
\begin{align*}
& Q_1(r)= \big( \tfrac{1}{rd}, \tfrac{1}{rd}\big),
\quad Q_2= \big(\tfrac{d-1}{d}, \tfrac {d-1}{d} \big),
\quad Q_3=\big(\tfrac{d-1}{d} ,\tfrac{1}{d}\big)\\
&P(r)= \big(\tfrac 1r,\tfrac{d+1-r(d-1)}{r(d-1)}\big), \quad 
Q_4(r)=(1- \tfrac{d+1}{rd(d-1)},\tfrac{1}{rd}).
\end{align*}
Then 

(i) $V_r^I A: L^p\to L^q$ is bounded for $(\tfrac 1p,\tfrac 1q)$ in the interior of $\fP_d(r) $ and unbounded  for $(\tfrac 1p,\tfrac 1q)\notin \fP_d(r)$. 

(ii) $V_r^I A: L^p\to L^q$ is bounded for $(\tfrac 1p,\tfrac 1q)$ on the half open line segment $(Q_4(r), Q_1(r)]$  and on the half open line segment $[Q_1(r), Q_2)$.

(iii) $V_r^IA$ is of restricted strong type $(p,q)$ if $(\tfrac 1p, \tfrac 1q)$ belongs to the half open line segment
$[Q_2,Q_3)$. $V_r^I A$  fails to be of strong type on $[Q_2,Q_3]$. 

(iv) $V_r^IA$ is of restricted weak type $(p,q)$ if  $(\tfrac 1p,\tfrac 1q)=Q_3$.
\end{theorem}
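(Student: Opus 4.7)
The proof will follow the template of Theorem~\ref{dge3thm}, with refinements needed for the regime $r<2$ (which holds throughout the intermediate range since $(d^2+1)/(d(d-1))<2$ when $d\ge 3$), so that the $L^2$-based single-scale reduction of~\cite{JSW} is unavailable. I begin with a dyadic frequency decomposition $\sigma=\sum_{j\ge 0}\sigma_j$ with $\widehat{\sigma_j}$ supported in $|\xi|\sim 2^j$, producing the operator splitting $A_t=\sum_j A^j_t$. Since the $V_r$ quasinorm is subadditive, the main task reduces to a single-scale bound
\[
\Norm{V_r^I A^j f}_{L^q}\ \lesssim\ 2^{-j\eta(p,q,r)}\Norm{f}_{L^p},
\]
with $\eta(p,q,r)>0$ on the interior of $\fP_d(r)$ and $\eta(p,q,r)=0$ on the two new edges $[Q_4(r),Q_1(r)]$ and $[Q_4(r),P(r)]$.

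The single-scale estimate is proved in two regimes. When $q\ge r$, use the embedding $B^{1/r}_{r,1}(I)\hookrightarrow V_r(I)$ together with the observation that $A^j_\cdot f(x)$ is essentially localized at $t$-frequency $\lesssim 2^j$ on $I=[1,2]$, giving
\[
V_r^I A^j f(x)\ \lesssim\ 2^{j/r}\Big(\int_I |A^j_t f(x)|^r\,\ud t\Big)^{1/r};
\]
Minkowski and the frequency-localized $L^p\to L^q$ bounds of Sogge--Stein and Lee~\cite{Lee2003} for the single average $A^j_t$ (with decay $2^{-j\gamma(p,q)}$) then give $\eta(p,q,r)=\gamma(p,q)-1/r$. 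This identifies the horizontal edge $[Q_4(r),Q_1(r)]$ (at height $1/q=1/(rd)$) and the slope-$1$ edge $[Q_4(r),P(r)]$ as the loci where $\gamma(p,q)=1/r$. When $q<r$ (relevant near $Q_2$), use instead the interpolation inequality $\Norm{g}_{V_r}\le 2\Norm g_\infty^{1-1/r}\Norm{g'}_{L^1}^{1/r}$, which combined with H\"older in the spatial variable and the frequency-localized $L^p\to L^q$ bounds for the local maximal operator $S^j$ (from~\cite{Schlag1997} and its refinements) supplies the remaining single-scale bounds.

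The boundary cases (ii)--(iv) are then obtained as in Theorem~\ref{dge3thm}: the half-open segments $(Q_4(r),Q_1(r)]$ and $[Q_1(r),Q_2)$ are handled by interpolating the interior estimates against the Stein--Tomas-type bound at $Q_4(r)$ and the diagonal $L^p\to L^p$ bound for $V_rA$ from~\cite{JSW}, respectively; the restricted strong type on $[Q_2,Q_3)$ and the failure of strong type on $[Q_2,Q_3]$ come from Bourgain's restricted weak type at $Q_2$ combined with vertical real interpolation and the standard lacunary counterexample; and the restricted weak type at $Q_3$ is inherited from Lee's~\cite{Lee2003} restricted weak type bound for $S^I$ at $Q_3$ via the variation embedding, where the assumption $r>d/(d-1)$ ensures summability of the single-scale bounds over $j$. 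Sharpness of the pentagon follows from Knapp and radial bump examples, together with functions Fourier-localized in $|\xi|\sim 2^j$ producing $\sim 2^j$ distinct jumps in $t\in[1,2]$, which rules out points outside the two new edges. The main technical obstacle I anticipate is the sharp single-scale analysis for $r<2$ along $[Q_4(r),Q_1(r)]$ and $[Q_4(r),P(r)]$: at the corner $Q_4(r)$, where both the single-average and maximal bounds are critical, the argument likely requires restricted weak type input analogous to the one used at $Q_3$.
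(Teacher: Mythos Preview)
Your overall framework (frequency decomposition, the Besov embedding $B^{1/r}_{r,1}\hookrightarrow V_r$, single-scale bounds $\|\cA_j f\|_{L^q(L^r)}\lesssim 2^{-j\gamma}\|f\|_p$) matches the paper and correctly handles the interior of $\fP_d(r)$ and the open segment $(Q_1(r),Q_2)$. Note that the Besov embedding holds for all $r\ge 1$, so there is no special obstruction in the regime $r<2$; your concern about the ``$L^2$-based reduction of \cite{JSW}'' being unavailable is misplaced.

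There is, however, a genuine gap in your treatment of the horizontal edge $(Q_4(r),Q_1(r)]$ at $q=rd$. On this edge the single-scale exponent $\eta(p,q,r)=d/q-1/r$ vanishes, so summing the $\cA_j$ does not converge, and no amount of interpolation against an endpoint bound at $Q_4(r)$ will recover the strong type there (interpolation between an interior point and $Q_4(r)$ only yields points strictly inside the pentagon). The paper's mechanism here is not interpolation but a Fefferman--Stein sharp-function argument (\S\S\ref{maxopsect}--\ref{sec:strong type endpoint}): one bounds $\|\sum_j\|\cA_jf_j\|_{L^r}\|_q$ by controlling $G^\#$, splitting cubes according to whether their scale is above or below $2^{-j}$, and invoking an auxiliary maximal operator $\fM_{r,n}$ which gains a factor $2^{-n\varepsilon}$ precisely when $p>\rho_{\min}(q)$. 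This is the main technical input of the paper for this theorem, and it is not captured by anything in your sketch.

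Two smaller points. First, your appeal to \cite{JSW} for the diagonal $L^p\to L^p$ bound is inapplicable: \cite{JSW} requires $r>2$, whereas here $r\le\tfrac{d^2+1}{d(d-1)}<2$. The paper instead obtains $Q_1(r)$ as the endpoint of the horizontal edge via the sharp-function argument and the open diagonal segment from the single-scale bounds directly. Second, the restricted weak type at $Q_3$ is not ``inherited'' from Lee's maximal-function bound (variation is strictly stronger than the sup); the paper proves it for $V_r^IA$ directly via Bourgain's interpolation trick applied to the single-scale Besov bounds (Lemma~\ref{lem:Q3rwt}), and then deduces the restricted strong type on $[Q_2,Q_3)$ by a localization argument. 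Finally, note that the theorem makes no claim on the segment $[Q_4(r),P(r)]$; the paper explicitly leaves it open.
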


Note that for $r=\tfrac{d^2+1}{d(d-1)}$ the pentagon $\fP_d(r)$ in Figure~\ref{fig:intermediate r} degenerates to a quadrangle, as $P(r)=Q_4(r)=Q_4$. We leave open what happens at the closed boundary segment $[Q_4(r), P(r)]$ and the half-open boundary segment $[P(r),Q_3)$.

%%%%%%%%%%%%%%%%%%%%%%%%%%%%
%%
%   figure intermediate r
%%
%%%%%%%%%%%%%%%%%%%%%%%%%%%%%

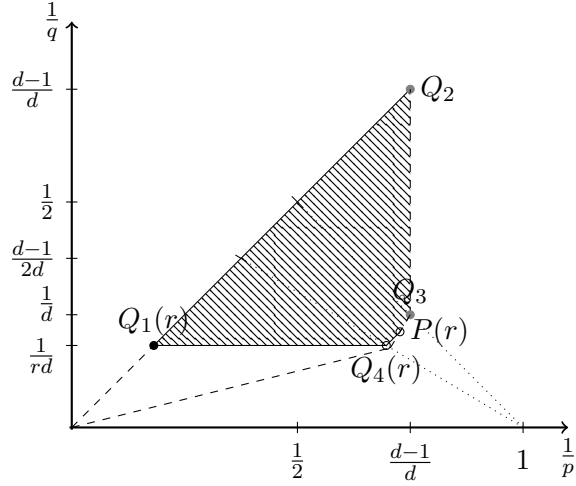
\begin{figure}[h]
\begin{tikzpicture}[scale=1.5] 

\begin{scope}[scale=2]
\draw[thick,->,black] (-.01,0) -- (2.2,0) node[below] {$ \frac 1 p$};
\draw[thick,->] (0,-.01) -- (0,1.80) node[left] {$ \frac 1 q$};  %x-axis

\draw (2,-0.025) -- (2,.025) node[below= 0.25cm] {$ 1$}; 
%\draw (-0.025,2) -- (.025,2) node[left= 0.25cm] {$ 1$}; 
%\node[circle,draw=black, fill=black, inner sep=0pt,minimum size=3pt] (b) at (2,2) {}; %Dot at (1,1) [deleted]
%\node[circle,draw=black, fill=black, inner sep=0pt,minimum size=3pt] (b) at (0,0) {};%dot at (0,0) [deleted]

%\draw (1,-0.025) -- (1,.025) node[below= 0.25cm] {$\frac 1 2$}; 
\draw (.025,1) -- (-.025,1) node[left] {$ \frac{1}{2}$};
%\node[circle,draw=black, fill=black, inner sep=0pt,minimum size=3pt] (b) at (1,1) {}; %dot at (1/2,1/2) replaced by tick

\draw (1,.025) -- (1,-.025) node[below] {$ \frac{1}{2}$};

\draw[ dotted]  (2,0)  -- (1,1);

\draw (16/11,14/33) circle (.5pt);%empty circle at P(r)

\draw[dashed] (24/17,6/17) -- (3/2, 1/2); %dashed from Q_4 to  Q_3 

\draw (16/11,14/33) node[right] {\textcolor{black}{$P(r)$}}; %label at P(r)

\node[circle,draw=black, fill=black, inner sep=0pt,minimum size=3pt] (b) at (4/11,4/11) {}; %dot at Q_1(r)

\draw (4/11,4/11) node[above] {\textcolor{black}{$Q_1(r)$}}; %label at Q_1(r)

%\draw (-0.025,1/10) -- (.025,1/10);
%\draw (0,1/10) node[left = 0.08cm] {\textcolor{blue}{$ \frac{1}{rd}$} };
\draw[color=black] (46/33,4/11) -- (4/11,4/11) ; %line from Q_4(r) to Q_1(r)
\draw[color=black] (46/33,4/11) -- (16/11,14/33) ; %line from Q_4(r) to P(r)
\draw[color=black] (4/11,4/11) -- (3/2,3/2); %line from Q_1(r) to Q_2
%\draw[dashed] (3.15/4,1/10) -- (1 + 1/2 + 1/10, 1/8+1/16 + 1/80);
\draw[ dashed] (0,0) -- (4/11,4/11); %dashed from (0,0) to Q_1(r)

\fill[pattern=north west lines, pattern color=black] 
(4/11,4/11) %Q_1(r)
-- (46/33,4/11) %Q_4(r)
-- (16/11,14/33) %P(r)
-- (3/2,1/2) %Q_3
-- (3/2,3/2) %Q_2
-- (4/11,4/11); %Q_1(r)

%%%% conjectured point

\draw (3/4-.025,3/4+.015) -- (3/4+.025,3/4-.015);
\draw (1-.025,1+.025) -- (1+.025,1-.025);

\draw (-0.025,3/4) -- (.025,3/4); %y-axis tick at (d-1)/2d
\draw (0,3/4) node[left = 0.08cm] {\textcolor{black}{$ \frac{d-1}{2d}$} }; %y-axis label at (d-1)/2d

\draw[ dotted, color=black] (2,0) -- (3/4,3/4) ; %dashed from (1,0) to (d-1/2d,d-1/2d)

%\draw[dashed] (0,0) -- (1,1/8) -- (1 + 1/2 + 1/10, 1/8+1/16 + 1/80);

\draw[dashed] (0,0) -- (24/17,6/17); %dashed from (0,0) to Q_4

%\node[circle,draw=lightgray, fill=lightgray, inner sep=0pt,minimum size=3pt] (b) at (46/33,4/11) {}; %dot at Q_4(r)

\draw (46/33,4/11) circle (.5pt);%empty circle at Q_4(r)

\draw (46/33,4/11) node[below] {\textcolor{black}{$Q_4(r)$}}; %label at Q_4

%%%%% endpoint spherical

\draw (-0.025,3/2) -- (.025,3/2); %y-axis tick at (d-1)/d
\draw (0,3/2) node[left = 0.08cm] {\textcolor{black}{$ \frac{d-1}{d}$} }; %y-axis label at (d-1)/d
%\draw[dashed] (1,1) -- (6.75/4,6.75/4);
\node[circle,draw=gray, fill=gray, inner sep=0pt,minimum size=3pt] (b) at (3/2,3/2) {}; %dot at Q_2
\draw (3/2,3/2) node[right] {\textcolor{black}{$Q_2$}}; %label at Q_2

\draw[dashed, color=black]  (3/2,1/2) -- (3/2,3/2); %dotted from Q_3 to Q_2

\node[circle,draw=gray, fill=gray, inner sep=0pt,minimum size=3pt] (b) at (3/2,1/2) {}; %dot at Q_3

\draw (3/2,1/2) node[above] {\textcolor{black}{$Q_3$}}; %label at Q_3

%%%%% at the line q=p'

%\node[circle,draw=black, fill=black, inner sep=0pt,minimum size=3pt] (b) at (3/2,1/2) {}; %dot at Q_3 [redundant]
\draw (3/2, -0.025) -- (3/2, .025); %x-axis tick at (d-1)/d
\draw (3/2, 0) node[below = 0.08cm] {\textcolor{black}{$ \frac{d-1}{d}$} }; %x-axis label at (d-1)/d
\draw (-0.025,1/2) -- (.025,1/2); %y-axis tick at 1/d
\draw (0,1/2+.05) node[left = 0.08cm] {\textcolor{black}{$ \frac{1}{d}$} }; %y-axis label at 1/d + .05
\draw (-0.025,4/11) -- (.025,4/11); %y-axis tick at 1/rd
\draw (0,3.5/11) node[left = 0.08cm] {\textcolor{black}{$ \frac{1}{rd}$} }; %y-axis label at 1/rd - .05

%%% closing the trapezoid/subpentagon

\draw[dashed, color=black]  (3/2,1/2) -- (16/11,14/33); %dashed from Q_3 to P(r)

%\draw (-.1,0) node[below] {$Q_1$};
\end{scope}

\end{tikzpicture}

\caption{The pentagon $\mathfrak{P}_d(r)$ for  $\frac{d}{d-1} < r \leq \frac{d^2+1}{d^2-d}$ and $d \geq 3$ (Theorem~\ref{thm:intermediate r}).  The outer (dashed) quadrangle is the region of boundedness for the maximal operator.  Shown with $d=4$ and $r=\frac{11}8$.} 
\label{fig:intermediate r}
%\vspace{1cm}
\end{figure}

%%%%%%%%%%%%%%%%%%%%%%%%%%%%
%%
%   figure small r
%%
%%%%%%%%%%%%%%%%%%%%%%%%%%%%%

\begin{figure}[h]
\begin{tikzpicture}[scale=1.5] 

\begin{scope}[scale=2]
\draw[thick,->] (-.01,0) -- (2.2,0) node[below] {$ \frac 1 p$};
\draw[thick,->] (0,-.01) -- (0,1.85) node[left] {$ \frac 1 q$};

\draw (2,-0.025) -- (2,.025) node[below= 0.25cm] {$ 1$};

\draw (.025,1) -- (-.025,1) node[left] {$ \frac{1}{2}$};

\draw[ dotted]  (2,0)  -- (1,1);

\draw (22/15,8/15) circle (.5pt);%empty circle at Q_3(r)

\draw (22/15,8/15) node[right] {\textcolor{black}{$Q_3(r)$}}; %label at Q_3(r)

\draw (22/15,22/15) circle (.5pt);%empty circle at Q_2(r)

\draw (22/15,22/15) node[above] {\textcolor{black}{$Q_2(r)$}}; %label at Q_2(r)

\draw[dashed] (22/15,22/15) -- (22/15,8/15); %dashed line Q_2(r) to Q_3(r)
\draw[color=black, dashed] (4/3,2/5) -- (22/15,8/15); %dashed Q_4(r) to Q_3(r)

\node[circle,draw=black, fill=black, inner sep=0pt,minimum size=3pt] (b) at (2/5,2/5) {}; %dot at Q_1(r)

\draw (2/5,2/5) node[above] {\textcolor{black}{$Q_1(r)$}}; %label at Q_1(r)

%\draw (-0.025,1/10) -- (.025,1/10);
%\draw (0,1/10) node[left = 0.08cm] {\textcolor{blue}{$ \frac{1}{rd}$} };
\draw[color=black] (4/3,2/5) -- (2/5,2/5) ;%Q_4(r) to Q_1(r)
\draw[color=black] (2/5,2/5) -- (22/15,22/15); %Q_1(r) to Q_2(r)
\draw[dashed] (22/15,22/15) -- (3/2,3/2); %dashed Q_2(r) to Q_2
%\draw[dashed] (3.15/4,1/10) -- (1 + 1/2 + 1/10, 1/8+1/16 + 1/80);
\draw[ dashed] (0,0) -- (2/5,2/5); %dashed (0,0) to Q_1(r)

\fill[pattern=north west lines, pattern color=black] 
(2/5,2/5) %Q_1(r)
-- (4/3,2/5) %Q_4(r)
-- (22/15,8/15) %Q_3(r)
--  (22/15,22/15) %Q_2(r)
-- (2/5,2/5);%Q_1(r)

%%%% conjectured point

%\node[circle,draw=black, fill=black, inner sep=0pt,minimum size=3pt] (b) at (3/4,3/4) {}; %((d-1)/2d,(d-1)/2d)

\draw (3/4-.025,3/4+.015) -- (3/4+.025,3/4-.015);
\draw (1-.025,1+.025) -- (1+.025,1-.025);

\draw (-0.025,3/4) -- (.025,3/4); %y-axis tick at (d-1)/2d
\draw (0,3/4) node[left = 0.08cm] {\textcolor{black}{$ \frac{d-1}{2d}$} }; %y-axis label at (d-1)/2d

\draw[ dotted, color=black] (2,0) -- (3/4,3/4) ; %(1,0) to ((d-1)/2d,(d-1)/2d)

%\draw[dashed] (0,0) -- (1,1/8) -- (1 + 1/2 + 1/10, 1/8+1/16 + 1/80);

\draw[dashed] (0,0) -- (24/17,6/17); %dashed to Q_4

%\node[circle,draw=black, fill=black, inner sep=0pt,minimum size=3pt] (b) at (24/17,6/17) {}; %dot at Q_4 [deleted]

%\draw (24/17,6/17 ) node[below] {\textcolor{black}{$Q_4$}}; %label at Q_4[deleted]

%\node[circle,draw=lightgray, fill=lightgray, inner sep=0pt,minimum size=3pt] (b) at (4/3,2/5) {}; %dot at Q_4(r)

\draw (4/3,2/5) circle (.5pt);%empty circle at Q_4(r)

\draw (4/3,2/5) node[below] {\textcolor{black}{$Q_4(r)$}}; %label at Q_4(r)

%%%%% endpoint spherical

\draw (-0.025,3/2) -- (.025,3/2); %y-axis tick at (d-1)/d
\draw (0,3/2) node[left = 0.08cm] {\textcolor{black}{$ \frac{d-1}{d}$} }; %y-axis labl at (d-1)/d

\draw[dashed, color=black]  (3/2,1/2) -- (3/2,3/2); %dashed Q_3 to Q_2

%%%%% at the line q=p'

%\node[circle,draw=black, fill=black, inner sep=0pt,minimum size=3pt] (b) at (3/2,1/2) {}; %dot at Q_3 [deleted]
\draw (3/2, -0.025) -- (3/2, .025); %x-axis tick at (d-1)/d
\draw (3/2, 0) node[below = 0.08cm] {\textcolor{black}{$ \frac{d-1}{d}$} }; %x-axis label at (d-1)/d
\draw (-0.025,1/2) -- (.025,1/2);%y-axis tick at 1/d
\draw (0,1/2+.05) node[left = 0.08cm] {\textcolor{black}{$ \frac{1}{d}$} }; %y-axis label at 1/d

\draw (-0.025,2/5) -- (.025,2/5); %y-axis tick at 1/rd
\draw (0,2/5-.05) node[left = 0.08cm] {\textcolor{black}{$ \frac{1}{rd}$} }; %y-axis label at 1/rd

%%% closing the trapezoid/subpentagon

\draw[dashed, color=black]  (3/2,1/2) -- (24/17,6/17); %dashed Q_3 to Q_4

%\draw (-.1,0) node[below] {$Q_1$}; %label Q_1 [deleeted]

\end{scope}

\end{tikzpicture}

\caption{The quadrangle $\mathfrak{Q}_d(r)$ for  $1 \leq r \leq \frac{d}{d-1}$ and $d \geq 4$ (Theorem~\ref{thm:small r}).  The outer (dashed) quadrangle is the boundedness region for the maximal function. Shown  with $d=4$ and $r=\frac54$. }
\label{fig:small r}
%\vspace{1cm}
\end{figure}
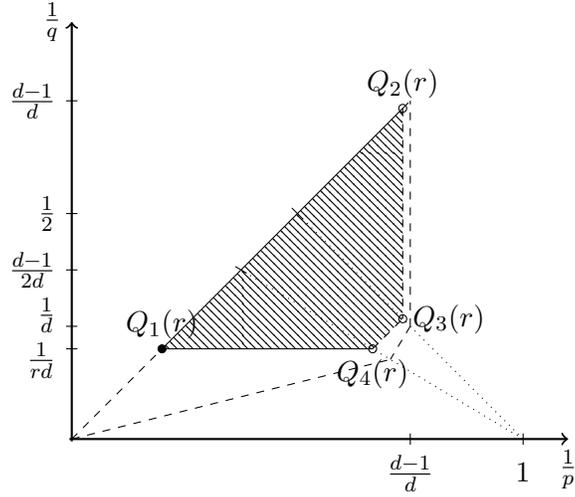

Finally, we address small values of $r$.

\begin{theorem}\label{thm:small r}  Suppose that  either  $d\ge 4$ and  $1\le r\le \tfrac d{d-1}$ or $d=3$ and $\tfrac{4}{3} < r \leq \tfrac{3}{2}$.
Let $\fQ_d(r)$ be the quadrangle (Figure~\ref{fig:small r})  with vertices
\begin{align*}
% Q_1= \big(\tfrac 1d, \tfrac 1d\big), \quad 
& Q_1(r)= \big(\tfrac{1}{rd}, \tfrac{1}{rd}\big), \quad 
Q_2(r)= \big(\tfrac{r(d-1)-1}{r(d-1)}, \tfrac{r(d-1)-1}{r(d-1)}), \\
& Q_3(r)=(\tfrac{r(d-1)-1}{r(d-1)} , \tfrac{1}{r(d-1)} ), \quad Q_4(r)= (1-\tfrac{d+1}{rd(d-1)}, \tfrac{1}{rd}).
\end{align*}
Then 

(i) $V_r^I A: L^p\to L^q$ is bounded for $(\tfrac 1p,\tfrac 1q)$ in the interior of $\fQ_d(r) $  and unbounded  for $(\tfrac 1p,\tfrac 1q)\notin \fQ_d(r)$.

(ii) $V_r^I A:L^p\to L^q$ is bounded if $(\frac 1p,\frac 1q)$ is in the half open line segment $(Q_4(r), Q_1(r)]$  and $[Q_1(r), Q_2(r))$.

(iii) For the case $r=1$, $d\ge 4$,  the operator $V_1^IA$ is of restricted weak type 
$(\tfrac{d-1}{d-2}, d-1)$ (that is, at $Q_3(1)$) and of restricted strong type $(\tfrac{d-1}{d-2}, q)$  for $\tfrac{d-1}{d-2}\le q<d-1$ (that is, on $[Q_2(1), Q_3(1))$). In three dimensions, $V_1^IA:L^2(\bbR^3)\to L^2(\bbR^3)$ is bounded.

\end{theorem}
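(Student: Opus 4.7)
The overall strategy is a Littlewood--Paley decomposition in the spatial frequency variable, combined with an interpolation between a maximal-function bound and a $V_1$-bound coming from the fundamental theorem of calculus. Writing $A_t = \sum_{k\ge 0} A^k_t$ where $A^k_t$ localizes to $|\xi| \sim 2^k$, the subadditivity of $V_r$ gives $V_r^I A f \leq \sum_k V_r^I A^k f$ pointwise, reducing matters to single-frequency estimates
\[
\|V_r^I A^k f\|_q \leq C\, 2^{-k\epsilon(p,q)} \|f\|_p
\]
with $\epsilon(p,q)>0$ throughout the interior of $\fQ_d(r)$, which are then summed in a geometric series.

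For each $k$, the plan uses two complementary pointwise bounds: the elementary $V_\infty$-bound $V_\infty^I A^k f(x) \leq 2\sup_{t\in I}|A^k_t f(x)|$, and the $V_1$-bound
\[
V_1^I A^k f(x) \leq \int_1^2 |\partial_t A^k_t f(x)|\,dt \leq \sup_{t\in I} |\partial_t A^k_t f(x)|.
\]
Combining these through the elementary interpolation $|a|_{V_r} \leq |a|_{V_\infty}^{1-1/r}|a|_{V_1}^{1/r}$ and H\"older's inequality yields
\[
\|V_r^I A^k f\|_q \lesssim \bigl\|\sup_t |A^k_t f|\bigr\|_{q_1}^{1-1/r}\, \bigl\|\sup_t |\partial_t A^k_t f|\bigr\|_{q_2}^{1/r}
\]
with $1/q = (1-1/r)/q_1 + (1/r)/q_2$. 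The frequency-localized $L^p\to L^{q_j}$ maximal bounds come from the Bourgain--Stein maximal theorem, local smoothing, and the Bessel decay $|\hat\sigma(\xi)| \lesssim |\xi|^{-(d-1)/2}$; the derivative operator $\partial_t A^k_t$ loses one factor of $2^k$ since $\partial_t\hat\sigma(t\xi)$ carries an extra power of $|\xi|$. Optimizing the interpolation parameters $(q_1,q_2)$ traces out the quadrangle $\fQ_d(r)$, and the unboundedness claim for $(1/p,1/q)\not\in\fQ_d(r)$ follows from Knapp-type, radial decreasing, and thin-annulus counterexamples adapted to $V_r$.

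For part~(iii), the restricted strong-type estimates on $[Q_2(1), Q_3(1))$ are obtained by restricted weak-type interpolation combined with Bourgain's restricted weak-type endpoint at $Q_2$ for the spherical maximal operator, transferred to $\partial_t A_t$ via the same dyadic decomposition. The three-dimensional $L^2 \to L^2$ claim for $V_1^I A$ is the cleanest case and follows directly from Plancherel: since $\widehat\sigma(\xi) = \sin|\xi|/|\xi|$ in $\R^3$, one checks by a short computation that $|\partial_t \widehat\sigma(t\xi)|$ is uniformly bounded for $t \in [1,2]$ and all $\xi$, so Minkowski's inequality gives
\[
\|V_1^I A f\|_2 \leq \int_1^2 \|\partial_t A_t f\|_2\, dt \lesssim \|f\|_2.
\]

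The main obstacle I anticipate is the restricted weak-type endpoint at $Q_3(1)$ for $d \ge 4$: the geometric sum in $k$ from the naive interpolation above fails to close at this vertex. Reaching it should require a more refined Bourgain-style argument, presumably analogous to Lee's endpoint analysis of the spherical maximal operator at $Q_3$, now applied to the derivative-of-average operator $\partial_t A_t$, and making essential use of restricted weak-type input together with bilinear/Fourier extension tools. A secondary difficulty is calibrating the exponents in the single-frequency bounds precisely so that summability in $k$ holds all the way up to, but not past, each claimed edge of $\fQ_d(r)$ in part~(ii).
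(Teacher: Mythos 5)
There is a genuine gap in your single-scale scheme. The chain $|a|_{V_r}\le |a|_{V_\infty}^{1-1/r}|a|_{V_1}^{1/r}$ followed by replacing $\int_1^2|\partial_t A_t^k f|\,dt$ with $\sup_{t\in I}|\partial_t A_t^kf|$ measures both factors in $L^\infty_t$, and for a piece frequency-localized at scale $2^k$ the passage from an $L^r_t$ (or $L^1_t$) norm to a sup costs a Sobolev factor in $t$. Concretely, near the lower edge $\frac1q=\frac1{rd}$ the sharp frequency-localized input is $\|\cA_k f\|_{L^q(L^r)}\lesssim 2^{-kd/q}\|f\|_p$ (regions (D)--(E) of Proposition \ref{prop:bounds A_j with ST}), and the variation costs exactly $2^{k/r}$, so the geometric series closes precisely when $d/q>1/r$; your maximal-function versions decay only like $2^{-k(d-1)/q}$ and $2^{-k((d-1)/q-1)}$ respectively, so your interpolation closes only for $(d-1)/q>1/r$. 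This misses an entire strip of the \emph{interior} of $\fQ_d(r)$ adjacent to the edge $\overline{Q_4(r)Q_1(r)}$ (and a similar loss occurs near $\overline{Q_3(r)Q_4(r)}$), so "optimizing $(q_1,q_2)$" cannot trace out the claimed quadrangle. The remedy is to never pass to $\sup_t$: control $V_r$ through the embedding $B^{1/r}_{r,1}\hookrightarrow V_r$ and reduce to mixed-norm $L^q(L^r_t)$ bounds for $\cA_k$ at the price of a factor $2^{k/r}$ (Corollary \ref{cor:trading-der}); those mixed-norm bounds come from interpolating kernel estimates, Plancherel and the Stein--Tomas square function, not from maximal theorems.

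Second, the strong-type bound on the half-open segment $(Q_4(r),Q_1(r)]$ in part (ii) lives exactly on the line $d/q=1/r$, where the single-scale estimates have zero surplus decay; no calibration of exponents makes a geometric sum work there. The paper needs a Fefferman--Stein sharp-maximal-function argument together with an auxiliary maximal operator over shrinking cubes (Propositions \ref{maximal-op-prop} and \ref{stpropnew}), and your proposal has no substitute for this mechanism. Conversely, you overestimate the difficulty at $Q_3(1)$: no bilinear or extension input is required, only Bourgain's interpolation trick applied to $\|\partial_t\cA_j f\|_{L^\infty(L^1)}\lesssim 2^{j}\|f\|_1$ and $\|\partial_t\cA_j f\|_{L^2(L^2)}\lesssim 2^{-j(d-3)/2}\|f\|_2$ (Lemma \ref{lem:1var}), which is exactly where the hypothesis $d\ge 4$ enters; the segment $[Q_2(1),Q_3(1))$ then follows by a localization argument rather than by transferring Bourgain's $Q_2$ endpoint. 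Your $d=3$ Plancherel argument for $V_1^IA$ on $L^2$ is correct and coincides with the paper's, and your sketch of the necessary conditions is in the right spirit, though the decisive examples are the sign-alternating families of disks, Knapp caps and shells that produce the $N^{1/r}$ factor in the variation.
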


We leave open what happens at the closed boundary segments $[Q_2(r), Q_3(r)]$ for $1 < r \leq \frac{d}{d-1}$ and $[Q_3(r),Q_4(r)]$ for $1\leq  r \leq \frac{d}{d-1}$.

\begin{figure}[H]
\includegraphics[width=4in]{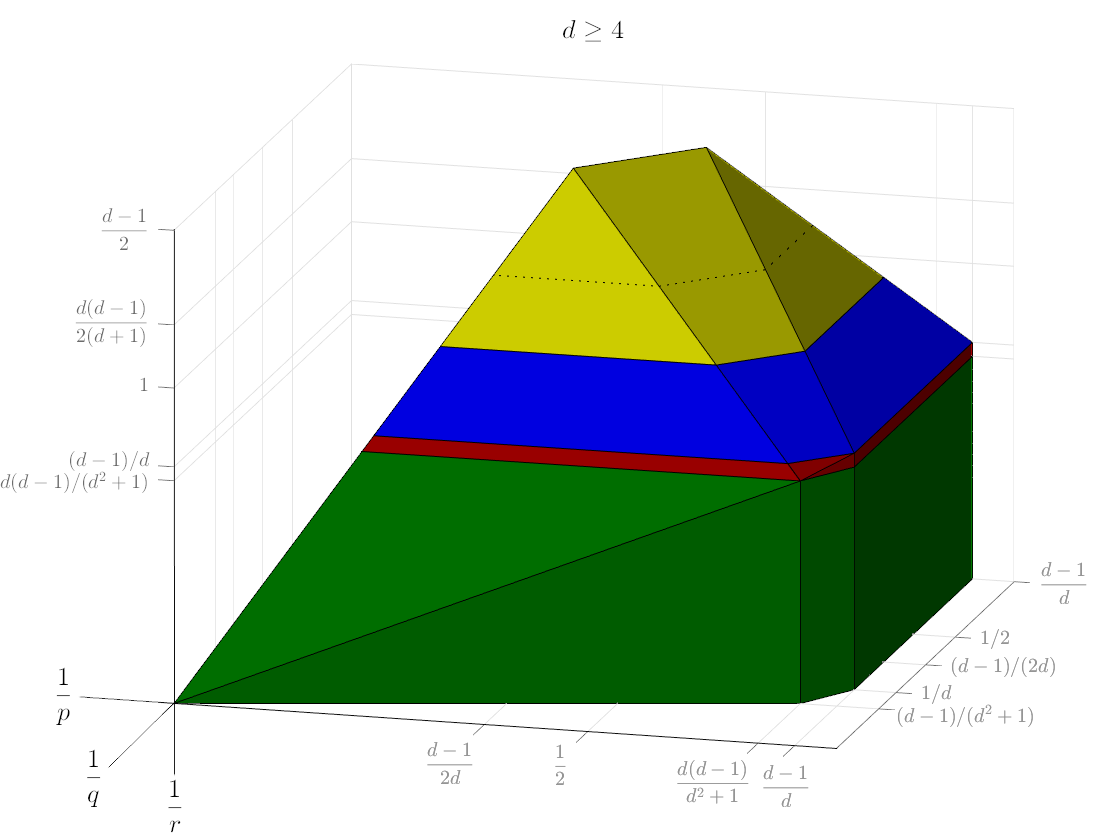}
\caption{A diagram of the typeset of $V_r^IA$ in $(\frac{1}{p},\frac{1}{q},\frac{1}{r})$-space for large values of $d$. The green region corresponds to Theorem \ref{dge3thm} (Figure \ref{fig:big r}), the red region corresponds to Theorem \ref{thm:intermediate r} (Figure \ref{fig:intermediate r}), and the blue region corresponds to Theorem \ref{thm:small r} (Figure \ref{fig:small r}). The yellow region is conjectural.}
\centering
\label{fig:3D d>=4}
\end{figure}

\begin{figure}[H]
\includegraphics[width=4in]{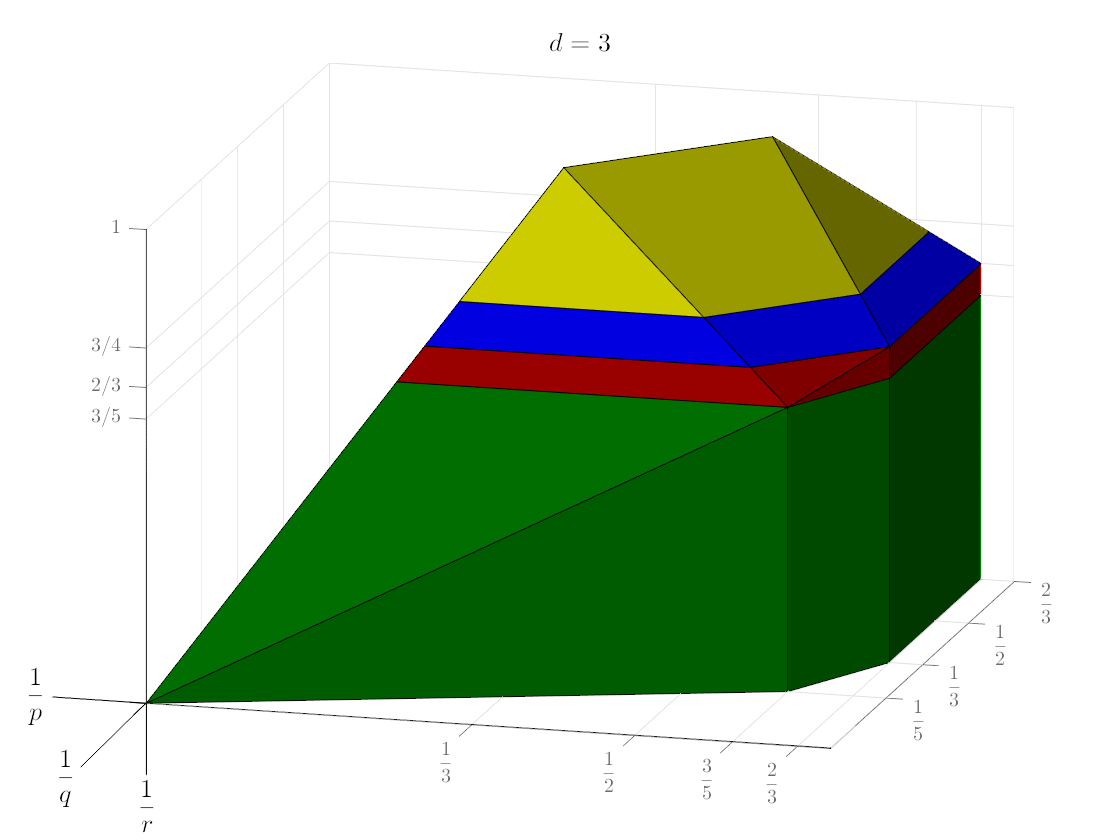}
\caption{A diagram of the typeset of $V_r^IA$ in $(\frac{1}{p},\frac{1}{q},\frac{1}{r})$-space for $d=3$. The green region corresponds to Theorem \ref{dge3thm} (Figure \ref{fig:big r}), the red region corresponds to Theorem \ref{thm:intermediate r} (Figure \ref{fig:intermediate r}), and the blue region corresponds to Theorem \ref{thm:small r} (Figure \ref{fig:small r}). The yellow region is conjectural.}
\centering
\label{fig:3D d=3}
\end{figure}

Note that there is a discrepancy in our results between $d=3$, for which we only obtain sharp results in the partial range $\frac{4}{3} < r \leq \frac{d}{d-1}$ and the case $d \geq 4$, where results are obtained for all $1 \leq r \leq \frac{d}{d-1}$. The reason is because we restrict ourselves to the traditional range $1 \leq r \leq \infty$ for the variation norm. The definition of $V_r$ can be extended, with modifications, to the range $0 < r <1$ (see for example \cite{BerghPeetre}). In that context, one can formulate conjectural results for $V_r^I A$ for $\frac{2}{d-1}< r < 1$ (see Figure \ref{fig:3D d>=4}) for $d \geq 4$. We remark that a positive solution to Sogge's local smoothing conjecture \cite{Sogge91} in $d+1$ dimensions would imply a complete result up to endpoints. Partial results in the range $ r > \frac{2(d+1)}{d(d-1)}$ can be proved using the techniques of this paper. We shall address issues for $r<1$ in a follow up paper. 

Similarly, in three dimensions, the range $1 \leq r \leq 4/3$ remains open as a conjecture (see Figure \ref{fig:3D d=3}). Note that here we are in the traditional range for the $V_r$ spaces.

In dimension 2, due to the recent full resolution of Sogge's problem in $2+1$ dimensions by Guth, Wang and Zhang \cite{GWZ}, that is, $$
\partial_t^{1/2-\varepsilon} A: L^4 \to L^4(L^4),
$$ it is possible to get an almost optimal result (up to endpoints) for the variation norm estimates. 
\begin{theorem} \label{d=2thm}
Let $d=2$.

(i) If $r> 5/2$ then $V_r^IA:L^p\to L^q$ is bounded if $(\tfrac 1p,\tfrac 1q) $ is either in the interior of the quadrangle $\fQ_2(r)$ (Figure~\ref{fig:d=2 big r-first}) formed by the vertices 
\begin{align*}
&P(r)= (\tfrac 1{r},\tfrac 1{2r} ), \quad  Q_1(r)=(\tfrac 1{2r}, \tfrac 1{2r}),\\
&  Q_2=Q_3= (\tfrac{1}{2}, \tfrac {1}{2} ) , \quad Q_4= (\tfrac{2}{5},\tfrac{1}{5})
\end{align*}
or in the open line segment between $Q_2=Q_3$ and $Q_1(r)$. 

(ii) If $2<r \leq 5/2$ then  $V_r^IA:L^p\to L^q$ is bounded 
if $(\tfrac 1p,\tfrac 1q) $ is either in the interior of the quadrangle $\fQ_2(r)$ (Figure~\ref{fig:d=2 big r-second}) formed by the vertices 
\begin{align*}
% Q_1= \big(\tfrac 1d, \tfrac 1d\big), \quad 
&Q_1(r)= \big(\tfrac{1}{2r}, \tfrac{1}{2r}\big), \quad 
Q_2=Q_3 = \big(\tfrac{1}{2}, \tfrac{1}{2}), \\
&P(r)=(\tfrac{1}{r}, \tfrac{3-r}{r}), \quad Q_4(r)= (1-\tfrac{3}{2r}, \tfrac{1}{2r})
\end{align*}
or in the open line segment between $Q_2=Q_3$ and $Q_1(r)$.

(iii) If {$r < 2$} then $V_r^I A$ does not map any  $L^p(\bbR^2)$ to any $L^q(\bbR^2)$. 
\end{theorem}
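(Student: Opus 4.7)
For parts (i) and (ii), we perform a spatial Littlewood-Paley decomposition $A_t = A^0_t + \sum_{k\ge 1} A^k_t$ with $A^k_t$ localized at spatial frequencies $|\xi|\sim 2^k$. Writing $A^k_t$ as a sum of half-wave propagators multiplied by a symbol of order $-1/2$ shows that its temporal oscillations also occur at frequency $\sim 2^k$, and the Besov embedding $B^{1/r}_{r,1}([1,2]) \hookrightarrow V_r([1,2])$ then yields the key reduction
\[
 \bigl\| \bigl|A_\cdot^k f\bigr|_{V_r([1,2])} \bigr\|_{L^q_x} \lesssim 2^{k/r}\, \|A^k_t f\|_{L^q_x L^r_t([1,2])}.
\]
The problem is thereby reduced to proving frequency-localized mixed-norm bounds of the form $\|A^k_t f\|_{L^q_x L^r_t} \lesssim 2^{-k\beta(p,q) + k\varepsilon}\|f\|_p$ with $\beta(p,q) > 1/r$ in the interior of the respective region, after which a geometric sum in $k$ yields the claim.

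The principal input is the Guth-Wang-Zhang theorem \cite{GWZ}, which together with the $|\xi|^{-1/2}$ decay of $\widehat{\ud\sigma}$ gives $\|A^k_t f\|_{L^4_x L^4_t([1,2])} \lesssim 2^{-k(1/2 - \varepsilon)} \|f\|_4$. This is sharp up to the $\varepsilon$-loss and drives the bound at $P(r)$ (for which the required exponent is precisely $1/r$). We interpolate it with the trivial $L^2$ bound $\|A^k_t f\|_{L^2_{x,t}} \lesssim 2^{-k/2} \|f\|_2$ (controlling the diagonal vertices $Q_1(r)$ and $Q_2$), with the frequency-localized form of Lee's restricted weak type endpoint for $S^I$ at $Q_4=(2/5,1/5)$ obtained from Tao's bilinear cone extension \cite{TaoCone} (handling $Q_4$ in part (i) and the edge $[Q_4, P(r)]$), and with trivial $L^p \to L^\infty$ Sobolev bounds (which yield the $Q_4(r)$ vertex in part (ii)). For $q \ge r$ we pass from $L^q_x L^r_t$ to $L^r_t L^q_x$ via Minkowski's inequality and apply the $L^p \to L^q$ bounds for individual $A_t$. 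Real and complex interpolation among these ingredients then deliver the full pentagon or quadrangle $\fQ_2(r)$ in each range of $r$; the open segment $(Q_1(r), Q_2)$ is recovered by interpolating against the trivial $L^2\to L^2$ bound at $Q_2$.

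For part (iii), we use a Knapp-type counterexample. Let $\widehat{f}_\delta$ be a smooth bump adapted to a rectangle of dimensions $\delta^{-1/2} \times \delta^{-1}$ tangent to the unit circle at $(1,0)$. Then for $x$ in the dual rectangle of dimensions $\delta^{1/2}\times\delta$ and for $t \in [1, 1+c\delta^{1/2}]$, the average $A_t f_\delta(x)$ is a wave packet oscillating in $t$ at frequency $\delta^{-1}$ with amplitude comparable to its peak value. Sampling $\sim \delta^{-1/2}$ equally spaced times exhibits $\bigl| A f_\delta(x) \bigr|_{V_r} \gtrsim \delta^{-1/(2r)}$ times that peak, pointwise for $x$ in the dual rectangle. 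Balancing $\|f_\delta\|_p$ against $\|V_r^I A f_\delta\|_q$ produces a scaling inequality in $\delta$ that is only consistent as $\delta \to 0$ when $r \ge 2$, regardless of $(p,q)$.

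The main obstacle will be to track the $\varepsilon$-losses in GWZ and in the frequency-localized version of Lee's $Q_4$ estimate precisely enough to close the summation on the critical boundary edges of $\fQ_2(r)$, and in particular to produce sharp restricted weak or strong type bounds on the edges $[P(r),Q_4]$ in part (i) and $[P(r), Q_4(r)]$ in part (ii). Once the frequency-localized mixed-norm bounds at all vertices are established, summation in $k$ and endpoint real/complex interpolation close the argument in the interior of each region.
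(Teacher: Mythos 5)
For parts (i) and (ii) your route is essentially the paper's: decompose into dyadic spatial frequency pieces $\cA_j$, use the embedding $B^{1/r}_{r,1}\hookrightarrow V_r$ from \eqref{BPe} together with Corollary \ref{cor:trading-der} to reduce to $\|\cA_j f\|_{L^q(L^r)}\lc 2^{-j(1/r+\eps)}\|f\|_p$ as in \eqref{eqn:eps-decay}, feed in the Guth--Wang--Zhang $L^4$ local smoothing bound, interpolate with the $L^2$ estimate, the $L^1/L^\infty$ fixed-time bounds and the Stein--Tomas square-function bound (Lemma \ref{stein-squarefct}), and sum the geometric series (Propositions \ref{prop:bounds Aj d=2 B}, \ref{prop:bounds Aj d=2 D} and \ref{prop:bounds 1/r d=2}). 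Two remarks: the theorem only asserts bounds in the \emph{interior} of $\fQ_2(r)$ plus the open diagonal segment, so the $\eps$-losses are harmless and you do not need the frequency-localized form of Lee's $Q_4$ endpoint or Tao's bilinear cone estimate at all --- Stein--Tomas plus GWZ already cover the vertex $Q_4$ region after interpolation; the endpoint questions you raise about the edges $[P(r),Q_4]$, $[P(r),Q_4(r)]$ are genuinely open and are deliberately not claimed.

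Part (iii) has a genuine gap. For a single Knapp wave packet $f_\delta$ at frequency $\sim\delta^{-1}$ with physical support in a $\delta^{1/2}\times\delta$ plate, the function $t\mapsto A_tf_\delta(x)$ at a fixed good $x$ is \emph{not} of comparable amplitude on a $t$-interval of length $c\delta^{1/2}$: stationary phase in the radial frequency variable (or the transport of the wave packet under $e^{\pm it\sqrt{-\Delta}}$) confines the envelope to a $t$-interval of length $\sim\delta$, i.e.\ one wavelength, so one sees only $O(1)$ sign changes and $|Af_\delta(x)|_{V_r}\approx\sup_t|A_tf_\delta(x)|$, with no $r$-dependence. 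Moreover, even granting your claimed gain of $\delta^{-1/(2r)}$, balancing $\|f_\delta\|_p\approx\delta^{3/(2p)}\|f_\delta\|_\infty$ against $\|V_r^IAf_\delta\|_q\gtrsim\delta^{-1/(2r)+1/2+3/(2q)}\|f_\delta\|_\infty$ yields only $\tfrac1r\le1-\tfrac3p+\tfrac3q$, which at $p=q$ gives $r\ge1$, not $r\ge2$. The fix is the construction of \S\ref{sec:Knapp}: superpose $N\sim\delta^{-1}$ such plates with alternating signs at radial separation $\delta$, so that for fixed $x$ the averages alternate sign $N$ times as $t$ runs over $[1,3/2]$; then the variation gains $N^{1/r}$ while $\|f\|_p$ only grows like $N^{1/p}$, and the resulting necessary condition $\tfrac{d-1}{2}(\tfrac1q+\tfrac1{p'})\ge\tfrac1r$ combined with $p\le q$ forces $r\ge2$ in two dimensions.
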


%%%%%%%%% figure 2 dimensions and r=5

\begin{figure}[H]
\begin{tikzpicture}[scale=1.8] 

\begin{scope}[scale=2]
\draw[thick,->] (-.01,0) -- (2.2,0) node[below] {$ \frac 1 p$};
\draw[thick,->] (0,-.01) -- (0,1.2) node[left] {$ \frac 1 q$};

\draw (1,-0.025) -- (1,.025); 
\draw (1,0) node[below= 0.08cm] {$\frac 1 2$}; \draw (.025,1) -- (-.025,1) node[left] {$ \frac{1}{2}$};
%\node[circle,draw=lightgray, fill=lightgray, inner sep=0pt,minimum size=3pt] (b) at (1,1) {};
\draw (1,1) circle (.5pt);%empty circle at Q_2=Q_3

\draw[ dotted]  (1,0)  -- (1,1); 
\draw[ dashed] (0,0) -- (1,1);
%\draw[ dashed] (1,0) -- (1,1);

%\node[circle,draw=black, fill=black, inner sep=0pt,minimum size=3pt] (b) at (0,0) {};

%%%% conjectured point

%\node[circle,draw=black, fill=black, inner sep=0pt,minimum size=3pt] (b) at (1/2,1/2) {};

\draw (1/2+.025,1/2-.025/3) -- (1/2-.025,1/2+.025/3);

\draw (-0.025,1/2) -- (.025,1/2);
\draw (0,1/2+.05) node[left = 0.08cm] {\textcolor{black}{$ \frac{1}{4}$} };

\draw[ dotted, color=black] (2,0) -- (1/2,1/2) ;

\draw (2, -0.025) -- (2, .025);
\draw (2,0) node[below = 0.08cm] {\textcolor{black}{$1$} };

%%%% 5/2

%\node[circle,draw=lightgray, fill=lightgray, inner sep=0pt,minimum size=3pt] (b) at (4/5,2/5) {}; %dot at Q_4
\draw (4/5,2/5) circle (.5pt);%empty circle at Q_4
\draw (4/5,2/5) node[right = 0.08cm] {\textcolor{black}{$Q_4$} }; %label at Q_4

\draw (1,1) node[above = 0.08cm] {\textcolor{black}{$Q_2=Q_3$} }; %label at Q_2=Q_3

\draw (4/5, -0.025) -- (4/5, .025);
\draw (4/5,0) node[below = 0.08cm] {\textcolor{black}{$ \frac{2}{5}$} }; 

\draw (-0.025,2/5) -- (.025,2/5);
\draw (0,2/5) node[left = 0.08cm] {\textcolor{black}{$ \frac{1}{5}$} }; %y-axis tick at 1/2r

\draw (-0.025,1/5) -- (.025,1/5);
\draw (0,1/5) node[left = 0.08cm] {\textcolor{black}{$ \frac{1}{2r}$} }; %y-axis label at 1/2r

\draw (1/5,1/5) node[above = 0.08cm] {\textcolor{black}{$Q_1(r)$} }; %label at Q_1(r)
%\node[circle,draw=lightgray, fill=lightgray, inner sep=0pt,minimum size=3pt] (b) at (1/5,1/5) {}; %dot at Q_1(r)
\draw (1/5,1/5) circle (.5pt);%empty circle at Q_1(r)

\draw (2/5,1/5) node[right = 0.08cm] {\textcolor{black}{$P(r)$} }; %label at P(r)
%\node[circle,draw=lightgray, fill=lightgray, inner sep=0pt,minimum size=3pt] (b) at (2/5,1/5) {}; %dot at P(r)
\draw (2/5,1/5) circle (.5pt);%empty circle at P(r)

%\draw[ dashed, color=green] (4/5,0) -- (1/2,1/2) ;

%%%%%% closing the triangle

\draw[ dashed] (1,1) -- (4/5,2/5) -- (0,0) ; %dashed from 0 to Q_4

\fill[pattern=north west lines, pattern color=black] (1,1) %Q_2=Q_3
-- (1/5,1/5) %Q_1(r)
-- (2/5,1/5) %P(r)
-- (4/5,2/5) %Q_4
-- (1,1); %Q_2=Q_3

\draw[dashed, color=black] (1/5,1/5) -- (2/5,1/5);
\draw[ color=black] (1/5,1/5) -- (1,1) ; %dashed Q_1(r) to P(r)

\draw (2/5, -0.025) -- (2/5, .025); %tick at 1/r
\draw (2/5,0) node[below = 0.08cm] {\textcolor{black}{$ \frac{1}{r}$} }; %label at 1/r

\end{scope}

\end{tikzpicture}

\caption{The region $\fQ_2(r)$ if $r >5/2$ (Theorem~\ref{d=2thm}~i).    The outer (dashed) triangle is the region of boundedness for the maximal operator. Shown with $r=5$.}
\label{fig:d=2 big r-first}
%\vspace{1cm}
\end{figure}
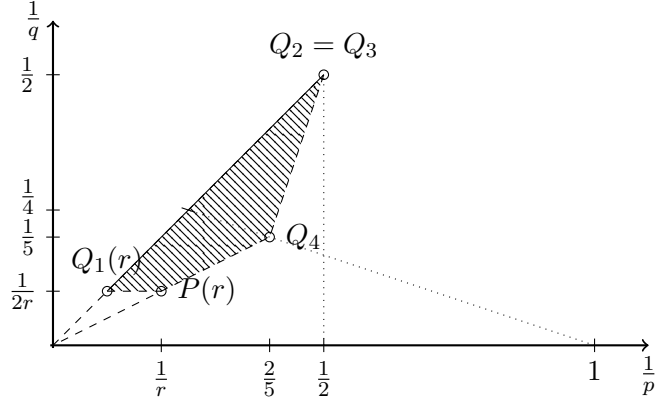

Note that, as for the circular maximal function theorem, the points $Q_2$ and $Q_3$ coincide if $d=2$; therefore the pentagon (Figures~\ref{fig:big r} and \ref{fig:intermediate r}) in Theorems \ref{dge3thm} and \ref{thm:intermediate r} becomes a quadrangle for $r>2$. Moreover, $P(5/2)=Q_4(5/2)=Q_4$, so the quadrangle becomes a triangle for $r=5/2$. The bounds are subsumed in Figure \ref{fig:3D d=2}; note that in contrast with $d \geq 3$, the blue/yellow region disappears, as $\frac{d}{d-1}=\frac{2}{d-1}$ coincide for $d=2$.

It is also possible to show unboundedness for $r=2$ via an argument involving the Besicovitch set, which will be addressed in a forthcoming paper.

We note that an affirmative answer to \textit{endpoint versions} of Sogge's problem as formulated and conjectured in \cite{HNS2011} would also settle strong type bounds on the half-open boundary segment $(Q_4(r), Q_1(r)]$. Unfortunately such endpoint bounds in Sogge's problem are currently only available in  dimensions four and higher.

%%%%%%%%% figure 2 dimensions and r=2.2

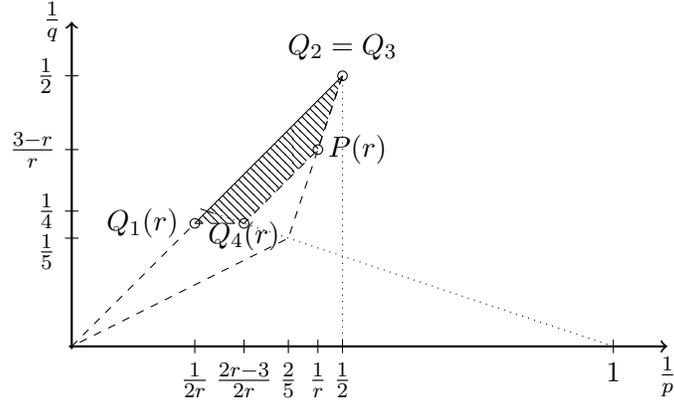
\begin{figure}[H]
\begin{tikzpicture}[scale=1.8] 

\begin{scope}[scale=2]
\draw[thick,->] (-.01,0) -- (2.2,0) node[below] {$ \frac 1 p$};
\draw[thick,->] (0,-.01) -- (0,1.2) node[left] {$ \frac 1 q$};

\draw (1,-0.025) -- (1,.025); 
\draw (1,0) node[below= 0.08cm] {$\frac 1 2$}; 
\draw (.025,1) -- (-.025,1) node[left] {$ \frac{1}{2}$};
%\node[circle,draw=lightgray, fill=lightgray, inner sep=0pt,minimum size=3pt] (b) at (1,1) {};
\draw (1,1) circle (.5pt);%empty circle at Q_2=Q_3

\draw[ dotted]  (1,0)  -- (1,1); 
\draw[ dashed] (0,0) -- (1/2.2,1/2.2); %(0,0) to Q_1(r)
%\draw[ dashed] (0,0) -- (1,1);
%\draw[ dashed] (1,0) -- (1,1);

%\node[circle,draw=black, fill=black, inner sep=0pt,minimum size=3pt] (b) at (0,0) {};

%%%%%%%%%%%%%%%%%%%%%%%%%%%%%%%%%%%%
%%%%%%%%%%%%New axis labels%%%%%%%%%%%%
%%%%%%%%%%%%%%%%%%%%%%%%%%%%%%%%%%%%

\draw (2/2.2,-0.025) -- (2/2.2,0.025);
\draw (2/2.2,0) node[below=0.08cm] {$\frac 1r$};

\draw (1/2.2,-0.025) -- (1/2.2,0.025);
\draw (1/2.2,0) node[below=0.08cm] {$\frac 1{2r}$};

\draw (2-3/2.2,-0.025) -- (2-3/2.2,0.025);
\draw (2-3/2.2,0) node[below=0.08cm] {$\frac {2r-3}{2r}$};

\draw (-0.025,6/2.2-2) -- (0.025,6/2.2-2);
\draw (0,6/2.2-2) node[left=0.08cm] {$\frac {3-r}{r}$};

%%%% conjectured point

%\node[circle,draw=black, fill=black, inner sep=0pt,minimum size=3pt] (b) at (1/2,1/2) {};

\draw (1/2-.025,1/2+.025/3) -- (1/2+.025,1/2-.025/3); %tick at conjectured point

\draw (-0.025,1/2) -- (.025,1/2);
\draw (0,1/2) node[left = 0.08cm] {\textcolor{black}{$ \frac{1}{4}$} };

\draw[ dotted, color=black] (2,0) -- (1/2,1/2) ;

\draw (2, -0.025) -- (2, .025);
\draw (2,0) node[below = 0.08cm] {\textcolor{black}{$1$} };

%%%% 5/2

%\node[circle,draw=black, fill=black, inner sep=0pt,minimum size=3pt] (b) at (4/5,2/5) {}; %dot at Q_4 [deleted]
%\draw (4/5,2/5) node[right = 0.08cm] {\textcolor{black}{$Q_4$} };%label at Q_4 [deleted]

\draw (1,1) node[above = 0.08cm] {\textcolor{black}{$Q_2=Q_3$} };

\draw (4/5, -0.025) -- (4/5, .025); %x-axis tick at 2/5
\draw (4/5,0) node[below = 0.08cm] {\textcolor{black}{$ \frac{2}{5}$} }; %x-axis label at 2/5

\draw (-0.025,2/5) -- (.025,2/5); %y-axis tick at 1/5
\draw (0,2/5-.05) node[left = 0.08cm] {\textcolor{black}{$ \frac{1}{5}$} }; %y-axis label at 1/5

%\draw (-0.025,1/5) -- (.025,1/5);
%\draw (0,1/5) node[left = 0.08cm] {\textcolor{blue}{$ \frac{1}{2r}$} };

\draw (1/2.2,1/2.2) node[left = 0.08cm] {\textcolor{black}{$Q_1(r)$} }; %label at Q_1(r)
%\node[circle,draw=lightgray, fill=lightgray, inner sep=0pt,minimum size=3pt] (b) at (1/2.2,1/2.2) {}; %dot at Q_1(r)

\draw (1/2.2,1/2.2) circle (.5pt);%empty circle at Q_1(r)

\draw (1.4/2.2,1.1/2.2) node[below] {\textcolor{black}{$Q_4(r)$} }; %label at Q_4(r)
%\node[circle,draw=lightgray, fill=lightgray, inner sep=0pt,minimum size=3pt] (b) at (1.4/2.2,1/2.2) {}; %dot at Q_4(r)

\draw (1.4/2.2,1/2.2) circle (.5pt);%empty circle at Q_4(r)

\draw (1/1.1,0.8/1.1) node[right] {\textcolor{black}{$P(r)$} }; %label at P(r)
%\node[circle,draw=lightgray, fill=lightgray, inner sep=0pt,minimum size=3pt] (b) at (1/1.1,0.8/1.1) {}; %dot at P(r)

\draw (1/1.1,0.8/1.1) circle (.5pt) ;%empty circle at P(r)

%\draw[ dashed, color=green] (4/5,0) -- (1/2,1/2) ;

%%%%%% closing the triangle

\draw[ dashed] (1,1)
-- (4/5,2/5) %Q_4
-- (0,0) ;
\draw[ color=black] (1/2.2,1/2.2) -- (1,1) ; %Q_1(r) to Q_2=Q_3

\fill[pattern=north west lines, pattern color=black] (1,1) -- (1/2.2,1/2.2) %Q_1(r)
-- (1.4/2.2,1/2.2) %Q_4(r)
--  (1/1.1,0.8/1.1) %P(r)
-- (1,1);%Q_2=Q_3

\draw[dashed, color=black] (1/2.2,1/2.2)  %Q_1(r)
-- (1.4/2.2,1/2.2) %Q_4(r)
-- (1/1.1,0.8/1.1) %P(r)
-- (1,1); %Q_2=Q_3

\end{scope}

\end{tikzpicture}

\caption{The region $\fQ_2(r)$ if $d=2$ and $2 < r \leq 5/2$ (Theorem~\ref{d=2thm}~ii).    The outer (dashed) triangle is the region of boundedness for the maximal operator. Shown with $r=2.2$. }
\label{fig:d=2 big r-second}
%\vspace{1cm}
\end{figure}

\begin{figure}[h]
\includegraphics[width=4in]{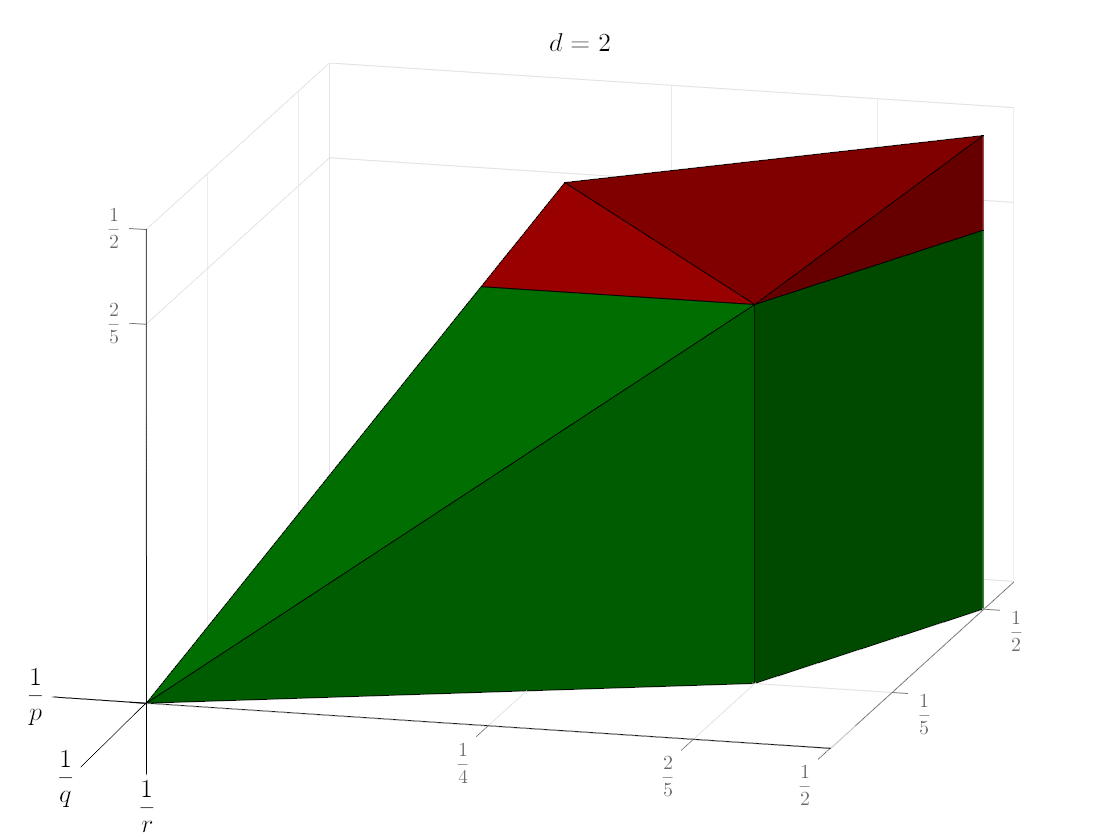}
\caption{A diagram of the typeset of $V_r^IA$ in $(\frac{1}{p},\frac{1}{q},\frac{1}{r})$-space for $d=2$. The green region corresponds to Figure \ref{fig:d=2 big r-first} and the red region corresponds to Figure \ref{fig:d=2 big r-second}.} 
\centering
\label{fig:3D d=2}
\end{figure}

\subsection*{Sparse domination} We now  formulate a sparse domination result for the global operator $V_rA$, $r>2$. Recall that a family of cubes $\mathfrak{S}$  in $\R^d$ is called \textit{sparse} if for
every $Q\in \fS$ there is a  measurable subset $E_Q\subset Q$ such that $|E_Q|\ge  |Q|/2$ and such that the sets on the family $\{E_Q : Q \in \fS\}$ are pairwise disjoint. In what follows we abbreviate $\langle f\rangle_{Q,s}=(|Q|^{-1} \int_Q |f|^s)^{1/s} .$

\begin{theorem}\label{cor:sparse}
Assume one of the following holds:
\begin{enumerate}[(i)]
    \item $d\ge 3$, $r>2$, and $(\tfrac 1p, \tfrac 1q)$ in the interior of $\fP_d(r)$.
    \item $d=2$, $r>2$ and $(\tfrac 1p, \tfrac 1q)$ in the interior of $\fQ_2(r)$.
\end{enumerate}
Then there is a constant $C=C(p,q)$ such that for each pair of compactly supported bounded functions $f_1$, $f_2$ there is a sparse family of cubes $\mathfrak{S}$ such that
\Be\label{eqn:sparsevar}
\int_{\R^d} V_rA f_1(x) f_2(x) \ud x\le C \sum_{Q \in \mathfrak{S}}|Q| \langle f_1 \rangle_{Q,p} \langle f_2 \rangle_{Q, q'},
\Ee 
where $\frac{1}{q}+\frac{1}{q'}=1$.
Furthermore, the $(1/p,1/q)$ range 
is sharp up to endpoints in the sense that no such result can hold if $(1/p,1/q)$ does not lie in the closure of $\fP_d(r)$, or $\fQ_2(r)$, respectively.
\end{theorem}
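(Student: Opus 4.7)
The plan is to adapt the recursive sparse domination scheme of Lacey \cite{LaceySpherical}, developed for the spherical maximal function, to the variation setting. Three ingredients are crucial. The single-scale estimate
\[
\|V_r^{[R,2R]}Af\|_{L^q(\R^d)}\leq C\,R^{d/q-d/p}\,\|f\|_{L^p(\R^d)}, \qquad R>0,
\]
follows from Theorems~\ref{dge3thm}--\ref{d=2thm} by parabolic scaling, once $(1/p,1/q)$ lies in the interior of the relevant region. Finite propagation asserts that $V_r^{[R,2R]}Af(x)$ depends only on $f$ restricted to the annulus $\{y:|y-x|\in[R,2R]\}$, so that at scale $R\sim\ell(Q)$ the operator is controlled by $f\chi_{CQ}$ for a dilation constant $C$. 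Finally, the subadditivity $V_r^{[a,c]}\leq V_r^{[a,b]}+V_r^{[b,c]}$ permits a dyadic decomposition of the variation in the time parameter.

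\textbf{Sketch of the recursion.} Fix compactly supported bounded $f_1,f_2$ and choose a dyadic cube $Q_0$ containing $\supp f_1\cup\supp f_2$. By finite propagation, $V_rAf_1(x)=V_r^{(0,C_0\ell(Q_0)]}Af_1(x)$ for $x\in\supp f_2$. We build $\fS$ inductively with root $Q_0$. At an active cube $Q\in\fS$, the principal contribution comes from the intermediate scales $t\in[c\ell(Q),C\ell(Q)]$: H\"older's inequality combined with the single-scale bound applied to $f_1\chi_{CQ}$ yields
\[
\int_{Q}V_r^{[c\ell(Q),\,C\ell(Q)]}Af_1\cdot f_2\leq C\,|Q|\,\langle f_1\rangle_{CQ,p}\,\langle f_2\rangle_{Q,q'},
\]
which is precisely the sparse term associated to $Q$. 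The remaining small-scale part $V_r^{(0,c\ell(Q))}Af_1$ is handled by a Calder\'on--Zygmund stopping time: let $E_Q\subset Q$ be the set where either $M_pf_1>A\langle f_1\rangle_{CQ,p}$ or $M_{q'}f_2>A\langle f_2\rangle_{Q,q'}$, with $A$ large enough that $|E_Q|\leq|Q|/2$; extract the maximal dyadic subcubes $\{Q_j\}$ covering $E_Q$, add them to $\fS$ as the children of $Q$, and iterate. By the subadditivity ingredient, the leftover variation on each $Q_j$ is absorbed into principal contributions at deeper levels of the recursion. Summing the sparse estimates over $\fS$ produces \eqref{eqn:sparsevar}.

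\textbf{Sharpness.} For the converse direction, we test \eqref{eqn:sparsevar} against $f_1=\chi_{E_1}$, $f_2=\chi_{E_2}$ with $E_1,E_2$ supported in a common ball $B$; only cubes comparable to $B$ contribute substantially to the right-hand side, and the resulting inequality rescales to a restricted weak-type estimate $V_r^IA:L^{p,1}\to L^{q,\infty}$. Such an estimate fails whenever $(1/p,1/q)$ lies outside the closure of $\fP_d(r)$ or $\fQ_2(r)$, by the sharpness portions of Theorems~\ref{dge3thm}--\ref{d=2thm}.

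\textbf{Main obstacle.} The main technical subtlety is reconciling the non-local nature of the $r$-variation in the time variable with the scale-by-scale structure of the sparse family; the subadditivity ingredient is the key tool, and its deployment at every iteration requires care to ensure the decomposition matches the sparse hierarchy. The convergence of the recursion relies on the fact that the $V_r$-variation of $t\mapsto A_tf_1(x)$ on short intervals vanishes in a uniform sense for bounded $f_1$, which in turn is underwritten by the L\'epingle-type hypothesis $r>2$.
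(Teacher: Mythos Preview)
Your approach and the paper's diverge substantially. The paper does not run a recursion by hand; it invokes a general sparse-domination theorem for dilation families of convolution operators from \cite{BRS-sparse} (recorded here as Theorem~\ref{thm:Vu}) and then verifies its three hypotheses: the local bound $V_r^IA:L^p\to L^q$, a frequency-localized decay estimate $\|V_r^IA f\|_q\lesssim\lambda^{-\varepsilon}\|f\|_p$ for $\widehat f$ supported where $|\xi|\sim\lambda$ (both obtained from Propositions~\ref{prop:bounds with 1/r} and~\ref{prop:bounds 1/r d=2}), and---crucially---the \emph{global} bounds $V_rA:L^p\to L^{p,\infty}$ and $V_rA:L^{q,1}\to L^q$ taken from \cite{JSW}.

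Your sketch has a genuine gap, namely the absence of any global input on $V_rA$. After you peel off the principal term $V_r^{[c\ell(Q),C\ell(Q)]}Af_1$ at a cube $Q$, the remainder $V_r^{(0,c\ell(Q))}Af_1$ must still be controlled on the good set $Q\setminus E_Q$ and on each child $Q_j$. On the good set it involves variation over \emph{all} scales below $c\ell(Q)$, which no single-scale $L^p\to L^q$ estimate covers; you say nothing about this contribution. On a child $Q_j$ (which may be arbitrarily small relative to $Q$ in a stopping-time construction) the iterate only recaptures $V_r^{(0,C\ell(Q_j)]}$, leaving an intermediate range $[C\ell(Q_j),c\ell(Q))$ spanning many dyadic scales; repeated use of subadditivity here produces an $\ell^1$-sum $\sum_k V_r^{I_k}$ rather than an $\ell^r$-sum, and that $\ell^1$-sum is not bounded on any $L^p$. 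In the black-box theorem these difficulties are precisely what the global hypothesis \eqref{eqn:pp-qq-var} is for. Your final paragraph gestures toward L\'epingle, which is indeed behind the restriction $r>2$, but it enters through the $L^p$-boundedness of $V_rA$ established in \cite{JSW}, not through any ``short-interval vanishing'' mechanism.

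For sharpness the paper argues essentially as you do (see \S\ref{sparse-sharp}); the point you should make explicit is that one takes the supports of $f_1$ and $f_2$ at unit distance from one another, which forces every contributing sparse cube to have sidelength $\gtrsim 1$ and collapses the right-hand side to a single term comparable to $\|f_1\|_p\|f_2\|_{q'}$.
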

Theorem \ref{cor:sparse}  can be obtained  as an immediate consequence of a (more general) sparse domination result in \cite{BRS-sparse}, together with the $L^p$ results in \cite{JSW} and  Theorems \ref{dge3thm} and
\ref{d=2thm}; see \S \ref{sparse-sharp} and  \S\ref{sparse-section}.
Sparse domination is known to imply as a corollary a number of weighted inequalities in the context of Muckenhoupt and reverse Hölder classes. We refer the interested reader to \cite{BFP} for the weighted consequences for $V_rA$ of Theorem \ref{cor:sparse}.

\subsection*{Overview of the argument}  
Our positive results can be classified in the following 4 groups of estimates:
\begin{itemize}
    \item The $L^p \to L^q$ bounds for $V_r^I$ in the interior of the regions in Theorems \ref{dge3thm}, \ref{thm:intermediate r}, \ref{thm:small r} and \ref{d=2thm}. These can all be obtained from a single scale frequency analysis. More precisely, for a triple $(p,q,r)$, one obtains bounds which decay geometrically with respect to the frequency scale. By a Besov space embedding, these can be obtained from suitable space-time $L^p \to L^q(L^r)$ bounds for the spherical averages $A_t$, which in turn follow from interpolation (and a localization argument) among the basic estimates $L^1 \to L^\infty(L^1), L^2 \to L^2(L^2)$, $L^\infty \to L^\infty(L^\infty)$ and $L^1 \to L^1(L^1)$ together with key local smoothing estimates, such as the Guth--Wang--Zhang \cite{GWZ} result for $d=2$, or a Stein--Tomas type estimate for $d \geq 3$.
    \item The Lebesgue space bounds for $V_r^I$ at certain boundaries of the regions in Theorems \ref{dge3thm}, \ref{thm:intermediate r}, \ref{thm:small r} and \ref{d=2thm}. Except for $q=rd$ and the segment $[P(r),Q_4]$ in Theorem \ref{dge3thm}, the remaining claimed estimates along the boundary can be obtained by combining the single scale frequency bounds from the previous item and Bourgain's interpolation argument.
    \item The $L^p \to L^q$ bounds for $V_r^I$ at the boundary segment with $q=rd$ in Theorems \ref{dge3thm}, \ref{thm:intermediate r}, \ref{thm:small r}. These estimates are harder to obtain and require a more delicate analysis. In contrast to the previous cases, we use a multi-scale frequency analysis and the Fefferman--Stein sharp maximal function. Effective bounds follow from exploiting cancellation, the local nature of the estimates, and a careful real interpolation of certain localised pieces. This also subsumes the boundary segment $[P(r),Q_4]$ in Theorem \ref{dge3thm} and Corollary \ref{cor:lower boundary edge}.
    \item The restricted weak type estimate for $V_{p/d}$ in Theorem \ref{endptvarthm}. Again, this estimate is of a harder nature. In view of the better bounds satisfied by the \textit{long variation} operator, which were proven in \cite{JSW}, it suffices to prove the endpoint bound for the so-called \textit{short} variation operator, which corresponds to the $\ell^r(\mathbb{Z})$-norm of the map $k \to V_r^{I_k} A$, where $I_k=[2^k,2^{k+1}]$. We perform a single  scale in frequency but a multi-scale in time analysis, for which we combine the previous techniques. In particular, we use the Fefferman--Stein maximal function to deal with multi-scale time sums and prove estimates for single frequency pieces of the short variation operator. These can then be combined with Bourgain's interpolation argument to obtain the desired endpoint result.
\end{itemize}

\subsection*{Structure of the paper}  We start gathering some well known facts about spherical averages and function spaces in \S \ref{sec:preliminaries}. In \S \ref{sec:sharpness} we provide the examples showing the necessary conditions for our theorems. In \S \ref{freq-loc-section} we exploit the single frequency analysis to deduce the claimed bounds in the interior of the regions, as well as some restricted weak and strong type endpoints, in Theorems \ref{dge3thm}, \ref{thm:intermediate r}, \ref{thm:small r} and \ref{d=2thm}. The proof of the harder off-diagonal strong type boundary results in those theorems, and therefore Corollary \ref{cor:lower boundary edge}, is provided in \S\S \ref{maxopsect}-\ref{sec:strong type endpoint}. In \S \ref{sec:endpoint global} we prove the restricted weak type inequality for the global operator in Theorem \ref{endptvarthm}. Finally, the sparse domination result is 
discussed in \S \ref{sparse-section}.

\subsection*{Acknowledgements} We are indebted to Shaoming Guo for useful contributions at various stages of the project.
Some initial work on this  project was done  during the 
workshop {``Sparse domination of singular integral operators''} in October 2017, attended by four of the authors. We would like to thank the American Institute of Mathematics for hosting the
workshop, as well as the organizers Amalia Culiuc, Francesco Di Plinio and Yumeng Ou.  
D.B. was partially supported by the NSF grant DMS-1954479.  L.R. was partially supported by the Basque Government through the BERC 2018-2021 program, by the Spanish Ministry of Economy and Competitiveness: BCAM Severo Ochoa excellence accreditation SEV-2017-2018 and through project MTM2017-82160-C2-1-P, by the project RYC2018-025477-I, and by Ikerbasque. A.S. was partially supported by NSF grant DMS-1764295 and by a Simons fellowship. B.S.  was partially supported by NSF grant DMS-1653264.

\section{Preliminaries}\label{sec:preliminaries}

It will be convenient to consider the $t$-parameter as a variable. To this end, let $\chi\in C^\infty_c (\bbR)$  so that $\chi(t)=1$ for $t$ in a neighborhood of $[1,2]$ and supported in  $[1/2,4]$, and  define 
\begin{equation}\label{cA def}
\cA f(x,t) := \chi(t) A_t f(x). 
\end{equation}
In view of future frequency decompositions, let $\beta_0\in C^\infty_c(\bbR)$ so that $\beta_0(s)=1$ for $|s|<1/2$ and $\beta_0(s)=0$ for $|s|>1$. For every integer $j \geq 1$, set 
%For $j\ge 1$ let  
\Be\label{defofbetaj} 
\notag\beta_j(s)= \beta_0(2^{-j} s)-\beta_0(2^{1-j}s).
\Ee%, \quad j=1,2,\dots\Ee
For functions $g$ on $ \bbR$, and $l \in \N_0$, define the operators  $\Lambda_l$ by 
\begin{equation}\label{eq:freq Lambda}
\widehat {\Lambda_l g}(\tau)=\beta_l(\tau)\widehat g(\tau).
\end{equation}
For functions $f$ on $\R^d$, and $j \in \N_0$, define the operators $L_j$ by
\begin{equation}\label{eq:freq Lj}
\widehat{L_j f} (\xi)=\beta_j(|\xi|) \widehat{f}(\xi),
\end{equation}
and let $\widetilde{L}_j$ be a modification of $L_j$ satisfying $\widetilde{L}_j L_j=L_j$. 

 \subsection{$V_r$ and related function spaces} 
 It will be convenient to work with the Besov space $B^{1/r}_{r,1}$. 
 The Besov spaces $B^s_{p,q}(\bbR)$ can be defined using the dyadic frequency decompositions $\{\Lambda_l\}_{l=0}^\infty$ on the real line 
and we have  $\|u\|_{B^s_{p,q}} = (\sum_{l=0}^\infty [2^{ls}\|\Lambda_l u\|_{L^p}]^q)^{1/q}$.  From  the Plancherel--P\'olya inequality we know the embedding
\Be\label{BPe} B^{1/r}_{r,1}\hookrightarrow V_r \hookrightarrow B^{1/r}_{r,\infty}, \quad \Ee
see
\cite[Ch.1]{Triebel1983}. One can also consult the paper by Bergh and Peetre  \cite{BerghPeetre} (who however work with a different type  of variation space when $r=1$) or refer to \cite[Proposition 2.2]{GuoRoosYung}.
 Thus an inequality for the variation operator  $V_r^I\cA$   follows if we can control the $B^{1/r}_{r,1}$ norm of $t\mapsto \cA f(x,t)$.

Note that, by our definition, $V_1(\R)$ coincides with the space of bounded functions of bounded variations.
The fundamental theorem of calculus implies 
\begin{equation}\label{eq:V1 BV}
\| V_1^E A \|_{L^p \to L^q} \leq \| \partial_t \mathcal{A} \|_{L^p \to L^q(L^1(E))},
\end{equation}
so we shall focus on obtaining bounds for the right-hand side when studying $V_1^E A$. 

\subsection{Frequency decomposition in space}

Given $j \geq 0$, write \Be\label{eqn:Kjtdef} A_t L_j f=K_{j,t}*f,\Ee
where $L_j$ is as in \eqref{eq:freq Lj}, so that $\widehat {K_{j,t}}(\xi)  =\widehat  \sigma(t\xi) \beta_j(|\xi|)$. Note that $K_{j,t}$ is a Schwartz convolution kernel and therefore we restrict our attention to the case $j \geq 1$.

An immediate computation yields the following pointwise estimates for the convolution kernel.

\begin{lemma} For all $N\in \N_0$, there exists a constant $C_N>0$ such that
\begin{equation}\label{eq:trivial kernel}
    |\partial_t^{\varsigma} K_{j,t}(x)| \lesssim_{\varsigma} C_N 2^{j \varsigma} \frac{2^j}{(1+2^j \big| |x| - t \big|)^N}
    \end{equation}
holds for all $x \in \R^d$, all $t>0$ and all $\varsigma \in \N_0$. Consequently, 
\Be
\label{kernel-error}
|K_{j,t}(x)|\lesssim_N (2^j|x|)^{-N}\qquad \text{ if} \quad |x|\ge 10, \quad t\in [1/2,4].
\Ee
\end{lemma}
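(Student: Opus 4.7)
The plan is to use the Fourier representation
\[
K_{j,t}(x) = c_d\int e^{i x\cdot\xi}\,\widehat{\sigma}(t\xi)\,\beta_j(|\xi|)\,d\xi
\]
together with the classical asymptotic expansion of the Fourier transform of surface measure: for $|\eta|\geq 1$,
\[
\widehat{\sigma}(\eta) = \sum_{\pm} a_\pm(|\eta|)\,e^{\pm i|\eta|} + R(\eta),
\]
where $a_\pm$ is a classical symbol of order $-(d-1)/2$ and $R$ is Schwartz. On the support of $\beta_j$ one has $|\xi|\sim 2^j\geq 1$ and $t\sim 1$, so the $R$-contribution is trivially controlled; I would focus on the two principal pieces.

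Next I would pass to polar coordinates $\xi = \rho\omega$ and perform the angular integration. The integral $F(s) := \int_{S^{d-1}} e^{is\hat x\cdot\omega}\,d\omega$ is smooth in $s$, bounded on compact sets, and admits the classical asymptotic $F(s) = \sum_{\pm'} b_{\pm'}(s)\, e^{\pm' is}$ for large $s$, with $b_{\pm'}$ a symbol of order $-(d-1)/2$. Evaluating at $s=\rho|x|$ and combining with the asymptotic above, the problem reduces (modulo rapidly decaying errors) to one-dimensional oscillatory integrals
\[
J_{\pm,\pm'}(x,t) = \int_0^\infty \beta_j(\rho)\,c_{\pm,\pm'}(\rho,t,|x|)\, e^{i\rho(\pm t\pm'|x|)}\,d\rho,
\]
whose amplitude $c_{\pm,\pm'}$ is smooth in $\rho$ with $\rho$-derivatives of order $k$ bounded by $\sim 2^{-jk}$. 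In the small-$\rho|x|$ regime this gain comes from the factor $|x|^k\leq 2^{-jk}$ hitting $F(\rho|x|)$; in the large-$\rho|x|$ regime it comes from differentiating classical symbols evaluated at $\rho\sim 2^j$.

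The desired bound then follows from standard non-stationary phase: $N$ integration by parts in $\rho$ yield the factor $(1+2^j|\pm t\pm'|x||)^{-N}$, while the $\rho$-support of $\beta_j$ has length $\sim 2^j$. The same-sign combinations give phases $t+|x|\gtrsim 1$ and hence rapid decay independent of $|x|-t$; the opposite-sign combinations produce the claimed $(1+2^j||x|-t|)^{-N}$ factor. Each $t$-derivative on $\widehat{\sigma}(t\xi)$ brings down $|\xi|\sim 2^j$, which accounts precisely for the loss $2^{j\varsigma}$. Finally, the consequence \eqref{kernel-error} is immediate: for $|x|\geq 10$ and $t\in[1/2,4]$ one has $||x|-t|\geq |x|/2$, so applying the main estimate with $N+1$ in place of $N$ gives $|K_{j,t}(x)|\lesssim 2^j(1+2^j|x|/2)^{-N-1}\lesssim (2^j|x|)^{-N}$.

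I do not anticipate any serious obstacle; the lemma is a routine oscillatory-integral computation embodying the heuristic that $K_{j,t}$ is a smoothed surface measure on the sphere $\{|y|=t\}$ of thickness $\sim 2^{-j}$. The only mild subtlety is that the decay in $2^j||x|-t|$ arises either from interference of the two stationary-phase contributions in the large-$\rho|x|$ regime, or directly from the $e^{\pm it\rho}$ oscillation alone (treated via plentiful IBP) when $\rho|x|\lesssim 1$.
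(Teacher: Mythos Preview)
Your approach is correct and is precisely the route the paper alludes to: the paper omits a proof, calling the lemma ``an immediate computation'', and later remarks (just after \eqref{Tjs-split}) that the kernel estimate ``could also be obtained through integration by parts in \eqref{Tjs} using the above representation.'' That is exactly what you do.

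The phrase ``immediate computation'' most likely refers to the simpler physical-side argument: write $K_{j,t}=\sigma_t*\phi_j$ with $\phi_j=\cF^{-1}[\beta_j(|\cdot|)]=2^{jd}\phi(2^j\cdot)$ for a fixed Schwartz function $\phi$, so that
\[
|K_{j,t}(x)|\lesssim 2^{jd}\int_{S^{d-1}}(1+2^j|x-ty'|)^{-M}\,d\sigma(y').
\]
Since $|x-ty'|\ge\big||x|-t\big|$ for all $y'\in S^{d-1}$, one extracts $(1+2^j||x|-t|)^{-N}$ and estimates the remaining integral by $\lesssim 2^{-j(d-1)}$ (the sphere $\{x-ty':y'\in S^{d-1}\}$ meets any ball of radius $2^{-j}s$ in a cap of measure $\lesssim(2^{-j}s)^{d-1}$). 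This avoids the double use of stationary phase and makes the factor $2^j$ transparent. One minor imprecision in your outline: the amplitude $c_{\pm,\pm'}$ carries the Jacobian $\rho^{d-1}$ and is of size $|x|^{-(d-1)/2}$ rather than $O(1)$, so the $k$th $\rho$-derivative is $\lesssim|x|^{-(d-1)/2}2^{-jk}$; this is harmless since in the regime $|x|\ll1$ one has $||x|-t|\sim t$ and extra integrations by parts absorb the loss.
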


In analogy to the definition of $\cA$ in \eqref{cA def}, define 
\begin{equation*}
 \cA_j f(x,t) := \chi(t) A_t L_j f(x)=\chi(t) K_{j,t}* f(x).
\end{equation*}
We gather some estimates for $\cA_j$ when the inequalities involve $L^1$ or $L^\infty$ spaces.

First, from the trivial fact that $\| A_t f \|_{L^\infty} \lesssim \| f \|_{L^\infty}$ uniformly in $t \in \R$, one immediately has
\Be\label{maxinfty}
\| \mathcal{A}_j f\|_{L^{\infty}(L^\infty)} \lesssim \| f\|_{L^\infty}.
\Ee
Moreover, one has the following estimates for $L^1$ functions.
\begin{lemma} \label{V1lemma} For $1\le q\le\infty$,
\begin{equation*}
    \| \cA_j f\|_{L^q(L^1)} + 2^{-j} \|\partial_t \cA_j f\|_{L^q(L^1)}  \lc  \|f\|_{L^1}.
\end{equation*} 
\end{lemma}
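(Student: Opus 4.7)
The plan is to dominate $|\cA_j f(x,\cdot)|$ pointwise (in $t$) by a convolution in $x$ whose kernel has uniformly bounded $L^r$-norm for all $r \in [1,\infty]$. Since $\chi$ is bounded and supported in $[1/2, 4]$,
\[
\int_{\bbR} |\cA_j f(x,t)|\, dt \;\leq\; C \int_{1/2}^{4} (|K_{j,t}| * |f|)(x)\, dt \;=\; (G_j * |f|)(x),
\]
where
\[
G_j(z) \;:=\; \int_{1/2}^{4} |K_{j,t}(z)|\, dt.
\]
The crucial claim is that $\|G_j\|_r \lesssim 1$ uniformly in $j$ for every $r \in [1,\infty]$.

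To see this, I would use the kernel estimate \eqref{eq:trivial kernel} with $\varsigma = 0$ together with \eqref{kernel-error}. For $|z| \in [1/4, 10]$, the change of variables $u = 2^j(|z|-t)$ in the integral $\int_{1/2}^4 2^j(1+2^j||z|-t|)^{-N}\,dt$ produces the $j$-uniform bound $G_j(z) \lesssim 1$. For $|z| \geq 10$ one has $G_j(z) \lesssim (2^j|z|)^{-N}$ from \eqref{kernel-error}, and for $|z| < 1/4$ the lower bound $||z|-t| \geq 1/4$ gives $G_j(z) \lesssim 2^{-jM}$ for arbitrary $M$. Combining these cases, $G_j$ is dominated by a fixed Schwartz-type function independent of $j$, so $\|G_j\|_r \lesssim 1$ for all $r \in [1,\infty]$. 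Young's convolution inequality with exponents $(1,q,q)$ then yields
\[
\|\cA_j f\|_{L^q_x(L^1_t)} \;\leq\; \|G_j\|_q \|f\|_1 \;\lesssim\; \|f\|_1.
\]

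For the derivative term I would split $\partial_t \cA_j f = \chi'(t) K_{j,t} * f + \chi(t) (\partial_t K_{j,t}) * f$. The first summand is estimated exactly as above, with no factor of $2^j$. For the second, the analogue $\widetilde G_j(z) := \int_{1/2}^4 |\partial_t K_{j,t}(z)|\, dt$ satisfies the same decay as $G_j$ but with an extra prefactor $2^j$ by \eqref{eq:trivial kernel} with $\varsigma = 1$; thus $\|\widetilde G_j\|_q \lesssim 2^j$, giving $\|\partial_t \cA_j f\|_{L^q(L^1)} \lesssim 2^j \|f\|_1$, which is absorbed by the $2^{-j}$ prefactor in the lemma.

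I do not anticipate any essential obstacle. The main conceptual point is that although $\|K_{j,t}\|_\infty \sim 2^j$ individually, so that the naive Minkowski bound $\|\cA_j f\|_{L^q(L^1)} \leq \int \|K_{j,t}*f\|_q\,dt$ blows up with $j$ for $q>1$, integrating $|K_{j,t}|$ in $t$ over a fixed compact interval averages the peak (of height $\sim 2^j$ on a set of width $\sim 2^{-j}$) down to an $O(1)$ bound, making the mixed-norm kernel $G_j$ frequency-uniform in every $L^r$.
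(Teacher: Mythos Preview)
Your proposal is correct and follows essentially the same route as the paper: both arguments use the pointwise kernel bound \eqref{eq:trivial kernel}, integrate in $t$ over $[1/2,4]$ to produce a $j$-uniform majorant, and then apply Young's inequality. The only cosmetic differences are that the paper treats $\cA_j f$ and $2^{-j}\partial_t\cA_j f$ simultaneously via a single pointwise bound (your display \eqref{pointwisekernelest}) rather than splitting with the product rule, and it verifies the $q=1$ and $q=\infty$ endpoints separately before interpolating, whereas you show $\|G_j\|_r\lesssim 1$ for all $r$ in one stroke.
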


\begin{proof} By \eqref{eq:trivial kernel} one has 
\begin{align}\label{pointwisekernelest}
\big| \cA_j f(x,t)\big| + 2^{-j} 
    \big|\partial_t \cA_j f(x,t)\big| 
    \lc   \int_{\R^d}|f(y)| \frac{2^j}{ (1+2^j| |x-y|- t|)^{N}}\,\ud y
\end{align}
for all $N \in \N_0$. Integrating in $t$ over the support of $\chi$ one sees that, for fixed $x$,
\begin{align*} 
&\int_{1/2}^4\big|\cA_j f(x,t)\big| \ud t + 2^{-j}
\int_{1/2}^4\big|\partial_t \cA_j f(x,t)\big| \ud t
\\ &\lc   \int_{\R^d} |f(y)| \int_{1/2}^4  \frac {2^j}{ (1+2^j| |x-y|- t|)^{N} }\ud t\,\ud y
\,\lc  \|f\|_{L^1}.  
\end{align*}
This gives the assertion for $q=\infty$. 

For $q=1$, the result follows from integrating in $x$ instead, using the decay in \eqref{pointwisekernelest} and taking into account that the integration in $t$ is over $[1/2,4]$.

The remaining cases $1 < q < \infty$ follow from combining the above through Young's convolution inequality.
\end{proof} 

\begin{corollary}\label{cor:1inftyr}
For $1\le r\le \infty,$
\[\|\cA_j f\|_{L^\infty(L^r)} \lc 2^{j(1-\frac 1r)} \|f\|_{L^1}.\]
\end{corollary}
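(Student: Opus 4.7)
The plan is to obtain the bound by combining two endpoint estimates in $t$ and applying a pointwise (in $x$) H\"older inequality in the $t$-variable.

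For the first endpoint, Lemma~\ref{V1lemma} taken with $q=\infty$ provides the uniform-in-$x$ bound
\[
\sup_x \|\cA_j f(x,\cdot)\|_{L^1_t} \lc \|f\|_1.
\]
For the other endpoint I would use that the pointwise estimate \eqref{eq:trivial kernel} (with $\varsigma=0$) implies $\|K_{j,t}\|_{L^\infty(\R^d)} \lc 2^j$ uniformly in $t \in [1/2,4]$; since $\cA_j f(x,t) = \chi(t)(K_{j,t}\ast f)(x)$, Young's convolution inequality then gives
\[
\sup_x \|\cA_j f(x,\cdot)\|_{L^\infty_t} \lc 2^j \|f\|_1.
\]

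To finish, at each fixed $x$ I would apply the elementary inequality $\|g\|_{L^r} \le \|g\|_{L^1}^{1/r}\|g\|_{L^\infty}^{1-1/r}$ (immediate from $|g|^r = |g|^{r-1}|g|$) to $g(t) = \cA_j f(x,t)$, and then take the supremum over $x$. This yields
\[
\|\cA_j f\|_{L^\infty_x(L^r_t)} \lc \|f\|_1^{1/r} (2^j \|f\|_1)^{1-1/r} = 2^{j(1-\frac 1r)}\|f\|_1,
\]
which is the desired estimate. There is no serious obstacle; the corollary is essentially a packaging of Lemma~\ref{V1lemma} together with the trivial $L^1 \to L^\infty$ convolution bound coming from the pointwise size of $K_{j,t}$.
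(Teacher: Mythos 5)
Your proposal is correct and follows essentially the same route as the paper: the paper also interpolates between the $L^\infty(L^\infty)$ bound with constant $2^j$ (from \eqref{eq:trivial kernel}) and Lemma~\ref{V1lemma} with $q=\infty$. You merely make the interpolation explicit via the pointwise log-convexity inequality $\|g\|_{L^r}\le \|g\|_{L^1}^{1/r}\|g\|_{L^\infty}^{1-1/r}$, which is a fine way to carry it out.
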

\begin{proof}
Interpolate between
$$\| \cA_j \|_{L^\infty(L^\infty)} \lesssim 2^j \| f \|_{L^1},$$
which follows from \eqref{eq:trivial kernel}, and Lemma \ref{V1lemma} with $q=\infty$.
\end{proof}

\subsection{Oscillatory integral representation} Given $m \in \R$, let $S^m(\R^d)$ denote the class of all functions $a \in C^\infty (\R^d)$ satisfying
$$
|\partial^\alpha a_{}(\xi)| \lesssim_\alpha (1+|\xi|)^{m - |\alpha|}
$$
for all multiindex $\alpha \in \N_0^d$ and all $\xi \in \R^d$. Given $a \in S^m(\R)$, define
\Be 
\label{Tjs} T_j^{\pm}[a,f](x,t) =  \int_{\R^d} \beta_j ( |\xi|) a(t|\xi|) e^{i\inn{x}{\xi} \pm it|\xi|} \widehat f(\xi)\ud\xi.
\Ee
It is well known that the Fourier transform of the spherical measure is
\begin{equation*}
    \widehat{\sigma}(\xi)= (2 \pi)^{d/2} |\xi|^{-(d-2)/2} J_{\frac{d-2}{2}}( |\xi|)=  b_0(|\xi|)+ \sum_{\pm} b_{\pm}(|\xi|) e^{ \pm  i |\xi|},
\end{equation*}
where $b_0  \in C^\infty_c(\R)$ is  supported in $\{|\xi| \leq 1\}$ and $b_\pm \in S^{-(d-1)/2}(\R)$ are supported in $\{ |\xi| \geq 1/2\}$ (\textit{c.f.} \cite[Chapter VIII]{bigStein}). Thus one can write
%We recall some basic properties of the operators $\cA_j$. Stationary phase calculations show that 
\Be\label{Tjs-split}
\cA_j f(x,t)= 2^{-j(d-1)/2} (2\pi)^{-d} \sum_\pm T_j^\pm [a_{\pm},f] (x,t) \chi(t)
\Ee
where $a_{\pm} \in S^0(\R)$. We note  that the kernel estimate \eqref{eq:trivial kernel} could also be obtained through integration by parts in \eqref{Tjs} using the above representation.
It is clear from the expression of $T_j^\pm$ that
$$
\partial_t \big( T_j^\pm [a,f](x,t) \chi(t) \big) = T_j^\pm[a,f](x,t) \chi'(t)  +  T_j^\pm [\widetilde{a}, f](x,t) \chi(t)
$$
where $\widetilde{a}(\xi)= a'(t|\xi|)|\xi|  \pm i |\xi| a(\xi)$. This and Plancherel's theorem yield
\begin{equation}\label{eq:L2}
    \| \mathcal{A}_j f \|_{L^2(L^2)} \lesssim 2^{-j(d-1)/2} \| f \|_{L^2}, \qquad \| \partial_t \mathcal{A}_j f \|_{L^2(L^2)} \lesssim 2^{-j(d-3)/2} \| f \|_{L^2}.
\end{equation}

\subsection{A Stein--Tomas estimate}

In \cite{JSW}, in order to obtain $L^p$ bounds for the global $V_rA$, the estimate 
\begin{equation}\label{eq:p-p ST}
\Big\| \Big( \int_1^2 |e^{i t \sqrt{-\Delta}} L_j f |^2 \ud t \Big)^{1/2} \Big\|_{L^p} \lesssim 2^{j ( d(\frac{1}{2} - \frac{1}{p}) - \frac{1}{2} + \varepsilon )} \| f \|_{L^p}
\end{equation}
with $\eps>0$ 
is used  for $\frac{2(d+1)}{d-1} \leq  p < \infty$ if $d \geq 3$; it holds 
for  $4 <p< \infty$ if $d=2$.
This statement is closely related to estimates 
for Stein's square-function generated by Bochner--Riesz multipliers 
in \cite{Carbery1983}, \cite{Christ} and \cite{Seeger-crelle1986}, and the connection is given by the theorem of Kaneko and Sunouchi \cite{KanekoSunouchi}. See also  \cite{LeeRogersSeeger2014} for endpoint bounds and historical remarks, and \cite{LRSw},  \cite{Lee-Sanghyuk2018} for  recent  work on Stein's square function.
The  Stein--Tomas $L^2$ Fourier restriction theorem together with a localization result (\cf. Lemma \ref{localization-lemma} below) yields an analogue of \eqref{eq:p-p ST} with $\eps=0$ for $p \geq  \frac{2(d+1)}{d-1}$.  The method is well known \cite{FeBR} but we include the statement with a proof for  completeness.
\begin{lemma}\label{stein-squarefct}
Let $\tfrac{2(d+1)}{d-1}  \le q \le\infty$. Then for all $j\geq 0$,
\begin{align*}
\|\cA_j f\|_{L^q(L^2)}\lesssim  2^{-jd/q} \|f\|_{L^2}. 
\end{align*}
\end{lemma}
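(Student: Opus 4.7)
The plan is to prove the estimate at the two endpoints $q_0 := 2(d+1)/(d-1)$ and $q = \infty$ and interpolate via H\"older's inequality in $x$ applied to the $L^2_t$-valued map $\cA_j f$. The oscillatory representation \eqref{Tjs-split} reduces matters to showing $\|T_j^\pm[a,f]\chi\|_{L^q_x(L^2_t)} \lesssim 2^{j((d-1)/2 - d/q)}\|f\|_2$ for $a \in S^0$. Writing $\xi = r\omega$ in polar coordinates,
\[
T_j^\pm[a,f](x,t) = \int_0^\infty e^{\pm itr} r^{d-1}\beta_j(r) a(tr) F(r,x)\, dr, \qquad F(r,x) := \int_{S^{d-1}} \widehat f(r\omega) e^{ir\omega\cdot x}\, d\sigma(\omega).
\]
For fixed $x$, the $t$-Fourier transform of $\chi(t)a(tr)e^{\pm itr}$ is Schwartz-concentrated near $\tau = \mp r$ (by integration by parts in $t$, using that $a \in S^0$ and $t \sim 1$ on $\supp\chi$), so Plancherel in $t$ combined with a standard Young-type estimate yields
\[
\|T_j^\pm[a,f](x,\cdot)\chi\|_{L^2_t}^2 \,\lesssim\, \int_{r \sim 2^j} r^{2(d-1)}|F(r,x)|^2\, dr.
\]

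For the endpoint $q = \infty$, I would apply the trivial Cauchy--Schwarz bound $|F(r,x)| \lesssim \|\widehat f(r\omega)\|_{L^2_\omega(S^{d-1})}$; inserting into the display above, taking $\sup_x$, and using the polar formula $\|f\|_2^2 \sim \int r^{d-1}\|\widehat f(r\omega)\|^2_{L^2(S^{d-1})}\,dr$ together with the prefactor $2^{-j(d-1)/2}$ from \eqref{Tjs-split}, one obtains $\|\cA_j f\|_{L^\infty_x(L^2_t)} \lesssim \|f\|_2$. For the endpoint $q = q_0$ I would replace Cauchy--Schwarz with the $L^2$ Stein--Tomas restriction theorem on $S^{d-1}$; a straightforward scaling from the unit sphere to the sphere of radius $r \sim 2^j$ gives
\[
\|F(r,\cdot)\|_{L^{q_0}_x(\R^d)} \,\lesssim\, r^{-d/q_0}\,\|\widehat f(r\omega)\|_{L^2_\omega(S^{d-1})}.
\]
Since $q_0/2 \ge 1$, Minkowski's inequality lets me pass the $L^{q_0/2}_x$-norm inside $\int dr$, and the polar formula for $\|f\|_2$ then delivers $\|\cA_j f\|_{L^{q_0}_x(L^2_t)} \lesssim 2^{-jd/q_0}\|f\|_2$. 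For intermediate $q \in [q_0, \infty]$, H\"older's inequality in $x$,
\[
\|\cA_j f\|_{L^q_x(L^2_t)} \,\le\, \|\cA_j f\|_{L^{q_0}_x(L^2_t)}^{q_0/q}\,\|\cA_j f\|_{L^\infty_x(L^2_t)}^{1 - q_0/q},
\]
yields the claim.

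The key input is the sharp $L^2$ Stein--Tomas restriction theorem on $S^{d-1}$ together with the correct rescaling to $rS^{d-1}$ producing the factor $r^{-d/q_0}$; the rest (Plancherel and Young in $t$, Minkowski, polar coordinates, interpolation) is routine. Alternatively, as hinted at in the text, one can package the reduction using the localization lemma (Lemma~\ref{localization-lemma}), which directly converts the Stein--Tomas restriction bound on a single sphere into the square-function estimate for frequency-localized functions; the argument above is essentially a self-contained incarnation of that approach.
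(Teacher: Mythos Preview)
Your proposal is correct. The ingredients are the same as in the paper's proof (oscillatory representation, polar coordinates, Stein--Tomas, and an integration-by-parts/Plancherel argument in $t$ to decouple the $r$- and $t$-integrals), but the organization differs: the paper argues by duality, reducing to an $L^{p}(L^2)\to L^2$ bound for the adjoint with $p\le 2(d+1)/(d+3)$, applies Stein--Tomas in restriction form, then uses Minkowski and an $r$-integration by parts that plays the same role as your ``Young-type'' step. Your version instead proves the two endpoints $q_0=\tfrac{2(d+1)}{d-1}$ (via the Stein--Tomas extension bound on $F(r,\cdot)$ with the correct scaling $r^{-d/q_0}$) and $q=\infty$ (via Cauchy--Schwarz on $F$), then interpolates by H\"older in $x$. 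Either route is fine; yours is perhaps slightly more transparent, while the paper's dual formulation covers the whole range $q\ge q_0$ in one stroke.
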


\begin{proof}
We use the oscillatory integral representation in \eqref{Tjs-split} and \eqref{Tjs}. We only discuss the estimate for $T_j^+[a, f](x,t) \chi(t)$ and abbreviate it with $T_j f(x,t)$ (the corresponding estimate for $T_j^-$ is analogous). It then  suffices to show 
\Be\notag  2^{-j(d-1)/2}\|T_j f\|_{L^q(L^2)} \lc 2^{-jd/q}  \|f\|_{L^2}, \qquad \tfrac{2(d+1)}{d-1}\le q\le\infty.\Ee

Let
\[\widetilde T_jg(x,t) = \chi(t) \int_{\R^d}  \beta_j(|\xi|)a(t|\xi|)e^{-it|\xi|} \widehat g(\xi,t) e^{i\inn x\xi} \ud \xi \] and observe that in view of the support of $\chi$ we have $\widetilde T_j g(\cdot,t)=0$ for $t\notin [1/2,4]$. 
By a duality argument, it suffices to show that for  $g  \in L^{p}(L^2)$
the inequality
\begin{equation}\label{dualform}
  \Big\|\int \widetilde T_j g(\cdot,t) \ud t \Big\|_{L^2}\lc 2^{j(\frac dp-\frac d2-\frac 12)} \|g\|_{L^p(L^2)},\qquad 
    1\le p\le \tfrac{2(d+1)}{d+3}
\end{equation}
holds.  By Plancherel's  theorem the square of the left-hand side is equal to
\begin{align*} 
&\int_{\R^d} \Big|\int\chi(t) \beta_j(|\xi|)a(t|\xi|)e^{-it|\xi|} \widehat g(\xi,t)\ud t\Big|^2\ud\xi\\&=
\int_0^\infty \int_{S^{d-1}}\Big| 
\int \chi(t) \beta_j(r)a(tr)e^{-itr} \widehat g(r\theta ,t) \ud t\Big|^2 \ud\theta\, r^{d-1} \ud r. 
\end{align*}
We now apply the Stein--Tomas inequality for the Fourier restriction operator for the sphere (valid for $1 \leq p\le 2(d+1)/(d+3)$), 
and see that the last expression is dominated by a constant times
\begin{align}\notag
    &\int_0^\infty \Big\| 
\int \chi(t) \beta_j(r)a(tr)e^{-itr} r^{-d} g(r^{-1}\cdot,t) \ud t \Big\|_{L^p}^2 r^{d-1} \ud r
\\ \notag
&\lc  \int_0^\infty \Big\| 
\int \chi(t) \beta_j(r)a(tr)e^{-itr}  g(\cdot,t) \ud t \Big\|_{L^p}^2 r^{\frac{2d}{p}-d-1} \ud r
\\
\label{afterST}
&\lc  \ 2^{2j(\frac dp-\frac d2-\frac 12)} \Big\|\Big(\int_0^\infty  \Big|
\int \chi(t) \beta_j(r) a(tr)e^{-itr}  g(\cdot,t) \ud t \Big|^2 \ud r\Big)^{1/2} \Big\|_{L^p}^2 
\end{align}
where in the last inequality we have used Minkowski's integral inequality. Next, observe that
\begin{align*}
&\int_0^\infty \Big|
\int \chi(t) \beta_j(r)a(tr)e^{-itr}  g(x,t) \ud t \Big|^2 \ud r\\
&=\,  \iint\int_0^\infty \chi(t) \chi(t') |\beta_j(r) |^2a(tr)\overline{a(t'r)} e^{i(t'-t)r} \ud r  \, g(x,t) \overline g(x,t') \ud t \ud t'.
\end{align*}
We integrate by parts in $r$ and then estimate the absolute value of the displayed  expression  by a constant times 
\begin{align*}
&\iint\frac{2^j}{(1+2^j|t-t'|)^2} |g(x,t)g(x,t') |\ud t\, \ud t'
\\ &= \int_{-\infty}^\infty\frac{2^j}{(1+2^j|h|)^2} \int|g(x,t)g(x,t+h) |
\ud t \ud h
\lc \int |g(x,t)|^2 \ud t
.\end{align*}
Using this in \eqref{afterST} yields \eqref{dualform} and hence the assertion.
\end{proof}

\subsection{Frequency decompositions in time}
In order to deduce Besov space estimates for $t \mapsto \cA_j f(x,t)$, we also work with a frequency decomposition in the $t$-variable.
We extend the definition of $\Lambda_l$ in \eqref{eq:freq Lambda}  to functions of $x$ and $t$ and apply  that decomposition to the operators $\mathcal{A}_j$ in the $t$-variable.

It is  useful to observe  that dyadic frequency decompositions in the  variable dual to $t$ essentially correspond  in our situation to dyadic frequency decompositions in the  variables dual to $x$. To see this, we show that the terms $\Lambda_l\cA_j$ are mostly negligible when $|j-l|\ge 10$. 
We write \[\La_l\cA_j f(x,t)= 2^{-j(d-1)/2}  (2\pi)^{-(d+1)} \sum_\pm \int_{\R^d} \ka^{\pm}_{j,l}(y,t ) f(x-y) \ud y \]
where, in view of \eqref{Tjs}, one has 
\begin{equation}\label{kajl-kernel}
    \ka^{\pm}_{j,l}(y,t )=%(2\pi)^{-d-1}
    \int_{\R} \int_{\R^d} e^{i\inn y\xi+it\tau}\beta_l(\tau)\beta_j(|\xi|)
    \int \chi(s)\, a_\pm(s\xi) e^{is (\pm|\xi|-\tau)}  \ud s \ud \xi\ud\tau .
\end{equation}

\begin{lemma}\label{errorlemma}
(i) For every $N \in \N_0$, there exists a finite $C_N >0$ such that \Be\label{ptwise-ka}
|\ka_{j,l}^\pm(y,t)|\leq C_N (1+|y|+|t|)^{-N} \min\{ 2^{-jN}, 2^{-lN}\}, \quad |j-l|\ge 10.
\Ee
(ii) Suppose $1\le p,r\le    q\leq \infty$. Then, there exists a finite $C_N(p,q,r)>0$ such that
\[ \|\Lambda_l \cA_j f\|_{L^q(L^r)} \le C_N(p,q,r) \min\{ 2^{-jN}, 2^{-lN}\} \| f \|_{L^p},
\quad |j-l| \geq 10.
\]
\end{lemma}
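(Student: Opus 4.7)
The proof follows from standard non-stationary phase reasoning, the key observation being that on the support of $\beta_j(|\xi|)\beta_l(\tau)$ with $|j-l|\ge 10$, the phase $s(\pm|\xi|-\tau)$ in the $s$-integral of \eqref{kajl-kernel} oscillates at rate $|\pm|\xi|-\tau|\gtrsim 2^{\max(j,l)}$. I would first bound the inner integral
\[
\Psi_\pm(\xi,\tau):=\int\chi(s)\,a_\pm(s|\xi|)\,e^{is(\pm|\xi|-\tau)}\ud s,
\]
interpreting $a_\pm(s\xi)$ in \eqref{kajl-kernel} as $a_\pm(s|\xi|)$, consistent with the definition of $T_j^\pm$ in \eqref{Tjs}. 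Integrating by parts $N$ times in $s$ and using that $a_\pm\in S^0(\R)$ together with $|\xi|\ge 1/2$ on $\supp\beta_j$ to obtain $|\partial_s^k a_\pm(s|\xi|)|\lesssim_k s^{-k}$ (bounded on $\supp\chi$), I obtain $|\Psi_\pm(\xi,\tau)|\lesssim_N 2^{-N\max(j,l)}$; the same argument yields analogous bounds for $\partial_\xi^\alpha\partial_\tau^k\Psi_\pm$ on this support.

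For (i), I write $\ka^\pm_{j,l}(y,t)=\iint e^{i(\inn{y}{\xi}+t\tau)}\beta_j(|\xi|)\beta_l(\tau)\Psi_\pm(\xi,\tau)\ud\xi\ud\tau$ and integrate by parts repeatedly in $\xi$ and $\tau$. Each $\xi$-derivative on $\beta_j(|\xi|)$ costs $2^{-j}$, each $\tau$-derivative on $\beta_l(\tau)$ costs $2^{-l}$, and $\Psi_\pm$-derivatives are controlled by the previous step; combining with the trivial volume bound ($\sim 2^{jd}\cdot 2^l$) yields
\[
|\ka^\pm_{j,l}(y,t)|\lesssim_{N,M} 2^{jd+l}\cdot 2^{-N\max(j,l)}\,(1+2^j|y|)^{-M}\,(1+2^l|t|)^{-M}
\]
for arbitrary $N,M\ge 0$. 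Since $j,l\ge 0$ gives $2^j,2^l\ge 1$, one has $(1+2^j|y|)^{-M}(1+2^l|t|)^{-M}\lesssim (1+|y|+|t|)^{-M}$, and taking $N$ sufficiently large to absorb the polynomial prefactor $2^{jd+l}\le 2^{(d+1)\max(j,l)}$ establishes (i).

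For (ii), note that $\Lambda_l\cA_jf(x,t)$ is a spatial convolution $\int\ka^\pm_{j,l}(y,t)f(x-y)\ud y$ (times $2^{-j(d-1)/2}$). Applying Minkowski's inequality in $y$ (valid since $r\ge 1$) followed by Young's convolution inequality in $x$ with exponents satisfying $1+1/q=1/p+1/u$ (valid since $p\le q$ forces $u\ge 1$),
\[
\|\Lambda_l\cA_jf\|_{L^q_x(L^r_t)}\lesssim \bigl\|\,y\mapsto\|\ka^\pm_{j,l}(y,\cdot)\|_{L^r_t}\,\bigr\|_{L^u_x}\|f\|_p.
\]
From (i) with $N$ taken large enough to ensure integrability of $(1+|y|+|t|)^{-N}$ in both $t$ (requires $N>1/r$) and $y$ (after the $t$-integration, requires $N-1/r>d/u$), the right-hand factor is bounded by $C_{N'}\min\{2^{-jN'},2^{-lN'}\}$ for any $N'$. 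The main point of care is ensuring that the $s$-derivatives on $a_\pm(s|\xi|)$ implicit through the chain rule stay uniformly bounded on the relevant support; this is immediate from $a_\pm\in S^0(\R)$ and $|\xi|\gtrsim 1$, but requires some bookkeeping throughout.
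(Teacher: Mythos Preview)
Your proof is correct and follows essentially the same approach as the paper's own proof: integration by parts in $s$ (exploiting the non-stationary phase $|\pm|\xi|-\tau|\gtrsim 2^{\max(j,l)}$ when $|j-l|\ge 10$) followed by integration by parts in $\xi,\tau$ for (i), and then Minkowski's and Young's inequalities for (ii). Your write-up is simply a more detailed expansion of the paper's terse ``multiple integration by parts in $s$ and subsequent integration by parts in $\xi,\tau$''.
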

\begin{proof}
 Part (i) follows from \eqref{kajl-kernel} after multiple integration by parts in $s$ and subsequent integration by parts in $\xi, \tau$. Part (ii) is an immediate consequence of (i) using Minkowski's and Young's convolution inequality.
 \end{proof}

The above lemma allows one to only focus on the spatial frequency decomposition when looking for estimates of the type $L^p \to L^q(B_{r,1}^{1/r})$ for the operator $\mathcal{A}$ in most cases of interest.
In particular, we get the following.
\begin{corollary}\label{cor:trading-der}
Let  $s\in \bbR$, $1\le p,q,r \le\infty$. Then for all $j\in \bbN_0$, 
\[ \|\cA_j\|_{L^p\to L^q(B^s_{r,1})} \lc 2^{js} \|\cA_j\|_{L^p\to L^q(L^r)}+ C_N 2^{-jN} \]
\end{corollary}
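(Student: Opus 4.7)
The plan is to split the defining sum of the Besov norm into a near-diagonal piece, where $|j-l|\le 10$, and an off-diagonal piece, where $|j-l|\ge 10$. The first piece is controlled trivially by $\|\cA_j\|_{L^p\to L^q(L^r)}$, while the second piece is harmless thanks to Lemma~\ref{errorlemma}.

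First I would invoke Minkowski's inequality (pulling the $L^q_x$ norm inside the $\ell^1$-sum over $l$) to write
\[
\|\cA_j f\|_{L^q(B^s_{r,1})} \;\le\; \sum_{l=0}^\infty 2^{ls}\,\|\Lambda_l \cA_j f\|_{L^q(L^r)}.
\]
For the indices $l$ with $|j-l|\le 10$ I would use that $\Lambda_l$ is a Fourier multiplier on $\bbR$ whose convolution kernel is $2^l (\beta_0^\vee)(2^l\cdot)-2^{l-1}(\beta_0^\vee)(2^{l-1}\cdot)$, so its $L^1$-norm is bounded uniformly in $l$; consequently $\Lambda_l$ is bounded on $L^r(\bbR)$, uniformly in $l$ and $r$, and the same bound holds for $\Lambda_l$ acting in the $t$-variable of $L^q_x(L^r_t)$. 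Thus each such term is controlled by $2^{ls}\|\cA_j f\|_{L^q(L^r)}\lesssim 2^{js}\|\cA_j\|_{L^p\to L^q(L^r)}\|f\|_p$, and there are only $O(1)$ such $l$, which produces the main term in the claimed bound.

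For the tail $|j-l|\ge 10$, Lemma~\ref{errorlemma}(ii) supplies, for any $M\in \bbN$,
\[
\|\Lambda_l \cA_j f\|_{L^q(L^r)} \;\lesssim_{M}\; \min\{2^{-jM},\,2^{-lM}\}\,\|f\|_p.
\]
It remains to verify that
\[
\sum_{l\,:\,|j-l|\ge 10} 2^{ls}\,\min\{2^{-jM},\,2^{-lM}\} \;\lesssim\; 2^{-jN}
\]
for any prescribed $N$, by choosing $M$ large enough relative to $N$ and $s$. On the range $l\le j-10$ use the bound $2^{-jM}$ and sum a geometric series in $2^{ls}$; on the range $l\ge j+10$ use $2^{-lM}$, which forces convergence of $\sum_l 2^{l(s-M)}$ whenever $M>s$, and the sum is controlled by its first term $2^{(j+10)(s-M)}$. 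In both cases the result is $\lesssim 2^{-jN}$ once $M$ exceeds $N+|s|$, yielding the claimed error term.

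There is no essential obstacle here: the corollary is a bookkeeping statement that converts the almost-diagonal structure encoded in Lemma~\ref{errorlemma} into a clean Besov-norm estimate, and the only care needed is in choosing $M$ large enough to absorb the factor $2^{ls}$ uniformly in $l$.
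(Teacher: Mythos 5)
Your proof is correct and follows essentially the same route as the paper's: split the Besov sum into the near-diagonal piece $|j-l|\le 10$, controlled by the uniform $L^r$-boundedness of $\Lambda_l$ and the operator norm $\|\cA_j\|_{L^p\to L^q(L^r)}$, and the tail $|j-l|\ge 10$, controlled by Lemma~\ref{errorlemma}(ii). If anything you are slightly more careful than the paper in tracking the factor $2^{ls}$ in the tail sum and choosing the decay exponent large relative to $N$ and $|s|$, a detail the paper's proof elides.
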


\begin{proof} 
We write
$\| \cA_j f \big\|_{L^{q} (B^{s}_{r,1})} \le I+II$
where
\begin{align*}
  I&= \Big\| \sum_{\substack{l\ge0\\|j-l|\le 10}}  2^{l s } \Big\|\La_l \cA_j f\Big\|_{L^r(\bbR)} \Big\|_{L^{q}(\bbR^d)},
  \\
  II&= \Big\|  \sum_{\substack{l\ge 0\\ |j-l|>10}} 2^{ls } \Big\|\La_l \cA_j f\Big \|_{L^r(\bbR)} \Big\|_{L^{q}(\bbR^d)}.
  \end{align*}
  Clearly \[I\lc 2^{js} \|\cA_j f\|_{L^q(L^r)}\lc 2^{js} \|\cA_j\|_{L^p\to L^{q}(L^r)} \|f\|_{L^p}\]
  and by (ii) in Lemma \ref{errorlemma} \[ II \lc \sum_{l\ge 0} \min\{2^{-jN}, 2^{-lN}\} \|f\|_{L^p} \lc 2^{-jN} \|f\|_{L^p}.\]
  Combining both estimates, the assertion follows. 
\end{proof}
In certain endpoint estimates in \S\ref{sec:strong type endpoint}, we use an upgraded version of  Corollary \ref{cor:trading-der}  in conjunction with Littlewood--Paley theory,
as presented in the next lemma.

\begin{lemma} \label{lemma:besov reduction} 
Let $1\leq r < \infty$, $2 \leq q < \infty$, $1 < p < \infty$ such that $r,p \leq q$. Let $s \in \R$. 
Assume that for all $\{f_j\}_{j \geq 0}$ with $f_j \in L^p$,
\begin{equation}\label{eq:multi-scale assumption}
\Big\| \sum_{j\ge 0} \|\cA_j f_j\|_{L^r(\bbR)}\Big\|_{L^q(\bbR^d)}  \lc \Big(\sum_{j \geq 0} 2^{-jsq }\|
f_j\|_{L^p(\R^d)}^q\Big)^{1/q}
\end{equation}
holds. 
 Then
\begin{equation}\label{eq:Besov_conclusion} \big\| \cA f \big\|_{L^{q} (B^{s}_{r,1})} \lesssim  \|f\|_{L^p}. \end{equation}
\end{lemma}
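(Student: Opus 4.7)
The plan is to transfer the $t$-frequency decomposition defining the Besov norm on the left-hand side of \eqref{eq:Besov_conclusion} to the spatial Littlewood--Paley decomposition appearing in the hypothesis \eqref{eq:multi-scale assumption}, and then close with a vector-valued Littlewood--Paley square function inequality. Throughout, I would write $f = \sum_{j \geq 0} \tilde L_j f$ and use the identity $\cA_j \tilde L_j f = \cA_j f$, which follows from $L_j \tilde L_j = L_j$.

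The first step is to expand $\|\cA f\|_{L^q(B^{s}_{r,1})} = \|\sum_{l\ge 0} 2^{ls}\|\Lambda_l \cA f\|_{L^r_t}\|_{L^q_x}$ and to split $\Lambda_l \cA f = \sum_j \Lambda_l \cA_j f$. The essential input is Lemma \ref{errorlemma}(ii), which (using the hypothesis $p, r \leq q$) gives $\|\Lambda_l \cA_j f\|_{L^q_x(L^r_t)} \lc \min\{2^{-jN}, 2^{-lN}\}\|f\|_p$ whenever $|l-j| \geq 10$; summing in $l$ and $j$ the off-diagonal contribution to $\sum_l 2^{ls}\sum_j \|\Lambda_l \cA_j f\|_{L^q_x(L^r_t)}$ is thus controlled by $C_N\|f\|_p$ once $N$ is chosen larger than $s$, leaving the on-diagonal reduction
\[
\|\cA f\|_{L^q(B^{s}_{r,1})} \lc \Big\|\sum_{j\ge 0} 2^{js}\|\cA_j f\|_{L^r_t}\Big\|_{L^q_x} + \|f\|_p.
\]

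Next, I would apply the hypothesis \eqref{eq:multi-scale assumption} with the choice $f_j := 2^{js}\tilde L_j f$. Since $\cA_j f_j = 2^{js} \cA_j f$, this yields
\[
\Big\|\sum_j 2^{js}\|\cA_j f\|_{L^r_t}\Big\|_{L^q_x} = \Big\|\sum_j \|\cA_j f_j\|_{L^r_t}\Big\|_{L^q_x} \lc \Big(\sum_j 2^{-jsq}\|f_j\|_p^q\Big)^{1/q} = \Big(\sum_j \|\tilde L_j f\|_p^q\Big)^{1/q}.
\]
To conclude, I would use Minkowski's integral inequality to swap $L^p_x$ with $\ell^q_j$ (valid since $p \leq q$), then embed $\ell^q \hookrightarrow \ell^2$ (valid since $q \geq 2$), and finally invoke the Littlewood--Paley square function inequality (valid since $1 < p < \infty$):
\[
\Big(\sum_j \|\tilde L_j f\|_p^q\Big)^{1/q} \leq \Big\|\Big(\sum_j |\tilde L_j f|^q\Big)^{1/q}\Big\|_p \leq \Big\|\Big(\sum_j |\tilde L_j f|^2\Big)^{1/2}\Big\|_p \lc \|f\|_p.
\]
The main technical obstacle is really the first step, namely reconciling the temporal and spatial frequency scales so that the Besov norm in $t$ can be replaced by the $\ell^1$-sum over $j$ of $L^r_t$-norms of the $x$-frequency localized pieces. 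Lemma \ref{errorlemma} is designed precisely to establish this alignment, so the remainder reduces to standard multi-scale bookkeeping and the classical Littlewood--Paley inequality.
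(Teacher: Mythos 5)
Your proof is correct and follows essentially the same route as the paper's: split the Besov sum into near-diagonal terms $|l-j|\le 10$ handled by the hypothesis with $f_j = 2^{js}\widetilde L_j f$, and off-diagonal terms killed by Lemma \ref{errorlemma}(ii) with $N>s$, then close via Minkowski, the embedding $\ell^2\hookrightarrow\ell^q$ ($q\ge 2$; note your phrase ``embed $\ell^q\hookrightarrow\ell^2$'' has the arrow reversed, though your displayed inequality is the correct one), and Littlewood--Paley. No substantive differences.
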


\begin{proof}
Write $\| \cA f\|_{L^q(B_{r,1}^s)} \leq I+ II$, where $I$ and $II$ are as in the proof of Corollary \ref{cor:trading-der} but with an additional sum in the $j$-parameter. Recall that $\mathcal{A}_j f= \mathcal{A}_j (\widetilde{L}_j f)$. Applying the assumption \eqref{eq:multi-scale assumption} in $I$, one obtains
    \begin{align*} I&\lc
    \Big\|  \sum_{j=0}^\infty 2^{j s } \|\cA_j (\widetilde{L}_j f)\|_{L^r(\bbR)}  \Big\|_{L^{q}(\bbR^d)}
\lc \Big(\sum_{j=0}^\infty\|\widetilde L_j f\|_{L^p}^{q}\Big)^{\frac{1}{q}} 
\\
&\lc \Big\| \Big(\sum_{j=0}^\infty |\widetilde L_j f|^{q} \Big)^{\frac{1}{q}}\Big\|_{L^p} \lc
     \Big\| \Big(\sum_{j=0}^\infty |\widetilde L_j f|^{2} \Big)^{\frac{1}{2}}\Big\|_{L^p}
    \lc \|f\|_{L^p}
    \end{align*} 
since $q\ge 2$ and $1 < p \leq q < \infty$; note that the second line follows from Minkowski's inequality, the embedding $\ell^2 \hookrightarrow \ell^q$ and the Littlewood--Paley inequality. For the error term $II$, one applies (ii) in Lemma \ref{errorlemma} to obtain 
    \[
    II\lesssim_N \sum_{l\ge 0}\sum_{j\ge 0} 2^{ls}  \min\{2^{-lN}, 2^{-jN} \} \|f\|_{L^p}\lc \|f\|_{L^p}
    \]
for $N>s$. Combining both estimates, \eqref{eq:Besov_conclusion} follows.
\end{proof}

\begin{remark}
The previous lemma also extends to $q=\infty$ with the obvious notational modifications.
\end{remark}

\subsection{Bourgain's interpolation lemma} \label{sec:Binterpol} For the proof of restricted weak type inequalities we will repeatedly apply a result  of Bourgain \cite{Bourgain1985} that leads to restricted weak type inequalities in certain endpoint situations.
We cite the  abstract version of this lemma given in \cite[\S6.2]{CSWW1999} for the Lions--Peetre real interpolation spaces
(see \cite{BL1976}).

Let $\overline A=(A_0,A_1)$, $\overline B=(B_0,B_1)$ be compatible Banach spaces in the sense of interpolation theory. Let  $T_j:\overline A\to \overline  B$ be sublinear operators satisfying for all $j\in \bbZ$
\Be\label{eqn:Binterpol-assu}
\|T_j\|_{A_0\to B_0}\le C_0 2^{j\gamma_0}, \quad \|T_j\|_{A_1\to B_1} \le C_1 2^{-j\gamma_1},\,\quad\gamma_0, \gamma_1>0.
\Ee 
This assumption and real interpolation immediately gives $\|T_j\|_{\overline A_{\theta,\rho} \to \overline B_{\theta,\rho}}= O(1) $  for all $0<\rho \leq \infty$ 
and all $\theta =\gamma_0/(\gamma_0+\gamma_1)$, but one also gets a
weaker conclusion  for the sum of the operators.
%the operator can be summed in a re
 \begin{lemma} \label{lem:Binterpol} Suppose \eqref{eqn:Binterpol-assu} holds for all $j\in \bbZ$. Then 
 \Be\notag
 %\label{eqn:Binterpol-concl}
\Big\|\sum_j T_j\Big \|_{\overline A_{\theta,1}\to \overline B_{\theta,\infty}}\le C(\gamma_0,\gamma_1) C_0^{\frac{\gamma_1}{\gamma_0+\gamma_1}}C_1^{\frac{\gamma_0}{\gamma_0+\gamma_1}}.
\Ee
\end{lemma}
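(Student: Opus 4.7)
I will work with the standard characterisations of the real interpolation spaces $\overline A_{\theta,1}$ and $\overline B_{\theta,\infty}$ in terms of the $J$- and $K$-functionals (see \cite{BL1976}). Every $a\in\overline A_{\theta,1}$ admits a decomposition $a=\sum_{k\in\Z}g_k$, converging in $A_0+A_1$, with
$$\sum_{k\in\Z} 2^{-k\theta}\bigl(\|g_k\|_{A_0}+2^k\|g_k\|_{A_1}\bigr)\lesssim \|a\|_{\overline A_{\theta,1}},$$
and $\|b\|_{\overline B_{\theta,\infty}}=\sup_{s>0}s^{-\theta}K(s,b;B_0,B_1)$. It therefore suffices to produce, for each $s>0$, a splitting $\sum_jT_ja=b_0(s)+b_1(s)$ satisfying
$$\|b_0(s)\|_{B_0}+s\,\|b_1(s)\|_{B_1}\lesssim C_\theta\, s^\theta\,\|a\|_{\overline A_{\theta,1}},\qquad C_\theta:=C_0^{1-\theta}C_1^{\theta}=C_0^{\gamma_1/(\gamma_0+\gamma_1)}C_1^{\gamma_0/(\gamma_0+\gamma_1)}.$$

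\textbf{The splitting.} Expand $\sum_jT_ja=\sum_{j,k}T_jg_k$. For a fixed $s>0$ I assign the term $T_jg_k$ to $b_0(s)$ precisely when $C_0 2^{j\gamma_0}\|g_k\|_{A_0}\le sC_1 2^{-j\gamma_1}\|g_k\|_{A_1}$, and to $b_1(s)$ otherwise. Applying the two operator bounds on each half gives
$$\|b_0(s)\|_{B_0}+s\,\|b_1(s)\|_{B_1}\le\sum_{k\in\Z}\sum_{j\in\Z}\min\bigl(C_0 2^{j\gamma_0}\|g_k\|_{A_0},\; sC_1 2^{-j\gamma_1}\|g_k\|_{A_1}\bigr).$$
For each fixed $k$ the two arguments of the minimum are geometric in $j$ with ratios $2^{\gamma_0}$ and $2^{-\gamma_1}$ respectively, so the inner sum is comparable to the value at the balancing index, which evaluates to $C_\theta\, s^\theta\|g_k\|_{A_0}^{1-\theta}\|g_k\|_{A_1}^\theta$; the positivity of $\gamma_0$ and $\gamma_1$ is used decisively here to sum the two geometric tails.

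\textbf{Collapsing to the target norm.} Weighted AM--GM applied to $x=\|g_k\|_{A_0}$, $y=2^k\|g_k\|_{A_1}$ yields $\|g_k\|_{A_0}^{1-\theta}\|g_k\|_{A_1}^\theta\le 2^{-k\theta}(\|g_k\|_{A_0}+2^k\|g_k\|_{A_1})$. Summing in $k$ and invoking the $J$-bound gives, uniformly in $s$,
$$s^{-\theta}K\Bigl(s,\,\textstyle\sum_jT_ja;\,B_0,B_1\Bigr)\lesssim C_\theta\sum_{k\in\Z} 2^{-k\theta}\bigl(\|g_k\|_{A_0}+2^k\|g_k\|_{A_1}\bigr)\lesssim C_\theta\,\|a\|_{\overline A_{\theta,1}},$$
and taking the supremum over $s>0$ is the claimed bound.

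\textbf{Main obstacle.} The algebra is clean; the care-points I would need to address are (i) the convergence of the double sums defining $b_0(s)$ and $b_1(s)$ in $B_0+B_1$, which I would handle by first establishing the inequality for finitely supported $\{g_k\}$ and then passing to the limit via the absolute $J$-convergence of the decomposition, and (ii) the replacement of the real balancing index by an integer, which costs only constants depending on $\gamma_0,\gamma_1$. The essential role of the conditions $\gamma_0,\gamma_1>0$ and of using $\overline A_{\theta,1}$ (rather than $\overline A_{\theta,p}$ with $p>1$) is that both are needed to pass from the inner geometric sum to its peak term without a logarithmic loss.
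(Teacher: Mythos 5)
The paper does not actually prove Lemma \ref{lem:Binterpol}; it cites the abstract version from \cite[\S 6.2]{CSWW1999}. Your argument is the standard proof of that result, and it is correct: the discrete $J$-method decomposition of $a$, the $s$-dependent assignment of each term $T_jg_k$ to $b_0(s)$ or $b_1(s)$ so that both contributions are bounded by the minimum $\min\bigl(C_02^{j\gamma_0}\|g_k\|_{A_0},\,sC_12^{-j\gamma_1}\|g_k\|_{A_1}\bigr)$, the geometric summation in $j$ around the balancing index (which is precisely where $\gamma_0,\gamma_1>0$ are needed), and the AM--GM step converting $\|g_k\|_{A_0}^{1-\theta}\|g_k\|_{A_1}^{\theta}$ back into $2^{-k\theta}J(2^k,g_k)$ all go through, and the constant $C_0^{1-\theta}C_1^{\theta}=C_0^{\gamma_1/(\gamma_0+\gamma_1)}C_1^{\gamma_0/(\gamma_0+\gamma_1)}$ comes out exactly as claimed.

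One caveat worth recording: the lemma is stated for \emph{sublinear} $T_j$, whereas your expansion $\sum_jT_ja=\sum_{j,k}T_jg_k$ uses linearity. In the sublinear case — which is the one actually used in the paper, where the $T_j$ map into Banach lattices and satisfy $|T_j(\sum_kg_k)|\le\sum_k|T_jg_k|$ pointwise — the identity should be replaced by this pointwise domination, after which the same splitting and the monotonicity of the $K$-functional under lattice domination give the identical bound. Together with the limiting argument you already sketch for passing from finitely supported to general $J$-decompositions, this closes the proof.
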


\section{Necessary conditions}\label{sec:sharpness}
In this section we modify known examples for the spherical maximal operators to give some necessary conditions for $L^p\to L^q$ boundedness of the local variation operator $V^I_rA$. For $r>\frac{d}{d-1}$ these  conditions show that $L^p\to L^q$ boundedness does not hold in the complement of the region $\fP_d(r)$ in Theorems \ref{dge3thm} and  \ref{thm:intermediate r} and the complement of $\fQ_2(r)$ in Theorem \ref{d=2thm}. For $1\le r\le \frac{d}{d-1}$ they show that $L^p\to L^q$ boundedness  does not   hold in the complement of $\fQ_d(r) $ defined in Theorem \ref{thm:small r}. They also show that $V_r^I$ is unbounded from any $L^p(\R^2)$ to any $L^q(\R^2)$ if $r<2$, that is, part (iii) in Theorem \ref{d=2thm}. Finally, we also prove sharpness of the sparse bounds in Theorem \ref{cor:sparse} up to the endpoints.

\subsection{Description of the edges} \label{sec:edges}
It will be helpful to make explicit the equations for the  edges of the boundedness regions in the above theorems.

(i) Consider the case $r>\tfrac{d^2+1}{d(d-1)}$ and the region  $\fP_d(r)$ in Theorem \ref{dge3thm}.  In this case the point $P(r)$ is on the line through $(0,0)$ and $Q_4$, which is given by 
 $\bigl\{\tfrac 1q=\tfrac{1}{dp}\bigr\}$. The boundary lines describing $\fP_d(r)$ are  
\begin{gather*}
\overline{P(r)Q_1(r)}= \bigl\{\tfrac 1q=\tfrac{1}{dr}\bigr\}, \qquad \overline{Q_1(r) Q_2} = \bigl\{\tfrac 1q=\tfrac 1p\bigr\}, \qquad \overline{Q_2 Q_3} = \bigl\{\tfrac 1p=\tfrac{d-1}{d}\bigr\}, \\
 \overline{Q_3 Q_4} = \bigl\{\tfrac 1q= \tfrac{d+1}{d-1}\tfrac 1p -1\bigr\}, \qquad \overline{Q_4 P(r)}= \bigl\{\tfrac 1q=\tfrac{1}{dp}\bigr\}.
\end{gather*}
If $d=2$, the points $Q_2$ and $Q_3$ coincide, and the lines $\overline{Q_1(r)Q_2}$, $\overline{Q_3Q_4}$, $\overline{Q_4P(r)}$ and $\overline{P(r)Q_1(r)}$ describe the quadrangle $\fQ_2(r)$ in Theorem \ref{d=2thm}, (i).

(ii) For the case $\tfrac{d}{d-1}<r\le \tfrac{d^2+1}{d(d-1)}$ the point $P(r)$ moves to the line  connecting $Q_3$ and $Q_4$ and only the part between $P(r)$ and $Q_3$ will be part of the boundary.
Note that for $r=\tfrac{d^2+1}{d(d-1)}$ the points $P(r)$ and $Q_4$ coincide so that the pentagon degenerates to a quadrangle. As $r\to \tfrac{d}{d-1}$ the point $P(r)$ moves to $Q_3$. 
The boundary lines of $\fP_d(r)$ in Theorem \ref{thm:intermediate r} are given in this case by
\begin{gather*}
 \overline{Q_1(r)Q_2} =\bigl\{\tfrac 1q=\tfrac 1p\bigr\}, \qquad\overline{Q_2Q_3} =\bigl\{\tfrac 1p=\tfrac{d-1}{d}\bigr\}, \\   \overline{Q_3P(r)} = \bigl\{\tfrac 1q= \tfrac{d+1}{d-1}\cdot\tfrac 1p -1\bigr\}, \qquad   \overline{P(r)Q_4(r)}= \bigl\{\tfrac{1}{q}=\tfrac 1p+ \tfrac{2}{r(d-1)} -1\bigr\},\\
\overline{Q_4(r)Q_1(r)}= \bigl\{\tfrac 1q=\tfrac{1}{dr}\bigr\}.
\end{gather*}
It is convenient to note, in view of \S\ref{freq-loc-section}, that the equation $\frac{1}{q}=\frac 1p+ \frac{2}{r(d-1)} -1$ is equivalent to $\frac 1r=\frac{d-1}{2}\big(\frac 1q+\frac 1{p'}\big)$.

Again, if $d=2$, the points $Q_2$ and $Q_3$ coincide, and the lines $\overline{Q_1(r)Q_2}$, $\overline{Q_3P(r)}$, $\overline{P(r)Q_4(r)}$ and $\overline{Q_4(r)(r)Q_1(r)}$ describe the quadrangle $\fQ_2(r)$ in Theorem \ref{d=2thm}, (ii).

(iii) In the case $1\le r<\tfrac{d}{d-1} $ we now have a quadrangle $\fQ_d(r)$ in Theorem \ref{thm:small r}, whose boundary lines are
\begin{gather*}
 \overline{Q_1(r)Q_2(r)} = \bigl\{\tfrac{1}{p}=\tfrac{1}{q}\bigr\}, \qquad \overline{Q_2(r)Q_3(r)} =\bigl\{\tfrac 1p=1-\tfrac{1}{r(d-1)}\bigr\},\\
 \overline{Q_3(r)Q_4(r)}= \bigl\{\tfrac 1q=\tfrac 1p+ \tfrac{2}{r(d-1)}-1\bigr\}, \qquad \overline{Q_4(r)Q_1(r)}= \bigl\{\tfrac 1q=\tfrac{1}{dr}\bigr\}.
\end{gather*}

We next list our necessary conditions for bounds on $V_r^I A$. We remark that the sharpness in the conditions \S\S\ref{subsec:ex1} -- \ref{knapp-simple} corresponds to the necessary conditions for the spherical maximal function $S^I$.

%\end{remarkno}
\subsection{The condition $p\le q$}\label{subsec:ex1} This is the standard necessary condition for translation operators mapping $L^p(\bbR^d)$ to $L^q(\bbR^d)$, see \cite{hormander1960}.

\subsection{The condition $p>\frac{d}{d-1}$} \label{steinexample} This is (a variant of) Stein's example for spherical maximal functions \cite{Stein1976}. Let $B$ be the ball of radius $1/10$ centered at the origin and let $f(y)=\bbone_B(y) |y|^{1-d} (\log|y|)^{-1}(\log\log |y|)^{-1}$. Then $f\in L^{\frac{d}{d-1},q}$ for all $q>1$, but for $1<|x|<2$ and $t(x)=|x|$ we have $A_{t(x)}f(x)=\infty.$

\subsection{The condition $d/q\ge 1/p$} 
For the condition $d/q\ge 1/p$ we just take the standard example for the spherical averages \cite{SS1997},  namely consider a fixed shell $S_{j,0}$  (as in \eqref{eqn:Sjn} below) and $g_j=\bbone_{ S_{j,0}}$ so that $\|g_j\|_{L^p} \le 2^{-j/p}$.  For $|x|\le 2^{-j-2}$ we have  $A_1g_j(x)\ge c>0$ and evaluating the $L^q$ norm over $\{x: |x|\le 2^{-j-2}\}$ we  get $\|V_r^IA g_j\|_{L^q}\ge 2^{-jd/q}$ and  obtain the necessity of  $d/q\ge 1/p$.

\subsection{The condition $\tfrac 1q\ge \tfrac{d+1}{(d-1)p}-1 $}\label{knapp-simple}
This is the standard Knapp example in \cite{SS1997}. Given $0 < \delta \ll 1$, one tests the maximal operator on $f_\delta$ being the  characteristic function of $\{y:|y'|\le \delta, \, |y_d|\le \delta^2\}$ and evaluates $A_{x_d} f_\delta(x)$ for $|x'|\le \delta$ and $1<x_d<2$.

\subsection{The condition $\frac 1p\le 1-\frac{1}{r(d-1)} $}\label{sec:disks}
In view of \S\ref{steinexample} this example is only relevant for $r<\frac{d}{d-1}$. 
For large $j$ define 
\Be\label{eqn:cjn}  c_{j,n}=-n 2^{-j} , \quad n=1,\dots, N\Ee where $N=2^{j-2}$.
Let  
$B_{j,n}$ be the ball of radius $2^{-j-4}$ centered at $c_{j,n}e_d$.

Let 
$f_j(x) = \sum_{n=1}^{N} (-1)^n  \bbone_{B_{j,n}}(x)$, so that 
\Be\notag\|f_j\|_{L^p} \lc N^{1/p} 2^{-jd/p}.\Ee

Consider \Be\label{eqn:R-region-def}  R=\{(x',x_d): |x'|\le (4d)^{-1} ,\,\, 1\le x_d\le 3/2\}.\Ee Note that
for $x\in R$ we have $|x-c_{j,n}e_d|\in [1,2]$; indeed, 
$|x-c_{j,n}e_d|\ge |x_d-c_{j,n}|\ge 1$ and 
$|x-c_{j,n}e_d|\le (|x_d-c_{j,n}|^2+ (4d)^{-2})^{1/2} \le 2$.

For $x\in R$ pick $t_n(x)=|x-c_{j,n} e_d|$ and observe that there is a constant $a>0$ such that 
$A_{t_{2\nu}(x) }f_j(x) \ge a 2^{-j(d-1)}$  and 
$A_{t_{2\nu-1}(x)}f_j(x) \le - a 2^{-j(d-1)}$ , and thus
\[
|A_{t_{2\nu}(x)}f_j(x) -A_{t_{2\nu-1}(x) }f_j(x)|\ge 2 a 2^{-j(d-1)}.\]

Hence, for any $r$ we get 
$V_r^I Af(x)\gtrsim N^{1/r} 2^{-j(d-1)}$ for $x\in R$  and thus for any $q>0$ 
\[\frac{ \|V_r^I A f_j\|_{L^q}}{\|f_j\|_{L^p}} \gc N^{\frac 1r-\frac 1p}2^{-j(d-1-\frac dp)}
\]  Since $N=2^{j-2}$ the assumption of $L^p\to L^q$ boundedness of $V^I_rA$ implies $\frac 1r\le \frac{d-1}{p'}$ or equivalently 
$\frac 1p\le 1- \frac{1}{r(d-1)}$.

\subsection{The condition $\tfrac 1q\ge \tfrac 1p+\tfrac{2}{(d-1)r}-1$, i.e.  $\frac{d-1}{2}(\frac 1q+\frac 1{p'}) \ge \frac 1r$}
  \label{sec:Knapp} 
  This is a variant of the example in \S\ref{knapp-simple}.
We let $c_{j,n}$ be as in \eqref{eqn:cjn}  and 
$P_{j,n}=\{y: |y'|\le 2^{-j/2-2}, |y_d-c_{j,n}|\le 2^{-j-4}\}$.
Let $N\le 2^{j-2}$.
Let $f_j=\sum_{n=1}^{N} (-1)^n  \bbone_{P_{j,n}}(x)$. Then $\|f_j\|_{L^p}\lc N^{1/p} 2^{-j\frac{d+1}{2p}}$. Let 
$\Om=\{x: |x'|\le 2^{-j/2-2}, \,\, 1\le x_d\le 3/2\}$ so that $|\Om|\approx 2^{-j(d-1)/2}$.
Let $t_n(x)=|x_d-c_{j,n}| \in [1,2]$. Then for $x\in \Om$,
$A_{t_{2\nu}(x)}f_j(x)\ge a 2^{-j(d-1)/2}$ and
$A_{t_{2\nu-1}(x)}f_j(x)\le - a 2^{-j(d-1)/2}$ for some constant $a>0$.
Hence  
%with $N=2^{j-2}$
$V_r^I A f_j(x)\gtrsim N^{1/r} 2^{-j\frac{d-1}{2}}$ and thus 
$\|V_r^I A f_j\|_{L^q}\gtrsim N^{1/r} 2^{-j\frac{d-1}{2} (1+\frac 1q)}$. Consequently with $N=2^{j-2}$
\[\frac{ \|V_r^I A f_j\|_{L^q}}{\|f_j\|_{L^p}} \gc N^{\frac 1r-\frac 1p} 
2^{-j\frac{d-1}{2} (1+\frac 1q)+j \frac{d+1}{2p} } \gc 2^{j (\frac 1r- \frac {d-1}{2}(\frac 1q+\frac 1{p'})) }.
\]  
Hence the condition $\frac{d-1}{2}(\frac 1q+\frac 1{p'}) \ge \frac 1r$ is necessary for $V_r^IA:L^p\to L^q$ to be bounded. Moreover, as $p \leq q$ by \S\ref{subsec:ex1}, this also implies that no $L^p(\R^2) \to L^q(\R^2)$ bounds hold for $r<2$. 

\subsection{The condition $d/q\ge 1/r$}  \label{sec:d/qge1/r} 
Consider the shells 
\Be\label{eqn:Sjn} S_{j,n} =\big\{y:  \big| |y|-1-n2^{j} \big| \le 2^{-j-2} \big\}  .\Ee
We set   $f_j=\sum_{n=1}^N (-1)^n \bbone_{S_{j,n}}$, with $N=2^{j-2}$. Then clearly $\|f_j\|_{L^p} \lc 1$ uniformly in $j$.

For $|x|\le 2^{-j-5}$  let $t_n(x)=1+ n2^{-j} \in [1,2]$. 
Then $A_{t_{2\nu} (x)}f_j(x) \ge  a$ and 
$A_{t_{2\nu-1} (x)}f_j(x) \le  -a$  for some $a$ independent of $j$. Hence 
$V_r^I f(x) \gtrsim N^{1/r} \approx 2^{j/r} $ for $|x|\le 2^{-j} $ and thus 
$\|V_r^I f_j\|_{L^q}\gc 2^{j(\frac 1r-\frac dq)}$. This implies the necessity of the condition $1/r\le d/q$.
\begin{remark} An alternative (more complicated) example for  the condition $d/q\ge 1/r$ is  in \cite[\S8]{JSW}.\end{remark}

\subsection{Sharpness of the sparse bounds} 
\label{sparse-sharp} 
The sparse domination result in Theorem~\ref{cor:sparse} is sharp, and this is immediate from the examples just described in this section. The argument, shown by Lacey in \cite[Section 5]{LaceySpherical} for the spherical maximal function, can be  extended in our context and even more general ones \cite[Proposition 7.2]{BRS-sparse}.

We exemplify this considering the example in \S\ref{sec:disks}, with the choice $N=2^{j-2}$. With $f_j$ as in this example we have $|f_j|=\bbone_U$ where $U$ is the union of the balls $B_{j,n}$ which is essentially a $2^{-j}$-neighborhood of the $x_d$-axis segment $[-1/4,0]$. $V_rAf_j$ is evaluated at $R$ as in \eqref{eqn:R-region-def}.
Then for large $j$ we have 
$$
\langle V_rA f_j,\bbone_R\rangle=\int_{\R^d} V_rA f(x)\bbone_R(x)\ud x\gc 2^{j(\frac 1r-d+1)}.
$$
On the other hand, suppose that $p<q$ and the sparse bound
$$
\int_{\R^d} V_rAf_j(x) \bbone_R(x) \ud x\le C_0 \sup_{\fS: \mathrm{sparse}} \Lambda^{\fS}_{p,q'} (f,\bbone_R) 
$$
holds for some positive $C_0$,  with $\La^\fS_{p,q'} (f,g)= \sum_{Q \in \mathfrak{S}}|Q| \langle f_j \rangle_{Q,p} \langle \bbone_R \rangle_{Q, q'}.$
By the definition of supremum there is a sparse collection $\fS_0$ such that 
$$
\int_{\R^d} V_rAf_j(x) \bbone_R(x) \ud x\le 2C_0 \sum_{Q \in \mathfrak{S}_0}|Q| \langle f_j \rangle_{Q,p} \langle \bbone_R \rangle_{Q, q'}.
$$
It is crucial in the example that \Be\label{eqn:sepsupp} \mathrm{dist}(\supp(f_j), R) \ge 1\Ee which implies that all cubes contributing to the sum have side length at least $1$.
Moreover, for each $l\ge 0$ there are only $O(1)$ cubes of sidelength $2^l$ contributing. For each such term we can estimate
\[ |Q| \langle f_j\rangle_{Q,p} \langle \bbone_R\rangle_{Q,q'} \lc |Q|^{\frac 1q-\frac 1p} 2^{-j\frac{d-1}{p} }
\]
and by summing over all terms (taking advantage of $p<q$) we obtain 
\[
2^{j(\frac 1r-d+1)} \lc
\langle V_rA f_j,\bbone_R\rangle=\int_{\R^d} V_rA f_j(x)\bbone_R(x)\ud x \lc  C_02^{-j\frac{d-1}{p} }
\]
and letting $j\to \infty$ we obtain the same necessary condition as in \S\ref{sec:disks}, i.e. $\frac{1}{p}\le 1-\frac{1}{r(d-1)}$.

The remaining examples in \S\S\ref{steinexample}--\ref{sec:d/qge1/r} yield similar necessary conditions for sparse bounds, and this is proved by essentially the same idea, always taking advantage of a support-separation property analogous to \eqref{eqn:sepsupp}. We leave the details to the reader.

\section{ $L^p\to L^q(L^r)$ estimates for $\mathcal{A}_j$}
\label{freq-loc-section}

In this section we prove $L^p \to L^q(L^r)$ bounds for the dyadic frequency localized operators $\mathcal{A}_j$ in the closure of the regions $\fP_d(r)$ and $\fQ_d(r)$ featuring in Theorems \ref{dge3thm}, \ref{thm:intermediate r}, \ref{thm:small r} and \ref{d=2thm}. This will lead to the proofs for $L^p \to L^q$ bounds for $V_r^I A$ if $(\tfrac{1}{p}, \frac{1}{q})$ belongs to the interior of $\fP_d(r)$ and $\fQ_d(r)$ respectively, as well as several restricted weak-type results through Bourgain's interpolation trick.

\subsection{Localization}\label{sec:localization}The following observation relies on the localization property \eqref{eqn:Kjtdef} of the kernel  $K_{j,t}$.

\begin{lemma} \label{localization-lemma} (i) 
 For $p_0\le p_1\le q_1 \leq q_0$, $1\le r\le\infty$, and every $N\in \bbN$,
\[\|\cA_j\|_{L^{p_1}\to L^{q_1}(L^{r})} \lc \|\cA_j\|_{L^{p_0}\to L^{q_0}(L^{r})} +C_N 2^{-jN}. \]

(ii) For $r_0\le r_1$, $1\le p\le q\le\infty$, 
%we have 
\[\|\cA_j\|_{L^p\to L^q(L^{r_0})} \lc \|\cA_j\|_{L^p\to L^q(L^{r_1})}  .\]
\end{lemma}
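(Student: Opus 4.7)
Part (ii) is essentially immediate from H\"older's inequality. Since $\cA_j f(x, t) = \chi(t) A_t L_j f(x)$ is supported in the compact interval $t \in [1/2, 4]$, for $r_0 \le r_1$ the inclusion $L^{r_1}([1/2,4]) \hookrightarrow L^{r_0}([1/2,4])$ yields
\[
\|\cA_j f(x, \cdot)\|_{L^{r_0}(\R)} \lesssim \|\cA_j f(x, \cdot)\|_{L^{r_1}(\R)}
\]
pointwise in $x$, and taking $L^q$-norms in $x$ gives the claim.

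For part (i), my plan is a standard localization based on the off-diagonal kernel decay \eqref{kernel-error}. Partition $\R^d$ into unit cubes $\{Q_\nu\}$, and enlarge each to $Q_\nu^*$ (same center, side length, say, $30$, so that $x \in Q_\nu$ and $y \notin Q_\nu^*$ forces $|x-y| \ge 10$). Write $f = f\bbone_{Q_\nu^*} + f\bbone_{(Q_\nu^*)^c}$ when estimating $\cA_j f(\cdot, t)$ on $Q_\nu$.

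For the \emph{near part}, since $q_1 \le q_0$, H\"older on the unit cube $Q_\nu$ followed by the assumed bound at $(p_0, q_0)$ yields
\[
\|\cA_j(f \bbone_{Q_\nu^*})\|_{L^{q_1}(Q_\nu; L^r)} \lesssim \|\cA_j\|_{L^{p_0} \to L^{q_0}(L^r)} \|f \bbone_{Q_\nu^*}\|_{p_0},
\]
and since $p_0 \le p_1$ and $|Q_\nu^*| = O(1)$, $\|f \bbone_{Q_\nu^*}\|_{p_0} \lesssim \|f \bbone_{Q_\nu^*}\|_{p_1}$. Summing in $\nu$ uses $p_1 \le q_1$: the $\ell^{p_1} \hookrightarrow \ell^{q_1}$ embedding together with the bounded overlap of $\{Q_\nu^*\}$ gives
\[
\Big(\sum_\nu \|f \bbone_{Q_\nu^*}\|_{p_1}^{q_1}\Big)^{1/q_1} \le \Big(\sum_\nu \|f \bbone_{Q_\nu^*}\|_{p_1}^{p_1}\Big)^{1/p_1} \lesssim \|f\|_{p_1}.
\]
For the \emph{far part}, \eqref{kernel-error} and the compact support of $\chi$ in $t$ give the pointwise estimate
\[
|\cA_j(f \bbone_{(Q_\nu^*)^c})(x, t)| \lesssim_N 2^{-jN} (|f| * g_N)(x),\qquad g_N(x) := (1+|x|)^{-N},
\]
uniformly for $t \in [1/2, 4]$ and $x \in Q_\nu$. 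Absorbing the $L^r_t$-norm using $\|\chi\|_r = O(1)$, Young's convolution inequality then gives a contribution of $O_N(2^{-jN}) \|f\|_{p_1}$ to $\|\cA_j f\|_{L^{q_1}(L^r)}$; here Young's applies because $1 + 1/q_1 - 1/p_1 \in [0, 1]$ (using $q_1 \ge p_1$), and $g_N \in L^s(\R^d)$ holds for $N$ large relative to the resulting exponent $s$.

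The argument has no essential difficulty; it is purely a bookkeeping exercise matching the four index inequalities $p_0 \le p_1 \le q_1 \le q_0$ to the correct direction of the H\"older and Young embeddings, with the kernel decay in \eqref{kernel-error} supplying the $2^{-jN}$ error term.
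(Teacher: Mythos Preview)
Your proposal is correct and follows essentially the same approach as the paper. The only cosmetic difference is that the paper partitions the input $f$ into unit cubes $Q_\fz$ and restricts the output to enlarged cubes $Q_\fz^*$, whereas you partition the output into unit cubes $Q_\nu$ and split $f$ into its restriction to $Q_\nu^*$ and the complement; the near/far decomposition, the H\"older steps using $q_1\le q_0$ and $p_0\le p_1$ on bounded cubes, the $\ell^{p_1}\hookrightarrow\ell^{q_1}$ summation, and the use of \eqref{kernel-error} with Young's inequality for the far part are all identical in substance.
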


\begin{proof}
Assume that $\|\cA_j\|_{L^{p_0}\to L^{q_0}(L^{r})}<\infty$. Let $f\in L^{p_1}$. For $\fz\in \bbZ^d$ let $Q_\fz=\prod_{i=1}^d[\fz_i,\fz_i+1)$.
Let $Q_{\fz}^*$ be a cube centered at $\fz$ with side-length $20 d$.  Write
$f=\sum_\fz f_\fz$ with $f_\fz=f\bbone_{Q_{\fz}}$ and estimate 
\[\|\cA_j f\|_{L^{q_1}(L^r)}\le \Big\| \sum_\fz \bbone_{Q^*_\fz} \cA_j f_\fz\Big\|_{L^{q_1}(L^r)}+ \Big\|
\sum_\fz \bbone_{\R^d \backslash Q^*_\fz} \cA_j f_\fz\Big\|_{L^{q_1}(L^r)} \,=\,I+II.
\]
Since the $Q^*_\fz$ have bounded overlap, by Hölder's inequality for $q_1 \leq q_0$,
\[ I\lc \Big(\sum_\fz \|\bbone_{Q_\fz^*}\cA_j f_\fz\|_{L^{q_1}(L^r)}^{q_1} \Big)^{1/q_1} \lesssim \Big(\sum_\fz \|\cA_j f_\fz\|_{L^{q_0}(L^r)}^{q_1} \Big)^{1/q_1}.
\]
Applying the bound for the operator $\cA_j$,
\[
\Big(\sum_\fz \|\cA_j f_\fz\|_{L^{q_0}(L^r)}^{q_1} \Big)^{1/q_1}
\lc \|\cA_j\|_{L^{p_0}\to L^{q_0}(L^r)} 
\Big(\sum_\fz \| f_\fz\|_{L^{p_0}}^{q_1} \Big)^{1/q_1}
\] and, since $p_0\le p_1\le q_1$, we also have 
\[ \Big(\sum_\fz \| f_\fz\|_{L^{p_0}}^{q_1} \Big)^{1/q_1}
\lc
\Big(\sum_\fz \| f_\fz\|_{L^{p_1}}^{q_1} \Big)^{1/q_1}
\lc\Big(\sum_\fz \| f_\fz\|_{L^{p_1}}^{p_1} \Big)^{1/p_1}\lc \|f\|_{L^{p_1}}.
\]
Moreover, by \eqref{kernel-error} with $N>d$, 
\[ 
II\le \Big(\int \Big[ \int_{|y-x|\ge 1}  (2^j|x-y|)^{-N} |f(y)| \ud y\Big]^{q_1}
\ud x\Big)^{1/q_1} \lc_N 2^{-jN} \|f\|_{L^{p_1}}.
\]
Combining the two estimates we obtain
\[\|\cA_j f\|_{L^{q_1}(L^r)} \lc \big(\|\cA_j\|_{L^{p_0}\to L^{q_0}(L^r)} +C_N 2^{-jN}\big) \|f\|_{L^{p_1}},
\] which is the assertion in  part (i).

Part (ii) is immediate and simply follows from Hölder's inequality in the $t$-variable.
\end{proof}

\subsection{Interpolation}

Lemma \ref{stein-squarefct} can be extended to a larger range of exponents by interpolation with \eqref{maxinfty} and Lemma \ref{V1lemma} and by the localization property in Lemma \ref{localization-lemma}. We state this in more generality; see Figure \ref{fig:interpolation}.

%%%%%%%%%%%%%%%%%%%%%%%%%%%%
%%
%   single scale interpolation
%%
%%%%%%%%%%%%%%%%%%%%%%%%%%%%%

\begin{figure}[t]
\begin{tikzpicture}[scale=1.8] 

\begin{scope}[scale=2]
\draw[thick,->] (-.1,0) -- (2.2,0) node[below] {$ \frac 1 p$};
\draw[thick,->] (0,-.1) -- (0,1) node[left] {$ \frac 1 q$};
\draw[ dashed] (0,0) -- (0.9,0.9) node[right] {\tiny{$q=p$}};

\node[circle,draw=black, fill=black, inner sep=0pt,minimum size=3pt] (b) at (0,0) {};

\node[circle,draw=black, fill=black, inner sep=0pt,minimum size=3pt] (b) at (2,0) {};

\draw (2,-0.025) -- (2,.025) node[below= 0.25cm] {$1$}; 

\draw (4/5,-0.025) -- (4/5,.025) node[below= 0.25cm] {$\frac{1}{p_0}$}; 
\draw (.025,1/2) -- (-.025,1/2) node[left] {$ \frac{1}{q_0}$};
\node[circle,draw=black, fill=black, inner sep=0pt,minimum size=3pt] (b) at (4/5,1/2) {};

\draw  (2,0)  -- (4/5,1/2); 
\draw (0,0) -- (4/5,1/2);
%\draw[ dashed] (4/5,0) -- (4/5,1/2);

\node[circle,draw=black, fill=black, inner sep=0pt,minimum size=3pt] (b) at (1/2,1/2) {};

\node[circle,draw=black, fill=black, inner sep=0pt,minimum size=3pt] (b) at (5.6/4,1/4) {};

\draw (5.7/4,-0.025) -- (5.7/4,.025) node[below= 0.25cm] {\small{1/$\rho_{\mathrm{min}}(q)$}};

\node[circle,draw=black, fill=black, inner sep=0pt,minimum size=3pt] (b) at (1.22/3,1/4) {};

\draw (1.22/3,-0.025) -- (1.22/3,.025) node[below= 0.25cm]
{\small{1/$\rho_{\mathrm{max}}(q)$}};

\draw (.025,1/4) -- (-.025,1/4) node[left] {$ \frac{1}{q}$};

\fill[pattern=north west lines, pattern color=cyan] (0,0) -- (2,0) -- (4/5,1/2) --  (0,0);

\fill[pattern=north west lines, pattern color=red] (0,0) -- (4/5,1/2) -- (1/2,1/2) -- (0,0);

\draw (1/2,1/2) -- (4/5,1/2);
\draw (1/2,1/2) -- (0,0);

\end{scope}

\end{tikzpicture}

\caption{Interpolation and localization lemmas. If $\| \cA_j\|_{L^{p_0} \to L^{q_0}(L^{p_0})} \lesssim 2^{-jd/q_0}$, then $\| \cA_j\|_{L^{p} \to L^{q}(L^{p})} \lesssim 2^{-jd/q}$ in the blue triangle and $\| \cA_j\|_{L^{p} \to L^{q}(L^{\rho_{\mathrm{max}}(q) })} \lesssim 2^{-jd/q}$ in the red triangle.} 
\label{fig:interpolation}

\end{figure}

\begin{lemma} \label{p0q0lemma}
Let $p_0$ and $q_0$ such that $1\le p_0\le q_0 \leq \infty$.
Assume that 
\Be\label{p0q0-assu}
\sup_{j \geq 0} 2^{j d/q_0} \|\cA_j\|_{L^{p_0}\to L^{q_0}(L^{p_0})} \le C<\infty.\Ee
Let $q_0\le q\le \infty$ and define $\rho_{\min}(q)$ and $\rho_{\max}(q)$ by
\Be\label{rhominmax}
1-\frac{1}{\rho_{\min}(q)} =\frac{q_0}q\Big(1-\frac 1{p_0}\Big), \qquad
\frac 1{\rho_{\max}(q)}= \frac {q_0}{q}\frac 1{p_0}.
\Ee
Assume that $\rho_{\min}(q)\le p\le q$ and $0<r\le \min \{ p, \rho_{\max}(q)\} $.
Then
\[\sup_{j \geq 0} 2^{j d/q} \|\cA_j\|_{L^{p}\to L^{q}(L^{r})} <\infty.\]
\end{lemma}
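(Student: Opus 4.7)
The plan is to combine three basic mixed-norm estimates for $\cA_j$ via Riesz--Thorin interpolation, and then to extend the resulting range of exponents using Hölder's inequality in $t$ together with the localization Lemma \ref{localization-lemma}.

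First I would collect the three input estimates: \eqref{maxinfty} gives $\|\cA_j\|_{L^\infty\to L^\infty(L^\infty)}\lesssim 1$; Lemma \ref{V1lemma} with $q=\infty$ gives $\|\cA_j\|_{L^1\to L^\infty(L^1)}\lesssim 1$; and the hypothesis \eqref{p0q0-assu} reads $\|\cA_j\|_{L^{p_0}\to L^{q_0}(L^{p_0})}\lesssim 2^{-jd/q_0}$. Two-step Riesz--Thorin interpolation of mixed-norm $L^q_x(L^r_t)$ spaces then produces
\[
\|\cA_j\|_{L^p\to L^q(L^p)}\lesssim 2^{-jd/q}
\]
for every $q\ge q_0$ and every $p\in[\rho_{\min}(q),\rho_{\max}(q)]$. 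Indeed, one first interpolates the two dimension-free bounds to obtain $\|\cA_j\|_{L^{p_1}\to L^\infty(L^{p_1})}\lesssim 1$ for each $p_1\in[1,\infty]$; then interpolating with the hypothesis via the parameter $\theta=q_0/q\in[0,1]$ produces the $L^p\to L^q(L^p)$ bound with $1/p=(1-\theta)/p_1+\theta/p_0$. As $p_1$ varies over $[1,\infty]$, the exponent $p$ sweeps out precisely the interval $[\rho_{\min}(q),\rho_{\max}(q)]$, matching the formulas in \eqref{rhominmax}.

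Next, I would use Hölder's inequality in $t$ on the compact support of the cutoff $\chi$ (i.e.\ Lemma \ref{localization-lemma}(ii), with the obvious extension to $0<r<1$) to lower the inner exponent, giving
\[
\|\cA_j\|_{L^p\to L^q(L^r)}\lesssim 2^{-jd/q}\quad\text{for every }0<r\le p.
\]
This settles the lemma for $p\in[\rho_{\min}(q),\rho_{\max}(q)]$.

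To reach the remaining range $\rho_{\max}(q)<p\le q$ (which is non-empty only when $p_0<q_0$, since $\rho_{\max}(q)=qp_0/q_0$), I note that the hypothesis forces $r\le \rho_{\max}(q)<p$. I apply the two preceding steps at the edge exponent $\tilde p:=\rho_{\max}(q)$ to obtain $\|\cA_j\|_{L^{\tilde p}\to L^q(L^r)}\lesssim 2^{-jd/q}$, which is legitimate since $r\le \tilde p$. Then Lemma \ref{localization-lemma}(i), applied with its $p_0,q_0$ replaced by $\tilde p,q$ and its $p_1,q_1$ replaced by $p,q$ (the monotonicity requirement $\tilde p\le p\le q\le q$ being exactly our hypothesis in this regime), upgrades the bound to $\|\cA_j\|_{L^p\to L^q(L^r)}\lesssim 2^{-jd/q}+C_N 2^{-jN}$, and the error term is absorbed. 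The argument is essentially a bookkeeping exercise and I do not anticipate a substantive obstacle; the only point requiring care is the verification that the Riesz--Thorin step identifies $p=\rho_{\max}(q)$ as precisely the threshold beyond which $r$ can no longer match $p$, so that the localization step is needed exactly there and nowhere else.
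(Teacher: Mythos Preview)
Your proposal is correct and follows essentially the same route as the paper: interpolate the hypothesis \eqref{p0q0-assu} with the $L^{p_1}\to L^\infty(L^{p_1})$ bounds coming from \eqref{maxinfty} and Lemma~\ref{V1lemma} to obtain the $L^p\to L^q(L^p)$ bound for $\rho_{\min}(q)\le p\le\rho_{\max}(q)$, then invoke both parts of Lemma~\ref{localization-lemma} to lower $r$ and to extend $p$ past $\rho_{\max}(q)$. The only cosmetic difference is the order of the interpolations, and your handling of the range $\rho_{\max}(q)<p\le q$ (reducing to $\tilde p=\rho_{\max}(q)$ and then applying the localization lemma) is in fact stated a bit more carefully than the paper's concluding sentence.
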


\begin{proof} 
Note that $\rho_{\min}(q)\le \rho_{\max}(q)$ when $q\ge q_0$, with strict inequality when $q>q_0$, and 
$\rho_{\min}(q_0)= \rho_{\max}(q_0)=p_0$.  Assume $q>q_0$ and let $\vartheta=1-q_0/q$. Note that $(1-\vth)/p_0=1/\rho_{\max}(q)$ and
$(1-\vth)/p_0+\vth= 1/\rho_{\min}(q)$.
We interpolate \eqref{p0q0-assu} %\eqref{p0q0-assu-upgraded} 
with the inequality
\begin{equation*}
\sup_{j \geq 0} \|\cA_j\|_{L^{p_1}\to L^\infty(L^{p_1})} <\infty, \quad 1\le p_1\le \infty
\end{equation*}
for the choices  $p_1=1$ and $p_1=\infty$ 
(by Lemma \ref{V1lemma} and \eqref{maxinfty}) 
and obtain the $L^p\to L^q(L^p)$ inequality for $p=\rho_{\min}(q) $ and  $p=\rho_{\max}(q)$. A further interpolation gives
\[\sup_{j \geq 0} \|\cA_j\|_{L^p\to L^q(L^p)} \lc \big(1+ \sup_{j \geq 0} \|\cA_j\|_{L^{p_0}\to L^{q_0}(L^{p_0})} \big), \quad \rho_{\min}(q)\le p\le \rho_{\max}(q).
\] We now combine this with Lemma \ref{localization-lemma}  and see that
the $L^p\to L^q(L^r)$ estimates hold when
$\rho_{\min}(q)\le p\le \rho_{\max}(q)$ and $r\le p$ and moreover when
$\rho_{\min}(q)\le r\le \rho_{\max(q)}$ and $r\le p\le q$. 
\end{proof}

\subsection{Bounds for $\cA_j$}
The previous lemma and the estimates in \S \ref{sec:preliminaries} yield the following bounds; see Figure \ref{fig:single scale} for the regions.

%%%%%%%%%%%%%%%%%%%%%%%%%%%%
%%
%   single scale bounds d\geq 3
%%
%%%%%%%%%%%%%%%%%%%%%%%%%%%%%

\begin{figure}[t]
\begin{tikzpicture}[scale=1.5] 

\begin{scope}[scale=2]
\draw[thick,->] (-.1,0) -- (2.2,0) node[below] {$ \frac 1 p$};
\draw[thick,->] (0,-.1) -- (0,2.2) node[left] {$ \frac 1 q$};

\draw (2,-0.025) -- (2,.025) node[below= 0.25cm] {$ 1$}; 
\draw (-0.025,2) -- (.025,2) node[left= 0.25cm] {$ 1$}; 
\node[circle,draw=black, fill=black, inner sep=0pt,minimum size=3pt] (b) at (2,2) {};
\node[circle,draw=black, fill=black, inner sep=0pt,minimum size=3pt] (b) at (0,0) {};

\draw (1,-0.025) -- (1,.025) node[below= 0.25cm] {$\frac 1 2$}; 
\draw (.025,1) -- (-.025,1) node[left] {$ \frac{1}{2}$};
\node[circle,draw=black, fill=black, inner sep=0pt,minimum size=3pt] (b) at (1,1) {};

\draw[ dashed]  (2,0)  -- (1,1); 
\draw[ dashed] (0,0) -- (1,1);
%\draw[ dashed] (1,0) -- (1,1);
\draw[dashed] (2,0) -- (2,2);
\draw[dashed] (2,2) -- (6.75/4, 6.75/4);

%%%%%% Strichartz

\node[circle,draw=blue, fill=blue, inner sep=0pt,minimum size=3pt] (b) at (1,1/2) {};
\node[circle,draw=blue, fill=blue, inner sep=0pt,minimum size=3pt] (b) at (1/2,1/2) {};
\draw (-0.025,1/2) -- (.025,1/2);
\draw (0,1/2) node[left = 0.08cm] {\textcolor{blue}{$ \frac{d-1}{2(d+1)}$} };

\fill[pattern=north west lines, pattern color=cyan] (1/2, 1/2) -- (1,1/2) -- (1,1) --  (1/2,1/2);

\fill[pattern=north west lines, pattern color=magenta] (1, 1) -- (1,1/2) -- (2,0) --  (1,1);

\draw (1.5,1) node[above] {$A$};
\draw (0.87,0.6) node[above] {$B_1$};
\draw (0.75,0.44) node[above] {$B_2$};
\draw (1.2,0.5) node[above] {$C$};
\draw (0.52,0.3) node[above] {$D$};
\draw (1,0.2) node[above] {$E$};

\draw[dashed] (1,1/2) -- (0,0);

%%%% conjectured point

\node[circle,draw=red, fill=red, inner sep=0pt,minimum size=3pt] (b) at (2/3,2/3) {};

\draw (-0.025,2/3) -- (.025,2/3);
\draw (0,2/3) node[left = 0.08cm] {\textcolor{red}{$ \frac{d-1}{2d}$} };

\draw[ dashed, color=red] (2,0) -- (2/3,2/3) ;

\fill[pattern=north west lines, pattern color=blue] (1, 1/2) -- (1/2,1/2) -- (0,0) --  (1,1/2);

\fill[pattern=north west lines, pattern color=olive] (0,0) -- (1, 1/2) -- (2,0) -- (0,0);

%%%%% endpoint spherical

%\draw (-0.025,6.75/4) -- (.025,6.75/4);
%\draw (0,6.75/4) node[left = 0.08cm] {\textcolor{olive}{$ \frac{d-1}{d}$} };
\draw[dashed] (1,1) -- (6.75/4,6.75/4);
%\node[circle,draw=olive, fill=olive, inner sep=0pt,minimum size=3pt] (b) at (6.75/4,6.75/4) {};

\draw[dashed, color=gray] (1,1) -- (1,1/2);
\draw[dashed, color=gray] (1/2,.99/2) -- (1,1/2);

%\draw[dashed]  (6.75/4,1.25/4) -- (1 + 1/2 + 1/10, 1/8+1/16 + 1/80);
\fill[pattern=north west lines, pattern color=orange] (1,1) -- (2,0) -- (2,2) -- (1,1);

\end{scope}

\end{tikzpicture}

\caption{Regions for $L^p \to L^q(L^r)$ bounds for the single scale $\cA_j$ for $0 < r \leq 1$. As $r$ increases the regions shrink due to the constraints $r \leq p$ or $r \leq \tfrac{q(d-1)}{d+1}$.}
\label{fig:single scale}
%\vspace{1cm}
\end{figure}
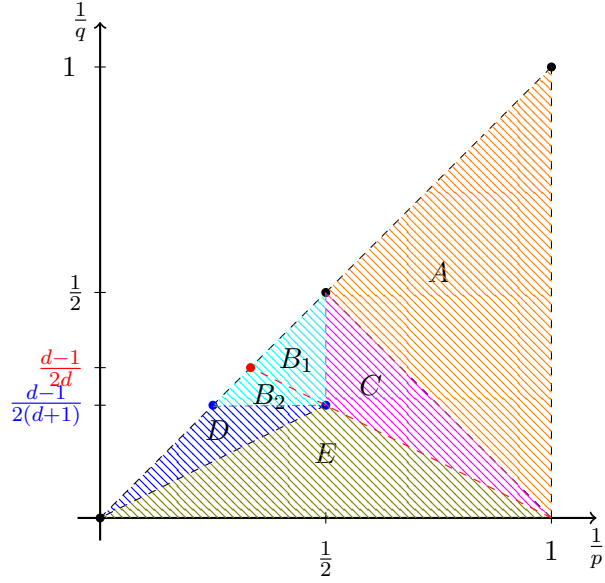

\begin{proposition}\label{prop:bounds A_j with ST}
Let $d \geq 2$.
\begin{enumerate}[(A)]
    \item Let $ 1 \leq p \leq 2$, $p \leq q \leq p'$ and $0 < r \leq p$. Then
$$
\|\mathcal{A}_j f\|_{L^q(L^r)} \lesssim 2^{-j(d-1)/p'} \| f \|_{L^p}.
$$
\item Let $2 \leq p \leq q \leq \frac{2(d+1)}{d-1}$. Let $0 < r \leq 2$. Then
$$
\| \mathcal{A}_j f \|_{L^q(L^r)} \lesssim 2^{-j\frac{d-1}{2} (\frac{1}{q} + \frac{1}{2})} \| f \|_{L^p}.
$$
\item Let $1 \leq p \leq 2$, $\frac{d-1}{d+1} \frac{1}{p'} \leq \frac{1}{q} \leq \frac{1}{p'}$ and $0 <r \leq p.$ Then
$$
\|\mathcal{A}_j f \|_{L^q(L^r)} \lesssim 2^{-j\frac{d-1}{2}(\frac{1}{q} + \frac{1}{p'})} \| f\|_{L^p}.
$$
\item Let $\tfrac{2(d+1)}{d-1}\le q\le \infty$, $ \frac{d-1}{d+1} \frac{1}{p} \leq \tfrac 1q\le \tfrac 1p$ and $0 <r \leq \frac{q(d-1)}{d+1}$.  % $\tfrac 1r\ge \tfrac{d+1}{d-1}\tfrac 1q.$
Then 
\[ \|\cA_j f\|_{L^q(L^r)} \lc 2^{-jd/q} \|f\|_{L^p}.\]

\item Let $\tfrac{2(d+1)}{d-1}\le q\le \infty$, $\frac{1}{q} \leq \frac{d-1}{d+1} \frac{1}{p}$, $\tfrac 1q\le \tfrac 1p\le  1- \tfrac{d+1}{d-1}\tfrac 1q,$ and $0 < r \leq p$. Then 
\[ \|\cA_j f\|_{L^q(L^r)} \lc 2^{-jd/q} \|f\|_{L^p}.\]
\end{enumerate}

\end{proposition}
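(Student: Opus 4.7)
The plan is to derive the five parts by Riesz--Thorin interpolation among a few endpoint estimates that are already in hand, and then to widen the allowed ranges of $(p,q,r)$ via the localization lemma. The endpoints available are: $\|\cA_j\|_{L^1\to L^\infty(L^1)}\lesssim 1$ (a consequence of Lemma~\ref{V1lemma}), the Plancherel bound $\|\cA_j\|_{L^2\to L^2(L^2)}\lesssim 2^{-j(d-1)/2}$ from \eqref{eq:L2}, the Stein--Tomas bound $\|\cA_j\|_{L^2\to L^{q_*}(L^2)}\lesssim 2^{-jd/q_*}$ with $q_*=\tfrac{2(d+1)}{d-1}$ from Lemma~\ref{stein-squarefct}, and the trivial $\|\cA_j\|_{L^\infty\to L^\infty(L^\infty)}\lesssim 1$ from \eqref{maxinfty}.

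For part~(A), I would apply Riesz--Thorin between the first two endpoints at parameter $\theta=2/p'$, obtaining $\|\cA_j\|_{L^p\to L^{p'}(L^p)}\lesssim 2^{-j(d-1)/p'}$ for $1\le p\le 2$; Lemma~\ref{localization-lemma}(i) then reduces the outer index from $p'$ down to any $q\in[p,p']$, while part~(ii) of the same lemma allows the inner index to shrink from $p$ to any $r\in(0,p]$. For part~(B), interpolating the $L^2\to L^2(L^2)$ and $L^2\to L^{q_*}(L^2)$ bounds at $p=2$ gives $\|\cA_j\|_{L^2\to L^q(L^2)}\lesssim 2^{-j(d-1)(1/q+1/2)/2}$ for $2\le q\le q_*$; one verifies that at $q=q_*$ the exponent equals $d/q_*$, so the endpoints match. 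Another application of Lemma~\ref{localization-lemma} then allows $p\in[2,q]$ and $r\in(0,2]$.

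For part~(C), interpolating $L^1\to L^\infty(L^1)$ with $L^2\to L^{q_*}(L^2)$ at $\theta=2/p'$ produces the new endpoint $\|\cA_j\|_{L^p\to L^q(L^p)}\lesssim 2^{-jd(d-1)/((d+1)p')}$ on the line $1/q=\tfrac{d-1}{(d+1)p'}$; a second Riesz--Thorin step against the endpoint of part~(A) at $q=p'$ sweeps the full range $\tfrac{d-1}{(d+1)p'}\le 1/q\le 1/p'$, the resulting exponent being linear in $1/q$ and equal to $(d-1)(1/q+1/p')/2$ at both endpoints. H\"older in $t$ then yields any $r\le p$. Parts (D) and (E) are direct applications of Lemma~\ref{p0q0lemma} with $p_0=2$ and $q_0=q_*$: the hypothesis \eqref{p0q0-assu} is precisely the Stein--Tomas estimate, and a short calculation gives $\rho_{\max}(q)=q(d-1)/(d+1)$ and $1/\rho_{\min}(q)=1-\tfrac{d+1}{(d-1)q}$. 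In this language, (D) is the subcase $\rho_{\max}(q)\le p\le q$ with $r\le \rho_{\max}(q)$, and (E) is the subcase $\rho_{\min}(q)\le p\le \rho_{\max}(q)$ with $r\le p$; both are contained in the conclusion of the lemma.

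The substantive analytic input --- the oscillatory integral representation, the Stein--Tomas $L^2$ restriction argument, and the rapid kernel decay exploited in the localization lemma --- has already been packaged into the preceding statements, so the principal obstacle is essentially bookkeeping: verifying that the four endpoint bounds interpolate consistently to the five claimed inequalities and that the various interpolation lines sweep the stated regions without gaps.
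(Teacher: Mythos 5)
Your proposal is correct and follows essentially the same route as the paper: all five parts are obtained by (mixed-norm) interpolation among the kernel bound $L^1\to L^\infty(L^1)$, the Plancherel bound \eqref{eq:L2}, the Stein--Tomas bound of Lemma \ref{stein-squarefct} (packaged as Lemma \ref{p0q0lemma} for (D) and (E)), together with Lemma \ref{localization-lemma} and H\"older in $t$. The only differences are cosmetic orderings of the interpolation steps (e.g.\ for (B) the paper passes through the diagonal $L^p\to L^p(L^2)$ bound from (D) rather than localizing from the $p=2$ segment, and for (C) it cites the endpoint of (E) rather than re-deriving it), and your exponent checks at the matching endpoints are all accurate.
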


\begin{proof}
The bounds in (A) for $r=p$ follow from interpolation of Lemma \ref{V1lemma} and the $L^2$-estimate \eqref{eq:L2}, whilst the remaining values of $0<r<p$ follow from (ii) in Lemma \ref{localization-lemma}.

The bounds in (D) and (E) are an application of Lemma \ref{p0q0lemma} with $p_0=2$, $q_0=\tfrac{2(d+1)}{d-1}$, which is the estimate in Lemma \ref{stein-squarefct}.

The bounds in (C) follow from interpolation of those in (A) if $q=p'$ and those in (E) if $\tfrac{1}{q}=\tfrac{1}{p'} \tfrac{d-1}{d+1}$, $1 \leq p \leq 2$.

Finally, the bounds in (B) follow from interpolation of the $L^2$ estimate \eqref{eq:L2} with the $L^p \to L^p(L^2)$ estimate in (D) for $p=\tfrac{2(d+1)}{d-1}$, and a further interpolation of those with the estimates in (C) for $p=2$.
\end{proof}

The above bounds on (A), (C) and (E) are sharp. However, the bounds in (B) and the $r$-range in (D) can be improved; for example, if information on the \textit{local smoothing} phenomenon for the wave equation is known. Recall that these estimates, first noted by Sogge in \cite{Sogge91}, are of the type
\begin{equation}\label{local smoothing conj}
\Big\| \Big( \int_1^2  |e^{i t \sqrt{-\Delta}} L_j f|^p \ud t \Big)^{1/p} \Big\|_{L^p} \lesssim 2^{j(\bar s_p - \sigma)} \| f \|_{L^p}
\end{equation}
for some $\sigma>0$ if $2 < p < \infty$, where $\bar{s}_p:=  (d-1) \big(\frac{1}{2}- \frac{1}{p}\big)$. It is conjectured that \eqref{local smoothing conj} holds for all $\sigma < \sigma_p$, where
\begin{equation*}
    \sigma_p:= \begin{cases}
    \begin{array}{ll}
        1/p & \text{ if } \quad  \frac{2d}{d-1} \leq  p < \infty,  \\
        \bar s_p & \text{ if } \quad   \,\, 2 \leq p \leq \frac{2d}{d-1}.
    \end{array}
    \end{cases}
\end{equation*}
This conjecture is strongest at $p=\tfrac{2d}{d-1}$. After contributions by many, it has recently been solved by Guth, Wang and Zhang \cite{GWZ} for $d=2$, and is known to hold for all $p \geq \tfrac{2(d+1)}{d-1}$ if $d \geq 3$ by the sharp decoupling inequalities of Bourgain and Demeter \cite{BD2015}. It is also expected that endpoint regularity results with $\sigma=1/p$ should hold if $p>2d/(d-1)$; see \cite{HNS2011} for results in this direction if $d \geq 4$. The validity of the local smoothing conjecture would imply the following bounds on spherical averages on the region (B). We remark that these improved bounds are only relevant for our variational bounds if $d=2,3$; for $d\geq 4$ the bounds in Proposition \ref{prop:bounds A_j with ST} will suffice (see the discussion after Theorem \ref{thm:small r} in the Introduction).

\begin{proposition}\label{prop:bounds Aj d=2 B}
Let $d\geq 2$. Assume that the local smoothing conjecture holds, that is, \eqref{local smoothing conj} holds at $p=\frac{2d}{d-1}$ for all $\sigma < 1/p.$
\begin{enumerate}
    \item[($B_1$)] If $ \tfrac{d-1}{d+1} \tfrac{1}{p'} \leq \tfrac{1}{q} \leq \tfrac{1}{p}$ and $2 < q \leq \tfrac{2d}{d-1}$, $2 < p \leq \tfrac{2d}{d-1}$ and $0 < r \leq p $, then
$$
\| \cA_j f \|_{L^q(L^r)} \lesssim 2^{-j \frac{d-1}{2} (\frac{1}{q} + \frac{1}{p'}) + j \varepsilon}  \| f \|_{L^p}
$$
for all $\varepsilon>0$.
\item[($B_2$)] If $\tfrac{1}{q} \leq \min \{ \tfrac{d-1}{d+1} \tfrac{1}{p'}, \tfrac{1}{p}\}$ and $\tfrac{2d}{d-1} \leq q \leq \tfrac{2(d+1)}{d-1}$ and $0 < r \leq p $, then
$$
\| \cA_j f \|_{L^q(L^r)} \lesssim 2^{-jd/q + j \varepsilon} \| f \|_{L^p}
$$
for all $\varepsilon>0$.
\end{enumerate}
In particular, the above estimates hold for $d=2$.
\end{proposition}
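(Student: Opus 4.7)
\emph{Proof sketch.} The plan is to convert the local smoothing hypothesis into a single-scale estimate at the diagonal point $p = q = q_0 := 2d/(d-1)$, and then to interpolate and localize in two different ways to obtain the two regions $B_1$ and $B_2$. Starting from the oscillatory representation \eqref{Tjs-split} and absorbing the order-zero symbols $a_\pm(t|\xi|)$ (for instance via a Fourier series expansion in $|\xi|/2^j$ on the support of $\beta_j(|\xi|)$), one obtains
\begin{equation*}
\|\cA_j f\|_{L^{q_0}(L^{q_0})} \lesssim 2^{-j(d-1)/2} \bigl\|e^{\pm i t\sqrt{-\Delta}} L_j f\bigr\|_{L^{q_0}(L^{q_0})}.
\end{equation*}
The hypothesis \eqref{local smoothing conj} at $p = q_0$, together with the identity $\bar s_{q_0} = 1/q_0 = (d-1)/(2d)$, then yields the diagonal estimate
\begin{equation*}
\|\cA_j f\|_{L^{q_0}(L^{q_0})} \lesssim 2^{-jd/q_0 + j\varepsilon}\|f\|_{L^{q_0}}, \qquad \varepsilon > 0.
\end{equation*}

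For region $B_2$, I would run the argument of Lemma~\ref{p0q0lemma} verbatim with this new diagonal endpoint in place of the Stein--Tomas pair. A short calculation with \eqref{rhominmax} shows that the choice $p_0 = q_0 = 2d/(d-1)$ produces $\rho_{\max}(q) = q$ and $1/\rho_{\min}(q) = 1 - (d+1)/((d-1)q)$, so that the allowed range $\rho_{\min}(q) \leq p \leq q$ translates exactly into the constraint $1/q \leq \min\{(d-1)/(d+1) \cdot 1/p',\, 1/p\}$. Localization (Lemma~\ref{localization-lemma}) extends the inner exponent to $r \leq p$, and the $2^{j\varepsilon}$-loss is preserved throughout the interpolation.

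For region $B_1$, one first checks that the constraint $1/q \geq (d-1)/(d+1) \cdot 1/p'$ is redundant when $p < q_0$ and reduces to equality at $p = q_0$, so that $B_1$ coincides with the closed triangle of vertices $(1/2, 1/2)$, $(1/q_0, 1/q_0)$ and $(1/2, 1/q_0)$. This is contained in the triangle with vertices $Z_1 = (1/2, 1/2)$, $Z_2 = (1/q_0, 1/q_0)$ and $Z_3 = (1/2, (d-1)/(2(d+1)))$, at which we have respectively the estimates $2^{-j(d-1)/2}$ from \eqref{eq:L2}, $2^{-jd/q_0 + j\varepsilon}$ from the previous step, and $2^{-jd(d-1)/(2(d+1))}$ from Lemma~\ref{stein-squarefct}. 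Since the target exponent $(d-1)/2 \cdot (1/q + 1/p')$ is affine in $(1/p, 1/q)$ and agrees with these three bounds at the three vertices, a trilinear complex interpolation (implemented by iterating bilinear complex interpolation twice) delivers the claim on the whole triangle. Choosing the inner exponents $r = 2, q_0, 2$ at $Z_1, Z_2, Z_3$ respectively, the interpolated inner exponent equals $p$; smaller $r$ follow from H\"older's inequality on $[1,2]$.

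The main delicate point is the trilinear complex interpolation with the three parameters $p, q, r$ varying simultaneously in mixed-norm spaces $L^q(L^r)$; this is standard but requires a careful setup as an analytic family of operators. The unconditional statement for $d = 2$ then follows at once from the resolution of the local smoothing conjecture at $p = 4$ in \cite{GWZ}.
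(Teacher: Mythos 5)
Your proposal is correct and follows essentially the same route as the paper: both convert the local smoothing hypothesis into the diagonal single-scale bound $\|\cA_j f\|_{L^{q_0}(L^{q_0})}\lesssim 2^{-jd/q_0+j\varepsilon}\|f\|_{q_0}$ at $q_0=\tfrac{2d}{d-1}$ via \eqref{Tjs-split}, and then interpolate with the $L^2$ estimate \eqref{eq:L2}, the Stein--Tomas bound of Lemma \ref{stein-squarefct}, and the trivial endpoints. The only differences are organizational (you run $B_2$ through the machinery of Lemma \ref{p0q0lemma} with $p_0=q_0$ rather than interpolating the diagonal segment with the critical line, and you phrase $B_1$ as one trilinear interpolation rather than two bilinear ones), and your vertex/exponent computations check out.
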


\begin{proof}
By the oscillatory integral representation in \eqref{Tjs-split} and \eqref{Tjs}, the estimate \eqref{local smoothing conj} implies
\begin{equation}\label{eq:LS conj av}
    \| \cA_j f \|_{L^{p}(L^p)} \lesssim 2^{-j\frac{d-1}{2} + j\varepsilon} \| f\|_{L^p}
\end{equation}
for $p=\frac{2d}{d-1}$. Interpolation of \eqref{eq:LS conj av} and Lemma \ref{stein-squarefct} yields
\begin{equation}\label{eq:critical line d=2}
    \| \mathcal{A}_j f \|_{L^q(L^p)} \lesssim 2^{-jd/q + j \varepsilon } \| f \|_{L^p}
\end{equation}
for $\tfrac{1}{q}=\tfrac{d-1}{d+1}\tfrac{1}{p'}$ and $2 < p \leq \tfrac{2d}{d-1} \leq q < \tfrac{2(d+1)}{d-1}$. Moreover, interpolation of \eqref{eq:LS conj av} and the $L^2$-estimate \eqref{eq:L2} yields
\begin{equation}\label{eq:2<p<4 d=2}
    \| \cA_j \|_{L^p(L^p)} \lesssim 2^{-j\frac{d-1}{2} + j \varepsilon} \| f \|_{L^p}
\end{equation}
for $2 < p \leq \tfrac{2d}{d-1}$. The region (B$_1$) then follows from interpolating \eqref{eq:critical line d=2} and \eqref{eq:2<p<4 d=2}. 

For the region (B$_2$), interpolate \eqref{eq:LS conj av} and \eqref{maxinfty} to obtain
\begin{equation}\label{eq:Ls with Linfty}
    \| \cA_j f \|_{L^q(L^q)} \lesssim 2^{-jd/q + j \varepsilon} \| f \|_{L^q}
\end{equation}
for all $\frac{2d}{d-1} \leq q \leq \infty$.
A further interpolation of \eqref{eq:Ls with Linfty} with \eqref{eq:critical line d=2} for $\tfrac{2d}{d-1} \leq q \leq \tfrac{2(d+1)}{d-1}$ yields the estimates in (B$_2$).

The assertion for $d=2$ follows since the local smoothing assumption was established in \cite{GWZ}.
\end{proof}

The range of $r$ in the estimates in (D) can also be improved to $0 < r \leq p$ using the known local smoothing estimates at $p=\tfrac{2(d+1)}{d-1}$ for all $\sigma < 1/p$. For our variational problem, this only becomes relevant if $d=2$, as otherwise the results in Proposition \ref{prop:bounds A_j with ST} will suffice. We note that the use of such local smoothing estimates induces an $\varepsilon$-loss with respect to (D) in Proposition \ref{prop:bounds A_j with ST}, although this will have no consequences on our proof in $d=2$. The $\varepsilon$-loss in the forthcoming proposition can be removed if $p>\tfrac{2(d-1)}{d-3}$ when $d \geq 4$ by the currently known sharp regularity estimates in \cite{HNS2011}. 

\begin{proposition}[Improved bounds in (D)]\label{prop:bounds Aj d=2 D}
Let $d \geq 2$. Let $ \tfrac{2(d+1)}{d-1} \le q\le \infty$, $ \tfrac{d-1}{d+1}\frac{1}{p} \leq \tfrac 1q\le \tfrac 1p$ and $r \leq p$.  Then 
\[ \|\cA_j f\|_{L^q(L^r)} \lc 2^{-j(d/q -  \varepsilon)}  \|f\|_{L^p}\]
for all $\varepsilon>0$.
\end{proposition}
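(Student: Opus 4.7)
The plan is to follow the interpolation scheme of part (D) of Proposition~\ref{prop:bounds A_j with ST}, but to replace the Stein--Tomas anchor $p_0=2$ with the Strichartz critical exponent $p_0=q_0=\tfrac{2(d+1)}{d-1}$, at which the local smoothing inequality \eqref{local smoothing conj} is available unconditionally via the Bourgain--Demeter decoupling theorem. A computation identical to the one deriving \eqref{eq:LS conj av}, but now at this choice of $p_0$, yields the diagonal bound
\begin{equation*}
\|\cA_j f\|_{L^{p_0}(L^{p_0})}\lesssim 2^{-j(d/p_0-\varepsilon)}\|f\|_{p_0}
\end{equation*}
for every $\varepsilon>0$. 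This estimate and the Stein--Tomas bound $\|\cA_j f\|_{L^{q_0}(L^{2})}\lesssim 2^{-jd/q_0}\|f\|_{2}$ furnished by Lemma~\ref{stein-squarefct} share the common outer exponent $q_0=p_0$, which is precisely the configuration needed to enlarge the admissible inner exponent beyond what part (D) provides.

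Applying complex interpolation between these two estimates and using the identity $[L^{q_0}(L^{p_0}),L^{q_0}(L^{2})]_{\vartheta}=L^{q_0}(L^{p})$ (a standard instance of Calderón's interpolation theorem for vector-valued Lebesgue spaces), with the input spaces $L^{p_0}$ and $L^2$ interpolated simultaneously, produces
\begin{equation*}
\|\cA_j f\|_{L^{q_0}(L^{p})}\lesssim 2^{-j(d/q_0-\varepsilon)}\|f\|_{p}\qquad\text{for every } p\in[2,p_0].
\end{equation*}
For each such $p$, a further interpolation with the trivial bound \eqref{maxinfty} produces
\begin{equation*}
\|\cA_j f\|_{L^{Q}(L^{P})}\lesssim 2^{-j(d/Q-\varepsilon)}\|f\|_{P}
\end{equation*}
whenever $1/P=(1-\tau)/p$ and $1/Q=(1-\tau)/q_0$ for some $\tau\in[0,1]$. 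A direct parametrisation check shows that as $(p,\tau)$ ranges over $[2,p_0]\times[0,1]$, the pair $(1/P,1/Q)$ sweeps out precisely the target region $\{Q\ge q_0,\ \tfrac{d-1}{d+1}(1/P)\le 1/Q\le 1/P\}$: the extreme value $p=p_0$ gives $P=Q$ (the diagonal), whereas $p=2$ gives the Stein--Tomas edge $1/Q=\tfrac{d-1}{d+1}(1/P)$.

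Finally, since $\cA_j f(x,\cdot)$ is supported in the bounded interval $[1/2,4]$, Lemma~\ref{localization-lemma}~(ii) allows us to decrease the inner exponent from $P$ to any $r\le P$, yielding the stated inequality. The only substantive ingredient beyond the local smoothing input is the interpolation of the mixed-norm spaces $L^{q_0}(L^p)$; this is a routine application of standard interpolation theory and does not require any further analysis of the operator $\cA_j$.
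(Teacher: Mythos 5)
Your argument is correct and rests on the same two nontrivial inputs as the paper's proof — the unconditional local smoothing estimate at the critical exponent $\tfrac{2(d+1)}{d-1}$, transferred to $\cA_j$ via \eqref{Tjs-split}, and the Stein--Tomas bound of Lemma \ref{stein-squarefct} — combined by interpolation and Hölder in $t$. The paper organizes the interpolation slightly differently (it first produces the diagonal family $\|\cA_j\|_{L^q\to L^q(L^q)}\lesssim 2^{-j(d/q-\varepsilon)}$ for all $q\ge \tfrac{2(d+1)}{d-1}$ and then interpolates with the edge case of Proposition \ref{prop:bounds A_j with ST}, (D), at $q=\tfrac{p(d+1)}{d-1}$, $r=p$), but this is the same computation with the interpolation vertices relabelled, and both parametrizations sweep out exactly the stated region.
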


\begin{proof}
By \eqref{Tjs-split}, the estimates \eqref{local smoothing conj} for $p \geq \tfrac{2(d+1)}{d-1}$ imply that, given any $\varepsilon>0$,
$$
\| \cA_j f\|_{L^p(L^p)} \lesssim 2^{-j(d/q - \varepsilon)} \| f \|_{L^q}
$$
holds for all $\tfrac{2(d+1)}{d-1} \leq q \leq \infty$. It then suffices to interpolate this with the estimates in Proposition \ref{prop:bounds A_j with ST}, (D), when $q=\tfrac{p(d+1)}{d-1}$ and $r=p$.
\end{proof}

\subsection{Bounds for $V_r^I\cA_j$}
Let $1 \leq r \leq \infty$. By the embedding \eqref{BPe}  and Corollary \ref{cor:trading-der},
$V_r A$ maps $L^p(\R^d)$ to $L^q(\R^d)$ if there exists an $\eps>0$ such that
%\db{and derivatives in $t$ are like derivatives in $x$}, if there exists $\varepsilon>0$ such that
\Be\label{eqn:eps-decay}
\| \mathcal{A}_j f \|_{L^q(L^r)} \lesssim 2^{-j(\frac 1r+\varepsilon)} \| f \|_{L^p},
\Ee
for all $f\in L^p$.
This will suffice to show all the bounds in the interiors of $\fP_d(r), \fQ_d(r)$ claimed in Theorems \ref{dge3thm}, \ref{thm:intermediate r}, \ref{thm:small r} and \ref{d=2thm}.

We start with the case $d\geq 3$. We will only have to identify in each region $A-E$ of Proposition \ref{prop:bounds A_j with ST} the conditions under which \eqref{eqn:eps-decay} holds and to relate this to the corresponding statements in the theorems in the introduction.

\begin{proposition}\label{prop:bounds with 1/r}
Let $d \geq 3$. The inequality \eqref{eqn:eps-decay} holds for some $\varepsilon>0$
 under the following conditions on $1 \leq p, q \leq \infty$, $0 < r \leq \infty$: 
\begin{enumerate}
\item[(A')] $ 1 \leq p \leq 2$, $p \leq q \leq p'$, and 
\begin{itemize}
\item[$\circ$]  $ \tfrac{d}{d-1} < r \leq p$; or 
\item[$\circ$] $\tfrac{2}{d-1} < r \leq \tfrac{d}{d-1}$ and $\tfrac{1}{p} < 1-\tfrac{1}{(d-1)r}$. 
\end{itemize}
\item[(B')]  $2 \leq p \leq q \leq \frac{2(d+1)}{d-1}$ and
\begin{itemize}
    \item[$\circ$] $\tfrac{2(d+1)}{d(d-1)} < r \leq 2$; or
    \item[$\circ$] $\tfrac{2}{d-1} < r \leq \tfrac{2(d+1)}{d(d-1)}$ and $\tfrac{1}{q} > \tfrac{2}{(d-1)r}-\tfrac{1}{2}$.
\end{itemize}
\item[(C')] $1 \leq p \leq 2$, $\frac{d-1}{d+1} \frac{1}{p'} \leq \frac{1}{q} \leq \frac{1}{p'}$, and
\begin{itemize}
\item[$\circ$] $\tfrac{d^2+1}{d(d-1)} <r \leq p$ and $\tfrac{1}{q} > \tfrac{d+1}{d-1}\tfrac{1}{p}-1$, $\tfrac{1}{p} < \tfrac{d-1}{d}$; or 
\item[$\circ$] $\tfrac{d}{d-1} < r \leq \min\{\tfrac{d^2+1}{d(d-1)}, p\}$ and $\tfrac{1}{q} > \tfrac{d+1}{d-1}\tfrac{1}{p}-1$, $\frac{1}{q}>\frac 1p+ \frac{2}{r(d-1)} -1$; or 
\item[$\circ$] $\tfrac{2}{d-1} < r \leq \tfrac{d}{d-1}$ and $\frac{1}{q}>\frac 1p+ \frac{2}{r(d-1)} -1$. 
\end{itemize}

\item[(D')] $\tfrac{2(d+1)}{d-1}\le q\le \infty$, $ \frac{d-1}{d+1} \frac{1}{p} \leq \tfrac 1q\le \tfrac 1p$ and $\tfrac{1}{q} > \tfrac{1}{dr}$ for $  \tfrac{2(d+1)}{d(d-1)}< r \le \tfrac{q(d-1)}{d+1}$.

\item[(E')] $\tfrac{2(d+1)}{d-1}\le q\le \infty$, $\frac{1}{q} \leq \frac{d-1}{d+1} \frac{1}{p}$, $\tfrac 1q\le \tfrac 1p\le  1- \tfrac{d+1}{d-1}\tfrac 1q$ and $\frac{1}{q}>\frac{1}{dr}$ for $ \frac{2(d+1)}{d(d-1)} < r \leq p$.
\end{enumerate}
\end{proposition}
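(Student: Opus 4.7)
The plan is to derive \eqref{eqn:eps-decay} for each region (A')--(E') by directly invoking the corresponding estimate from Proposition~\ref{prop:bounds A_j with ST}. Each of the bounds (A)--(E) takes the form $\|\cA_j f\|_{L^q(L^r)} \lesssim 2^{-j\alpha(p,q)}\|f\|_p$ for an explicit $\alpha(p,q)$, so it suffices to check that under the stated hypotheses $\alpha(p,q) > 1/r$, and then take $\varepsilon = \alpha(p,q) - 1/r > 0$. The bullet structure in each part of the proposition is dictated by where the line $\{\alpha(p,q)=1/r\}$ cuts through the boundary of the validity region of the underlying single-scale estimate.

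The per-region verification is direct algebra. In (A'), $\alpha = (d-1)/p'$, and $\alpha > 1/r$ is equivalent to $1/p < 1-\tfrac{1}{r(d-1)}$, which is the second bullet; in the first bullet the hypothesis $d/(d-1)<r\le p$ forces $p'<d$ and so $(d-1)/p'>(d-1)/d>1/r$ automatically. In (B'), $\alpha = \tfrac{d-1}{2}(\tfrac{1}{q}+\tfrac{1}{2})$, and $\alpha>1/r$ reads $\tfrac{1}{q}>\tfrac{2}{r(d-1)}-\tfrac{1}{2}$; this is vacuous over the range $q \le \tfrac{2(d+1)}{d-1}$ precisely when $r>\tfrac{2(d+1)}{d(d-1)}$, giving the first bullet. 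In (C'), $\alpha = \tfrac{d-1}{2}(\tfrac{1}{q}+\tfrac{1}{p'})$, and the threshold condition $\alpha>1/r$ rearranges to $\tfrac{1}{q}>\tfrac{1}{p}+\tfrac{2}{r(d-1)}-1$; this appears verbatim in the second and third bullets, while in the first bullet (with $r>\tfrac{d^2+1}{d(d-1)}$) it is implied by the other boundary constraint $\tfrac{1}{q}>\tfrac{d+1}{d-1}\tfrac{1}{p}-1$ (the line $\overline{Q_3Q_4}$) together with $\tfrac{1}{p}<\tfrac{d-1}{d}$. Finally in (D') and (E'), $\alpha = d/q$ and $\alpha > 1/r$ is just $\tfrac{1}{q}>\tfrac{1}{rd}$, as stated.

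A companion check is that the hypotheses of Proposition~\ref{prop:bounds A_j with ST} itself are automatically inherited: the requirement $r\le p$ for (A), (C), (E), and $r\le q(d-1)/(d+1)$ for (D). Some of these are explicit in the bullets; the others follow from short algebraic manipulations. For example, the condition $\tfrac{1}{p}<1-\tfrac{1}{r(d-1)}$ (which is implicit in the second bullet of (A') and the second and third bullets of (C') after combining with $1/q\le 1/p'$) yields $p>\tfrac{r(d-1)}{r(d-1)-1}$, and the latter is $\ge r$ exactly when $r\le \tfrac{d}{d-1}$, which is the range relevant in those cases.

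The main obstacle is really just this bookkeeping: for each sub-bullet one must match against the geometric picture in the $(1/p,1/q)$-plane and verify both that the decay exponent $\alpha(p,q)$ strictly exceeds $1/r$ and that $(p,q,r)$ lies in the domain of the cited bound from Proposition~\ref{prop:bounds A_j with ST}. Region (C') is the most involved, since its three bullets arise from three separate boundaries interacting with the threshold line: the Knapp line $\overline{Q_3Q_4}$, the vertical line through $\overline{Q_2 Q_3}$, and the line $\overline{P(r) Q_4}$ determined by the Stein--Tomas exponent. No further analytic input beyond Proposition~\ref{prop:bounds A_j with ST} is needed.
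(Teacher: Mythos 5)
Your proposal is correct and follows essentially the same route as the paper: the paper's proof likewise reduces each of (A')--(E') to checking that the decay exponent from Proposition~\ref{prop:bounds A_j with ST} strictly exceeds $1/r$ under the stated constraints, together with the same algebraic verifications that $r\le p$ (resp.\ $r\le q(d-1)/(d+1)$) is inherited. The only cosmetic difference is in the first bullet of (C'), where you derive the threshold from the Knapp-line constraint $\tfrac1q>\tfrac{d+1}{d-1}\tfrac1p-1$ and $r\le p$, while the paper uses $\tfrac1q\ge\tfrac{d-1}{d+1}\tfrac1{p'}$; both are valid.
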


\begin{proof}
It suffices to check that the exponents appearing in the inequalities $A-E$ in Proposition \ref{prop:bounds A_j with ST} are strictly greater than $1/r$ under the claimed conditions.
\begin{enumerate}[(A')]
\item The exponent in (A), Proposition \ref{prop:bounds A_j with ST}, is $\tfrac{d-1}{p'}$. Note that $\tfrac{d-1}{p'} > \tfrac{1}{r}$ is satisfied if $\tfrac{d}{d-1} < r \leq p$. Moreover, it also holds if 
$\frac{1}{p} < \frac{(d-1)r-1}{(d-1)r}$ and $ r \leq \tfrac{d}{d-1}$. The additional constraint $r>\tfrac{2}{d-1}$ follows since $p \geq 2$ in (A). Note this requires $d \geq 3$.

    \item The exponent in (B), Proposition \ref{prop:bounds A_j with ST}, is $\tfrac{d-1}{2}(\tfrac{1}{q}+\tfrac{1}{2})$. Note that $\tfrac{d-1}{2}(\tfrac{1}{q}+\tfrac{1}{2}) > \tfrac{1}{r}$ is satisfied if $\tfrac{2(d+1)}{d(d-1)} < r \leq 2$, as $q \leq \tfrac{2(d+1)}{d-1}$. Moreover, it also holds if $\frac{1}{q}>\frac{2}{(d-1)r} - \frac{1}{2}$ and $r \leq \tfrac{2(d+1)}{d(d-1)}$. The additional constraint $r>\tfrac{2}{d-1}$ follows since $q \geq 2$ in (B). Note this requires $d \geq 3$.

    \item The exponent in (C), Proposition \ref{prop:bounds A_j with ST}, is $\tfrac{d-1}{2}(\tfrac{1}{q}+\tfrac{1}{p'})$. Note that $\tfrac{d-1}{2}(\tfrac{1}{q}+\tfrac{1}{p'}) > \tfrac{1}{r}$ is satisfied if $\tfrac{d^2+1}{d(d-1)} < r \leq p$, as $\tfrac{1}{q} \geq \tfrac{d-1}{d+1} \tfrac{1}{p'}$. The additional constraint $\tfrac{1}{q}>\tfrac{d+1}{d-1} \tfrac{1}{p} - 1$ follows from $r \leq p$. Note that this and $q \geq p'$, also yield the additional constraint $\tfrac{1}{p} < \tfrac{d-1}{d}$. 
    
    For the remaining values $r \leq \tfrac{d
   ^2+1}{d(d-1)}$, it simply holds by the assumption $\frac{1}{q}>\frac 1p+ \frac{2}{r(d-1)} -1$. Note that $r\leq p$ is automatically satisfied if $r \leq \tfrac{d}{d-1}$. The lower bound $r>\tfrac{2}{d-1}$ follows from the assumption $\frac{1}{q}>\frac 1p+ \frac{2}{r(d-1)} -1$ with $q \geq p'$ and $p \leq 2$. This yields $\tfrac{2}{d-1} <  r \leq p \leq 2 $, which requires $d \geq 3$.

    \item The exponent in (D), Proposition \ref{prop:bounds A_j with ST}, is $\tfrac{d}{q}$. Note that $\tfrac{d}{q} > \tfrac{1}{r}$ is trivially satisfied if $\tfrac{1}{q} > \tfrac{1}{dr}$. The lower bound  $ r > \tfrac{2(d+1)}{d(d-1)} $, follows from $q \leq \tfrac{2(d+1)}{d-1}$. Note that when combined with $r \leq \tfrac{q(d-1)}{d+1}$ requires $d \geq 3$.

    \item The exponent in (E), Proposition \ref{prop:bounds A_j with ST}, is $\tfrac{d}{q}$. Note that $\tfrac{d}{q} > \tfrac{1}{r}$ is trivially satisfied if $\tfrac{1}{q} > \tfrac{1}{dr}$. %Moreover, as $r \leq p$, one has the additional condition $\tfrac{1}{q} > \tfrac{1}{dp}$. 
    The constraint $r > \tfrac{2(d+1)}{d(d-1)}$ follows from $q \geq \tfrac{2(d+1)}{d-1}$. Note the above constraints combined yield the additional condition $\tfrac{1}{d} \leq \tfrac{r}{q} \leq \tfrac{d-1}{d+1}$, which requires $d \geq 3$. \qedhere

\end{enumerate}
\end{proof}

We next turn to the case $d=2$. As observed in the proof of the previous proposition, the bounds in Proposition \ref{prop:bounds A_j with ST} do not yield any bound of the type \eqref{eqn:eps-decay} for $d=2$. We use instead the upgraded bounds from Propositions \ref{prop:bounds Aj d=2 B} and \ref{prop:bounds Aj d=2 D}.

\begin{proposition}\label{prop:bounds 1/r d=2}
Let $d=2$. The inequality \eqref{eqn:eps-decay} holds for some $\varepsilon>0$
 under the following conditions on $1 \leq p, q \leq \infty$, $0 < r \leq \infty$:
 \begin{enumerate}
     \item[(B$_1$')] $ \tfrac{1}{3p'}  \leq \tfrac{1}{q} \leq \tfrac{1}{p}$ and $2 < q \leq 4$, $2<p\le 4$, and
     \begin{itemize}
      \item[$\circ$] $5/2 < r \leq p $; or 
       \item[$\circ$] $2 < r \leq  \min\{ 5/2 , p\}$ and 
       $\frac{1}{q}>\frac 1p+ \frac{2}{r} -1$.
     \end{itemize}

\item[(B$_2$')]  $\tfrac{1}{q} \leq \min \{ \tfrac{1}{3p'} , \tfrac{1}{p}\}$, $4 \leq q \leq 6$ and $\frac{1}{q}>\frac{1}{2r}$ for $2 < r \leq p$.

\item[(D')]  $ 6 \le q\le \infty$, $ \tfrac{1}{3p}  \leq \tfrac 1q\le \tfrac 1p$ and  $\tfrac{1}{q} > \tfrac{1}{2r}$  for $3 < r \leq p$. 
\end{enumerate}
\end{proposition}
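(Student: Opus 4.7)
The plan is to reuse the strategy of Proposition~\ref{prop:bounds with 1/r}: for each subcase I locate the single-scale bound $\|\cA_j f\|_{L^q(L^r)}\lesssim 2^{-js_{p,q}+j\varepsilon'}\|f\|_p$ with the best available decay exponent $s_{p,q}$, and verify the strict inequality $s_{p,q}>1/r$ under the stated hypotheses. The key difference from $d\ge 3$ is that the Stein--Tomas-based estimates of Proposition~\ref{prop:bounds A_j with ST} leave no room for an $\varepsilon$-improvement when $d=2$; one must invoke instead Propositions~\ref{prop:bounds Aj d=2 B} and \ref{prop:bounds Aj d=2 D}, whose $\varepsilon'$-loss arising from the Guth--Wang--Zhang local smoothing theorem is absorbed by the resulting gap between $s_{p,q}$ and $1/r$.

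For case (B$_1$') I apply Proposition~\ref{prop:bounds Aj d=2 B}~(B$_1$), which in $d=2$ yields $s_{p,q}=\tfrac{1}{2}\big(\tfrac 1q+\tfrac{1}{p'}\big)-\varepsilon'$. The target inequality $\tfrac{1}{2}\big(\tfrac 1q+\tfrac{1}{p'}\big)>\tfrac{1}{r}$ rewrites as $\tfrac 1q>\tfrac 1p+\tfrac 2r-1$, which is precisely the stated hypothesis when $2<r\le 5/2$. For $r>5/2$ with $r\le p$ this inequality is not explicit in the hypotheses and must be checked: since $\tfrac{1}{2}\big(\tfrac 1q+\tfrac{1}{p'}\big)$ is minimized on the prescribed region at the extreme point $\tfrac 1q=\tfrac{1}{3p'}$, $p=r$, where it equals $\tfrac{2(1-1/r)}{3}$, the elementary inequality $\tfrac{2(1-1/r)}{3}>\tfrac{1}{r}$ reduces exactly to $r>5/2$, with strict slack everywhere else in the region.

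For cases (B$_2$') and (D') I apply Proposition~\ref{prop:bounds Aj d=2 B}~(B$_2$) and Proposition~\ref{prop:bounds Aj d=2 D} respectively; both produce the exponent $s_{p,q}=\tfrac{2}{q}-\varepsilon'$ in $d=2$. The inequality $\tfrac{2}{q}>\tfrac{1}{r}$ is literally the stated hypothesis $\tfrac{1}{q}>\tfrac{1}{2r}$, and the parenthetical bounds $r>2$ in (B$_2$') and $r>3$ in (D') follow automatically from this hypothesis together with $q\le 6$ and $q\ge 6$, respectively. There is no genuine obstacle: the proof is essentially arithmetic, verifying that the single-scale decay rates beat the variation threshold $1/r$ by a positive margin. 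The only mildly delicate point is the identification of the worst-case configuration at the transition $r=5/2$ in (B$_1$'), where the constraint $r\le p$ becomes tight and must be invoked to recover the strict inequality.
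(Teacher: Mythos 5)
Your proposal is correct and follows essentially the same route as the paper: in each of (B$_1$'), (B$_2$'), (D') you invoke the local-smoothing-based single-scale bounds of Propositions~\ref{prop:bounds Aj d=2 B} and \ref{prop:bounds Aj d=2 D} and check arithmetically that the decay exponent strictly exceeds $1/r$ (your extremal computation at $\tfrac1q=\tfrac1{3p'}$, $p=r$ for the subcase $r>5/2$ is exactly the paper's use of $q\le 3p'$ and $r\le p$). One trivial slip: the automatic lower bound $r>2$ in (B$_2$') comes from $\tfrac1q>\tfrac1{2r}$ combined with $q\ge 4$, not with $q\le 6$; since $r>2$ is already a stated hypothesis this does not affect the argument.
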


\begin{proof}
As in Proposition \ref{prop:bounds with 1/r}, it suffices to check that the exponents appearing in the inequalities $B_1$, $B_2$ in Proposition \ref{prop:bounds Aj d=2 B} and in Proposition \ref{prop:bounds Aj d=2 D} are strictly greater than $1/r$ under the claimed conditions.
\begin{enumerate}
    \item[(B$_1$')] The exponent in (B$_1$'), Proposition \ref{prop:bounds Aj d=2 B} is $\frac{1}{2} (\frac{1}{q} + \frac{1}{p'}) - \varepsilon$. Choosing $\varepsilon>0$ small enough, $\frac{1}{2} (\frac{1}{q} + \frac{1}{p'}) - \varepsilon > \tfrac{1}{r}$ is satisfied using $q \leq 3p'$ and $5/2 < r \leq p$. 
    If $r \leq \min\{5/2, p\}$, the required condition follows simply by assumption choosing $\varepsilon>0$ to be small enough. Note that $r>2$ follows from the assumptions $\tfrac{1}{2} (\frac{1}{q} + \frac{1}{p'}) > \tfrac{1}{r}$ and $p \leq q$.
    
    \item[(B$_2$')] The exponent in (B$_2$'), Proposition \ref{prop:bounds Aj d=2 B} is $2/q-\varepsilon$. Choosing $\varepsilon>0$ small enough, $2/q-\varepsilon>\frac{1}{r}$ is trivially satisfied by the assumption $\tfrac{1}{q}>\tfrac{1}{2r}$. 
    The lower bound $r>2$ follows from the assumptions $\tfrac{1}{q}>\tfrac{1}{2r}$ and $q \geq 4$.
        
    \item[(D')] The exponent in (D'), Proposition \ref{prop:bounds Aj d=2 D} is $2/q-\varepsilon$. Choosing $\varepsilon>0$ small enough, $2/q-\varepsilon>\frac{1}{r}$ is trivially satisfied by the assumption $\tfrac{1}{q}>\tfrac{1}{2r}$.  
    Note that the lower bound $r>3$ follows combining the assumptions $\tfrac{1}{q}>\tfrac{1}{2r}$ and $q \geq 6$. \qedhere
\end{enumerate}
\end{proof}

Combining Propositions \ref{prop:bounds with 1/r} and \ref{prop:bounds 1/r d=2} with the observations in  \S\ref{sec:edges} we get the following estimates for $V_r^IA$ for all $r\ge 1$. We use the trivial fact that $L^q(V_{r_0})$  is embedded in $L^q(V_{r_1})$ for $r_0<r_1$, which allows to overcome the $r \leq p$ or $r \leq \tfrac{q(d-1)}{d+1}$ constraints in the above Propositions.

\begin{corollary} Let $d\ge 3$. 
 $V_r^IA:L^p\to L^q$ is bounded
if one of the following conditions is satisfied:

(i) $(\tfrac 1p,\tfrac 1q)$ belongs to  the open 
 line segment $(Q_1(r), Q_2)$ or 
 the interior of the domain
$\fP_d(r)$ in Theorem \ref{dge3thm} ($r>\tfrac{d^2+1}{d(d-1)}$).

(ii) $(\tfrac 1p,\tfrac 1q)$ belongs to the open 
 line segment $(Q_1(r), Q_2)$ or 
 the interior of the domain
$\fP_d(r)$ in Theorem \ref{thm:intermediate r}  ($\tfrac{d}{d-1}<r\le \tfrac{d^2+1}{d(d-1)}$). 

(iii) $(\tfrac 1p,\tfrac 1q)$ belongs  to the open 
 line segment $(Q_1(r), Q_2(r))$ or 
 the interior of the domain $\fQ_d(r)$ in Theorem \ref{thm:small r}  ($1\le r\le \tfrac{d}{d-1}$ for $d\geq 4$ or $\tfrac{4}{3} < r \leq \tfrac{3}{2} $ for $d=3$).
 \end{corollary}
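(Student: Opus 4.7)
The plan is to combine the Besov embedding $B^{1/r}_{r,1}\hookrightarrow V_r$ from \eqref{BPe} with the single-scale estimates established in Proposition \ref{prop:bounds with 1/r}. By \eqref{BPe} it suffices to prove $\|\cA f\|_{L^q(B^{1/r}_{r,1})}\lesssim \|f\|_{L^p}$. Decomposing $\cA=\sum_{j\ge 0}\cA_j$ in the spatial frequency and applying Corollary \ref{cor:trading-der} to each dyadic piece yields
$$\|\cA_j\|_{L^p\to L^q(B^{1/r}_{r,1})}\lesssim 2^{j/r}\|\cA_j\|_{L^p\to L^q(L^r)}+C_N 2^{-jN}.$$
Hence, if the single-scale bound $\|\cA_j\|_{L^p\to L^q(L^r)}\lesssim 2^{-j(1/r+\varepsilon)}$ holds for some $\varepsilon>0$, summation over $j$ gives the desired $L^p\to L^q(B^{1/r}_{r,1})$ estimate, and therefore the claimed $L^p\to L^q$ boundedness of $V^I_r A$.

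The next task is to verify, for each of the three ranges of $r$ in the corollary, that every $(1/p,1/q)$ in the stated region (the interior of $\fP_d(r)$ or $\fQ_d(r)$, together with the open diagonal segment) falls into at least one of the subregions (A'), (B'), (C'), (D'), (E') of Proposition \ref{prop:bounds with 1/r}, where the decay $2^{-j(1/r+\varepsilon)}$ is guaranteed. The explicit descriptions of the boundary edges in \S\ref{sec:edges} match the conditions defining these subregions, so this amounts to a direct, if somewhat tedious, geometric check. For instance, in case (i) one uses (A'), (C') to cover the portion of the pentagon with $1/p\ge 1/2$, and (D'), (E') for $1/p\le 1/2$, with (B') providing a common overlap. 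For the open diagonal segments $(Q_1(r),Q_2)$ or $(Q_1(r),Q_2(r))$ one proceeds with $p=q$, invoking (A') when $p\le 2$, (B') for intermediate $p$, and (D') for large $p$.

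The main subtlety is handling the auxiliary constraints $r\le p$ or $r\le q(d-1)/(d+1)$ appearing in several of the subcases. These are bypassed using the elementary monotonicity $V_{r_0}\hookrightarrow V_r$ valid for $r_0\le r$ (a consequence of $\ell^{r_0}\hookrightarrow\ell^r$): to establish a $V_r$ bound it suffices to prove the stronger $V_{r_0}$ bound for some $r_0\le r$ satisfying the missing constraint while still lying in the effective decay range, e.g.\ $r_0>2(d+1)/(d(d-1))$ in subcase (D'). A short calculation shows that such an intermediate $r_0$ always exists in the configurations appearing in the corollary, thanks to the hypotheses $d\ge 3$ and $r>d/(d-1)$ (for (i), (ii)) or the explicit restrictions on $r$ in (iii). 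Once this verification is complete, the assertion of the corollary follows by the reduction scheme outlined above.
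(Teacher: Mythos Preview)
Your proposal is correct and follows essentially the same approach as the paper: reduce to the single-scale decay \eqref{eqn:eps-decay} via the Besov embedding \eqref{BPe} and Corollary \ref{cor:trading-der}, then invoke Proposition \ref{prop:bounds with 1/r} together with the edge descriptions in \S\ref{sec:edges}, using the monotonicity $V_{r_0}\hookrightarrow V_r$ for $r_0\le r$ to bypass the auxiliary constraints $r\le p$ or $r\le q(d-1)/(d+1)$. The paper states this more tersely (in the paragraph immediately preceding the corollary) but the content is the same.
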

 
 \begin{corollary}
 Let $d=2$. $V_r^IA:L^p\to L^q$ is bounded
if one of the following conditions is satisfied:

(i) $(\tfrac 1p,\tfrac 1q)$ belongs  to the open 
 line segment $(Q_1(r), Q_2)$ or 
 the interior of the domain $\fQ_2(r)$  in Theorem \ref{d=2thm}, (i) ($r > \tfrac{5}{2}$).

(ii) $(\tfrac 1p,\tfrac 1q)$ belongs  to the open 
 line segment $(Q_1(r), Q_2)$ or 
 the interior of the domain $\fQ_2(r)$  in Theorem \ref{d=2thm}, (ii) ($2 < r\le \tfrac{5}{2} $).
\end{corollary}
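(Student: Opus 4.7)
The strategy is to reduce the corollary to the single-scale decay estimate \eqref{eqn:eps-decay}, namely $\|\cA_j f\|_{L^q(L^r)}\lesssim 2^{-j(1/r+\varepsilon)}\|f\|_p$ for some $\varepsilon>0$ uniformly in $j\in\bbN_0$. Once this is in place, the Besov embedding $B^{1/r}_{r,1}\hookrightarrow V_r$ from \eqref{BPe}, combined with Corollary~\ref{cor:trading-der} applied with $s=1/r$, yields $\|\cA f\|_{L^q(B^{1/r}_{r,1})}\lesssim \|f\|_p$, hence an $L^p\to L^q$ bound for $V_r^I A$ via the definition of the variation norm.

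When $r\le p$, the single-scale estimate is supplied directly by Proposition~\ref{prop:bounds 1/r d=2}. For $(1/p,1/q)$ in the interior of $\fQ_2(r)$ or on the open segment $(Q_1(r),Q_2)$, one of (B$_1'$), (B$_2'$), (D') applies according to whether $q\in(2,4]$, $[4,6]$, or $[6,\infty)$; a short case analysis shows that the edge inequalities defining $\fQ_2(r)$ -- in particular $1/q>1/(2r)$ on the bottom edge, and either $1/q>1/(2p)$ (when $r>5/2$) or $1/q>1/p+2/r-1$ (when $2<r\le 5/2$) on the lower-right edge -- are precisely the $r$-dependent strict inequalities demanded by the proposition. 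Together with the diagonal constraint $1/q<1/p$ and the upper-right constraint $1/q>3/p-1$ (with equality at $Q_2$), this covers the stated region.

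When $r>p$, Proposition~\ref{prop:bounds 1/r d=2} does not apply directly, since each clause requires $r\le p$. We sidestep this through the trivial embedding $V_{r_0}\hookrightarrow V_r$ valid for $r_0\le r$: it suffices to prove the corresponding $L^p\to L^q$ bound for $V_{r_0}^I A$ at the same $(1/p,1/q)$. Taking $r_0=p$, the $r_0$-dependent constraints $1/q>1/(2r_0)$ and $1/q>1/p+2/r_0-1$ become $1/q>1/(2p)$ and $1/q>3/p-1$, respectively -- both of which are boundary inequalities of $\fQ_2(r)$ automatically satisfied in its interior. The remaining non-$r$-dependent constraints of (B$_1'$), (B$_2'$), (D') are then matched by the $q$-based case analysis as before (one checks that $q<2p$ in the interior forces $p>3$ whenever $q\ge 6$, and similar relations in the other ranges), yielding the single-scale estimate in the $B^{1/p}_{p,1}$ scale and hence, via $V_p\hookrightarrow V_r$, the desired $V_r^I A$ bound.

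The main analytical input has already been absorbed into Propositions~\ref{prop:bounds Aj d=2 B} and \ref{prop:bounds Aj d=2 D}, which rest on the Guth--Wang--Zhang resolution \cite{GWZ} of Sogge's local smoothing conjecture at $p=4$ in two dimensions. Given those inputs, there is no substantive obstacle: the proof is essentially bookkeeping, matching edges of $\fQ_2(r)$ to the decay exponents in Proposition~\ref{prop:bounds 1/r d=2}, with the one genuine subtlety being the use of $V_{r_0}\hookrightarrow V_r$ to cover the regime $r>p$.
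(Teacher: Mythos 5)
Your overall strategy is exactly the paper's: reduce to the single-scale decay estimate \eqref{eqn:eps-decay}, feed it into the embedding \eqref{BPe} via Corollary \ref{cor:trading-der}, invoke Proposition \ref{prop:bounds 1/r d=2}, and absorb the regime $r>p$ through the embedding $V_{r_0}\hookrightarrow V_{r}$ with $r_0=p$ (the paper states this last device explicitly just before the corollary). So the architecture is fine.

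There is, however, a genuine gap in your claimed trichotomy that one of (B$_1$'), (B$_2$'), (D') applies according to whether $q\in(2,4]$, $[4,6]$, or $[6,\infty)$. For $4<q<6$ the only clause you invoke is (B$_2$'), which carries the constraint $\tfrac1q\le\tfrac1{3p'}$ (i.e.\ $(\tfrac1p,\tfrac1q)$ on or below the critical line); this is \emph{not} one of the edge inequalities of $\fQ_2(r)$ and is not implied by membership in its interior. Concretely, $(\tfrac1p,\tfrac1q)=(\tfrac13,\,0.23)$ (so $p=3$, $q\approx4.35$) lies in the interior of $\fQ_2(r)$ for every $r>2$, yet satisfies $\tfrac1q>\tfrac1{3p'}=\tfrac29$ and $q>4$, so neither (B$_1$') (which requires $q\le4$) nor (B$_2$') applies, and (D') requires $q\ge6$; the only remaining single-scale bound, Proposition \ref{prop:bounds A_j with ST}(B), gives decay $\tfrac12(\tfrac1q+\tfrac12)\approx0.365$ with inner exponent $\le2$, which does not beat $\tfrac1{r_0}\ge\tfrac12$. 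The repair is to interpolate the single-scale estimates themselves rather than quoting the clauses verbatim: interpolating the $L^2\to L^2(L^2)$ bound \eqref{eq:L2} at $(\tfrac12,\tfrac12)$ with the critical-line bound \eqref{eq:critical line d=2} extends the decay $2^{-j(\frac12(\frac1q+\frac1{p'})-\varepsilon)}$ of clause (B$_1$) in Proposition \ref{prop:bounds Aj d=2 B} to all of $\{2\le p\le q\le 6,\ \tfrac1q\ge\tfrac1{3p'}\}$, with inner exponent interpolating accordingly; one then checks that $\tfrac12(\tfrac1q+\tfrac1{p'})>\tfrac1r$ on the offending sliver (this is exactly the condition $\tfrac1q>\tfrac1p+\tfrac2r-1$, an edge of $\fQ_2(r)$ for $2<r\le\tfrac52$ and a consequence of $\tfrac1q>\tfrac1{2p}$ together with $p\ge r>\tfrac52$ otherwise). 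Equivalently, one may note that the set of $(\tfrac1p,\tfrac1q)$ for which \eqref{eqn:eps-decay} holds with some inner exponent $r_0\le r$ is convex, so it suffices to verify it near the vertices. With that amendment — splitting the range $4\le q\le 6$ according to the position relative to the critical line rather than forcing (B$_2$') — your bookkeeping closes.
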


\subsection{Various endpoint bounds}\label{sec:RWT}
We shall discuss various endpoint bounds that can be obtained by interpolation (in particular  Bourgain's  interpolation lemma as formulated in \S\ref{sec:Binterpol}).
This will settle all endpoint results claimed in our theorems except for a more sophisticated  strong type bound at the lower edges which will be discussed in the two subsequent sections.

We start by looking at the point $Q_3$.

\begin{lemma} \label{lem:Q3rwt} Let $d\ge 3$, $r>\frac{d}{d-1}$.  Let $p_3=\tfrac{d}{d-1}$, $q_3=d$. 

Then $\cA: L^{p_3,1}\to L^{q_3,\infty} (B^{1/r}_{r,1})$ is bounded. Consequently, $V_r^I A$ is of restricted weak type at $Q_3$ in Theorems \ref{dge3thm} and \ref{thm:intermediate r}.
\end{lemma}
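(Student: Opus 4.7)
The plan is to apply Bourgain's interpolation lemma (Lemma~\ref{lem:Binterpol}) to the single-scale operators $T_j = \cA_j$, viewed as maps $L^{p}\to L^{q}(B^{1/r}_{r,1})$, interpolating between a growth endpoint and a decay endpoint chosen so that the interpolation point in $(1/p,1/q)$-space is exactly $Q_3 = ((d-1)/d,1/d)$. Once $\cA\colon L^{p_3,1}\to L^{q_3,\infty}(B^{1/r}_{r,1})$ is established, the embedding $B^{1/r}_{r,1}\hookrightarrow V_r$ from~\eqref{BPe} immediately yields the restricted weak type bound for $V_r^I A$ asserted in the lemma.

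For the growth endpoint I would take $(p_0,q_0)=(1,\infty)$ uniformly in $r$: Corollary~\ref{cor:1inftyr} gives $\|\cA_j\|_{L^1\to L^\infty(L^r)}\lesssim 2^{j(1-1/r)}$, which Corollary~\ref{cor:trading-der} upgrades to $\|\cA_j\|_{L^1\to L^\infty(B^{1/r}_{r,1})}\lesssim 2^j$, i.e.\ $\gamma_0 = 1$. The decay endpoint splits into two cases. When $d/(d-1) < r \le 2$ (nonempty for $d\ge 3$), take $(p_1,q_1)=(r,r')$: Proposition~\ref{prop:bounds A_j with ST}(A) at $p=r\le 2$, $q=r'$, Besov index $r\le p$, gives $\|\cA_j\|_{L^r\to L^{r'}(L^r)}\lesssim 2^{-j(d-1)/r'}$, and Corollary~\ref{cor:trading-der} yields $\|\cA_j\|_{L^r\to L^{r'}(B^{1/r}_{r,1})}\lesssim 2^{-j\gamma_1}$ with $\gamma_1 = (r(d-1)-d)/r$, positive precisely because $r > d/(d-1)$; a short computation gives $\theta = r/(d(r-1))$ at which the interpolation point is $Q_3$. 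When $r \ge 2$, take instead $(p_1,q_1)=(2,2)$: combining the $L^2$-estimate~\eqref{eq:L2} with Bernstein's inequality applied to the $t$-bandlimited pieces $\Lambda_l\cA_j f(x,\cdot)$ (the contributions from $|l-j|>10$ being absorbed via Lemma~\ref{errorlemma}) yields $\|\cA_j\|_{L^2\to L^2(B^{1/r}_{r,1})}\lesssim 2^{-j(d-2)/2}$, so $\gamma_1 = (d-2)/2 > 0$ for $d \ge 3$, $\theta = 2/d$, and the interpolation point is again $Q_3$.

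In each case Bourgain's lemma delivers $\cA\colon L^{p_3,1}\to L^{q_3,\infty}(B^{1/r}_{r,1})$ upon identifying $(L^{p_0},L^{p_1})_{\theta,1} = L^{p_3,1}$ together with the vector-valued real interpolation $(L^{q_0}(B^{1/r}_{r,1}), L^{q_1}(B^{1/r}_{r,1}))_{\theta,\infty} = L^{q_3,\infty}(B^{1/r}_{r,1})$; the low-frequency piece $\cA_0$ causes no issue since its kernel is Schwartz. The main obstacle in this plan is the $L^2\to L^2(B^{1/r}_{r,1})$ estimate with decay $2^{-j(d-2)/2}$ for $r \ge 2$: this bound falls outside the direct scope of Proposition~\ref{prop:bounds A_j with ST}, whose regions all constrain $r$ by $p$ or by $2$, so I would establish it separately by exploiting the single-frequency bandlimited structure of $\cA_j$ in the $t$-variable to transfer~\eqref{eq:L2} to higher Besov indices, the resulting $(d-2)/2$ gain being essentially the single-scale $L^2$ decay underlying Stein's spherical maximal theorem in dimensions $d \ge 3$.
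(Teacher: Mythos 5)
Your proposal is correct and, in the range $\tfrac{d}{d-1}<r\le 2$, coincides exactly with the paper's proof: Bourgain interpolation (Lemma \ref{lem:Binterpol}) between $\|\cA_j\|_{L^1\to L^\infty(B^{1/r}_{r,1})}\lesssim 2^{j}$ and $\|\cA_j\|_{L^r\to L^{r'}(B^{1/r}_{r,1})}\lesssim 2^{-j(\frac{d-1}{r'}-\frac1r)}$, which lands at $Q_3$. The only divergence is for $r>2$, where the paper simply reduces to $r\le 2$ by the standard embedding $B^{1/r_0}_{r_0,1}\hookrightarrow B^{1/r}_{r,1}$ for $r_0\le r$, whereas you set up a second (also valid, but more laborious) interpolation against an $L^2\to L^2(B^{1/r}_{r,1})$ bound with decay $2^{-j(d-2)/2}$ obtained from \eqref{eq:L2} and Bernstein in $t$.
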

\begin{proof} By standard embedding theorems, we can assume $r \leq 2$. For $r>\frac{d}{d-1}$ we have $\frac{d-1}{r'}-\frac 1r>0.$
We 
have from Corollary \ref{cor:1inftyr} and Proposition \ref{prop:bounds A_j with ST}, (A), \begin{align*}  \|\cA_j f\|_{L^\infty(L^r) } &\lc 2^{j(1-1/r)} \|f\|_{L^1},
    \\
    \|\cA_j f\|_{L^{r'}(L^r)} &\lc 2^{-j\frac{d-1}{r'}} \|f\|_{L^r},
    \end{align*} and by Corollary \ref{cor:trading-der}
    %Lemma \ref{errorlemma}
    \begin{align*}
    \|\cA_j f\|_{L^\infty(B_{r,1}^{1/r} ) } &\lc 2^{j} \|f\|_{L^1},
    \\
    \|\cA_j f\|_{L^{r'}(B_{r,1}^{1/r})} &\lc 2^{-j(\frac{d-1}{r'}-\frac 1r)} \|f\|_{L^r}. 
\end{align*}
The lemma then follows by applying  \S\ref{sec:Binterpol} to the last two inequalities. The bound for $V_rA$ is a simple corollary in view of \eqref{BPe}.
    \end{proof}

A similar argument yields a restricted weak type bound at $Q_4$.

\begin{lemma} \label{lem:Q4rwt} Let $d\ge 3$, $r>\frac{d^2+1}{d(d-1)}$ and $p_4=\frac{d^2+1}{d(d-1)}$, $q_4=\frac{d^2+1}{d-1}$. 

Then 
$\cA: L^{p_4,1}\to L^{q_4,\infty} (B^{1/r}_{r,1})$ is bounded. Consequently, $V_r^IA$ is of restricted weak type at $Q_4$ in Theorem \ref{dge3thm}.
\end{lemma}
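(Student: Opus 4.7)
The plan is to mirror the strategy of Lemma \ref{lem:Q3rwt}: produce two single-scale estimates for $\cA_j$ with opposing $j$-exponents, then apply Bourgain's interpolation lemma (\S\ref{sec:Binterpol}) so that the interpolation point lands exactly at $Q_4$.

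First, I would reduce to the range $p_4 < r\le 2$. Since $1/r_0 - 1/r_0 = 1/r - 1/r$ on the relevant Sobolev--Besov scale, the embedding $B^{1/r_0}_{r_0,1}\hookrightarrow B^{1/r}_{r,1}$ for $r_0\le r$ allows one to replace any $r>2$ by some $r_0\in(p_4,2]$, which is nonempty because $p_4 = \frac{d^2+1}{d(d-1)}<2$ whenever $d\ge 3$. Next I assemble the two endpoint estimates. The \emph{blowup} endpoint $\|\cA_j f\|_{L^\infty(B^{1/r}_{r,1})}\lesssim 2^{j} \|f\|_1$ follows by combining Corollary \ref{cor:1inftyr} (which gives $\|\cA_j f\|_{L^\infty(L^r)}\lesssim 2^{j(1-1/r)}\|f\|_1$) with Corollary \ref{cor:trading-der}. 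For the \emph{decay} endpoint, I take $p=r$ and $q_\ast=\frac{(d+1)r'}{d-1}$, landing precisely on the lower edge $\frac{1}{q}=\frac{d-1}{d+1}\frac{1}{p'}$ of region (C) in Proposition \ref{prop:bounds A_j with ST}; this is legal since $r\le p$ and $r\le 2$. Computing, the (C)-exponent simplifies to $\frac{d-1}{2}(\frac{1}{q_\ast}+\frac{1}{r'})=\frac{d(d-1)}{(d+1)r'}$, and trading derivatives yields
\[
\|\cA_j f\|_{L^{q_\ast}(B^{1/r}_{r,1})}\lesssim 2^{-j\gamma_1}\|f\|_r,\qquad \gamma_1:=\frac{d(d-1)}{(d+1)r'}-\frac{1}{r}.
\]
A short algebraic check confirms $\gamma_1>0 \iff r>p_4$, matching the hypothesis exactly.

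With $\gamma_0=1$ and $\gamma_1$ as above, I apply Lemma \ref{lem:Binterpol} to the couples $\overline A=(L^1,L^r)$ and $\overline B=(L^\infty(B^{1/r}_{r,1}),L^{q_\ast}(B^{1/r}_{r,1}))$ (extending $\cA_j\equiv 0$ for $j\le 0$; the smooth low-frequency contribution is handled separately by a trivial Young estimate). The critical algebraic identity is
\[
1+\gamma_1 = \frac{1}{r'}+\frac{d(d-1)}{(d+1)r'}=\frac{d^2+1}{(d+1)r'},
\]
so the Bourgain parameter is $\theta^\ast=\gamma_0/(\gamma_0+\gamma_1)=\frac{(d+1)r'}{d^2+1}$. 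Feeding this into the interpolation coordinates gives $\frac{1}{p_\theta}=1-\theta^\ast/r'=\frac{d(d-1)}{d^2+1}=\frac{1}{p_4}$ and $\frac{1}{q_\theta}=\theta^\ast/q_\ast=\frac{d-1}{d^2+1}=\frac{1}{q_4}$. Hence $\overline A_{\theta^\ast,1}=L^{p_4,1}$ and $\overline B_{\theta^\ast,\infty}=L^{q_4,\infty}(B^{1/r}_{r,1})$, producing the claimed boundedness $\cA:L^{p_4,1}\to L^{q_4,\infty}(B^{1/r}_{r,1})$. The embedding \eqref{BPe}, $B^{1/r}_{r,1}\hookrightarrow V_r$, then promotes this to restricted weak type $(p_4,q_4)$ for $V_r^IA$, as stated.

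The principal obstacle is identifying the correct decay endpoint $(p_1,q_1)$. It must simultaneously (i) admit a usable strong-type estimate from the available single-scale bounds --- forcing it onto the boundary $\tfrac{1}{q}=\tfrac{d-1}{d+1}\tfrac{1}{p'}$ of region (C); (ii) lie on the secant from $(1,0)$ through $Q_4$ in the $(\tfrac{1}{p},\tfrac{1}{q})$-plane; and (iii) produce a Bourgain ratio $\theta^\ast=1/(1+\gamma_1)$ that reproduces the exact position of $Q_4$ on that secant. The choice $p_1=r$, $q_1=(d+1)r'/(d-1)$ is pinned down uniquely by (i) and (ii), and the miracle is that constraint (iii) is then automatically satisfied thanks to the identity displayed above --- the same identity that locates the threshold $r=p_4$ for $\gamma_1>0$.
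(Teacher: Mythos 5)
Your proof is correct and is essentially the paper's own argument: the same two single-scale inputs (the $L^1\to L^\infty(B^{1/r}_{r,1})$ blow-up bound from Corollary \ref{cor:1inftyr} plus Corollary \ref{cor:trading-der}, and the decay bound at $p_\circ=r$, $q_\circ=\tfrac{d+1}{d-1}r'$ — the paper quotes region (E) of Proposition \ref{prop:bounds A_j with ST} while you use the lower edge of region (C), but these coincide at that corner point and give the identical exponent $\tfrac{d(d-1)}{(d+1)r'}$), followed by Bourgain's interpolation landing at $Q_4$. The exponent arithmetic, the threshold $\gamma_1>0\iff r>p_4$, and the reduction to $r\le 2$ all match the paper.
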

\begin{proof}
By standard embedding theorems, we can assume $r \leq 2$.
By assumption on $r$ we have $\frac{d(d-1)}{(d+1)r'}-\frac 1r>0$.
It then suffices to interpolate using \S\ref{sec:Binterpol}
the inequalities
\begin{align*}
\| \cA_j f\|_{L^\infty(B_{r,1}^{1/r})} &\lc 2^j\|f\|_{L^1}
\\
\|\cA_j f\|_{L^{q_\circ} (B_{r,1}^{1/r}) } &\lc 
2^{-j(\frac{d}{q_\circ}-\frac 1r)} \lc \|f\|_{L^{p_\circ} }
\quad \text{ with } p_\circ= r,\quad q_\circ= \tfrac{d+1}{d-1}r';
\end{align*}
the last inequality follows from Proposition \ref{prop:bounds A_j with ST}, (E).
\end{proof}

\begin{corollary}\label{cor:Q2Q3Q4}  Let $d\ge 3$. Then the following hold:

(i) $V_r^IA:L^p\to L^q$ is bounded
if $(1/p,1/q)$ belongs to the open segment $(Q_3,Q_4)$ in Theorem \ref{dge3thm} ($r>\tfrac{d^2+1}{d(d-1)}$).

(ii) $V_r^IA:L^{p,1}\to L^q$ is bounded
if $(1/p,1/q)$ belongs to the half-open segment $[Q_2,Q_3)$ in Theorems \ref{dge3thm} and \ref{thm:intermediate r} ($r>\tfrac{d}{d-1}$).

\end{corollary}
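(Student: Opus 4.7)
The plan is to derive both parts from Lemmas~\ref{lem:Q3rwt} and~\ref{lem:Q4rwt} by real interpolation, with an extra step needed to handle the vertical segment in~(ii).

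For part (i), I would apply the Marcinkiewicz real interpolation theorem in its Lorentz-space formulation (cf.~\cite{BL1976}) to the restricted weak type endpoints $\cA:L^{p_i,1}\to L^{q_i,\infty}(B^{1/r}_{r,1})$ at $Q_i$, $i=3,4$. Since $p_3\ne p_4$ and $q_3\ne q_4$, the open segment $(Q_3,Q_4)$ is genuinely off-diagonal in both coordinates, and the interpolation yields strong-type bounds $\cA:L^p\to L^q(B^{1/r}_{r,1})$ on this segment. The embedding $B^{1/r}_{r,1}\hookrightarrow V_r$ from~\eqref{BPe} then transfers this into the strong-type bound for $V_r^IA$ asserted in~(i).

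For part~(ii), the segment $[Q_2,Q_3)$ is vertical ($1/p=(d-1)/d$ is fixed), so off-diagonal interpolation between points of different $p$-coordinate cannot reach it. The plan is first to establish a companion restricted weak type bound at $Q_2$, namely
\[
\cA:L^{d/(d-1),1}\to L^{d/(d-1),\infty}(B^{1/r}_{r,1}),
\]
by a further application of Bourgain's interpolation lemma (Lemma~\ref{lem:Binterpol}) to the dyadic pieces $\cA_j$. The growth bound $\|\cA_jf\|_{L^\infty(B^{1/r}_{r,1})}\lesssim 2^j\|f\|_1$ is retained at one end, and at the other end one selects a decay bound $\|\cA_jf\|_{L^{q_1}(B^{1/r}_{r,1})}\lesssim 2^{-j\gamma_1}\|f\|_{p_1}$ with parameters chosen so that the interpolation lands on $Q_2$; a direct computation forces $q_1=p_1'/(d-1)$ and $\gamma_1=d/p_1'-1$, which is positive exactly when $p_1>d/(d-1)$. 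The necessary dyadic bound follows, for an appropriate choice of $p_1$ depending on~$r$, by combining Proposition~\ref{prop:bounds A_j with ST} with the trading identity of Corollary~\ref{cor:trading-der}. With restricted weak type at both $Q_2$ and $Q_3$ on the vertical line $1/p=(d-1)/d$ in hand, a layer-cake decomposition of $\|V_r^IA\bbone_E\|_q^q$ split at the threshold $\mu\sim|E|^{(d-1)/d}$, using the $Q_3$ bound for $\lambda>\mu$ and the $Q_2$ bound for $\lambda<\mu$, produces restricted strong type $L^{d/(d-1),1}\to L^q$ for every $q\in(d/(d-1),d)$, that is, on the open segment $(Q_2,Q_3)$.

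The main obstacle I expect is the endpoint $Q_2$ itself, where the claim $L^{d/(d-1),1}\to L^{d/(d-1)}$ is strictly stronger than restricted weak type and cannot be read off from a layer-cake split. I would handle it by an upgrade parallel to the argument that converts Bourgain's restricted weak type at $Q_2$ for the global spherical maximal function $S$ into restricted strong type for the local operator $S^I$, exploiting in an essential way the compactness of the time interval $[1,2]$. Together with the case-by-case verification of the decay estimate that feeds Bourgain's interpolation at $Q_2$, this is where the bulk of the technical work should lie.
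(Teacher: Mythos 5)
Part (i) and the last two steps of your part (ii) are in line with the paper: the paper also obtains $(Q_3,Q_4)$ by interpolating the restricted weak type bounds of Lemmas \ref{lem:Q3rwt} and \ref{lem:Q4rwt}, gets the open segment $(Q_2,Q_3)$ by vertical-line real interpolation, and upgrades to restricted strong type at $Q_2$ itself by a unit-cube localization argument exploiting the compact time interval. The problem is your first step in part (ii), the derivation of restricted weak type at $Q_2$.

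Your Bourgain-interpolation scheme at $Q_2$ cannot be carried out. You anchor one endpoint at $(1/p_0,1/q_0)=(1,0)$, i.e.\ the growth bound $\|\cA_jf\|_{L^\infty(B^{1/r}_{r,1})}\lesssim 2^j\|f\|_1$, and your own computation then forces the second endpoint to be $(1/p_1,1/q_1)$ with $q_1=p_1'/(d-1)$ and $\gamma_1=d/p_1'-1$. But $\tfrac1{q_1}-\tfrac1{p_1}=\tfrac{d-1}{p_1'}-\tfrac1{p_1}=(d-1)-\tfrac{d}{p_1}$, which is positive precisely when $p_1>\tfrac{d}{d-1}$ --- that is, exactly in the regime where $\gamma_1>0$. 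Geometrically, $Q_2$ lies on the diagonal and the ray from $(1,0)$ through $Q_2$ immediately exits into $\{\tfrac1q>\tfrac1p\}$, so any admissible second endpoint would require a global bound $\cA_j:L^{p_1}\to L^{q_1}(B^{1/r}_{r,1})$ with $q_1<p_1$. No such bound holds (test on a sum of $N$ widely separated translates: the left side grows like $N^{1/q_1}$ and the right like $N^{1/p_1}$), and indeed every case (A)--(E) of Proposition \ref{prop:bounds A_j with ST} assumes $q\ge p$. So there is no ``appropriate choice of $p_1$'': at $p_1=\tfrac{d}{d-1}$ the decay $\gamma_1$ vanishes and the interpolation degenerates, and for $p_1>\tfrac{d}{d-1}$ the required input is false. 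The repair is either the paper's route --- deduce restricted weak type at $Q_2$ from Lemma \ref{lem:Q3rwt} by tiling $f$ over unit cubes, using locality of $V_r^IA$ and H\"older on the bounded sets $Q_\fz^*$ to lower the target exponent from $d$ to $\tfrac{d}{d-1}$, and summing in $\ell^{d/(d-1)}$ --- or a Bourgain interpolation whose two endpoints both lie on the diagonal (as sketched in the paper's remark following the corollary), rather than one anchored at $L^1\to L^\infty$. Once restricted weak type at $Q_2$ is secured, your layer-cake argument on the vertical line and your localization upgrade at $Q_2$ go through as you describe.
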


\begin{proof}
Part (i) just follows from interpolation between Lemma \ref{lem:Q3rwt} and \ref{lem:Q4rwt}.

For part (ii), let $p=\tfrac{d}{d-1}$ and fix $q_2=\tfrac{d}{d-1}$ and $q_3=d$. For $\fz \in \Z^d$, let $Q_\fz=\prod_{i=1}^d[\fz_i,\fz_i+1)$ and let $Q_{\fz}^*$ be a cube centered at $\fz$ with sidelength $20 d$.  Write
$f=\sum_\fz f_\fz$ with $f_\fz=f\bbone_{Q_{\fz}}$. As $V_r^I A$ is local and the $Q_\fz^*$ have bounded overlap, by Hölder's inequality
\[\| V_r^I A f\|_{L^{q_2,\infty}}\le \Big( \sum_{\fz} \| \bbone_{Q_\fz^*} V_r^I A f_\fz \|_{L^{q_2,\infty}}^{q_2} \Big)^{1/{q_2}} \leq \Big( \sum_{\fz} \| \bbone_{Q_\fz^*} V_r^I A f_\fz \|_{L^{q_3,\infty}}^{q_2} \Big)^{1/{q_2}}.  \]
By Lemma \ref{lem:Q3rwt}, the right-hand side is further bounded by
$$
\Big( \sum_{\fz} \|  f_\fz \|_{L^{p,1}}^{q_2} \Big)^{1/{q_2}} \lesssim \| f \|_{L^{p,1}},
$$
as $p=q_2=p_3=\frac{d}{d-1}$. This implies that $V_r^I A$ is of restricted weak type at $Q_2$ if $r> \tfrac{d}{d-1}$. By interpolation between $Q_2$ and $Q_3$, one has that $V_r^I A$ is of restricted strong type on the open line segment $(Q_2,Q_3)$. Finally, the restricted strong type at $Q_2$ follows from the above localization argument, but using any of the just obtained $L^{p,1} \to L^{q}$ inequalities for $q_2 < q <q_3$ instead of the $L^{p,1} \to L^{q_3,\infty}$.
\end{proof}

\begin{remark}
One can obtain that $V_r^I A$ is of restricted weak type at $Q_2$ in Theorems \ref{dge3thm} and \ref{thm:intermediate r}  ($r>d/(d-1)$) by an application of \S\ref{sec:Binterpol} with the inequalities
\begin{align*}
    \|\cA_j f \|_{L^1(B_{r,1}^{1/r})} & \lesssim 2^{j} \| f \|_{L^1} \\
    \| \cA_j f \|_{L^2(B_{r,1}^{1/r})} & \lesssim  2^{-j(d-2)/2 + j/r }   \| f \|_{L^2}.
\end{align*}
Interpolation with the restricted weak type bound at $Q_3$ yields the restricted strong type bounds on the open line segment $(Q_2,Q_3)$. However, in order to deduce the restricted strong type at $Q_2$ we need to argue with the localization argument presented in the proof of Corollary \ref{cor:Q2Q3Q4} above.
\end{remark}

We next address the claimed bounds for $V_1^I A$ in Theorem \ref{thm:small r}.

\begin{lemma}\label{lem:1var} Let $d\ge 4$. The operator $\partial_t \cA$ maps $L^{\frac{d-1}{d-2},1} $ boundedly to $L^{d-1,\infty} (L^1) $. Consequently, $V_r^I A $ is of restricted weak type at $Q_3(1)$ in Theorem \ref{thm:small r}.
\end{lemma}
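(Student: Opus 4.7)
The plan is to apply Bourgain's interpolation lemma (Lemma \ref{lem:Binterpol}) to the dyadic pieces $\partial_t \cA_j$, exploiting two contrasting endpoint estimates. Once $\partial_t\cA$ is shown to map $L^{(d-1)/(d-2),1}$ boundedly into $L^{d-1,\infty}(L^1)$, the restricted weak type bound for $V_1^IA$ at $Q_3(1)=\big(\tfrac{d-2}{d-1},\tfrac 1{d-1}\big)$ is an immediate consequence of the pointwise control from \eqref{eq:V1 BV}. The low-frequency contribution $\partial_t \cA \widetilde L_0$ has a Schwartz convolution kernel (times $\chi(t)$) and is easily dispatched, so attention is confined to the sum $\sum_{j\ge 1}\partial_t\cA_j$.

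For the $L^1$ endpoint I would invoke Lemma \ref{V1lemma} directly, yielding
\[
\|\partial_t \cA_j f\|_{L^\infty(L^1)} \lc 2^j \|f\|_1.
\]
For the $L^2$ endpoint, I would combine the $L^2$-estimate \eqref{eq:L2} with Hölder's inequality in $t$ on the bounded interval $[1/2,4]$, to get
\[
\|\partial_t \cA_j f\|_{L^2(L^1)} \lc \|\partial_t \cA_j f\|_{L^2(L^2)} \lc 2^{-j(d-3)/2}\|f\|_2.
\]
The positivity of the decay exponent $(d-3)/2$ is precisely what forces the dimensional restriction $d\ge 4$.

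I would then invoke Lemma \ref{lem:Binterpol} with $A_0=L^1$, $A_1=L^2$, $B_0=L^\infty(L^1)$, $B_1=L^2(L^1)$, $\gamma_0=1$ and $\gamma_1=(d-3)/2$. The interpolation parameter is $\theta=\gamma_0/(\gamma_0+\gamma_1)=2/(d-1)$. A standard identification of real interpolation spaces yields $(L^1,L^2)_{\theta,1}=L^{(d-1)/(d-2),1}$ (since $1/p=1-\theta/2=(d-2)/(d-1)$). For the mixed-norm target, since the inner $L^1$-norm is held fixed, real interpolation acts only on the outer exponent, giving $(L^\infty(L^1),L^2(L^1))_{\theta,\infty}=L^{d-1,\infty}(L^1)$ (with $1/q=\theta/2=1/(d-1)$). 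These are exactly the source and target spaces claimed, so the mapping property for $\partial_t \cA$ follows.

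The only minor technical point is the identification of the real interpolation space of a pair of mixed-norm spaces; this is a standard but not entirely trivial fact that should be cited carefully (Banach-valued Lorentz spaces). Beyond this, the proof is a direct assembly of previously established building blocks in the paper, with the dimensional hypothesis $d\ge 4$ entering solely through the positivity requirement $\gamma_1>0$ in Bourgain's lemma.
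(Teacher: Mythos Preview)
Your proposal is correct and follows essentially the same route as the paper's proof: the paper also uses the two endpoint bounds $\|\partial_t\cA_j f\|_{L^\infty(L^1)}\lc 2^j\|f\|_1$ and $\|\partial_t\cA_j f\|_{L^2(L^1)}\lc\|\partial_t\cA_j f\|_{L^2(L^2)}\lc 2^{-j(d-3)/2}\|f\|_2$, and then applies Bourgain's interpolation lemma (\S\ref{sec:Binterpol}) to conclude. Your write-up is in fact more explicit than the paper's about the interpolation parameter and the identification of the mixed-norm target space; the caution you flag about citing Banach-valued real interpolation is appropriate but not a gap.
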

\begin{proof}
We have $\|\partial_t \cA_j f\|_{L^2(L^1)} \lc
\|\partial_t \cA_j f\|_{L^2(L^2)}  \lc 2^{-j\frac{d-3}{2} } \|f\|_{L^2}$. We interpolate the estimates
(obtained from Corollary \ref{cor:1inftyr} and Proposition \ref{prop:bounds A_j with ST} together with Corollary \ref{cor:trading-der})
\begin{align*}
\|\partial_t \cA_j f\|_{L^\infty(L^1)} &\lc 2^j \|f\|_{L^1}\\
  \|\partial_t \cA_j f\|_{L^2(L^1)}  &\lc 2^{-j\frac{d-3}2} \|f\|_{L^2}
\end{align*}
and obtain the conclusion by application of 
\S\ref{sec:Binterpol}.\end{proof}

\begin{corollary}
Let $d \geq 4$. The operator  $V_r^I A:L^{p,1} \to L^q$ is bounded if $(1/p,1/q)$ belongs to the half-open line segment $[Q_2(1), Q_3(1))$ in Theorem \ref{thm:small r}.
\end{corollary}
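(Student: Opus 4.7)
The plan is to adapt the localization argument used in Corollary~\ref{cor:Q2Q3Q4}(ii) to the present situation, now with Lemma~\ref{lem:1var} serving as the weak-type endpoint input in place of Lemma~\ref{lem:Q3rwt}. I would set $p=\frac{d-1}{d-2}$ and fix $q$ with $p\le q<d-1$. Since the convolution kernel of $A_t$ is supported on $\{|y|=t\}$, for $t\in[1,2]$ the operator $V_1^I A$ is local: decomposing $f=\sum_{\fz\in\Z^d}f_\fz$ with $f_\fz=f\bbone_{Q_\fz}$ over the unit cubes $Q_\fz=\fz+[0,1)^d$, each function $V_1^I A f_\fz$ is supported in a dilate $Q_\fz^*$ of uniformly bounded diameter, and the family $\{Q_\fz^*\}_{\fz\in\Z^d}$ has uniformly bounded overlap.

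The bounded overlap (and $q\ge 1$) yields
\[
\|V_1^I A f\|_{L^q(\R^d)}^q \lesssim \sum_\fz \|V_1^I A f_\fz\|_{L^q(Q_\fz^*)}^q.
\]
For each $\fz$, since $q<d-1$ and $|Q_\fz^*|$ is uniformly bounded, H\"older's inequality yields the local embedding $\|g\|_{L^q(Q_\fz^*)}\lesssim\|g\|_{L^{d-1,\infty}(Q_\fz^*)}$, and combining this with Lemma~\ref{lem:1var} gives
\[
\|V_1^I A f_\fz\|_{L^q(Q_\fz^*)}\lesssim\|V_1^I A f_\fz\|_{L^{d-1,\infty}(\R^d)}\lesssim\|f_\fz\|_{L^{p,1}(\R^d)}.
\]
Since $p\le q$, the inclusion $\ell^p\hookrightarrow\ell^q$ then produces
\[
\|V_1^I A f\|_{L^q(\R^d)}\lesssim\Bigl(\sum_\fz\|f_\fz\|_{L^{p,1}(\R^d)}^p\Bigr)^{1/p}.
\]

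To conclude, I would invoke the disjoint-support aggregation $\bigl(\sum_\fz\|f_\fz\|_{L^{p,1}}^p\bigr)^{1/p}\le\|f\|_{L^{p,1}(\R^d)}$, which follows by a direct application of Minkowski's integral inequality to the identity $\|g\|_{L^{p,1}}=p\int_0^\infty|\{|g|>t\}|^{1/p}\,dt$, together with the additivity $|\{|f|>t\}|=\sum_\fz|\{|f_\fz|>t\}|$ valid since the $f_\fz$ have disjoint supports. This same aggregation step is implicit in Corollary~\ref{cor:Q2Q3Q4}(ii), and I do not anticipate any new technical obstacle in the argument beyond Lemma~\ref{lem:1var} itself; the only substantive input is the restricted weak-type bound at $Q_3(1)$, and the passage to restricted strong type on $[Q_2(1),Q_3(1))$ is entirely a matter of locality plus the Lorentz-space aggregation.
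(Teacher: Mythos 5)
Your proof is correct and follows the same strategy as the paper, which simply invokes the localization argument of Corollary~\ref{cor:Q2Q3Q4} with Lemma~\ref{lem:1var} as the endpoint input: decompose over unit cubes, use bounded overlap and the local embedding of $L^{d-1,\infty}(Q_\fz^*)$ into $L^q(Q_\fz^*)$ for $q<d-1$, and aggregate via $\ell^p\hookrightarrow\ell^q$ and the disjoint-support Lorentz inequality. The only (harmless) difference is that you obtain every $q\in[\tfrac{d-1}{d-2},d-1)$ in a single pass, whereas the paper's template first derives restricted weak type at the diagonal point, interpolates to get the open segment, and then revisits the diagonal endpoint with a second localization.
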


\begin{proof}
The restricted strong type bounds on $[Q_2(1),Q_3(1))$ can be obtained as in Corollary \ref{cor:Q2Q3Q4}.
\end{proof}

\begin{lemma} Let $d=3$. The operator $V_1^IA$ is bounded on $L^2(\bbR^3)$.
\end{lemma}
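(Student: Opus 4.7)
The plan is to reduce the $V_1^I A$ bound to an $L^2 \to L^2(L^2)$ bound for $\partial_t \cA$ and then exploit the fact that in the frequency-localized $L^2$ estimate \eqref{eq:L2} the dimension $d=3$ is precisely borderline (neither loss nor gain in $j$), so that Littlewood--Paley orthogonality closes the argument.

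First, by \eqref{eq:V1 BV} it suffices to show
$$
\|\partial_t \cA f\|_{L^2(\R^3; L^1([1,2]))} \lesssim \|f\|_{L^2(\R^3)}.
$$
Since $[1,2]$ has finite length, the Cauchy--Schwarz inequality in $t$ further reduces the task to proving
$$
\|\partial_t \cA f\|_{L^2(\R^3 \times \R)} \lesssim \|f\|_{L^2(\R^3)},
$$
with the implicit support of $\chi$ in the $t$-variable.

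Now decompose $f = \sum_{j \geq 0} L_j f$ and note that $\cA_j = \cA_j L_j$, so $\partial_t \cA f = \sum_{j \geq 0} \partial_t \cA_j L_j f$. For each fixed $t$, the function $x \mapsto \partial_t \cA_j L_j f(x,t)$ has Fourier transform supported in the annulus $\{|\xi|\sim 2^j\}$ (since the multiplier $\hat\sigma(t\xi)\beta_j(|\xi|)$ preserves that support and multiplication by the phase from $\partial_t$ does too). By Plancherel in $x$ these annular pieces have finite overlap, so
$$
\|\partial_t \cA f(\cdot,t)\|_{L^2(\R^3)}^2 \lesssim \sum_{j \geq 0} \|\partial_t \cA_j L_j f(\cdot, t)\|_{L^2(\R^3)}^2.
$$
Integrating in $t$ and applying \eqref{eq:L2} with $d=3$, which yields the \emph{scale invariant} bound $\|\partial_t \cA_j g\|_{L^2(L^2)} \lesssim \|g\|_2$ uniformly in $j$, we obtain
$$
\|\partial_t \cA f\|_{L^2(L^2)}^2 \lesssim \sum_{j \geq 0}\|\partial_t \cA_j L_j f\|_{L^2(L^2)}^2 \lesssim \sum_{j \geq 0}\|L_j f\|_2^2 \lesssim \|f\|_2^2,
$$
where the last step is Plancherel/Littlewood--Paley orthogonality for $L_j$.

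The only ``hard'' step is conceptual rather than technical: one must recognize that $d=3$ is the critical dimension where the derivative $\partial_t$ (contributing a factor of $|\xi|$) exactly balances the decay of $\hat\sigma$ at rate $|\xi|^{-(d-1)/2}$, so that \eqref{eq:L2} becomes a uniform-in-$j$ statement that can be summed via spatial frequency orthogonality. No square function estimate or Stein--Tomas input beyond the basic $L^2$ estimate is required.
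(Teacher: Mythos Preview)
Your proof is correct and follows essentially the same approach as the paper's own proof, which is the terse one-liner: reduce via \eqref{eq:V1 BV} and Cauchy--Schwarz in $t$ to $\|\partial_t \cA f\|_{L^2(L^2)}$, then invoke \eqref{eq:L2} (which for $d=3$ gives no loss in $j$) together with spatial-frequency orthogonality. One small notational slip: the identity $\cA_j = \cA_j L_j$ is not literally true since $L_j^2 \neq L_j$; what you need (and what holds) is $\cA_j = \cA_j \widetilde L_j$, or equivalently that the Plancherel proof of \eqref{eq:L2} actually yields $\|\partial_t \cA_j f\|_{L^2(L^2)} \lesssim \|\widetilde L_j f\|_2$, after which $\sum_j \|\widetilde L_j f\|_2^2 \lesssim \|f\|_2^2$ by bounded overlap.
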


\begin{proof}By \eqref{eq:V1 BV} we have \[\|V_1^IA \|_{L^2} \le \Big\| \int_I |\partial_t \cA f(\cdot,t)|\, \ud t \Big\|_{L^2} \lc \Big(\iint |\partial_t \cA_t f|^2 \ud x\ud t\Big)^{1/2} \lc\|f\|_{L^2},
\]  by  \eqref{eq:L2} and orthogonality.
\end{proof}

\section{A maximal operator} \label{maxopsect}
%\subsection{A maximal operator} 
We first introduce an auxiliary  maximal function which will be crucial  in the proof of the endpoint bounds in \S \ref{sec:strong type endpoint}.

For $L\in \bbZ$ let  $\cQ_L$ be the family of all cubes in $\R^d$
with side length in $(2^{L-1}, 2^L]$. 
Given  $Q$ we  write \Be\label{defofL(Q)} L(Q)=L \quad \text{ if } Q\in \cQ_L.\Ee We use the slashed integral to denote an average, i.e.
\[\intslash_Q g(y)\ud y= \frac{1}{|Q|}\int_Q g(y)\ud y.
\]
For $x\in \bbR^d$ we let $\cQ_L(x)$ be the collection of all $Q\in \cQ_L$ containing $x$. 
Given $n =0,1,2,\dots$ and a sequence of functions $F=\{f_j\}_{j\geq 0}$, define the maximal function 
\Be\label{Mrn-def}
\fM_{r,n} F(x) = \sup_{j\ge n} \sup_{Q\in \cQ_{n-j}(x)} \intslash_Q \Big(\int|\cA_j f_j(y,t)|^r\ud t\Big)^{1/r}\ud y.
\Ee

The following result  should be compared with Lemma \ref{p0q0lemma}.
Away from the right boundary of the region in that lemma, we gain a crucial factor of $2^{-n\eps}$. Related statements can be found in \cite{RS2010}, \cite{PramanikRogersSeeger2011}  (see also  \cite{HNS2011} for  dual versions).

\begin{proposition} \label{maximal-op-prop} 
Let $p_0$ and $q_0$ such that $1< p_0\le q_0<\infty$.
Assume that 
\Be\label{p0q0-assumption}
\sup_{j \geq 0} 2^{j d/q_0} \|\cA_j\|_{L^{p_0}\to L^{q_0}(L^{p_0})} <\infty.\Ee
Let $q_0< q \leq  \infty$ and 
$\frac 1{\rho_{\max}(q)}= \frac {q_0}{q}\frac 1{p_0}$ and
$\frac{1}{\rho_{\min}(q)} =1-\frac{q_0}q(1-\frac 1{p_0})$.
Assume that  \Be\label{pqr-assu-with gain} \rho_{\min}(q)< p\le  q  \text{ and } 
\begin{cases} 
r\le p &\text{ if } \rho_{\min}(q)<p<\rho_{\max}(q),
\\
r<\rho_{\max}(q) &\text{ if } \rho_{\max}(q)\le p\le q.
\end{cases}
\Ee Then there exists $\varepsilon(p,q,r)>0$ such that
\Be\label{fMr-concl}
\|\fM_{r,n} F\|_{L^q}\le C_{p,q,r} 2^{-n\eps(p,q,r)} \Big(\sum_{j \geq n} 2^{-jd}\|f_j\|_{L^p}^q\Big)^{1/q}. 
\Ee 
\end{proposition}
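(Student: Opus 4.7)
The strategy is to reduce \eqref{fMr-concl} to a per-frequency bound and prove the latter by interpolation. Introducing the single-scale maximal operator
\[
\fM_{r,n,j} g(x) := \sup_{Q\in\cQ_{n-j}(x)} \intslash_Q \Big(\int |\cA_j g(y,t)|^r \ud t\Big)^{1/r}\ud y,
\]
so that $\fM_{r,n} F(x)=\sup_{j\ge n}\fM_{r,n,j} f_j(x)$, the pointwise inequality $(\sup_j a_j)^q \le \sum_j a_j^q$ for $q<\infty$ (and $\|\sup_j g_j\|_\infty=\sup_j \|g_j\|_\infty$ for $q=\infty$) reduces \eqref{fMr-concl} to the single-scale estimate
\begin{equation}\label{eq:plan-single-scale}
\|\fM_{r,n,j} f\|_q \lesssim 2^{-jd/q}\, 2^{-n\epsilon(p,q,r)}\, \|f\|_p
\end{equation}
for $(p,q,r)$ as in \eqref{pqr-assu-with gain}, with some $\epsilon(p,q,r)>0$.

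The key tools for \eqref{eq:plan-single-scale} are two bounds sharing the input $L^p$, depending on an intermediate exponent $\tilde q \in [q_0, q)$ chosen so that $(p, \tilde q, r)$ lies in the range of Lemma \ref{p0q0lemma}. The first is the baseline bound obtained from Hardy--Littlewood applied in the $y$-variable to $\|\cA_j f(y,\cdot)\|_{L^r(dt)}$:
\[
\|\fM_{r,n,j} f\|_{\tilde q}\lesssim \|\cA_j f\|_{L^{\tilde q}(L^r)} \lesssim 2^{-jd/\tilde q}\|f\|_p,
\]
containing no $n$-dependence. The second is an endpoint estimate with genuine $n$-gain, obtained from Jensen's inequality ($\tilde q\ge q_0>1$) and $|Q|=2^{(n-j)d}$:
\[
\intslash_Q\|\cA_j f(y,\cdot)\|_{L^r(dt)}\,\ud y\le |Q|^{-1/\tilde q}\|\cA_j f\|_{L^{\tilde q}(L^r)}\lesssim 2^{-nd/\tilde q}\|f\|_p,
\]
yielding $\|\fM_{r,n,j} f\|_\infty\lesssim 2^{-nd/\tilde q}\|f\|_p$. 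Riesz--Thorin interpolation between $L^{\tilde q}$ and $L^\infty$ on the output side then gives
\[
\|\fM_{r,n,j} f\|_q\lesssim 2^{-jd/q}\cdot 2^{-nd(1/\tilde q - 1/q)}\|f\|_p \qquad (q \ge \tilde q),
\]
establishing \eqref{eq:plan-single-scale} with $\epsilon(p,q,r)=d(1/\tilde q - 1/q) > 0$ whenever $\tilde q<q$.

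It remains to find an admissible $\tilde q < q$ for each $(p,q,r)$ in \eqref{pqr-assu-with gain}. A direct calculation shows that the condition $p>\rho_{\min}(q)$ is equivalent to $q>q_0(1-1/p_0)/(1-1/p)$, the threshold below which $(p,\tilde q)$ becomes inadmissible in Lemma \ref{p0q0lemma}; in the regime $p\ge\rho_{\max}(q)$, the hypothesis $r<\rho_{\max}(q)$ rewrites as $q>q_0 r/p_0$, the threshold below which $r\le\rho_{\max}(\tilde q)$ fails. Taking $\tilde q$ slightly above these thresholds yields a strictly positive $\epsilon$. The main technical obstacle is the bookkeeping: ensuring that a single $\tilde q\in[\max(q_0,p),q)$ simultaneously satisfies all admissibility constraints, producing a uniformly positive $\epsilon$ across \eqref{pqr-assu-with gain}, particularly along the boundary faces $p=\rho_{\min}(q)$, $p=\rho_{\max}(q)$, and $r=\rho_{\max}(q)$, as well as at the diagonal $p=q$ (where the interpolation above degenerates and a refinement, e.g.\ via Littlewood--Paley decomposition in the $y$-variable, is needed). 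Once \eqref{eq:plan-single-scale} is established, summing over $j\ge n$ completes the proof of \eqref{fMr-concl}.
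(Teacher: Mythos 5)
Your architecture --- reduce to the single-scale operator $\fM_{r,n,j}$, prove a baseline $L^p\to L^{\tilde q}$ bound plus an $L^p\to L^\infty$ bound with gain $2^{-nd/\tilde q}$, and interpolate in the target exponent --- is sound, and for $p<q$ it is a genuinely different and arguably more streamlined route than the paper's. The paper first establishes the $n$-gain only at the single point $r=p=p_0$ (Lemma \ref{specialcase-p0-lemma}, interpolating the target exponents $q_0$ and $\infty$) and then spreads it to general $p$ by a bilinear, linearized interpolation in the \emph{input} exponent against the no-gain estimates at $p=\rho_{\min}(q)$ and $p=\rho_{\max}(q)$, invoking Stein/Calder\'on interpolation. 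You instead exploit that Lemma \ref{p0q0lemma} already provides $\|\cA_j\|_{L^p\to L^{\tilde q}(L^r)}\lc 2^{-jd/\tilde q}$ for a whole interval of intermediate exponents $\tilde q$, so the same $\tilde q$-versus-$\infty$ interpolation can be run at every admissible $p$ directly; your bookkeeping does check out whenever $p<q$ (in each regime of \eqref{pqr-assu-with gain} the relevant thresholds for $\tilde q$ are all strictly below $q$, so an admissible $\tilde q<q$ exists and $\eps=d(1/\tilde q-1/q)>0$). Interpolating the sublinear $\fM_{r,n,j}$ with fixed domain $L^p$ is legitimate by real interpolation or after linearization.

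The genuine gap is the diagonal $p=q$, which is included in \eqref{pqr-assu-with gain} (the regime $\rho_{\max}(q)\le p\le q$) and is needed downstream (e.g.\ for the endpoint $Q_1(r)$, where $p=q=rd$). Both of your single-scale estimates rest on $\|\cA_j f\|_{L^{\tilde q}(L^r)}\lc 2^{-jd/\tilde q}\|f\|_p$, which forces $p\le\tilde q$; indeed an $L^p\to L^{\tilde q}$ bound with $\tilde q<p$ is false for these translation-invariant operators (test on widely separated bumps). Hence no $\tilde q\in[p,q)$ exists when $p=q$ and your interpolation yields $\eps=0$. The repair you suggest --- a Littlewood--Paley decomposition in $y$ --- does not address this, since the obstruction is global in physical space, not in frequency. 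The correct fix is the spatial localization argument of Lemma \ref{localization-lemma}, which is exactly how the paper treats $\rho_{\max}(q)\le p\le q$: tile $\bbR^d$ by unit cubes $Q_\fz$, write $f=\sum_\fz f\bbone_{Q_\fz}$, use the kernel decay \eqref{kernel-error} to discard the far contributions, apply the already-proved $p<q$ case with input exponents $r_*=p_*=\rho_{\max}(q)-\epsilon$ to each localized piece (H\"older on a unit cube gives $\|f\bbone_{Q_\fz}\|_{p_*}\le\|f\bbone_{Q_\fz}\|_{p}$), and resum using the bounded overlap of the enlarged cubes and $\ell^{p}\hookrightarrow\ell^{q}$; this preserves the factor $2^{-n\eps}$. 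With that substitution your argument closes.
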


For the proof we first observe a uniform estimate in $n$.

\begin{lemma} \label{Mrnuniformlemma}
Let $p_0\le q_0$ and assume  \eqref{p0q0-assumption} holds. Let $q>1$ and $q_0 \leq q \leq \infty$. Let
$\rho_{\min}(q)$, $\rho_{\max}(q)$
be as in \eqref{rhominmax}, and let
 $\rho_{\min}(q)\le p\le q$ and $0<r\le \min \{ p, \rho_{\max}(q)\} $. 
Then
\Be
    \label{Mrnuniform}
    \|\fM_{{r},n} F\|_{L^q} \lc  \Big(\sum_{j\ge n} 2^{-jd}\|f_j\|_{L^p}^{q}\Big)^{1/q}.
    \Ee
\end{lemma}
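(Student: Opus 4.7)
The plan is to reduce to the single--scale estimates of Lemma \ref{p0q0lemma} via a pointwise domination by the Hardy--Littlewood maximal operator. First I would set $g_j(y):=\bigl(\int |\cA_j f_j(y,t)|^r\ud t\bigr)^{1/r}$. Since every cube $Q\in\cQ_{n-j}(x)$ contains $x$, the inner supremum in the definition of $\fM_{r,n}F(x)$ is dominated by $Mg_j(x)$, where $M$ denotes the (uncentered) Hardy--Littlewood maximal operator. Therefore
\[
\fM_{r,n}F(x)\le \sup_{j\ge n}Mg_j(x).
\]

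Next I would pass to the $L^q$--norm. Assume first $q<\infty$. Combining the trivial bound $\sup_j a_j\le (\sum_j a_j^q)^{1/q}$ for nonnegative reals with the $L^q$--boundedness of $M$ (which is the only place the hypothesis $q>1$ enters) gives
\[
\|\fM_{r,n}F\|_q^q\le \sum_{j\ge n}\|Mg_j\|_q^q\lesssim \sum_{j\ge n}\|g_j\|_q^q=\sum_{j\ge n}\|\cA_j f_j\|_{L^q(L^r)}^q.
\]
The case $q=\infty$ follows from the same chain, with the two suprema simply commuting; one obtains $\|\fM_{r,n}F\|_\infty\lesssim \sup_{j\ge n}\|\cA_j f_j\|_{L^\infty(L^r)}$.

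Finally, I would invoke Lemma \ref{p0q0lemma}. The conditions imposed here on the triple $(p,q,r)$ are precisely those required there, so
\[
\|\cA_j f_j\|_{L^q(L^r)}\lesssim 2^{-jd/q}\|f_j\|_p,
\]
and substituting this into the previous display yields \eqref{Mrnuniform}. There is no real obstacle: the argument is standard and, in contrast with the sharper statement of Proposition \ref{maximal-op-prop}, no $n$--dependent gain is sought, so a pointwise majorization by $M$ suffices. The only point requiring care is the matching of the exponent ranges with those of Lemma \ref{p0q0lemma}, which is automatic from the assumptions $\rho_{\min}(q)\le p\le q$ and $0<r\le \min\{p,\rho_{\max}(q)\}$.
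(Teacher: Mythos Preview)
Your proof is correct and follows essentially the same approach as the paper: pointwise domination by the Hardy--Littlewood maximal operator, the embedding $\ell^\infty\hookrightarrow\ell^q$, and then an appeal to Lemma~\ref{p0q0lemma}. The only cosmetic difference is that the paper moves the supremum in $j$ inside the cube average before applying $M_{HL}$ to a single function, whereas you apply $M$ to each $g_j$ separately and take the supremum afterward; both orderings give the same bound.
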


\begin{proof}
Let  $M_{HL}$ denote the Hardy--Littlewood maximal function. Then 
\begin{align*} |\fM_{r,n} F(x)| &\lc \sup_{x\in Q} \intslash_Q  \sup_{j \geq n} \Big(\int |\cA_j f_j(y,t)|^{r} \ud t\Big)^{1/r}\ud y
\\&\le M_{HL} \big[\sup_{j \geq n}\|\cA_j f_j\|_{L^{r}(\bbR)}\big](x)
\end{align*}
and therefore, since $r\le q$ and $q>1$
\begin{align*}\|\fM_{r,n} F\|_{L^q} &\lc  \big\|\sup_{j \geq n} \|\cA_j f_j\|_{L^{r}(\R)} \big\|_{{L^q}}
\lc \Big\|\Big( \sum_{j \geq n}\|\cA_j f_j \|_{L^{r}(\R) }^{q}\Big)^{1/q} \Big\|_{L^q}
\\ &= \Big(\sum_{j \geq n}
\|\cA_j f_j\|_{L^{q}(L^{r})}^{q} \Big)^{1/q} \lc 
\Big(\sum_{j \geq n} 2^{-jd} 
\| f_j\|_{L^p}^{q} \Big)^{1/q} ;
\end{align*}
here in the last step we have used Lemma \ref{p0q0lemma}.
\end{proof}

We now show how to gain over this inequality in the special case $r=p_0$.
\begin{lemma} \label{specialcase-p0-lemma}
Let $p_0\le q_0$ and assume \eqref{p0q0-assumption} holds. Then  for
$q_0\le q\le\infty$, $p_0\le p\le q$
\Be
\label{Mp0n-gain}
    \|\fM_{{p_0},n} F\|_{L^q} \lc  2^{-nd(\frac 1{q_0}-\frac 1q)} \Big(\sum_{j \geq n} 2^{-jd}\|f_j\|_{L^p}^{q}\Big)^{1/q}.
    \Ee
\end{lemma}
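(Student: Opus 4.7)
The plan is to view $\fM_{p_0,n} F(x) \le \sup_{j \ge n} g_j(x)$ where
\[ g_j(x) = \sup_{Q \in \cQ_{n-j}(x)} \intslash_Q \|\cA_j f_j(y,\cdot)\|_{L^{p_0}(\R)} \ud y, \]
establish four endpoint $L^p\to L^q$ bounds for the sublinear operator $f_j \mapsto g_j$, interpolate, and sum in $j$. The key new ingredient (compared to Lemma~\ref{Mrnuniformlemma}) is a pointwise bound which exploits that $|Q| \approx 2^{(n-j)d}$ shrinks as $j$ grows beyond $n$.

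\smallskip

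\textbf{Two $L^\infty$ endpoints (pointwise bounds).} Since $p_0 \le q_0$, Jensen's inequality gives, for $Q \in \cQ_{n-j}(x)$,
\[ \intslash_Q \|\cA_j f_j(y,\cdot)\|_{L^{p_0}} \ud y \le |Q|^{-1/q_0} \|\cA_j f_j\|_{L^{q_0}(L^{p_0})} . \]
Combined with $|Q| \approx 2^{(n-j)d}$ and the hypothesis \eqref{p0q0-assumption}, this produces $g_j(x) \lesssim 2^{-nd/q_0} \|f_j\|_{p_0}$; using the trivial bound $\|\cA_j\|_{L^\infty \to L^\infty(L^{p_0})} \lesssim 1$ (from \eqref{maxinfty}) in the same calculation also yields $g_j(x) \lesssim 2^{-nd/q_0} \|f_j\|_\infty$.

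\smallskip

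\textbf{Two $L^{q_0}$ endpoints.} Since $g_j(x) \le M_{HL}\bigl[\|\cA_j f_j(\cdot,\cdot)\|_{L^{p_0}(\R)}\bigr](x)$ pointwise, the Hardy--Littlewood maximal theorem together with \eqref{p0q0-assumption} gives $\|g_j\|_{q_0} \lesssim 2^{-jd/q_0} \|f_j\|_{p_0}$, while Lemma~\ref{p0q0lemma} applied at the corner case $p=q=q_0$, $r=p_0$ (where $\rho_{\min}(q_0)=\rho_{\max}(q_0)=p_0$) gives $\|g_j\|_{q_0} \lesssim 2^{-jd/q_0} \|f_j\|_{q_0}$.

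\smallskip

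\textbf{Interpolation.} The four endpoints lie at $(0,0), (1/p_0,0), (1/p_0,1/q_0), (1/q_0,1/q_0)$ in the $(1/p,1/q)$-plane, whose convex hull is exactly the region corresponding to $p_0 \le p \le q$, $q_0 \le q \le \infty$. One checks directly that the target function $\log_2(\text{bound}) = -jd/q - nd(1/q_0 - 1/q)$ is affine in $(1/p, 1/q)$ and agrees with the four endpoint values (all four reduce to either $-nd/q_0$ or $-jd/q_0$). To apply Riesz--Thorin to a sublinear operator, one linearizes: by a measurable selection, fix $x \mapsto Q(x)\in \cQ_{n-j}(x)$ realizing the sup, and choose $\eta(x,y,\cdot) \in L^{p_0'}(\R)$ of unit norm with $\int \eta(x,y,t) \cA_j f_j(y,t)\ud t = \|\cA_j f_j(y,\cdot)\|_{p_0}$; the resulting operator is linear in $f_j$, the four endpoint bounds persist, and Riesz--Thorin yields
\[ \|g_j\|_q \lesssim 2^{-jd/q - nd(1/q_0 - 1/q)} \|f_j\|_p \qquad \text{for all } p_0 \le p \le q, \ q_0 \le q \le \infty. \]

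\smallskip

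\textbf{Summation in $j$.} For $q < \infty$, $\sup_{j\ge n} |g_j|^q \le \sum_{j\ge n} |g_j|^q$, so
\[ \|\fM_{p_0,n} F\|_q \le \Bigl(\sum_{j \ge n} \|g_j\|_q^q\Bigr)^{1/q} \lesssim 2^{-nd(1/q_0 - 1/q)} \Bigl(\sum_{j \ge n} 2^{-jd} \|f_j\|_p^q\Bigr)^{1/q}, \]
which is \eqref{Mp0n-gain}; the $q=\infty$ case follows by taking $\sup_j$ of the pointwise endpoint. The main subtlety is the linearization step justifying Riesz--Thorin for the sublinear operator $f_j \mapsto g_j$; the pointwise bound itself is the crucial gain, and the $L^{q_0}$ bound is essentially the uniform estimate without the $n$-improvement.
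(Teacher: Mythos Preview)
Your argument has a genuine gap at the $(L^\infty,L^\infty)$ endpoint. You claim that plugging the bound $\|\cA_j\|_{L^\infty\to L^\infty(L^{p_0})}\lc 1$ ``in the same calculation'' yields $g_j(x)\lc 2^{-nd/q_0}\|f_j\|_\infty$, but it does not. The factor $|Q|^{-1/q_0}\approx 2^{(j-n)d/q_0}$ arose from H\"older's inequality between the average over $Q$ and the \emph{global} norm $\|\cA_j f_j\|_{L^{q_0}(L^{p_0})}$; if you bound by the $L^\infty$ norm instead, the H\"older gain disappears and you only get $g_j(x)\lc\|f_j\|_\infty$. Conversely, keeping the H\"older step but trying to control $\|\cA_j f_j\|_{L^{q_0}(\bbR^d;L^{p_0})}$ by $\|f_j\|_\infty$ fails because the outer norm is over all of $\bbR^d$. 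Without this fourth corner, your quadrilateral collapses to the triangle with vertices $(1/p_0,0),(1/p_0,1/q_0),(1/q_0,1/q_0)$, which for $q>q_0$ does not reach the full range $p_0\le p\le q$.

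The paper obtains the missing $L^\infty$ bound by a localization argument: for fixed $x$ one splits $f_j=\bbone_{R_x}f_j+\bbone_{R_x^\complement}f_j$ with $R_x$ a cube of bounded diameter centered at $x$. For the near part, H\"older on the bounded cube gives $\|\bbone_{R_x}f_j\|_{p_0}\lc\|f_j\|_p$ for every $p\ge p_0$, and then your H\"older-plus-hypothesis step supplies the $2^{-nd/q_0}$ gain. The far part is negligible by the kernel decay \eqref{kernel-error}, since $K_{j,t}$ is essentially supported in a ball of radius $O(1)$. This localization is precisely what bypasses the global-norm obstruction above.

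Your other three endpoints, the linearization for Riesz--Thorin, and the $\ell^q$-summation in $j$ are all correct and close in spirit to the paper's proof (which interpolates the full operator $\fM_{p_0,n}$ between $q=q_0$ and $q=\infty$ rather than each $g_j$ separately, but this is a cosmetic difference).
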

\begin{proof}
We use  real interpolation for the sublinear operator $\fM_{p_0,n}$. Then  \eqref{Mp0n-gain} follows from
\begin{subequations}
\Be
    \label{q=q0interpol}
    \|\fM_{{p_0},n} F\|_{L^{q_0}} \lc  \Big(\sum_{j \geq n} 2^{-jd}\|f_j\|_{L^p} ^{q_0}\Big)^{1/q_0} , \quad  p_0\le p\le q_0,
\Ee
    and
\Be
    \label{q=inftyinterpol}
    \|\fM_{p_0,n} F\|_{L^\infty} \lc  2^{-nd/q_0} \sup_{j \geq n} \|f_j\|_{L^p},  \quad p_0\le p\le \infty.
\Ee
\end{subequations}
Note that \eqref{q=q0interpol} immediately  follows by Lemma \ref{Mrnuniformlemma}.

We now show  \eqref{q=inftyinterpol}. Fix $x\in \bbR^d$, $j\geq n$ and $Q\in \cQ_{n-j}(x)$. Let $R_x$ be a cube of diameter $20d$  centered at $x$.
Then  split 
\begin{align*}
    \intslash_Q\Big(\int|\cA_j f_j(y,t)|^{p_0} \ud t\Big)^{1/p_0} \ud y \le  I(x)+II(x)
    \end{align*}
    where
    \begin{align*}
        I(x)&= \intslash_Q\Big(\int|\cA_j [\bbone_{R_x} f_j](y,t)|^{p_0} \ud t\Big)^{1/p_0} \ud y,
        \\
        II(x)&=\intslash_Q\Big(\int|\cA_j [\bbone_{R_x^\complement}f_j](y,t)|^{p_0} \ud t\Big)^{1/p_0} \ud y.
    \end{align*}
 Using   H\"older's inequality, then the assumption  \eqref{p0q0-assumption} and then again H\"older's inequality we get
 \begin{align*}
     I(x)&\le \Big(\frac{1}{|Q|}\int \|\cA_j [\bbone_{R_x} f_j](y, \cdot) \|_{L^{p_0}(\R)}^{q_0} \ud y\Big)^{1/q_0}
     \\&\lc |Q|^{-1/q_0} 2^{-jd/q_0} \|\bbone_{R_x}f_j\|_{L^{p_0}} \lc 2^{-nd/q_0} \|\bbone_{R_x} f_j\|_{L^{p_0}}
     \lc 2^{-nd/q_0} \| f_j\|_{L^p},
 \end{align*} 
 since $Q\in \cQ_{n-j}$ and $p\ge p_0$.
 
 Next we use estimate \eqref{kernel-error} (with $M>d$)
 \begin{align*} II(x)&\lc
\intslash_Q\int_{|y-w|\ge 1}  C_M(2^{j}|y-w|)^{-M}|f_{j}(w)| \ud w \ud y
\\ &\lc 2^{-j M} \|f_j\|_{L^p} \lc 2^{-n M} \|f_j\|_{L^p}.
\end{align*}
We combine the estimates for $I(x)$, $II(x)$ and then, after taking suprema in $x$, in $Q\in \cQ_{n-j}(x)$ and  in $j$, \eqref{q=inftyinterpol} follows.
\end{proof}

%%%%%%%%%%%%%%%%%%%%%%%%%%%%
%%
%   single scale interpolation
%%
%%%%%%%%%%%%%%%%%%%%%%%%%%%%%

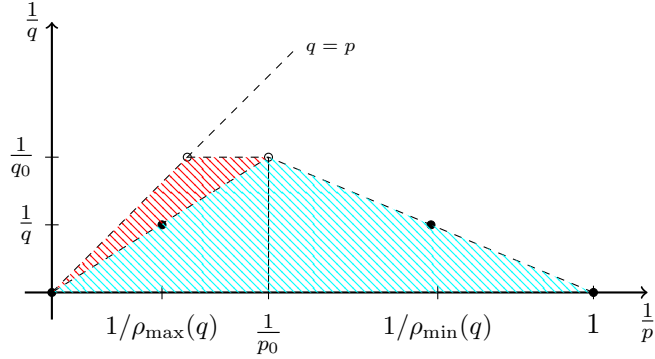
\begin{figure}[t]
\begin{tikzpicture}[scale=1.8] 

\begin{scope}[scale=2]
\draw[thick,->] (-.1,0) -- (2.2,0) node[below] {$ \frac 1 p$};
\draw[thick,->] (0,-.1) -- (0,1) node[left] {$ \frac 1 q$};
\draw[ dashed] (0,0) -- (0.9,0.9) node[right] {\tiny{$q=p$}};

%\draw (2,-0.025) -- (2,.025) node[below= 0.25cm] {$ 1$}; 
%\draw (-0.025,2) -- (.025,2) node[left= 0.25cm] {$ 1$}; 
%\node[circle,draw=black, fill=black, inner sep=0pt,minimum size=3pt] (b) at (2,2) {};
\node[circle,draw=black, fill=black, inner sep=0pt,minimum size=3pt] (b) at (0,0) {};

\node[circle,draw=black, fill=black, inner sep=0pt,minimum size=3pt] (b) at (2,0) {};

\draw (2,-0.025) -- (2,.025) node[below= 0.25cm] {$1$}; 

\draw (4/5,-0.025) -- (4/5,.025) node[below= 0.25cm] {$\frac{1}{p_0}$}; 
\draw (.025,1/2) -- (-.025,1/2) node[left] {$ \frac{1}{q_0}$};
\node[circle,draw=black, inner sep=0pt,minimum size=3pt] (b) at (4/5,1/2) {};

\draw[dashed]  (2,0)  -- (4/5,1/2); 
\draw[dashed] (0,0) -- (4/5,1/2);
%\draw[ dashed] (4/5,0) -- (4/5,1/2);
\draw (4/5, 0) -- (4/5,1/2);

\node[circle,draw=black, inner sep=0pt,minimum size=3pt] (b) at (1/2,1/2) {};

\node[circle,draw=black, fill=black, inner sep=0pt,minimum size=3pt] (b) at (5.6/4,1/4) {};

\draw (5.7/4,-0.025)--(5.7/4,.025) node[below= 0.25cm] {\small{1/$\rho_{\mathrm{min}}(q)$}};

\node[circle,draw=black, fill=black, inner sep=0pt,minimum size=3pt] (b) at (1.22/3,1/4) {};

\draw (1.22/3,-0.025)--(1.22/3,.025) node[below= 0.25cm] {\small{1/$\rho_{\mathrm{max}}(q)$}};

\draw (.025,1/4) -- (-.025,1/4) node[left] {$ \frac{1}{q}$};

\fill[pattern=north west lines, pattern color=cyan] (0,0) -- (2,0) -- (4/5,1/2) --  (0,0);

\fill[pattern=north west lines, pattern color=red] (0,0) -- (4/5,1/2) -- (1/2,1/2) -- (0,0);

\draw[dashed] (1/2,1/2) -- (4/5,1/2);
%\draw[dashed] (1/2,1/2) -- (0,0);

\end{scope}

\end{tikzpicture}

\caption{Bounds for $\fM_{r,n}$. At $r=p=p_0$, $q_0 < q\leq \infty$, we have a gain in $n$ given by Lemma \ref{specialcase-p0-lemma}. Interpolation with the uniform estimates from Lemma \ref{Mrnuniformlemma} for $r=p=\rho_{\min}(q)$ and $r=p=\rho_{\max}(q)$ yields the estimates in the interior of the blue triangle. The bounds in the red triangle follow by the localization argument.} 
\label{fig:max op int}

\end{figure}

The proof of Proposition \ref{maximal-op-prop} follows from (carefully) interpolating the two previous lemmas and a localization argument, as indicated in Figure \ref{fig:max op int}.

\begin{proof}[Conclusion of the proof of Proposition \ref{maximal-op-prop}]
We fix $q>q_0$. Observe that $\frac 1{\rho_{\min}(q)}-\frac 1{p_0}= (1-\frac{q_0}{q}) (1-\frac{1}{p_0})>0$ and $\frac{1}{p_0}-\frac{1}{\rho_{\max}(q)}=(1-\frac{q_0}{q})\frac 1{p_0}>0$ so that $\rho_{\min}(q)<p_0<\rho_{\max}(q)$. We first focus on the case $\rho_{\min}(q) < p < \rho_{\max}(q)$, for which it suffices to consider $r=p$; the corresponding inequality for smaller $r$  follows by H\"older's inequality on $[1/2,4]$. The remaining case $\rho_{\max}(q) \leq p \leq q$ will follow as a consequence of the previous range via a localization argument.

Let $p$ be as in \eqref{pqr-assu-with gain}. 
In what follows set $w(j)=2^{-jd}$ and let $\ell_w^q(L^p)$ be the space of $L^p$-valued sequences with \[\|F\|_{\ell^q_w(L^p)} = \Big(\sum_{j} 2^{-jd}\|f_j\|_{L^p}^q\Big)^{1/q}. \]

By linearization 
 it suffices to consider,    for any measurable choices of  positive integers $x\mapsto j(x)\in \bbN$, with $j(x) \geq n$, cubes $Q(x)\in \cQ_{n-j}(x)$, 
and  measurable  $L^{r'}(\bbR)$ valued functions $(x,y)\mapsto v(x,y,\cdot)$ in  $L^\infty(\bbR^{2d})$, 
the bilinear operator 
\[\cM_n
[F,v](x) =\intslash_{Q(x)}\int  v (x,y,s) 
 \cA_{j(x)} f_{j(x)} (y,s) \ud s \ud y
\]
and show that 
\Be \label{linearized-sit}
\|\cM_n [F,v]\|_{L^q} \lc  2^{-n\eps(p,q,r)} \|F\|_{\ell^q_w(L^p)} \|v\|_{L^\infty(L^{r'})},
\Ee

The  conclusion for $r\le p=p_0$ is immediate from Lemma \ref{specialcase-p0-lemma}, and in the study of the range $\rho_{\min}(q) < p < \rho_{\max}(q)$ we shall distinguish in what follows between  the $\rho_{\min}(q) < p< p_0$ and $p_0<p<\rho_{\max}(q)$. % and $\rho_{\max}(q)\le p\le q$.

\subsubsection*{The case $\rho_{\min}(q)<p\le p_0$} It suffices to prove \eqref{linearized-sit} for $r=p$.
We have from \eqref{Mrnuniform}
\Be\label{thetas=1}
\|\cM_n [F,v]\|_{L^q} \lc   \|F\|_{\ell^q_w(L^p)} \|v\|_{L^\infty(L^{p'})} \quad  \text{for  $p=\rho_{\min}(q)$}
\Ee
and from \eqref{Mp0n-gain} 
\Be\label{thetas=0}  \|\cM_n [F,v]\|_{L^q} \lc  2^{-nd(\frac 1{q_0}-\frac 1q)}  \|F\|_{\ell^q_w(L^{p_0} )} \|v\|_{L^\infty(L^{p_0'})}.
\Ee
One can interpolate \eqref{thetas=1} and \eqref{thetas=0}, noting that for $0<\theta<1$ and $\frac{1}{\rho_{\mathrm{min}}(q)}=1-\frac{q_0}{q} \frac{1}{p_0'}$,
\begin{equation*} \label{defoftheta}
\def\arraystretch{1.4}
\left. \begin{array}{l}
      \rho_{\min}(q)<p<p_0 \\ 
      \tfrac{1-\theta}{p_0}+\tfrac{\theta}{\rho_{\min}(q)}=\tfrac 1p
\end{array}\right\} \quad  \implies  \quad
(1-\theta) d(\tfrac{1}{q_0}-\tfrac 1q) =\tfrac{dp_0'}{q_0} \big(\tfrac{1}{\rho_{\min}(q)}-\tfrac{1}{p}\big),
\end{equation*}
and deduce
\begin{multline}\label{eps-pqr-smallp}
    \|\cM_n [F,v]\|_{L^q} \lc  2^{-n\eps(p,q,r)} \|F\|_{\ell^q_w(L^p)} \|v\|_{L^\infty(L^{p'})},
\\ \text{ for}\,\,\,  \rho_{\min}(q)\le p\le p_0 ,\quad
 \eps(p,q,p) = 
 \tfrac{dp_0'}{q_0} \big(\tfrac{1}{\rho_{\min}(q)}-\tfrac{1}{p}\big)>0
\end{multline}
with the implicit constants independent of the choices $j(x)$, $Q(x)$. Thus we also get 
\eqref{fMr-concl} for $r\le p$, $\rho_{\min}<p\le p_0$ and $\eps(p,q,r)=\eps(p,q,p)$ as in \eqref{eps-pqr-smallp}.

In order to  carry out the interpolation argument one uses Stein's interpolation theorem on analytic families of operators, with  an obvious analytic family suggested by the proof of the Riesz--Thorin theorem; we omit the details. Alternatively one can use Calder\'on's second method $[\cdot,\cdot]^\theta$, combining a result on multilinear maps with a result on Banach lattices such as $L^\infty(X)$, 
see  \cite[\S11.1]{Calderon-Studia}, 
\cite[\S13.6]{Calderon-Studia}. 

\subsubsection*{The case $p_0<p< \rho_{\max}(q)$} 
Again, it suffices to consider the case $r=p$.
Note that for $0 < \theta < 1$ and $\frac{1}{\rho_{\mathrm{max}}(q)}= \frac{q_0}{q} \frac{1}{p_0}$
\begin{equation}\label{defofvth}
\def\arraystretch{1.4}
\left. \begin{array}{l}
      p_0<p<\rho_{\max}(q) \\ 
      \tfrac{1-\vth}{p_0}+\tfrac{\vth}{\rho_{\max}(q)}=\tfrac 1p
\end{array}\right\} \quad  \implies  \quad
(1-\vartheta) d(\tfrac{1}{q_0}-\tfrac 1q)=\tfrac{dp_0}{q_0} \big(\tfrac 1p-\tfrac 1{\rho_{\max}(q)}\big) 
\end{equation}
We claim 
\begin{multline}\label{eps-pqr-middlep}
    \|\cM_n [F,v]\|_{L^q} \lc  2^{-n\eps(p,q,p)} \|F\|_{\ell^q_w(L^p)} \|v\|_{L^\infty(L^{p'})},
%\Big(\sum_j 2^{-jd}\|f_j\|_p^q\Big)^{1/q}, 
\\ \text{ for} \,\,\, p_0 \leq p \leq \rho_{\max}(q), \quad
\eps(p,q)=
\tfrac{dp_0}{q_0} \big(\tfrac 1p-\tfrac 1{\rho_{\max}(q)}\big)>0, 
\end{multline}
Given \eqref{defofvth},  the inequalities 
\eqref{eps-pqr-middlep} 
can then be deduced by the above indicated interpolation arguments  from
\Be\label{varthetas=1}
\|\cM_n [F,v]\|_{L^q} \lc   \|F\|_{\ell^q_w(L^p)} \|v\|_{L^\infty(L^{p'})} \quad \text{for $p=\rho_{\max}(q)$  \ }
\Ee
and 
\Be\label{varthetas=0}  \|\cM_n [F,v]\|_{L^q} \lc  2^{-nd(\frac 1{q_0}-\frac 1q)}  \|F\|_{\ell^q_w(L^{p_0} )} \|v\|_{L^\infty(L^{p_0'})}.
\Ee
Note that \eqref{varthetas=1}, \eqref{varthetas=0}  are immediate consequences of \eqref{Mrnuniform} and \eqref{Mp0n-gain}, respectively. 

\subsubsection*{The case $\rho_{\max}(q)\le p\le q$, $0<r<\rho_{\max}(q)$}
This case just follows by the localization argument used in the proof of Lemma \ref{localization-lemma} via the kernel estimates \eqref{kernel-error}, which allows to show that if
$$
\| \fM_{r,n} F \|_{L^q} \lesssim 2^{-n \varepsilon} \Big( \sum_{j \geq n} 2^{-jd} \| f_j \|_{L^{p_*}}^{q} \Big)^{1/q}
$$
holds for all $0 <r \leq r_*$ and some $1 \leq p_* \leq q$, then
$$
\| \fM_{r,n} F \|_{L^q} \lesssim 2^{-n \varepsilon} \Big( \sum_{j \geq n} 2^{-jd} \| f_j \|_{L^p}^{q} \Big)^{1/q}
$$
also holds for all $p_* \leq p \leq q$ and all $0 <r \leq r_*$. For fixed $q \in (q_0,\infty]$, the desired estimates for $\rho_{\max}(q) \leq p \leq q$ then follow from the above with input inequalities $r_*=p_*=\rho_{\max}(q)-\epsilon$ for $\epsilon$ arbitrarily small. Note that this argument has already been used in the context of  $\fM_{r,n}$ in the proof of \eqref{q=inftyinterpol} in Lemma \ref{specialcase-p0-lemma}. We omit the details. 
\begin{comment}
With out loss of generality we prove this last case for $p_0<r<\rho_{\max}(q)$. We now just  vary the parameter  $r$ between $p_0$
and $\rho_{\max}(q)$.
Note that replacing $p$ by $r$ in  \eqref{defofvth} we have
\begin{equation}\label{defofvth2}
\def\arraystretch{1.4}
\left. \begin{array}{l}
      p_0<r<\rho_{\max}(q) \\ 
      \tfrac{1-\vth}{p_0}+\tfrac{\vth}{\rho_{\max}(q)}=\tfrac 1r
\end{array}\right\} \quad  \implies  \quad
(1-\vartheta) d(\tfrac{1}{q_0}-\tfrac 1q)=\tfrac{dp_0}{q_0} \big(\tfrac 1r-\tfrac 1{\rho_{\max}(q)}\big).
\end{equation}
We claim 
\begin{multline}\label{eps-pqr-largep}
    \|\cM_n [F,v]\|_{L^q} \lc  2^{-n\tilde \eps(r,q} \|F\|_{\ell^q_w(L^p)} \|v\|_{L^\infty(L^{r'})} %\quad \rho_{\max}(q) \le p\le q,
%\Big(\sum_j 2^{-jd}\|f_j\|_p^q\Big)^{1/q}, 
\\ \text{for} \,\,\, \rho_{\max}(q) \le p\le q, \quad
\tilde \eps(r,q)=
\tfrac{dp_0}{q_0} \big(\tfrac 1r-\tfrac 1{\rho_{\max}(q)}\big). 
\end{multline}
Given \eqref{defofvth2}, the inequalities
\eqref{eps-pqr-largep} 
 follow again by the above interpolation arguments from 
\begin{multline}\label{varthetas-variant=1}
\|\cM_n [F,v]\|_{L^q} \lc   \|F\|_{\ell^q_w(L^{p})} \|v\|_{L^\infty(L^{r_1'})}\\ \text{ for $r_1=\rho_{\max}(q)$, $\rho_{\max}(q) \le p\le q$  \ }
\end{multline}
and 
\begin{multline}\label{varthetas-variant=0}  \|\cM_n [F,v]\|_{L^q} \lc  2^{-nd(\frac 1{q_0}-\frac 1q)}  \|F\|_{\ell^q_w(L^{p} )} \|v\|_{L^\infty(L^{r_0'})},\\
 \text{ for  $r_0=p_0$, $\rho_{\max}(q) \le p\le q$,  \ }
\end{multline}
which in turn
%\eqref{varthetas-variant=1}, \eqref{varthetas-variant=0}  
are consequences of \eqref{Mrnuniform} and \eqref{Mp0n-gain}, respectively. 
\end{comment}
\end{proof}

\section{The sharp $L^p\to L^{pd}(L^p)$ bound}\label{sec:strong type endpoint}

In this section we will give  bounds for the sums of the operators $\cA_j$ which, in particular, will cover the crucial endpoint bound at $P(r)=(\tfrac 1r, \tfrac {1}{rd})$ in Theorem \ref{dge3thm}, as well as the remaining endpoints bounds stated in Theorems \ref{dge3thm}, \ref{thm:intermediate r} and \ref{thm:small r}.

\begin{proposition}\label{stpropnew} 
Let $1< p_0\le q_0<\infty$.
Assume that
\begin{equation}\label{eq:st prop endp hyp}
   \sup_{j \geq 0} 2^{jd/q_0} \|\cA_j\|_{L^{p_0} \to L^{q_0}(L^{p_0}) } 
  \le C_0\le \infty.
\end{equation}
Let $q_0< q< \infty$ and define  $\frac 1{\rho_{\max}(q)}= \frac {q_0}{q}\frac 1{p_0}$ and
$\frac{1}{\rho_{\min}(q)} =1-\frac{q_0}q(1-\frac 1{p_0})$. Assume that 
$p$, $r$ are as in \eqref{pqr-assu-with gain}, i.e. 
\Be\notag \rho_{\min}(q)< p\le  q  \text{ and } 
\begin{cases} 
r\le p &\text{ if } \rho_{\min}(q)<p<\rho_{\max}(q),
\\
r<\rho_{\max}(q) &\text{ if } \rho_{\max}(q)\le p\le q.
\end{cases}
\Ee

Then for all $\{f_j\}_{j \geq 0}$,
\Be\label{propconcl}\Big\| \sum_{j\ge 0} \|\cA_j f_j\|_{L^r(\bbR)}\Big\|_{L^q(\bbR^d)}  \le C(p,q) (1+C_0) 
\Big(\sum_{j \geq 0} 2^{-jd}\|f_j\|_{L^p(\R^d)}^q\Big)^{1/q}. \Ee
\end{proposition}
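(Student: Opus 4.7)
My plan is to derive Proposition~\ref{stpropnew} from the maximal function estimate of Proposition~\ref{maximal-op-prop}, exploiting the geometric gain $2^{-n\varepsilon(p,q,r)}$ to absorb the summation over dyadic scales. After normalizing so that $A := \bigl(\sum_{j\ge 0} 2^{-jd}\|f_j\|_p^q\bigr)^{1/q} = 1$, the hypothesis \eqref{eq:st prop endp hyp} and the range condition \eqref{pqr-assu-with gain} put us in the setting of Proposition~\ref{maximal-op-prop}, which yields $\|\fM_{r,n} F\|_{L^q} \le C\,2^{-n\varepsilon}$ for all $n\ge 0$, with $\varepsilon = \varepsilon(p,q,r)>0$. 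The task is to convert this supremum-type bound into a sum-type bound for $\sum_j g_j$, where $g_j(x) := \|\cA_j f_j(x,\cdot)\|_{L^r(\bbR)}$.

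The first step is to obtain pointwise domination of $g_j$ by dyadic averages at scales above $2^{-j}$. Combining the kernel bound \eqref{eq:trivial kernel} with Minkowski's inequality in $t$ gives
\[
g_j(x) \lesssim_N \int \frac{2^{jd}}{(1+2^j|x-y|)^N}\, g_j(y)\,\mathrm{d}y \lesssim_N \sum_{n\ge 0} 2^{-n(N-d)} \intslash_{Q^{(n,j)}(x)} g_j,
\]
where $Q^{(n,j)}(x)$ is a dyadic cube in $\cQ_{n-j}(x)$ (with the case $n>j$ handled by the rapid decay \eqref{kernel-error}). In particular, each such average is bounded pointwise by $\fM_{r,n} F(x)$ \emph{individually} in $j$, but \emph{not} after summing in~$j$. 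To close the argument I would pass through the layer cake representation
\[
\Bigl\|\sum_j g_j\Bigr\|_{L^q}^{q} = q\int_0^\infty \lambda^{q-1} \Bigl|\{x : \sum_j g_j(x) > \lambda\}\Bigr| \, \mathrm{d}\lambda,
\]
and decompose the superlevel set as a union $\bigcup_{n\ge 0}\{x : \fM_{r,n}F(x) > c\,2^{-n\gamma}\lambda\}$ for some $0<\gamma<\varepsilon$; Chebyshev's inequality combined with Proposition~\ref{maximal-op-prop} gives the bound $C\,2^{-nq(\varepsilon-\gamma)}\lambda^{-q}$ on each piece, which sums in~$n$ to yield the weak-type $(q,q)$ inequality. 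A final interpolation within the open range \eqref{pqr-assu-with gain} (perturbing $p$, $q$, or $r$ slightly) upgrades the conclusion from weak to strong type.

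The principal obstacle is the sum-versus-supremum discrepancy in Step~3: the pointwise decomposition bounds each $g_j$ by averages that individually are dominated by $\fM_{r,n}F$, but passing to the sum $\sum_j g_j$ pointwise produces an expression of the form $\sum_j \intslash_{Q^{(n,j)}}g_j$ that is \emph{not} controlled by $\fM_{r,n}F = \sup_j \intslash_Q g_j$ (indeed, simple examples with $g_j = \mathbbm 1_{B(0,2^{-j})}$ show the naive pointwise bound cannot hold). The distributional argument bypasses this by only requiring that for each~$x$ with $\sum_j g_j(x)$ large, \emph{some} scale~$n$ captures a significant fraction, which is consistent with Chebyshev applied scale by scale. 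The strict inequality in \eqref{pqr-assu-with gain} (giving $\varepsilon>0$) is essential, since the parallel argument with only the uniform bound of Lemma~\ref{Mrnuniformlemma} would not converge.
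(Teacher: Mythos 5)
Your reduction to Proposition~\ref{maximal-op-prop} is the right starting point, and your pointwise domination of $g_j(x)=\|\cA_jf_j(x,\cdot)\|_{L^r}$ by averages at scales $2^{n-j}$ is correct (it is the reproducing identity $\cA_jf_j=\widetilde L_j\cA_jf_j$ in the $x$-variable plus Minkowski). But the argument has a genuine gap exactly at the point you flag as the ``principal obstacle'': the layer-cake decomposition does not bypass the sum-versus-supremum discrepancy, it merely restates it. Your set containment $\{\sum_jg_j>\lambda\}\subseteq\bigcup_n\{\fM_{r,n}F>c2^{-n\gamma}\lambda\}$ is equivalent to the claim that $\fM_{r,n}F(x)\le c2^{-n\gamma}\lambda$ for all $n$ forces $\sum_jg_j(x)\le\lambda$. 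From your pointwise bound you only get $\sum_jg_j(x)\lesssim\sum_n2^{-n(N-d)}\sum_j\intslash_{Q^{(n,j)}(x)}g_j$, and for each fixed $n$ the inner sum over $j$ consists of infinitely many terms each individually bounded by $\fM_{r,n}F(x)$; nothing prevents it from being arbitrarily large while every single average stays below $c2^{-n\gamma}\lambda$. The assertion that ``some scale $n$ captures a significant fraction'' is precisely the content of the proposition and is not established by Chebyshev applied scale by scale. (The concluding weak-to-strong interpolation is also delicate for the sublinear vector-valued operator $F\mapsto\sum_jg_j$, but that issue is moot.)

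The mechanism that actually converts the supremum bound into a sum bound is the Fefferman--Stein sharp maximal function, and it works because it reverses the order of quantifiers. One bounds $\|G\|_q\lesssim\|G^\#\|_q$ with $G=\sum_jg_j$, and in $G^\#(x)$ one first fixes a cube $Q\in\cQ_L(x)$ and then splits the sum over $j$ at $j=-L$. For the tail $j\ge -L$ each $j$ corresponds to a \emph{unique} $n=j+L\ge0$, so $\sum_{j\ge-L}\intslash_Qg_j=\sum_{n\ge0}\intslash_Qg_{n-L}\le\sum_{n\ge0}\fM_{r,n}F(x)$: the sum over $j$ becomes a sum over $n$, where the gain $2^{-n\varepsilon}$ from Proposition~\ref{maximal-op-prop} makes the $L^q$ norms summable. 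For the near-diagonal part $j\le-L$ (cubes of sidelength below $2^{-j}$) one uses the cancellation $g_j(y)-\intslash_Qg_j$ built into the sharp function, which yields a factor $2^{-n}$ from the smoothness of $\cA_jf_j$ at scale $2^{-j}$ and is then summable against the Hardy--Littlewood maximal function of $\sup_jg_j$; cubes of sidelength $\ge1$ reduce to the previous case by tiling. To repair your proof you would need to replace the direct layer-cake step with this sharp-function decomposition (or an equivalent good-$\lambda$ argument that fixes the cube before summing in $j$).
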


\begin{proof}  We first observe that by the monotone convergence theorem it suffices to show \eqref{propconcl} for any {\it finite } collection of functions $\{f_j\}_{j=0}^{\fn-1}$, with uniform bounds in $\fn\in \bbN$; moreover, all $f_j$ can be assumed to be in the Schwartz class. From  Lemma \ref{p0q0lemma} we get
\Be \label{Ndep}  \Big\| \sum_{j=0}^{\fn-1}  \|\cA_j f_j\|_{L^r(\bbR)}\Big\|_{L^q(\bbR^d)}  \lc \fn ^{1-1/q} 
\Big(\sum_j 2^{-jd}\|f_j\|_{L^p(\R^d)}^q\Big)^{1/q}, \quad q_0\le q \le \infty \Ee
and it is our task to remove the $\fn$-dependence in this estimate for $p,q,r$ as in the statement of the Proposition.

For a function $G\in L^{q_0}(\bbR^d)$  we recall the definition of
the Fefferman--Stein sharp maximal function
\[G^\#(x):=\sup_{x\in Q} \intslash_Q \Big|G(y)-\intslash_Q G(w) \ud w \Big| \ud y \]  
which satisfies the bound $\|G\|_{L^q}\le c(q) \|G^\#\|_{L^q} $ for every $q$ with $q_0<q<\infty$, and the implicit constant in this inequality is independent of the $L^{q_0}$-norm of $G$. This was proved in \cite{FSmult}. We  may apply this inequality to 
\[G(x) =\sum_{j\ge 0} \Big(\int |\cA_j f_j(x,t) |^r\ud t\Big)^{1/r},\]
as its $L^{q_0}$-norm is finite by \eqref{Ndep}; recall the sum is assumed to be finite. Let $\cQ(x)=\cup_{L\in \bbZ}\cQ_L(x)$, i.e. the family of cubes containing $x$. 
We estimate \[ G^\sharp (x) \lc \cG_I(x)+\cG_{II}(x)+\cG_{III}(x)\] where, with  $L(Q)$ as in \eqref{defofL(Q)},
\begin{equation*}
    \cG_I(x):=\,\, \sup_{\substack{Q\in \cQ(x)\\ L(Q)\le 0 }} \intslash_Q \Big|\!\! \sum_{0\le j\le -L(Q)}\!\! \Big( \|\cA_j f_j (y, \cdot)\|_{L^r(\R)} - \intslash_Q \|\cA_jf_j(w, \cdot)\|_{L^r(\R)}  \ud w \Big) \Big|\ud y,
    \end{equation*}
  \begin{align*}  \cG_{II} (x) &:= 
    \sup_{\substack{Q\in \cQ(x)\\ L(Q)\le 0 }} \intslash_Q  \sum_{ j\ge  -L(Q)} \|\cA_j f_j(y, \cdot)\|_{L^r(\R)}  \ud y,
    \\\
    \cG_{III}(x)&:= 
    \sup_{\substack{Q\in \cQ(x)\\ L(Q)> 0 }} \intslash_Q  \sum_{ j\ge 0} \|\cA_j f_j(y, \cdot)\|_{L^r(\R)} \ud y.
\end{align*}

The estimate for $\cG_{III}$ follows from the estimate for $\cG_{II}$. To see this let 
\[ U(y)= \sum_{j\ge 0} \|\cA_j f_j(y, \cdot)\|_{L^r(\R)}, \qquad U_*(w)=\sup_{\substack{Q\in \cQ(w) \\ L(Q)=0}} \intslash_Q  U(y) \ud y.\]
Given a cube $\widetilde Q\in \cQ(x)$ with $L(\widetilde Q)>0$ we may tile $\widetilde Q$ into cubes of  side length $1$ and get
\[\intslash\ci{\widetilde Q}U(y) \ud y \le \intslash\ci{\widetilde Q}  U_*(w) \ud w \le M_{HL} [U_*](x)\]
where $M_{HL}$ denotes the Hardy--Littlewood maximal operator. By a very  crude estimate we can replace $U_*$ by $\cG_{II} $ and get 
\Be \cG_{III}(x) \le M_{HL}[\cG_{II}] (x) .\label{GIIIptwise} \Ee

The term  $\cG_{II}$ is the most interesting  but it  has been already taken care of in \S\ref{maxopsect}. 
 We can write, with $U_j(y):=\|\cA_j f_j(y, \cdot)\|_{L^r(\R)} $
 \begin{align*} 
 \cG_{II}(x) 
 &=\sup_{\substack{Q\in \cQ(x)\\L(Q)\le 0}}\intslash_Q\sum_{n=0}^\infty U_{-L(Q)+n} (y) \ud y
 \\
& \le \sum_{n=0}^\infty
 \sup_{\substack{Q\in \cQ(x)\\L(Q)\le 0}}\intslash_Q
 U_{-L(Q)+n} (y) \ud y
  =\sum_{n=0}^\infty \sup_{j\ge n} \sup_{Q\in \cQ_{n-j}(x)} \intslash_Q U_j(y) \ud y .
 \end{align*}
 Hence,  
 with $\fM_{r,n}$ defined in \eqref{Mrn-def} and $F=\{f_j\}_{j \geq 0}$, we get
 \begin{align*}
     \cG_{II}(x) \le \sum_{n\ge 0} \fM_{r,n} F(x). 
 \end{align*}
  From  Proposition \ref{maximal-op-prop}, \eqref{GIIIptwise} and the $L^q$-boundedness of the Hardy-Littlewood maximal operator
 we obtain 
 \Be\label{GIIineq} 
    \|\cG_{II}\|_{L^q}+ \|\cG_{III}\|_{L^q}
    \lc \Big(\sum_{j \geq 0} 2^{-jd} \|f_j\|_{L^p}^q\Big)^{1/q} 
    %\text{ for $\rho_{\min(q)}< p<  \rho_{\max (q)}$, $r\le p$ .}
\end{equation}
for the range of $(p,q,r)$ assumed in the proposition.

It remains to  consider the term $\mathcal{G}_I$, where we can use the cancellative properties of the $\#$-function.
%ich amounts to the contribution of the small cubes. 
We will show that
\begin{equation}
    \label{GIclaim}
    \|\cG_I\|_{L^q}\lc \Big(\sum_{j \geq 0} 2^{-jd} \|f_j\|_{L^p}^q\Big)^{1/q} \quad \text{for $\rho_{\min}(q)\le p\le q$.}
\end{equation}
For $n\ge 0$ define
\begin{equation*}
    \cG_{I,n} (x):=  
    %\sup_{j\le n}
    \sup_{j\ge 0} \sup_{\substack{Q\in \cQ_{-n-j}(x)}} \intslash_Q \Big| \|\cA_{j} f_j(y,\cdot)\|_{L^r(\R)} - \intslash_Q  \|\cA_jf_j(w,\cdot)\|_{L^r(\R)} \ud w \Big|\ud y
    \end{equation*} 
    and, arguing as for $\cG_{II}$, we observe that 
    $\cG_I(x)\le \sum_{n\ge 0} \cG_{I,n}(x). $
 Our claim \eqref{GIclaim} will follow from the estimate
    \begin{equation}
    \label{GIclaimn}
    \|\cG_{I,n}\|_{L^q}\lc 2^{-n} \Big(\sum_{j \geq 0} 2^{-jd} \|f_j\|_{L^p}^q\Big)^{1/q} \quad \text{for $\rho_{\min}(q)\le p\le q$}
\end{equation}
uniformly in $n$. In order to show this,  note that by the triangle inequality
\[\cG_{I,n}(x)\le  
\sup_{j\ge 0} \sup_{\substack{Q\in \cQ_{-n-j}(x)}} \intslash_Q \intslash_Q  \big\| \cA_j f_j(y,\cdot)- \cA_j f_j(w,\cdot) \big\|_{L^r(\R)} \ud w\ud y.
\]
Write $\cA_j f_j=\widetilde L_j\cA_j f_j$ and let $\theta_j$ be the convolution kernel of $\widetilde L_j$. Then for $j\le n$, $x,y,w \in Q$, $Q\in \cQ_{-n-j}$
\begin{align*}
&\Big(\int \Big| \cA_{j} f_{j}(y,t) -  \cA_{j} f_j(w,t) \Big|^r \ud t\Big)^{1/r} 
\\&\le  \Big(\int\Big| \int (\theta_j(y-z)-\theta_j(w-z)) \cA_j f_j(z,t) \ud z \Big|^r  \ud t\Big)^{1/r} 
\\
&\le\int
\int_{0}^1 |\inn{y-w} {\nabla\theta_j(w+\tau(y-w)-z)}| \ud \tau  \Big(\int\big|
 \cA_{j}f_{j} (z, t)\big|^r \ud t\Big)^{1/r} \ud z. 
\end{align*}
Since $1+2^j|w+\tau(y-w)-z| \approx  1+2^j|x-z|$ for $x, y,w\in Q$, $Q\in \cQ_{-n-j} (x)$, $\tau\in [0,1]$, we can estimate the last expression by
\begin{align*} 
C_N 2^{-n-j} \int   \frac{ 2^{j(d+1)}}
{(1+2^j|x-z|)^N }\Big(\int  |\cA_{j}f_{j} (z,t)|^r\ud t\Big)^{1/r}  \ud z
\end{align*}
and hence we get  for $n\ge 0$ and $N>d$
%\begin{align*}
  \begin{equation*}
  |\cG_{I,n}(x)| \lc 2^{-n}   M_{HL} \big[ \sup_{j \geq 0}\|\cA_j f_j\|_{L^r(\R)} \big] (x).
  \end{equation*}
This implies, using the $L^q$ boundedness of the Hardy--Littlewood maximal operator $M_{HL}$ and Lemma \ref{p0q0lemma},
\begin{align*}
\|\cG_{I,n}  \|_{L^q} & \lc 2^{-n}   \big\| \sup_{j \geq 0} \|\cA_j f_j \|_{L^r(\R)}\big\|_{L^q}
\lc 2^{-n} \Big\| \Big(\sum_{j \geq 0} \|\cA_j f_j \|_{L^r(\R)}^q\Big)^{1/q}\Big\|_{L^q}
\\ & = 2^{-n}\Big(\sum_{j \geq 0}\|\cA_j f_j\|^q_{L^q(L^r)}\Big)^{1/q} \lc
2^{-n} \Big(\sum_{j \geq 0} 2^{-jd} \|f_j\|^q_{L^p}\Big)^{1/q},
\end{align*}
which is \eqref{GIclaimn}  and thus \eqref{GIclaim} is proved.
The proof is complete after combining \eqref{GIclaim} and  \eqref{GIIineq}.
\end{proof}

As \eqref{eq:st prop endp hyp} holds with $p_0=2$, $q_0=\frac{2(d+1)}{d-1}$ by Lemma \ref{stein-squarefct}, Proposition \ref{stpropnew} yields the following.

\begin{corollary} \label{stpropcor}
Assume that %$\frac{2(d+1)}{d-1}\le q\le \infty$, and suppose that 
\Be \label{eq:cor exp endp triangle}
\tfrac{2(d+1)}{d-1}< q < \infty, \quad \tfrac{d+1}{d-1}\tfrac 1 q   <  \tfrac 1p< 1- \tfrac{d+1}{d-1}\tfrac 1q, \quad r \leq p
\Ee
or 
\Be \label{eq:cor exp endp left}
\tfrac{2(d+1)}{d-1} < q < \infty, \quad  \tfrac 1q \leq \tfrac 1p  \leq \tfrac{d+1}{d-1}\tfrac 1 q   , \quad r < \tfrac{q(d-1)}{d+1}.
\Ee
Then for all $\{f_j\}_{j \geq 0}$,
\Be\notag\Big\| \sum_{j\ge 0} \|\cA_j f_j\|_{L^r(\bbR)}\Big\|_{L^q(\bbR^d)}  \lc \Big(\sum_{j \geq 0} 2^{-jd}\|f_j\|_{L^p(\R^d)}^q\Big)^{1/q}. \Ee
\end{corollary}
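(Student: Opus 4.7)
The plan is to derive this as a direct consequence of Proposition~\ref{stpropnew} by plugging in the Stein--Tomas-based input from Lemma~\ref{stein-squarefct}. Specifically, I would take $p_0 = 2$ and $q_0 = \tfrac{2(d+1)}{d-1}$, and note that Lemma~\ref{stein-squarefct} gives exactly
\[
\sup_{j \geq 0} 2^{jd/q_0}\,\|\cA_j\|_{L^{p_0} \to L^{q_0}(L^{p_0})} < \infty,
\]
which is hypothesis \eqref{eq:st prop endp hyp} of Proposition~\ref{stpropnew}.

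Next I would compute the auxiliary exponents $\rho_{\min}(q)$ and $\rho_{\max}(q)$ associated with this choice of $(p_0,q_0)$: from the definitions one finds
\[
\frac{1}{\rho_{\max}(q)} = \frac{q_0}{q}\cdot\frac{1}{p_0} = \frac{d+1}{(d-1)q}, \qquad \frac{1}{\rho_{\min}(q)} = 1 - \frac{q_0}{q}\Bigl(1 - \frac{1}{p_0}\Bigr) = 1 - \frac{d+1}{(d-1)q},
\]
so that $\rho_{\max}(q) = \tfrac{q(d-1)}{d+1}$ and $1/\rho_{\min}(q) = 1 - \tfrac{d+1}{(d-1)q}$. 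The strict lower bound $q > \tfrac{2(d+1)}{d-1}$ imposed in both \eqref{eq:cor exp endp triangle} and \eqref{eq:cor exp endp left} guarantees $q > q_0$, as required in Proposition~\ref{stpropnew}.

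Finally I would verify that each hypothesis in the corollary matches one of the two cases in \eqref{pqr-assu-with gain}. The triangular condition \eqref{eq:cor exp endp triangle}, written as $\tfrac{d+1}{(d-1)q} < \tfrac{1}{p} < 1 - \tfrac{d+1}{(d-1)q}$, is equivalent to $\rho_{\min}(q) < p < \rho_{\max}(q)$, and the additional restriction $r \le p$ coincides with the first branch of \eqref{pqr-assu-with gain}. The condition \eqref{eq:cor exp endp left}, namely $\tfrac{1}{q} \le \tfrac{1}{p} \le \tfrac{d+1}{(d-1)q}$ with $r < \tfrac{q(d-1)}{d+1}$, corresponds to $\rho_{\max}(q) \le p \le q$ with $r < \rho_{\max}(q)$, which is precisely the second branch. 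The conclusion of Proposition~\ref{stpropnew} then gives the stated inequality. No genuine obstacle arises; the only thing to watch is that the inequality $q > q_0$ is strict, which is guaranteed by the hypotheses.
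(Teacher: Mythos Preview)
Your proposal is correct and matches the paper's approach exactly: the paper states (just before the corollary) that the result follows from Proposition~\ref{stpropnew} applied with $p_0=2$, $q_0=\tfrac{2(d+1)}{d-1}$, using Lemma~\ref{stein-squarefct} to verify hypothesis \eqref{eq:st prop endp hyp}. Your computation of $\rho_{\min}(q)$, $\rho_{\max}(q)$ and the identification of the two cases with the two branches of \eqref{pqr-assu-with gain} simply makes this explicit.
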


Combining this with Lemma \ref{lemma:besov reduction} one obtains the strong type bound at $P(r)$ in Theorem \ref{dge3thm} (that is, Corollary \ref{cor:lower boundary edge}).

\begin{proposition} \label{stpropLr} 
Let $d\ge 3$,  $\frac{d^2+1}{d(d-1)}<p<\infty$. 
Then
\begin{equation}\label{stBesovr} \| \cA f \|_{L^{pd} (B^{1/p}_{p,1})} \le  \|f\|_{L^p}. \end{equation}
Moreover,
\begin{equation}\label{stVar-r} 
\| V^I_p A f \|_{L^{pd}}  \le  \|f\|_{L^p}. \end{equation}
\end{proposition}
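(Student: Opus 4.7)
The plan is to combine Corollary~\ref{stpropcor} with Lemma~\ref{lemma:besov reduction} and then invoke the embedding $B^{1/p}_{p,1}\hookrightarrow V_p$ to deduce the variation bound. I expect no serious obstacle, since the point $P(p)=(1/p,1/(pd))$ was chosen precisely to land on the critical line $\tfrac{1}{q}=\tfrac{1}{pd}$ for the Stein--Tomas-based single-scale bound of Lemma~\ref{stein-squarefct}, and the hypothesis $p>\tfrac{d^2+1}{d(d-1)}$ corresponds exactly to $P(p)$ lying strictly inside the triangle in which Corollary~\ref{stpropcor} applies. The main small matter is verifying these inclusion conditions.

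First I would check that the exponent triple $(p,q,r)=(p,pd,p)$ satisfies the condition \eqref{eq:cor exp endp triangle} of Corollary~\ref{stpropcor}. With $q=pd$ the relation $\tfrac{d+1}{d-1}\tfrac 1q<\tfrac 1p$ reduces to $d+1<d(d-1)$, i.e.\ $d^2-2d-1>0$, which holds for $d\ge 3$; likewise $q>\tfrac{2(d+1)}{d-1}$ is equivalent to $p>\tfrac{2(d+1)}{d(d-1)}$, which is implied by the hypothesis $p>\tfrac{d^2+1}{d(d-1)}$. The upper constraint $\tfrac 1p<1-\tfrac{d+1}{d-1}\tfrac 1q$ is, after clearing denominators, precisely $p>\tfrac{d^2+1}{d(d-1)}$, which is our standing assumption. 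Finally $r=p\leq p$ is trivial. Hence Corollary~\ref{stpropcor} yields, for every finite sequence $\{f_j\}_{j\ge 0}$,
\begin{equation*}
\Big\| \sum_{j\ge 0} \|\cA_j f_j\|_{L^p(\bbR)}\Big\|_{L^{pd}(\bbR^d)}  \lesssim \Big(\sum_{j \geq 0} 2^{-jd}\|f_j\|_p^{pd}\Big)^{1/(pd)}.
\end{equation*}

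Next I would apply Lemma~\ref{lemma:besov reduction} with $s=\tfrac 1p$, $q=pd$, $r=p$. Its hypotheses $1\le r<\infty$, $2\le q<\infty$, $1<p<\infty$, $r,p\le q$ are all satisfied (note $q=pd\ge 3p>2$ since $d\ge 3$, and $r=p\le pd=q$), and the multi-scale estimate \eqref{eq:multi-scale assumption} with $s=1/p$ is exactly the display above. We therefore conclude
\begin{equation*}
\| \cA f \|_{L^{pd} (B^{1/p}_{p,1})} \lesssim  \|f\|_{L^p},
\end{equation*}
which is \eqref{stBesovr}.

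To conclude \eqref{stVar-r}, I use that $\chi\equiv 1$ on a neighbourhood of $I=[1,2]$, so $\cA f(x,t)=A_tf(x)$ for $t\in I$; hence for each $x$
\begin{equation*}
V_p^I Af(x) = |Af(x)|_{V_p(I)} \leq \|\cA f(x,\cdot)\|_{V_p(\bbR)} \lesssim \|\cA f(x,\cdot)\|_{B^{1/p}_{p,1}(\bbR)},
\end{equation*}
by the Plancherel--Polya embedding \eqref{BPe}. Taking $L^{pd}(\R^d)$-norms in $x$ and invoking \eqref{stBesovr} gives \eqref{stVar-r}, completing the proof.
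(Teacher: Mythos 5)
Your proposal is correct and follows essentially the same route as the paper: verify that $(p,q,r)=(p,pd,p)$ satisfies condition \eqref{eq:cor exp endp triangle} of Corollary \ref{stpropcor} (with the threshold $p>\tfrac{d^2+1}{d(d-1)}$ coming from the constraint $\tfrac 1p<1-\tfrac{d+1}{d-1}\tfrac1q$ and $d\ge 3$ from $d^2-2d-1>0$), feed the resulting multi-scale estimate into Lemma \ref{lemma:besov reduction} with $s=1/p$, and pass to the variation bound via the embedding \eqref{BPe}. The verifications of the exponent conditions are all accurate.
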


\begin{proof} The bound \eqref{stVar-r} follows from \eqref{stBesovr}  via \eqref{BPe}. 

To prove \eqref{stBesovr}  we apply Corollary \ref{stpropcor} and Lemma \ref{lemma:besov reduction} with $s=d/q=1/p$. We  verify the assumption \eqref{eq:cor exp endp triangle} for $q=pd$, $r=p$. The condition
$\tfrac{d+1}{d-1}\tfrac 1 q<\tfrac 1p$ is satisfied (for $q=pd$) when $d^2-2d-1>0$, which holds when $d\ge 3$.
The condition $\tfrac 1p< 1- \tfrac{d+1}{d-1}\tfrac 1q$ is satisfied (for $q=pd$) if $p>\frac{d^2+1}{d(d-1)}$. The condition $q > \frac{2(d+1)}{d-1}$ is also satisfied if $q=pd$, $p>\frac{d^2+1}{d(d-1)}$. In particular, the latter implies that $q=pd>2$ in this range, so the hypothesis of Lemma \ref{lemma:besov reduction} are also satisfied and thus \eqref{stBesovr} follows.
\end{proof}

Arguing in a similar way, we obtain the remaining claimed endpoint bounds in Theorems \ref{dge3thm}, \ref{thm:intermediate r} and \ref{thm:small r}.

\begin{proposition} \label{prop:endp strong big r}
Let $d\ge 3$ and $r>\frac{d^2+1}{d(d-1)}$.

(i)  Let $r\le p\le rd$. Then the  operators $\cA: L^p\to L^{rd}(B^{1/r}_{r,1})$  and $V_r^I A: L^p\to L^{rd}$ are bounded. That is, $V_r^I:L^p \to L^q$ is bounded if $(1/p,1/q)$ belongs to the closed segment $[Q_1(r),P(r)]$ in Theorem \ref{dge3thm}.

(ii) Let $\frac{d^2+1}{d(d-1)}<p\le r$. Then the operators 
 $\cA: L^p\to L^{pd}(B^{1/r}_{r,1})$  and $V_r^I A: L^p\to L^{pd}$ are bounded. That is, $V_r^I:L^p \to L^q$ is bounded if $(1/p,1/q)$ belongs to the half-open segment $[P(r),Q_4)$ in Theorem \ref{dge3thm}.
\end{proposition}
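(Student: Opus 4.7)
The plan is to reduce both parts to Besov-valued estimates $\cA: L^p \to L^q(B^{1/r}_{r,1})$; the variation bounds then follow immediately from the embedding $B^{1/r}_{r,1} \hookrightarrow V_r$ in \eqref{BPe}. Part (i) fits directly into the machinery of \S\ref{sec:strong type endpoint}, whereas part (ii) requires an additional real interpolation step.

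For part (i), I would fix $q = rd$ and invoke Corollary \ref{stpropcor} with its internal $L^r$-exponent equal to our variation index $r$, followed by Lemma \ref{lemma:besov reduction} with $s = 1/r$; observe that $2^{-jd} = 2^{-jsq}$ exactly when $q = rd$, so the right-hand sides align. The ambient requirement $q > \tfrac{2(d+1)}{d-1}$ translates into $r > \tfrac{2(d+1)}{d(d-1)}$, which is implied by our hypothesis $r > \tfrac{d^2+1}{d(d-1)}$ since $d^2+1 > 2(d+1)$ for $d \geq 3$. The segment $r \leq p \leq rd$ then splits at the crossover $p_* := \tfrac{rd(d-1)}{d+1}$, which lies strictly between $r$ and $rd$ when $d \geq 3$: on $[r, p_*)$ the condition \eqref{eq:cor exp endp triangle} is verified (the delicate inequality $\tfrac{1}{p} < 1 - \tfrac{d+1}{rd(d-1)}$ rearranges, using $p \geq r$, to $r > \tfrac{d^2+1}{d(d-1)}$), while on $[p_*, rd]$ the condition \eqref{eq:cor exp endp left} applies, whose internal requirement $r < \tfrac{q(d-1)}{d+1}$ reduces to $d^2 - 2d - 1 > 0$.

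For part (ii), where $p \leq r$, the approach of part (i) breaks down: the hypothesis $r \leq p$ of the ``middle'' range of Proposition \ref{stpropnew} is violated, and one checks that the ``right'' range hypothesis $p \geq \rho_{\max}(pd) = \tfrac{pd(d-1)}{d+1}$ also fails since $d^2 - 2d - 1 > 0$. Instead, I would interpolate between two endpoints valued in the fixed Banach space $X := B^{1/r}_{r,1}$: the strong-type bound $\cA: L^r \to L^{rd}(X)$ at $P(r)$ supplied by part (i) (equivalently Proposition \ref{stpropLr}), and the restricted weak-type bound $\cA: L^{p_4,1} \to L^{q_4,\infty}(X)$ at $Q_4$ from Lemma \ref{lem:Q4rwt}, where $p_4 = \tfrac{d^2+1}{d(d-1)}$ and $q_4 = \tfrac{d^2+1}{d-1}$. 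Applying the Lions--Peetre real interpolation theorem to the Banach couples $(L^r, L^{p_4,1})$ and $(L^{rd}(X), L^{q_4,\infty}(X))$ with parameter $\theta \in (0,1)$ and secondary index $p_\theta$ produces $\cA: L^{p_\theta} \to L^{q_\theta, p_\theta}(X)$ with $\tfrac{1}{p_\theta} = \tfrac{1-\theta}{r} + \tfrac{\theta}{p_4}$ and $\tfrac{1}{q_\theta} = \tfrac{1-\theta}{rd} + \tfrac{\theta}{q_4}$. A direct computation confirms $q_\theta = p_\theta d$, so the intermediate points trace precisely the open segment $(P(r), Q_4)$. Since $p_\theta < q_\theta$ for $d \geq 3$, the Lorentz embedding $L^{q_\theta, p_\theta}(X) \hookrightarrow L^{q_\theta}(X)$ upgrades the interpolated bound to strong type, and combining with the endpoint $P(r)$ already covered by part (i) yields the full half-open segment $[P(r), Q_4)$.

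The main obstacle is the case $p \leq r$ in part (ii), which falls outside the $(p_0, q_0) = (2, \tfrac{2(d+1)}{d-1})$ Fefferman--Stein framework of \S\ref{sec:strong type endpoint}: the multi-scale summation estimate of Corollary \ref{stpropcor} is simply unavailable when the Besov index $r$ exceeds $p$. Vector-valued real interpolation circumvents this at the cost of losing the corner $Q_4$ itself (which is the excluded endpoint in the statement); the only subtlety is to invoke the Lorentz-space interpolation identity $(L^{p_0}(X), L^{p_1,q}(X))_{\theta, s} = L^{p_\theta, s}(X)$ with target in the Banach space $X = B^{1/r}_{r,1}$, which is a standard consequence of the retraction/coretraction method.
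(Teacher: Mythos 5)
Your part (i) is essentially the paper's own argument: the same split of $[r,rd]$ at $p_*=\tfrac{rd(d-1)}{d+1}$, with \eqref{eq:cor exp endp triangle} handling $[r,p_*)$ and \eqref{eq:cor exp endp left} handling $[p_*,rd]$, followed by Lemma \ref{lemma:besov reduction} with $s=1/r$; all of your exponent checks (in particular that $\tfrac1p<1-\tfrac{d+1}{rd(d-1)}$ at $p=r$ is exactly $r>\tfrac{d^2+1}{d(d-1)}$, and that $r<\tfrac{q(d-1)}{d+1}$ reduces to $d^2-2d-1>0$) match the paper. For part (ii) you take a genuinely different route. The paper simply applies Proposition \ref{stpropLr} at the exponent $p$ itself, obtaining $\cA:L^p\to L^{pd}(B^{1/p}_{p,1})$, and then uses the standard Besov embedding $B^{1/p}_{p,1}\hookrightarrow B^{1/r}_{r,1}$ (valid since $p\le r$ and the differential dimensions $\tfrac1p-\tfrac1p=\tfrac1r-\tfrac1r$ coincide) — a one-line deduction that keeps everything at strong type and never leaves the machinery of \S\ref{sec:strong type endpoint}. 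You instead freeze the target space $X=B^{1/r}_{r,1}$ and run vector-valued Lions--Peetre interpolation between the strong-type bound at $P(r)$ and the restricted-weak-type bound at $Q_4$ from Lemma \ref{lem:Q4rwt}, then upgrade $L^{q_\theta,p_\theta}(X)$ to $L^{q_\theta}(X)$ via $p_\theta<q_\theta$. This is correct (the identities $(L^{q_0}(X),L^{q_1,\infty}(X))_{\theta,s}=L^{q_\theta,s}(X)$ and $q_\theta=p_\theta d$ check out, and your diagnosis of why Proposition \ref{stpropnew} is unavailable for $p<r$ is accurate), and it has the mild virtue of not requiring the Besov embedding; the cost is the heavier interpolation apparatus and the need to cite Lemma \ref{lem:Q4rwt}, which the paper's shorter argument avoids entirely. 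Both arguments cover the full half-open segment $[P(r),Q_4)$, so there is no gap.
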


\begin{proof}
For part (i) we use again Corollary \ref{stpropcor} and Lemma \ref{lemma:besov reduction} with $s=d/q=1/r$. The condition \eqref{eq:cor exp endp triangle} yields the ranges $\frac{rd(d-1)}{rd(d-1) -(d+1)} < p < \frac{rd(d-1)}{d+1}$ and $r \leq p$. Note that $r > \frac{rd(d-1)}{rd(d-1) -(d+1)}$ if and only if $r > \frac{d^2+1}{d(d-1)}$; moreover, the condition $q>\frac{2(d+1)}{d-1}$ (for $q=rd$) is satisfied in the range $r > \frac{d
^2+1}{d(d-1)}$ whenever $d^2-2d-1>0$, which holds for $d \geq 3$. This settles the range $r \leq p < \frac{rd(d-1)}{d+1}$. The range $\frac{rd(d-1)}{d+1} \leq p \leq rd$ corresponds to \eqref{eq:cor exp endp left}. Note that the condition $r < \frac{q(d-1)}{d+1}$ requires (for $q=rd$) $d^2-2d-1>0$, which holds when $d \geq 3$. The condition $q > \frac{2(d+1)}{d-1}$ was already verified for \eqref{eq:cor exp endp triangle}. This concludes the bounds in (i).

Part (ii) follows from standard embedding theorems from Proposition \ref{stpropLr}.
\end{proof}

\begin{proposition}
Let $d\ge 4$ and $1\le r\le \frac{d^2+1}{d(d-1)}$ or $d=3$ and $4/3<r\le 5/3$. Let $\frac{rd(d-1)}{rd(d-1)-d-1} < p\le rd$. Then the  operators $\cA: L^p\to L^{rd}(B^{1/r}_{r,1})$  and $V_r^I A: L^p\to L^{rd}$ are bounded. That is, $V_r^I:L^p \to L^q$ is bounded if $(1/p,1/q)$ belongs to the half-open segment $[Q_1(r),Q_4(r))$ in Theorems \ref{thm:intermediate r} and \ref{thm:small r}.
\end{proposition}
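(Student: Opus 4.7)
The plan is to follow the same template as Propositions \ref{stpropLr} and \ref{prop:endp strong big r}: apply Corollary \ref{stpropcor} to obtain the multi-scale inequality
\[
\Big\| \sum_{j \geq 0} \|\cA_j f_j\|_{L^r(\R)} \Big\|_{L^{rd}(\R^d)} \lesssim \Big( \sum_{j \geq 0} 2^{-jd}\|f_j\|_p^{rd} \Big)^{1/(rd)},
\]
and then upgrade it to the Besov target via Lemma \ref{lemma:besov reduction} with $s=1/r=d/q$ and $q=rd$. The variation bound then follows from the embedding \eqref{BPe}. One easily checks the side hypotheses of Lemma \ref{lemma:besov reduction}: $q=rd \geq 2$ (since $d\ge 3$ and $r\ge 1$), $p \leq q$ by assumption, and $p > \frac{rd(d-1)}{rd(d-1) - d - 1} > 1$.

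The bulk of the work is to verify that Corollary \ref{stpropcor} applies with $q=rd$ and with the variation index $r$. I will split the range of $p$ at the value $p_* := \frac{rd(d-1)}{d+1}$ (noting $p_* \leq rd$). For the piece $p_* \leq p \leq rd$ I appeal to case \eqref{eq:cor exp endp left}: the inequalities $\tfrac 1q \leq \tfrac 1p \leq \tfrac{d+1}{d-1}\tfrac 1q$ are immediate, while the constraint $r < \tfrac{q(d-1)}{d+1}$ reduces to $d^2-2d-1>0$, which holds for $d \geq 3$. For the piece $\frac{rd(d-1)}{rd(d-1)-d-1} < p < p_*$ I appeal to the triangle case \eqref{eq:cor exp endp triangle}: the upper and lower bounds on $1/p$ translate exactly to the defining inequalities, and the constraint $r\le p$ follows from the crucial observation that
\[
r \leq \tfrac{rd(d-1)}{rd(d-1)-d-1} \quad \iff \quad (r-1)d(d-1) \leq d+1 \quad \iff \quad r \leq \tfrac{d^2+1}{d(d-1)},
\]
which is precisely our hypothesis on $r$; consequently $r \leq \tfrac{rd(d-1)}{rd(d-1)-d-1} < p$ automatically.

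Finally, in both sub-cases I must check the standing assumption $q_0 < q < \infty$ of the corollary, which here reads $rd > \tfrac{2(d+1)}{d-1}$, i.e.\ $r > \tfrac{2(d+1)}{d(d-1)}$. For $d \geq 4$ this upper bound is at most $\tfrac{10}{12}<1$, so the hypothesis $r\ge 1$ is strong enough; for $d=3$ the quantity equals $4/3$, which is exactly the cut-off in the statement. The ``hard part'' is really only conceptual rather than technical: one must recognise that the endpoint $Q_4(r)$ of the pentagon corresponds exactly to the boundary of applicability of case \eqref{eq:cor exp endp triangle} of Corollary \ref{stpropcor}, and that the restriction $r \leq \frac{d^2+1}{d(d-1)}$ is precisely what forces the auxiliary inequality $r \leq p$ to hold throughout Case (b). Once these numerical identifications are in place, the proof is a direct invocation of the machinery developed in Sections \ref{maxopsect} and \ref{sec:strong type endpoint}.
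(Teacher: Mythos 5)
Your proof is correct and follows essentially the same route as the paper: Corollary \ref{stpropcor} (splitting the $p$-range at $\frac{rd(d-1)}{d+1}$ between the cases \eqref{eq:cor exp endp triangle} and \eqref{eq:cor exp endp left}) combined with Lemma \ref{lemma:besov reduction} at $s=d/q=1/r$ and the embedding \eqref{BPe}. In particular your observation that $r\le \frac{d^2+1}{d(d-1)}$ is equivalent to $r\le \frac{rd(d-1)}{rd(d-1)-d-1}$, so that $r\le p$ holds automatically in the triangle case, and your verification that $rd>\frac{2(d+1)}{d-1}$ forces $d\ge 4$ for $r\ge 1$ (respectively $r>4/3$ for $d=3$) are exactly the points the paper singles out.
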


\begin{proof}
This follows arguing as in the proof of Proposition \ref{prop:endp strong big r}. The only difference is that in \eqref{eq:cor exp endp triangle}  the relevant range for $p$ (for $q=rd$) if $r \leq \frac{d^2+1}{d(d-1)}$ is  $\frac{rd(d-1)}{rd(d-1)-d-1} < p < \frac{rd(d-1)}{d+1}$. Moreover, the condition $q> \frac{2(d+1)}{d-1}$ requires (for $q=rd$) that $r>\frac{2(d+1)}{d(d-1)}$. As $r \geq 1$, this condition is satisfied if $d^2-3d-2>0$, which holds for $d \geq 4$. If $d=3$, we require the restriction $r > 4/3$.
\end{proof}

\section{An endpoint bound for the global variation}\label{sec:endpoint global}
The purpose of this section is to prove Theorem \ref{endptvarthm}.

It will be useful to work with the standard homogeneous Littlewood--Paley decomposition. We define
$P_j f$, $j\in \bbZ$  by \[\widehat {P_j f} (\xi)=\big( \beta_0(2^{-j} |\xi|)- \beta_0(2^{1-j}|\xi|)\big)
\widehat f(\xi)\] so that $P_j$ localizes to frequencies of size $\approx 2^j$. We have $P_j=L_j$ for $j\ge 1$ and $L_0f=\sum_{j\le 0} P_j f$ for $f\in L^p$, $p\in (1,\infty)$ with convergence in $L^p$.
It will also be convenient to use reproducing operators $\widetilde P_j$ localizing to slightly larger frequency annuli, with $\widetilde P_j P_j=P_j$ for $j\in \bbZ$.

Let $j\geq 0$. We recall the definition
\[\cA_j f(x,t) = \chi(t) A_t L_j f(x)= \chi(t) K_{j,t}*f(x) 
\text{ where }  \widehat {K_{j,t}}(\xi)  =\widehat  \sigma(t\xi) \beta_j(|\xi|) .
\]
We  combine this with dyadic dilations, and define for $k\in \bbZ$, $t\in [1/2,4]$,
\begin{equation}\label{eq:def Ajk} 
\cA_j^kf(x,t) = \chi(t) A_{2^k t} P_{j-k} f(x)= \chi(t) K^k_{j,t} *f (x) 
\end{equation}
where
$K^k_{j,t}= 
2^{-kd} K_{j,t}(2^{-k}\cdot)$. 
Below we shall often use a scaled version of \eqref{kernel-error}, namely
\Be
\label{kernel-error-k}
|K_{j,t}^k(x)|\lc_N 2^{-kd} (2^{j-k}|x|)^{-N},\qquad |x|\ge 10\cdot 2^k, \quad t\in [1/2,4].\Ee

We start recording the following special case of Proposition \ref{prop:bounds A_j with ST}, which will be relevant for the proof of Theorem \ref{endptvarthm} (when $p=q$).

\begin{corollary}\label{cor:stein-r-fct}
For
$2\le r\le \infty$, 
$\frac{r(d+1)}{d-1}\le q\le \infty$, $r\le p\le q$ we have 
\begin{align*}
\|\cA_j f\|_{L^q(L^r)}\lesssim  2^{-jd/q} \|f\|_{L^p}. 
\end{align*}
\end{corollary}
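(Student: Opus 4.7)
The plan is to deduce the corollary directly from parts (D) and (E) of Proposition~\ref{prop:bounds A_j with ST} by a simple case split on the location of $1/p$ relative to the ``critical'' line $\tfrac{1}{q}=\tfrac{d-1}{d+1}\tfrac{1}{p}$. First I would observe that the hypotheses $r\ge 2$ and $q\ge \tfrac{r(d+1)}{d-1}$ immediately give $q\ge \tfrac{2(d+1)}{d-1}$, which is the lower bound on $q$ required for both (D) and (E).

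Next, I would split into two ranges of $p$. In the range $r\le p\le \tfrac{q(d-1)}{d+1}$ one has $\tfrac{1}{q}\le \tfrac{d-1}{d+1}\tfrac{1}{p}$, so part (E) is the relevant statement. The conditions $\tfrac{1}{q}\le \tfrac{1}{p}$ and $r\le p$ are built into our hypotheses; the only additional requirement to verify is
\[
\tfrac{1}{p} + \tfrac{d+1}{d-1}\tfrac{1}{q} \le 1,
\]
which is where one uses $r\ge 2$: the left-hand side is bounded by $\tfrac{1}{r}+\tfrac{d+1}{d-1}\cdot\tfrac{d-1}{r(d+1)}=\tfrac{2}{r}\le 1$. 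In the complementary range $\tfrac{q(d-1)}{d+1}\le p\le q$, one has $\tfrac{d-1}{d+1}\tfrac{1}{p}\le \tfrac{1}{q}\le \tfrac{1}{p}$, so (D) applies; the constraint $r\le \tfrac{q(d-1)}{d+1}$ needed there is exactly our hypothesis $q\ge \tfrac{r(d+1)}{d-1}$.

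Combining the two cases yields the bound $\|\cA_j f\|_{L^q(L^r)}\lesssim 2^{-jd/q}\|f\|_p$ throughout the claimed range. There is no real obstacle here: the corollary is a repackaging of Proposition~\ref{prop:bounds A_j with ST} that emphasizes the dependence on $r$ in the endpoint form needed for the variation estimates; all the analytic content (the Stein--Tomas input from Lemma~\ref{stein-squarefct}, the trivial bounds, and Lemma~\ref{p0q0lemma}) has already been absorbed into Proposition~\ref{prop:bounds A_j with ST}.
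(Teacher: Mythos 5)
Your verification is correct and matches the paper's (implicit) argument: the paper simply records this corollary as a special case of Proposition~\ref{prop:bounds A_j with ST}, and your case split on whether $\tfrac 1q \le \tfrac{d-1}{d+1}\tfrac 1p$ (region (E)) or $\tfrac{d-1}{d+1}\tfrac 1p \le \tfrac 1q \le \tfrac 1p$ (region (D)), together with the checks that $r\ge 2$ forces $q\ge \tfrac{2(d+1)}{d-1}$ and $\tfrac 1p+\tfrac{d+1}{d-1}\tfrac 1q\le \tfrac 2r\le 1$, is exactly the verification needed. Nothing is missing.
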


By Corollary \ref{cor:stein-r-fct} and rescaling we have
\Be\label{Ajk-Lp}
\|\cA_j^k\|_{L^p\to L^p(L^r)} \lc 2^{-jd/p}, \quad \tfrac{r(d+1)}{d-1}\le p\le \infty.
\Ee
One can improve over this result and extend it to sums in $k$ whenever $\tfrac{r(d+1)}{d-1} < p <\infty$.
\begin{proposition}\label{global-fixedj}
For $2\le r<\infty$, $\frac{r(d+1)}{d-1}< p<\infty $ we have, for all $j\ge 1$, 
\Be\label{eq:global fixed j}
\Big\| \Big(\sum_{k\in \bbZ} \|\cA_j^k f \|_{L^r(\bbR)}^r\Big)^{1/r} \Big\|_{L^p(\bbR^d)} \lc 2^{-jd/p} \|f\|_{L^p(\R^d)}.
\Ee
\end{proposition}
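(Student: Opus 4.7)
The strategy is to reduce to the single-scale estimate \eqref{Ajk-Lp} by first pulling the $\ell^r$-sum outside the $L^p$-norm via Minkowski's inequality, and then removing the resulting $\ell^r$-sum of norms of frequency-localized pieces by a duality--Littlewood--Paley argument.

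\emph{Step 1 (Minkowski).} Since $\cA_j^k=\cA_j^k\widetilde P_{j-k}$ by the frequency-reproducing property, and since $r\le p$, Minkowski's inequality applied to the $L^{p/r}_x$-norm of $\sum_k\|\cA_j^k f(x,\cdot)\|_{L^r_t}^r$ gives
\[
\Big\|\Big(\sum_{k\in\Z}\|\cA_j^k f\|_{L^r(\R)}^r\Big)^{1/r}\Big\|_{L^p(\R^d)}
\le\Big(\sum_{k\in\Z}\|\cA_j^k\widetilde P_{j-k}f\|_{L^p(L^r)}^r\Big)^{1/r}.
\]

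\emph{Step 2 (Single-scale bound).} The hypothesis $p>r(d+1)/(d-1)$ makes \eqref{Ajk-Lp} applicable, so each summand is bounded by $2^{-jd/p}\|\widetilde P_{j-k}f\|_p$, reducing matters to the Littlewood--Paley-type inequality
\[
\Big(\sum_{k\in\Z}\|\widetilde P_{j-k}f\|_p^r\Big)^{1/r}\lesssim\|f\|_p.
\]

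\emph{Step 3 (Littlewood--Paley).} By duality, this is equivalent to
\[
\Big\|\sum_{k\in\Z}\widetilde P_{j-k}^{*}h_k\Big\|_{p'}\lesssim\Big(\sum_{k\in\Z}\|h_k\|_{p'}^{r'}\Big)^{1/r'}
\]
for all sequences $\{h_k\}$. Since the functions $\widetilde P_{j-k}^{*}h_k$ are supported in essentially disjoint dyadic frequency annuli, the classical Littlewood--Paley inequality (valid for $1<p'<\infty$) bounds the left-hand side by $\|(\sum_k|\widetilde P_{j-k}^{*}h_k|^2)^{1/2}\|_{p'}$. The hypothesis $p>r(d+1)/(d-1)\ge 2(d+1)/(d-1)>2$ forces $p'<2$, so a further application of Minkowski gives
\[
\Big\|\Big(\sum_k|\widetilde P_{j-k}^{*}h_k|^2\Big)^{1/2}\Big\|_{p'}\le\Big(\sum_k\|\widetilde P_{j-k}^{*}h_k\|_{p'}^2\Big)^{1/2}\lesssim\Big(\sum_k\|h_k\|_{p'}^2\Big)^{1/2},
\]
and since $r\ge 2$ we have $r'\le 2$, so the embedding $\ell^{r'}\hookrightarrow\ell^2$ finishes the estimate.

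\emph{Main obstacle.} There is no serious analytic obstacle once \eqref{Ajk-Lp} is in hand; the entire argument is a careful bookkeeping of Minkowski's integral inequality and Littlewood--Paley theory. The only technical care needed is to verify that each of the three applications of Minkowski goes in the correct direction: the assumptions $r\le p$, $p'\le 2$ (equivalently $p\ge 2$), and $r'\le 2$ (equivalently $r\ge 2$) are used precisely for this purpose, and all three are forced by the standing hypotheses $2\le r\le p$ and $p>r(d+1)/(d-1)$.
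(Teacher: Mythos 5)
There is a genuine gap, and it sits exactly at the point the paper's closing Remark to this proposition warns about. Your Steps 1--2 are individually correct (Minkowski in $L^{p/r}$ with $p/r\ge 1$, then \eqref{Ajk-Lp}), but they reduce the problem to the inequality $\big(\sum_{k}\|\widetilde P_{j-k}f\|_p^r\big)^{1/r}\lesssim\|f\|_p$, and this inequality is \emph{false} in the regime of the proposition, where necessarily $r<p$ (since $p>\tfrac{r(d+1)}{d-1}>r$). A counterexample: take $f=\sum_{k=1}^N e^{i2^{k}\inn{x}{e_1}}\phi(x-x_k)$ with $\phi$ a fixed bump and $x_k$ widely separated, so that the summands have essentially disjoint spatial supports and frequency supports in distinct dyadic annuli. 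Then $\|\widetilde P_{k}f\|_p\approx 1$ for $N$ values of $k$ while $\|f\|_p\approx N^{1/p}$, so the left side is $\approx N^{1/r}\gg N^{1/p}$ when $r<p$. The correct Littlewood--Paley-type statement for $p\ge 2$ is $\big(\sum_k\|\widetilde P_kf\|_p^{p}\big)^{1/p}\lesssim\|f\|_p$ (and hence for any exponent $\ge p$ on the sequence space), not for exponents $r<p$. Thus Step 1, although a true inequality, already discards the structure one needs: pulling an $\ell^r$-sum outside an $L^p$-norm with $r<p$ is too lossy.

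The concrete error in your argument is in Step 3. After the (correct) duality reduction and the (correct) reverse square-function bound $\|\sum_k\widetilde P^*_{j-k}h_k\|_{p'}\lesssim\|(\sum_k|\widetilde P^*_{j-k}h_k|^2)^{1/2}\|_{p'}$, you invoke Minkowski to claim $\|(\sum_k|g_k|^2)^{1/2}\|_{p'}\le(\sum_k\|g_k\|_{p'}^2)^{1/2}$. Generalized Minkowski gives $L^{a}(\ell^{b})\hookleftarrow \ell^{b}(L^{a})$ only when $b\ge a$, i.e.\ this direction requires $p'\ge 2$; for $p'<2$ (which is what your hypotheses force) the inequality runs the other way, $(\sum_k\|g_k\|_{p'}^2)^{1/2}\le\|(\sum_k|g_k|^2)^{1/2}\|_{p'}$. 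This is not a repairable slip: as the counterexample shows, the target inequality itself fails. The paper's proof avoids this reduction entirely. It controls the Fefferman--Stein sharp function of $G(x)=(\sum_k\|\cA_j^kf(x,\cdot)\|_{L^r}^r)^{1/r}$, splitting $G^\#\lesssim \cG+\sum_n\cU_n$, and exploits the spatial localization of the kernels $K^k_{j,t}$ (via \eqref{kernel-error-k}) together with the improved $L^2$-based exponent $\alpha(r)>d/p_0$ and a geometric gain in $n$ to sum the pieces; the strict inequality $p>\tfrac{r(d+1)}{d-1}$ is what makes that summation converge. If you want to see why a soft argument cannot work, note that the paper's Remark after the proof states precisely that the standard Littlewood--Paley assembly you attempt is only available when $r\ge p$.
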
 

\begin{proof}
As in the proof of Proposition \ref{stpropnew}, by the monotone convergence theorem, it suffices to show \eqref{eq:global fixed j} for any finite collection $\{\cA_j^k\}_{k \in K}$, with uniform bounds on the cardinality of the finite set $K \subset \Z$. 

We use again the sharp maximal function of Fefferman--Stein. Let
\begin{equation*}
G(x)=    \big(\sum_{k \in K} \|\cA_j^k f(x,\cdot)\|_{L^r(\R)}^r \big)^{1/r},
\end{equation*}
which has finite $L^{p_0}$ norm whenever $p_0= \tfrac{r(d+1)}{d-1}$; note that \eqref{Ajk-Lp} and Minkowski's inequality imply that 
$$
\| G \|_{L^{p_0}} \lesssim |K|^{1/r} 2^{-jd/p} \| f \|_{L^{p_0}}.
$$
By the bound $\| G \|_{L^p} \lesssim_p \| G^\# \|_{L^p}$,
it suffices to show  $\| G^\# \|_{L^p} \lesssim 2^{-jd/p} \| f \|_{L^p}$, uniformly on the finite set $K$ for $p_0 < p < \infty$. By the triangle inequality, we dominate
\begin{align*}
%\Big\| 
\notag
G^\#(x) & 
 \leq \sup_{Q\in \cQ(x)} \intslash_Q \intslash_Q \Big(\sum_{k \in \Z}  \|\cA_j^k f(y,\cdot) -  \cA_j^k f(w,\cdot) \|_{L^r(\R)}^r   \Big)^{1/r} \ud w  \ud y \\
& \leq 2 \, \cG f(x) +\sum_{n=1}^\infty \cU_n f(x)
\end{align*}
where
\begin{equation*} \cG f(x):=\sup_{L\in \bbZ}\sup_{Q\in \cQ_L(x)} \intslash_Q\Big(\sum_{k\le L} \|\cA_j^k f(y,\cdot) \|_{L^r(\R)}^r\Big)^{1/r} \ud y
\end{equation*}
and
\begin{equation*}
\cU_n f(x):=\sup_{L\in \bbZ}\sup_{Q\in \cQ_L(x)} \intslash_Q\intslash_Q \big\|\cA_j^{L+n} f(y,\cdot)- \cA_j^{L+n} f(w,\cdot) \big\|_{L^r(\R)} \ud w\ud y.
\end{equation*}
We claim that  for $\frac{r(d+1)}{d-1} \le p \le \infty$, $2 \leq r < \infty$ we have 
\Be\label{Gbound}
\|\cG f\|_{L^p} \lc 2^{-j \alpha(r)} \|f\|_{L^p}, \quad \text{ for some } \alpha(r)>d/p
\Ee 
and
 \Be\label{Unbound} 
\|\cU_n f\|_{L^p} \lc \begin{cases}
 2^{(n-j)d(\frac{d-1}{r(d+1)}-\frac 1p)} 2^{-jd/p} \|f\|_{L^p}, &\text{ if $1\le n\le j$,}
\\
 2^{j-n} 2^{-jd/p}\|f\|_{L^p}, &\text{ if $n>j$.}
\end{cases}  
\Ee
The desired bound $\| G^\# \|_{L^p} \lesssim 2^{-jd/p} \| f \|_{L^p}$  follows summing in $n$ if $p>\frac{r(d+1)}{d-1}$.

\subsection*{Proof of \eqref{Gbound}} We prove inequalities for $\cG$ on $L^r$ and $L^\infty$ which will yield \eqref{Gbound} by interpolation.

Let $p_0=\frac{r(d+1)}{d-1}$. We first observe the inequalities 
\begin{align}
 &\|\cA_j^k f \|_{L^2(L^r)} \lc 2^{-j(\frac{d-2}{2}+ \frac 1r)} \|f\|_{L^2}
 \label{2rbd}
    \\
    &\|\cA_j^k f\|_{L^{p_0}(L^r)} \lc 2^{-jd/p_0} \|f\|_{L^{p_0}}
    \label{standardp0}
    \end{align}
uniformly in $k$. The estimate \eqref{2rbd}  holds from  the bounds $\|\cA^k_j \|_{L^2\to L^2(L^2)}\lc 2^{-j(d-1)/2}$ and $\|\partial_t \cA^k_j \|_{L^2\to L^2(L^2)}\lc 2^{-j(d-3)/2}$  (which follow from \eqref{eq:L2}) and the  Sobolev embedding theorem, and
 \eqref{standardp0} is \eqref{Ajk-Lp} with $p=p_0$.
Since $2\le r< p_0$ and $\frac{d-2}{2}+\frac 1r>\frac d{p_0}$  one has by interpolation that
\begin{equation}\label{eq:decay (r,r,r)}
\|\cA_j^k f\|_{L^{r}(L^r)} \lc 2^{-j\alpha(r)} \|f\|_{L^{r}}, \quad \text{for some } \alpha(r)>d/p_0. 
\end{equation}
This implies 
\begin{align}
    \|\cG f\|_{L^r} &\lc \Big\| M_{HL}\big[  (\sum_{k \in \Z}\|\cA_j^k \widetilde P_{j-k} f\|_{L^r(\R)}^r)^{1/r} \big] \Big\|_{L^r}\notag
    \\
    &\lc \Big(\sum_{k \in \Z} \|\cA_j^k \widetilde P_{j-k} f\|_{L^r(L^r)}^r \Big)^{1/r} \notag
    \\ &\lc 2^{-j\alpha(r)} \Big(\sum_{k \in \Z} \| \widetilde P_{j-k} f\|_{L^r}^r \Big)^{1/r} \lc{2^{-j\alpha(r)}}\|f\|_{L^r}
    \label{cGr}
\end{align}
by Littlewood--Paley theory, since $r\ge 2$.

We now prove an $L^\infty$ bound. Fix $x$, $L$, $Q\in \cQ_L(x)$ and   let $B_x^L$ be the ball centered at $x$ with radius $d 2^{L+10} $. Using Hölder's inequality and the embedding $\ell^1 \subseteq \ell^r$ for $r \geq 1$ we estimate  
\begin{align*} 
\intslash_Q\Big(\sum_{k\le L} \big\|\cA_j^k  f(y,\cdot)\big \|_{L^r(\R)}^r\Big)^{1/r} \ud y 
\le I(x)+II(x)
\end{align*}
where
\begin{align*}
    I(x)&= 
    \Big(\intslash_Q\sum_{k\le L} \big\|\cA_j^k  [\bbone_{B_x^L} f] (y,\cdot)\big\|_{L^r(\R)}^r\ud y \Big)^{1/r} 
    \\
    II(x)&=\intslash_Q\sum_{k\le L} \big\|\cA_j^k [\bbone_{\R^d \backslash B_x^L} f] (y,\cdot) \big\|_{L^r(\R)}\ud y. 
\end{align*}
We have, in view of $\cA_j^k=\cA_j^k \widetilde{P}_{j-k}$ and using \eqref{eq:decay (r,r,r)},
\begin{align*}
    I(x)&\lc 2^{-Ld/r} \Big(\sum_{k\le L} \big\|\cA_j^k  \widetilde P_{j-k}[\bbone_{B_x^L}f] \big\|^r_{L^r(L^r)} \Big)^{1/r} 
    \\
    &\lc 2^{-j\alpha(r)}
    2^{-Ld/r} \Big(\sum_{k\le L} \big\|\widetilde P_{j-k}[\bbone_{B_x^L}f] \big\|^r_{L^r} \Big)^{1/r} 
    \\
    &\lc 2^{-j\alpha(r)}
    2^{-Ld/r} \|\bbone_{B_x^L}f\|_{L^r} 
    \lc 2^{-j\alpha(r)}
    \|f\|_{L^\infty},
\end{align*}
using $r \geq 2$ and Littlewood--Paley theory in the third inequality. For the term $II(x)$ we use \eqref{kernel-error-k} and estimate
\begin{align*}
    II(x)&\lesssim_N \sum_{k\le L} \intslash_Q \int_{\R^d \backslash B_x^L} \frac{2^{-kd}}{(2^{j-k}|y-w|)^{N}} |f(w)| \ud w\, \ud y
    \\
    &\lesssim 2^{-jN} \sum_{k\le L} 2^{(L-k)(d-N)} \|f\|_{L^\infty} \lc 2^{-jN}\|f\|_{L^\infty},
\end{align*} 
where $N >d$. We combine the estimates for $I(x)$ and $II(x)$ to obtain
\Be \label{cGinfty}\|\cG f\|_{L^\infty} \lc 2^{-j\alpha(r)} \|f\|_{L^\infty}. \Ee 
Interpolating \eqref{cGr} and \eqref{cGinfty} and noting that that $\alpha(r) > d/p_0 \geq d/p$ for $p \geq p_0$ we obtain \eqref{Gbound}.

\subsection*{Proof of \eqref{Unbound} for $1\le n\le j$}  This case is similar to that of $\cG$. 
Let $p_0=\frac{r(d+1)}{d-1}$. 
We get the asserted estimate by interpolating  between the inequalities
\begin{align}
\label{Unp0}
\|\cU_n f\|_{L^{p_0}} &\lc 2^{-jd/p_0} \|f\|_{L^{p_0}}
\\
\label{Uninfty}
\|\cU_n f\|_{L^\infty} &\lc 2^{(n-j)d/p_0}\|f\|_{L^\infty}.
\end{align}

To see \eqref{Unp0}, we use $\cA_j^k = \cA_j^k \widetilde{P}_{j-k}$ and \eqref{standardp0} to estimate 
\begin{align*}
    \|\cU_n f\|_{L^{p_0}} &\lc \Big\| M_{HL} \big[\big(\sum_{k \in \Z} \|\cA_j^k \widetilde{P}_{j-k} f\|_{L^r(\R)} ^{p_0})^{1/p_0}  \big ] \Big\|_{L^{p_0}}
    \\
    &\lc \Big(\sum_{k \in \Z} \|\cA_j^k \widetilde{P}_{j-k} f\|_{L^{p_0}(L^r)} ^{p_0})^{1/p_0}  
    \\
    &\lc 2^{-jd/p_0} \big(\sum_{k \in \Z} \|\widetilde{P}_{j-k} f\|_{L^{p_0}}^{p_0})^{1/p_0} 
    \lc 2^{-jd/p_0} \|f\|_{L^{p_0}},
\end{align*}
using that $p_0 \geq 2$ and Littlewood--Paley theory in the last inequality.

To see \eqref{Uninfty}
we   fix $x$, $L$, $Q\in \cQ_L(x)$,  let $B_x^{L+n}$ be the ball centered at $x$ with radius $d 2^{L+n+10}$ and estimate 
\begin{align*} 
&\intslash_Q \intslash_Q\big\|\cA_j^{L+n} f(y,\cdot)- \cA_j^{L+n} f(w,\cdot) \big \|_{L^r(\R)} \ud w\ud y 
\\
&\lc \intslash_Q\big\|\cA_j^{L+n} \widetilde P_{j-L-n} f(y,\cdot)\big \|_{L^r(\R)} \ud y 
\le III(x)+IV(x)
\end{align*}
where
\begin{align*}
    III(x)&= 
    \Big(\intslash_Q \big\|\cA_j^{L+n}  [\bbone_{B_x^{L+n}} f] (y,\cdot)\big\|_{L^r(\R)}^{p_0}\ud y \Big)^{1/p_0} ,
    \\
    IV(x)&=\intslash_Q \big\|\cA_j^{L+n} [\bbone_{\bbR^d\setminus B_x^{L+n}} f] (y,\cdot) \big\|_{L^r(\R)}\ud y .
\end{align*}
We get by \eqref{Ajk-Lp}
\begin{align*}
III(x)&\lc 2^{-Ld/p_0} 2^{-jd/p_0} \|
\bbone_{B_x^{L+n}} f\|_{L^{p_0}}
\lc 2^{(n-j)d/p_0} \|f\|_{L^\infty}.
\end{align*}
Moreover, by \eqref{kernel-error-k},
\begin{align*}
    IV(x)&\lc  \intslash_Q \int_{\bbR^d\setminus B_x^{L+n}} \frac{2^{-(L+n)d}}{(2^{j-L-n}|y-w|)^{N}} |f(w)| \ud w\, \ud y
    \lc 2^{-jN}\|f\|_{L^\infty}
\end{align*} 
for any $N>d$. The estimates for $III(x)$ and $IV(x)$ yield \eqref{Uninfty} for $1\le n\le j$.

\subsection*{Proof of \eqref{Unbound} for $n>j$}
Here we use cancellation and note that for $x\in Q$  
%In analogy to  \eqref{useofcancel} we now have for $x\in Q$
\begin{multline}\label{useofcancel2}\notag
\intslash_Q \intslash_Q\big\|\widetilde P_{j-L(Q)-n} g(y,\cdot)- \widetilde P_{j-L(Q)-n}g(w,\cdot) \big \|_{L^r(\R)}\ud w\ud y \\ \lc
2^{j-n} M_{HL} [\|g\|_{L^r(\R)}](x).
% \quad \text{ if }  \,\,\, x\in Q.
\end{multline}
Using this with $g=\cA_j^{L+n}f=\widetilde{P}_{j-L-n} \cA_j^{L+n}$ and the Fefferman--Stein inequality for sequences of Hardy--Littlewood maximal functions, we may then estimate 
\begin{align*} 
&\Big\| \sup_{L \in \Z} \sup_{Q\in \cQ_L(x)} \intslash_{Q }\intslash_Q\big\|\cA_j^{L+n} f(y,\cdot)- \cA_j^{L+n} f(w,\cdot) \big \|_{L^r(\R)} \ud w\ud y \Big\|_{L^p(dx)}
\\
&\lc 2^{j-n} 
\Big\| \sup_{k \in \Z} M_{HL}\big[\|\cA_j^{k} f\|_{L^r(\R)}\big]  \Big\|_{L^p}
\lc 2^{j-n} 
\Big(\sum_{k \in \Z} \big\|\cA_j^{k} \widetilde P_{j-k}f\|_{L^p(L^r)}^{p} \Big)^{1/p} 
\\
&\lc 2^{j-n} 2^{-jd/p} 
\Big(\sum_{k \in \Z} \|\widetilde P_{j-k}f\|_{L^p}^{p} \Big)^{1/p} 
\lc 2^{j-n} 2^{-jd/p} \|f\|_{L^p}
\end{align*}
for $\frac{r(d+1)}{d-1} \leq p \leq \infty$,  using \eqref{Ajk-Lp}  in the third inequality and $p \geq 2$ and Littlewood--Paley theory in the last inequality. Thus \eqref{Unbound} is established  for $n>j$.

This finishes the proof of the proposition.
\end{proof}

\begin{remark}
The difficulty for putting the pieces together comes because it is assumed $\tfrac{r(d+1)}{d-1} < p $. If one had $r \geq p$, one can simply put pieces together by standard Littlewood--Paley theory as, for instance, in \eqref{cGr}.
\end{remark}

A consequence  of Proposition \ref{global-fixedj}
is the following restricted weak type bound.
\begin{proposition}\label{global-Besov} For $d\ge 3$, $r\ge 2$, 
\Be \label{eqn:globalrwt} \Big\| \Big(\sum_{k\in \bbZ} \Big\|\sum_{j\ge 1} \cA_j^k f \Big \|_{B^{1/r}_{r,1}(\bbR)}^r\Big)^{1/r} \Big\|_{L^{rd,\infty} (\bbR^d)} \lc \|f\|_{L^{rd,1}(\bbR^d)}.
\Ee
\end{proposition}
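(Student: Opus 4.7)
The plan is to apply Bourgain's interpolation lemma (Lemma \ref{lem:Binterpol}) to the sublinear operators
\[ T_j f(x) := \Big(\sum_{k\in \Z} \|\cA_j^k f(x, \cdot)\|^r_{B^{1/r}_{r,1}(\R)}\Big)^{1/r}, \qquad j\geq 1. \]
Two successive applications of the triangle inequality (in $B^{1/r}_{r,1}$ and in $\ell^r_k$) dominate the left-hand side of \eqref{eqn:globalrwt} by $\big\|\sum_{j\geq 1}T_j f\big\|_{L^{rd,\infty}}$, so it suffices to produce a restricted weak type $(rd,rd)$ bound for $\sum_{j\geq 1}T_j$.

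The single-scale estimate driving the argument is
\[ \|T_j f\|_{L^p} \lesssim 2^{j(1/r-d/p)}\|f\|_{L^p}+C_N 2^{-jN}\|f\|_{L^p}, \qquad \tfrac{r(d+1)}{d-1}<p<\infty. \]
To prove it one uses the fact that $\cA_j^k f(x,\cdot)=\chi(t)A_{2^k t}P_{j-k}f(x)$ oscillates in $t$ at rate $2^k|\xi|\approx 2^j$, \emph{independently of $k$}. An analogue of Lemma \ref{errorlemma} for the rescaled kernels $K_{j,t}^k$ then yields, pointwise in $x$,
\[ \|\cA_j^k f(x,\cdot)\|_{B^{1/r}_{r,1}}\lesssim 2^{j/r}\|\cA_j^k f(x,\cdot)\|_{L^r_t}+\text{(rapidly decaying error in $j$)}. \]
Inserting this into the $\ell^r_k$ sum and applying Proposition \ref{global-fixedj} yields the single-scale bound displayed above.

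Since $d\geq 3$ forces $\tfrac{r(d+1)}{d-1}<rd$, we can choose $p_1\in(\tfrac{r(d+1)}{d-1},rd)$ and $p_0\in(rd,\infty)$. The single-scale bound then specializes to a growth estimate $\|T_j\|_{L^{p_0}\to L^{p_0}}\lesssim 2^{j\gamma_0}$ and a decay estimate $\|T_j\|_{L^{p_1}\to L^{p_1}}\lesssim 2^{-j\gamma_1}$ with $\gamma_0=\tfrac1r-\tfrac{d}{p_0}>0$ and $\gamma_1=\tfrac{d}{p_1}-\tfrac1r>0$. A direct computation verifies that $\theta:=\gamma_0/(\gamma_0+\gamma_1)$ realizes $\tfrac{1}{rd}=\tfrac{1-\theta}{p_0}+\tfrac{\theta}{p_1}$, so Lemma \ref{lem:Binterpol} delivers
\[ \Big\|\sum_{j\geq 1}T_j\Big\|_{L^{rd,1}\to L^{rd,\infty}}\lesssim 1, \]
which is \eqref{eqn:globalrwt}. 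The main technical step is the frequency-localization reduction: one must verify via integration-by-parts on the kernel that the $k$-rescaling $K_{j,t}\mapsto K_{j,t}^k$ preserves the $2^j$-localization of the $t$-frequency, so that the $2^{j/r}$-loss incurred in upgrading $L^r_t$ to $B^{1/r}_{r,1}$ is uniform in $k$ and can be combined cleanly with the $\ell^r_k$-sum furnished by Proposition \ref{global-fixedj}. The rapidly decaying errors are negligible and sum absolutely.
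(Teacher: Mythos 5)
Your proposal is correct and follows essentially the same route as the paper: first the single-scale bound $\|\fA_j f\|_p\lesssim 2^{-j(d/p-1/r)}\|f\|_p$ for $\tfrac{r(d+1)}{d-1}<p<\infty$, obtained by trading the $B^{1/r}_{r,1}$ norm for $2^{j/r}$ times the $L^r_t$ norm (the $t$-frequency of $\cA_j^k f$ being $\approx 2^j$ uniformly in $k$) and invoking Proposition \ref{global-fixedj}, with the off-diagonal pieces controlled by the rescaled kernel estimates of Lemma \ref{errorlemma} (plus Fefferman--Stein and Littlewood--Paley to sum them in $k$); then Bourgain's interpolation lemma with exponents straddling $rd$, which is admissible precisely because $d\ge 3$ gives $\tfrac{r(d+1)}{d-1}<rd$. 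This matches the paper's proof.
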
 

\begin{proof} 
 Write \[\La_l\cA^k_j f(x,t)=  2^{-j(d-1)/2} (2\pi)^{-(d+1)}\sum_\pm \int 2^{-kd}\ka^{\pm}_{j,l}(2^{-k}y,t ) f(x-y) dy \]
where $\ka^\pm_{j,l}$ is as in \eqref{kajl-kernel}.

We first  show that for all $j\ge 1$,
$2\le r<\infty$, $\frac{r(d+1)}{d-1}< p<\infty$,  
\Be \label{globalBesovineq-j}
\Big\| \Big(\sum_{k\in \bbZ} \|\cA_j^k f \|_{B^{1/r}_{r,1}(\bbR)}^r\Big)^{1/r} \Big\|_{L^p(\bbR^d)} \lc 2^{-j(d/p -1/r)} \|f\|_{L^p(\R^d)}.
\Ee
Indeed, by Proposition \ref{global-fixedj}, for $|j-l|\le 10$
\Be \label{eq:main contr global} \Big\| \Big(\sum_{k\in \bbZ} 2^{l/r}\|\La_{l}\cA_j^k f \|_{L^r(\bbR)}^r\Big)^{1/r} \Big\|_{L^p(\bbR^d)} \lc 2^{-j(d/p -1/r)} \|f\|_{L^p(\R^d)}.
\Ee
Moreover for $|j-l|\ge 10$, we get from \eqref{ptwise-ka}
\begin{align}\notag 
\Big\| \Big( \sum_{k\in \bbZ} & 2^{l/r}\|\La_{l}\cA_j^k f \|_{L^r(\bbR)}^r\Big)^{1/r} \Big\|_{L^p(\bbR^d)} 
\\ & \lc_N \min\{ 2^{-j(N-\frac 1r)}, 2^{-l(N-\frac 1r)}\}
\Big\| \Big(\sum_{k\in \bbZ} \big| M_{HL}[ \widetilde{P}_{j-k}f]|^r\Big)^{1/r}\Big\|_{L^p(\R^d)} \notag \\
& \lesssim_N \min\{ 2^{-j(N-\frac 1r)}, 2^{-l(N-\frac 1r)}\} \| f \|_{L^p(\R^d)}, \label{eq:error terms global}
\end{align}
using that the Fefferman--Stein and Littlewood--Paley inequalities together imply
\[\Big\| \Big(\sum_{k\in \bbZ} \big| M_{HL}[ \widetilde{P}_{j-k}f]|^r\Big)^{1/r}\Big\|_{L^p(\R^d)}\lc_p \|f\|_{L^p(\R^d)}, \quad 1<p<\infty,\,\, r\ge 2.
\]
Then \eqref{globalBesovineq-j} follows summing over $\ell \geq 0$ in \eqref{eq:main contr global} and \eqref{eq:error terms global}.

We finish the proof using Bourgain's interpolation trick (see \S\ref{sec:Binterpol}).
Consider 
\[\fA_j f(x):=   \Big(\sum_{k\in \bbZ} \|\cA_j^k f(x,\cdot) \|_{B^{1/r}_{r,1}(\bbR)}^r\Big)^{1/r}.\] Note that $\frac{r(d+1)}{d-1}<rd$ when $d^2-2d-1>0$, that is, $d\ge 3$.
Let $p_0$, $p_1$ be such that $\frac{r(d+1)}{d-1}< p_0<rd<p_1$. By \eqref{globalBesovineq-j} we have that $\| \fA_j f\|_{L^{p_i}} \lc 2^{-j(d/p_i-1/r)} \|f\|_{L^{p_i}}$, $i=0,1$ 
and then a  restricted weak type $(rd, rd)$ inequality for $\sum_{j\ge 0} \fA_j$ follows from Lemma \ref{lem:Binterpol}. This implies the assertion. 
\end{proof}

\begin{proof}[Conclusion of the proof of Theorem \ref{endptvarthm}] 
Following \cite{JSW}, we write 
\begin{equation}\notag \label{eq:pointwise variation}
V_r A f(x) \le V_r^\dyad Af(x) + V_r^\sh Af(x)
\end{equation}
where 
\begin{equation*}
V^{\dyad}_r Af(x):= \sup_{N \in \N}\sup_{k_1<\dots<k_N} \Big(\sum_{i=1}^{N-1}|A_{2^{k_{i+1}}}f(x)- A_{2^{k_i}}f(x)|^r \Big)^{1/r}
\end{equation*}
is the \textit{dyadic or long variation operator} and
\begin{equation*}
V_r^\sh Af(x):=\Big(\sum_{k\in \bbZ}|V_r^{I_k} A f(x)|^r\Big)^{1/r}
\end{equation*}
is the \textit{short variation operator}, using only variation within the dyadic intervals $I_k=[2^k, 2^{k+1}]$; recall that $V_r^{I_k} Af(x)$ denotes the $r$-variation of $t\mapsto A_t f(x)$ over the interval $I_k$. It then suffices to establish the claimed bound in Theorem \ref{endptvarthm} for the operators $V_r^{\dyad}$ and $V_r^\sh$.

Regarding $V_r^\dyad A$, the inequality
\begin{equation}\notag \label{eq:Vr dyad bound}
\|V^{\dyad}_r Af\|_{L^p}\lc_{p,r} \|f\|_{L^p}, \quad r>2,\,\,1<p<\infty 
\end{equation}
was proved in \cite{JSW}.  This of course implies a $L^p$ bound for $V_{p/d}$ if $p >2d$ and, in particular, the claimed restricted weak type bound follows by the embedding $L^{p,1}\hookrightarrow L^{p} \hookrightarrow L^{p,\infty}$.

We next proceed with $V_r^\sh A$. Since $\chi(t)=1$ on $I=[1,2]$ we get
\[ V_r^{I_k} Af(x)\le \Big| \sum_{j=0}^\infty \cA_j^k f(x,\cdot)\Big |_{V_r(I)}\]
by the definition of $\cA_j^k$ in \eqref{eq:def Ajk}. 
The term corresponding to $j=0$ is easily estimated by a square function
\[ \Big(\sum_{k\in \bbZ} |\cA_0^k f(x,\cdot)|_{V_r(I)}^r\Big)^{1/r}
\lc \Big(\sum_{k\in \bbZ} \int_{1}^2 \big|\tfrac{d}{dt} \cA_0^k f(x,t) \big |^2 \ud t\Big)^{1/2}.
\] 
We claim
for $1<p<\infty$ 
\Be \label{jeqzero}
\Big\|\Big(\sum_{k\in \bbZ} \int_{1}^2 \big|\tfrac{d}{dt} \cA_0^k f(\cdot,t) \big |^2 \ud t\Big)^{1/2}\Big\|_{L^p} \le C_p \|f\|_{L^p}.
\Ee
Since $\chi'(t)=0$ for $1\le t\le 2$ we have
\[ \tfrac d{dt} \widehat {\cA_0^k f}(\xi,t) = \chi(t) \inn{2^k\xi}{\nabla \widehat\sigma( 2^kt\xi)    } \beta_0(2^k|\xi|)\widehat f(\xi)
\]
Using Plancherel's theorem  and interchanging sums and integrals one  gets
\eqref{jeqzero} for $p=2$.
We then invoke standard Calder\'on--Zygmund theory in the Hilbert-space setting 
(see \cite[ch. II.5]{Stein70SI})  to see that \eqref{jeqzero} holds in the full range $1<p<\infty$. It follows that for $r\ge 2$
\[\Big\|
\Big(\sum_{k\in \bbZ} |\cA_0^k f(x,\cdot)|_{V_r(I)}^r\Big)^{1/r}
\Big\|_{L^p(dx)} \lc_p \|f\|_{L^p}
\]
which is stronger than the required  $L^{p,1}\to L^{p,\infty} $ bound.

It remains to consider  the cases $j\ge 1$. By the embedding  \eqref{BPe} we have 
\Be \notag
\Big(\sum_{k\in \bbZ}\Big| \sum_{j\ge 1} \cA_j^k f(x, \cdot)\Big|^r_{V_r(I)} \Big)^{1/r} 
\lc 
\Big(\sum_{k\in \bbZ}\Big\| \sum_{j\ge 1} \cA_j^k f(x, \cdot )\Big\|^r_{B^{1/r}_{r,1}(I)} \Big)^{1/r}. 
\Ee
We apply the restricted weak type inequality of  Proposition \ref{global-Besov} to the expression on the right-hand side to conclude the desired bound for $V_r^\sh$. This finishes the proof.
\end{proof}

\begin{remark} If in two dimensions one has the conjectured local smoothing {\it endpoint}  results for $p>4$ then one can also show the restricted weak type  $(2r, 2r)$ estimate 
\eqref{eqn:globalrwt} 
for $r>2$.  The conjectured endpoint estimate in the assumptions seems currently out of reach. \end{remark}

\medskip

\medskip

\section{A sparse domination result}\label{sparse-section}

We conclude the paper with a discussion  of the sparse domination result for the global $V_rA$ in Theorem \ref{cor:sparse}. Roos and two of the authors proved in \cite{BRS-sparse} a general sparse domination result for multi-scale operators, which has a version for variation-norm operators associated to convolutions with compactly supported distributions. Such a result is a mechanism to upgrade Lebesgue space bounds to sparse bounds; in particular, and up to endpoints, the range of sparse bounds is uniquely determined by the $L^p \to L^q$ mapping properties of the local variation operator. Thus, the sparse bounds in Theorem \ref{cor:sparse} are an immediate consequence of the results in this paper, the bounds for the global variation operator in \cite[Theorem 1.4]{JSW} and  the sparse domination result in \cite[Proposition 7.2]{BRS-sparse}.

For completeness, we state the result in \cite{BRS-sparse}. We let $u$ be a compactly supported distribution, define the dilate in the sense of distributions by
$\inn{u_t}{f}=\inn{u}{f(t\cdot)} $ and let $Tf(x,t)= f*u_t$. For fixed $x$ let $V_r T f$ denote the $r$-variation norm  of $t\mapsto Tf(x,t)$.
As before let $I=[1,2]$ and $V_r^I f(x)$ the corresponding variation norm over $I$. 

\begin{theorem}[{\cite[Prop. 7.2]{BRS-sparse}}]\label{thm:Vu}
 Let $1<p\le q<\infty$, and $u\in \cS'(\bbR^d)$  with compact support in  $\bbR^d\setminus \{0\}.$
%Suppose that $1<p\le q<\infty$. 

(i) Suppose that
\Be\label{eqn:pp-qq-var}  \|V_rT \|_{L^p\to L^{p,\infty} } 
+\|V_r T\|_{L^{q,1}\to L^{q}} <\infty, \Ee
\Be \label{eqn:pq-var}  \|V^I_r T \|_{L^p\to L^q}  
<\infty,  \Ee
and  that there is an $\varepsilon>0$ so that for all $\la\ge 2$,
and all  Schwartz function  $f$  
with $\supp\,\widehat f\subset \{\xi: \la/2<|\xi|<2\la \} $,
\Be  \label{eqn:pqregV} \|V^I_r Tf\|_{L^q}\le C \la^{-\eps} \|f\|_{L^p}.\Ee
Then there is a constant $C=C(p,q)$ such that for each pair of compactly supported bounded functions $f_1$, $f_2$ there is a sparse family of cubes $\fS(f_1,f_2) $ such that 
\Be\label{eqn:sparsebound} \int V_rT f_1(x)\, f_2(x) dx \le 
C \sum_{Q \in \mathfrak{S} (f_1,f_2)}|Q| \langle f_1 \rangle_{Q,p} \langle f_2 \rangle_{Q,q'} .
\Ee

(ii) Suppose that in addition $p<q$, and suppose that \eqref{eqn:sparsebound} holds with a constant independently of $f_1, f_2$.  Then conditions \eqref{eqn:pp-qq-var}, \eqref{eqn:pq-var} hold.
\end{theorem}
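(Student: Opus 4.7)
The plan is to prove part (i) by the now-standard stopping-time template for sparse domination of positive/subadditive operators (developed by Lacey, Culiuc--Di~Plinio--Ou, and Lerner, among others), adapted to the variational setting. First I would use the compact support of $u$ to reduce to the case that $f_1, f_2$ are supported in a common dyadic cube $Q_0$, up to an $O(1)$-dilate. I would then build $\fS$ inductively: put $Q_0\in\fS$, and for each selected $Q\in\fS$ declare as its children the maximal dyadic subcubes $Q'\subsetneq Q$ on which either $\langle f_1\rangle_{Q',p}>C\langle f_1\rangle_{Q,p}$, or $\langle f_2\rangle_{Q',q'}>C\langle f_2\rangle_{Q,q'}$, or on which an exceptional level set of $V_rT(f_1\bbone_{cQ})$ (the portion controllable via \eqref{eqn:pp-qq-var}) has substantial intersection with $Q'$. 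Choosing $C$ large, Chebyshev combined with the hypothesis \eqref{eqn:pp-qq-var} (via Kolmogorov's inequality) keeps the total measure of children below $|Q|/2$, so $\fS$ is sparse.

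The heart of the argument will be the \emph{principal estimate}
\begin{equation*}
\int_{Q\setminus\bigcup Q'} V_rTf_1\cdot f_2\,\ud x \;\lesssim\; |Q|\,\langle f_1\rangle_{Q,p}\,\langle f_2\rangle_{Q,q'}
\end{equation*}
for each $Q\in\fS$; summing over $\fS$ immediately yields \eqref{eqn:sparsebound}. To establish this I would split $f_1=f_1\bbone_{cQ}+f_1\bbone_{(cQ)^c}$. For the local piece, after rescaling so that $\diam Q\approx 1$, the variation of $A_t(f_1\bbone_{cQ})$ on $t\in[1,2]$ is bounded via \eqref{eqn:pq-var} by $\|f_1\bbone_{cQ}\|_p\approx |Q|^{1/p}\langle f_1\rangle_{Q,p}$, and H\"older against $f_2$ over $Q$ closes this piece. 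For the non-local piece, I would dyadically decompose $u=\sum_{k\ge 1}u^k$ with $\widehat{u^k}$ localized to $|\xi|\sim 2^k$, so that $Tf(\cdot,t)=\sum_kf\ast u^k_t$; after rescaling each piece to unit scale, \eqref{eqn:pqregV} applied to the frequency-localized input yields an $L^p\to L^q$ bound with $2^{-k\varepsilon}$ decay, and the resulting geometric series sums to an $O(\langle f_1\rangle_{Q,p})$ bound on $Q$.

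The hard part will be the clean separation of "local" and "non-local" contributions \emph{inside} the variation norm. Unlike a Calder\'on--Zygmund kernel, the variation norm couples all $t$-scales simultaneously, and the standard $T1$-style truncation does not commute with $V_r$. The workaround is the subadditivity inequality $V_rT(f+g)\le V_rTf+V_rTg$, which reduces matters to treating each frequency-and-space piece separately; the smooth frequency decomposition of $u$ and the $\lambda^{-\varepsilon}$ decay in \eqref{eqn:pqregV} are exactly what make this geometrically summable. The interplay of this single-scale decay with the weak-type hypotheses \eqref{eqn:pp-qq-var} controlling the global operator is the delicate point, and will likely require an intermediate multi-scale maximal function in the spirit of $\fM_{r,n}$ from \S\ref{maxopsect}.

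For part (ii), the converse follows by testing. Taking $f_1=\bbone_E$ and $f_2=\bbone_F$ in \eqref{eqn:sparsebound} yields
\begin{equation*}
\langle V_rT\bbone_E,\bbone_F\rangle \;\lesssim\; \sum_{Q\in\fS}|Q|^{1-\frac{1}{p}-\frac{1}{q'}}|E\cap Q|^{1/p}|F\cap Q|^{1/q'}.
\end{equation*}
A standard Carleson embedding, using the pairwise-disjoint $E_Q\subseteq Q$ in the sparse condition together with H\"older, bounds the right-hand side by $|E|^{1/p}|F|^{1/q'}$, establishing the restricted weak type $(p,q)$ inequality for $V_rT$. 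Because $p<q$, a standard argument based on Kolmogorov's inequality and duality then upgrades this simultaneously to $L^p\to L^{p,\infty}$ and $L^{q,1}\to L^q$, yielding \eqref{eqn:pp-qq-var}; monotonicity of the variation in the interval gives \eqref{eqn:pq-var} for free. Full details are in \cite{BRS-sparse}.
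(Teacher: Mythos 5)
First, a point of comparison: the paper does not prove this statement at all — it is imported verbatim from \cite{BRS-sparse} (Prop.~7.2 there), so the only "proof" in this paper is the citation. Your sketch follows the expected corona/stopping-time template, which is indeed the strategy of the cited reference, but as written it has two concrete gaps that would prevent it from closing.

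The first gap is in the principal estimate. After reducing to $f_1,f_2$ supported in (a dilate of) $Q_0$ and rescaling a stopping cube $Q$ to unit side, the dangerous contribution on $Q\setminus\bigcup Q'$ is not the spatially non-local piece $f_1\bbone_{(cQ)^c}$: since $u$ has compact support, the scales $t\lesssim \ell(Q)$ simply do not see that piece, and the scales $t\gg\ell(Q)$ are handled at ancestors of $Q$ in the tree (or vanish by the support reduction), not by a frequency decomposition of $u$. The $\lambda^{-\eps}$ hypothesis \eqref{eqn:pqregV} is needed elsewhere: to compare $T$ acting on $f_1$ with $T$ acting on its stopped/averaged version (equivalently, to sum the frequency scales of $f_1$ \emph{above} the scale of $Q$ inside the local piece), which is what makes the recursion with the stopping averages $\langle f_1\rangle_{Q,p}$ close. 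Relatedly, the step you yourself flag as hard — that $V_r$ couples all $t$-scales — is not resolved by subadditivity in the \emph{function} argument; one needs a decomposition of the variation in the \emph{scale} variable (short variations over $[2^k,2^{k+1}]$ plus a long/dyadic variation, as in \S\ref{sec:endpoint global}), with each block of scales assigned to the appropriate generation of stopping cubes; without this the claimed bound $\int_{Q\setminus\bigcup Q'}V_rTf_1\,f_2\lesssim|Q|\langle f_1\rangle_{Q,p}\langle f_2\rangle_{Q,q'}$ is not attainable, since $V_rTf_1$ on $Q\setminus\bigcup Q'$ still involves arbitrarily fine scales of $f_1$ inside $Q$. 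The second gap is in part (ii): testing \eqref{eqn:sparsebound} on indicators and a Carleson embedding gives a restricted weak type $(p,q)$ bound, but a single such off-diagonal estimate cannot be "upgraded by Kolmogorov and duality" to the two distinct global bounds in \eqref{eqn:pp-qq-var}; the necessity argument must estimate the sparse form directly for each of the two endpoint claims (choosing $f_2$ supported on a subset of a superlevel set of $V_rTf_1$ for the $L^p\to L^{p,\infty}$ bound, and dually for $L^{q,1}\to L^q$), using $p<q$ to sum over the cube scales, and the local bound \eqref{eqn:pq-var} likewise requires a separate localization argument rather than following "for free" from pointwise monotonicity.
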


\begin{proof}[Proof of Theorem \ref{cor:sparse}]
We let $u$ be surface  measure on the unit sphere.
As discussed in the introduction the inequalities in \eqref{eqn:pp-qq-var} were already proved in the relevant ranges of Theorem \ref{cor:sparse} in \cite[Theorem 1.4]{JSW}. The inequalities  \eqref{eqn:pq-var} and  \eqref{eqn:pqregV} in the asserted ranges  follow  from  
the single-scale frequency bounds in Propositions
 \ref{prop:bounds with 1/r} and \ref{prop:bounds 1/r d=2}.
 Thus the sparse bounds in Theorem \ref{cor:sparse} are a  consequence of part (i) of Theorem \ref{thm:Vu}. The sharpness of the sparse bounds follows from part (ii); see also \S \ref{sparse-sharp} for a direct argument.
\end{proof} 

%\subsection*{Note} On behalf of all authors, the corresponding author states that there is no conflict of interest.

\bibliography{Reference}

\providecommand{\bysame}{\leavevmode\hbox to3em{\hrulefill}\thinspace}
\providecommand{\MR}{\relax\ifhmode\unskip\space\fi MR }
% \MRhref is called by the amsart/book/proc definition of \MR.
\providecommand{\MRhref}[2]{%
  \href{http://www.ams.org/mathscinet-getitem?mr=#1}{#2}
}
\providecommand{\href}[2]{#2}
\begin{thebibliography}{10}

\bibitem{aimPL}
AimPL, \emph{Sparse domination of singular integral operators}, American
  Institute of Mathematics Problem List, edited by Dario Mena, available at {
  \tt http://aimpl.org/sparsedomop}.

\bibitem{BRS-sparse}
David Beltran, Joris Roos, and Andreas Seeger, \emph{Multi-scale sparse
  domination}, {\verb+arxiv.org/abs/2009.00227+} (2020).

\bibitem{BL1976}
J\"oran Bergh and J\"orgen L\"ofstr\"om, \emph{Interpolation spaces. {A}n
  introduction}, Springer-Verlag, Berlin-New York, 1976, Grundlehren der
  Mathematischen Wissenschaften, No. 223. \MR{0482275}

\bibitem{BerghPeetre}
J\"{o}ran Bergh and Jaak Peetre, \emph{On the spaces {$V_{p}$} {$(0<p\leq
  \infty )$}}, Boll. Un. Mat. Ital. (4) \textbf{10} (1974), 632--648.
  \MR{0380389}

\bibitem{BFP}
Fr\'{e}d\'{e}ric Bernicot, Dorothee Frey, and Stefanie Petermichl, \emph{Sharp
  weighted norm estimates beyond {C}alder\'{o}n-{Z}ygmund theory}, Anal. PDE
  \textbf{9} (2016), no.~5, 1079--1113. \MR{3531367}

\bibitem{Bourgain1985}
Jean Bourgain, \emph{Estimations de certaines fonctions maximales}, C. R. Acad.
  Sci. Paris S\'{e}r. I Math. \textbf{301} (1985), no.~10, 499--502.
  \MR{812567}

\bibitem{Bourgain1986}
\bysame, \emph{Averages in the plane over convex curves and maximal operators},
  J. Analyse Math. \textbf{47} (1986), 69--85. \MR{874045}

\bibitem{Bourgain-ergodic1989}
\bysame, \emph{Pointwise ergodic theorems for arithmetic sets}, Inst. Hautes
  \'{E}tudes Sci. Publ. Math. (1989), no.~69, 5--45, With an appendix by the
  author, Harry Furstenberg, Yitzhak Katznelson and Donald S. Ornstein.
  \MR{1019960}

\bibitem{BD2015}
Jean Bourgain and Ciprian Demeter, \emph{The proof of the {$l^2$} decoupling
  conjecture}, Ann. of Math. (2) \textbf{182} (2015), no.~1, 351--389.
  \MR{3374964}

\bibitem{Calderon-Studia}
Alberto~P. Calder\'{o}n, \emph{Intermediate spaces and interpolation, the
  complex method}, Studia Math. \textbf{24} (1964), no.~2, 113--190.
  \MR{167830}

\bibitem{Carbery1983}
Anthony Carbery, \emph{The boundedness of the maximal {B}ochner-{R}iesz
  operator on {$L^{4}({\mathbb R}^{2})$}}, Duke Math. J. \textbf{50} (1983),
  no.~2, 409--416. \MR{705033}

\bibitem{CSWW1999}
Anthony Carbery, Andreas Seeger, Stephen Wainger, and James Wright,
  \emph{Classes of singular integral operators along variable lines}, J. Geom.
  Anal. \textbf{9} (1999), no.~4, 583--605. \MR{1757580}

\bibitem{Christ}
Michael Christ, \emph{On almost everywhere convergence of {B}ochner-{R}iesz
  means in higher dimensions}, Proc. Amer. Math. Soc. \textbf{95} (1985),
  no.~1, 16--20. \MR{796439 (87c:42020)}

\bibitem{FeBR}
Charles Fefferman, \emph{A note on spherical summation multipliers}, Israel J.
  Math. \textbf{15} (1973), 44--52. \MR{0320624 (47 \#9160)}

\bibitem{FSmult}
Charles Fefferman and Elias~M. Stein, \emph{{$H^{p}$} spaces of several
  variables}, Acta Math. \textbf{129} (1972), no.~3-4, 137--193. \MR{0447953
  (56 \#6263)}

\bibitem{GuoRoosYung}
Shaoming Guo, Joris Roos, and Po-Lam Yung, \emph{Sharp variation-norm estimates
  for oscillatory integrals related to {C}arleson's theorem}, Anal. PDE
  \textbf{13} (2020), no.~5, 1457--1500. \MR{4149067}

\bibitem{GWZ}
Larry Guth, Hong Wang, and Ruixiang Zhang, \emph{A sharp square function
  estimate for the cone in {$\Bbb {R}^3$}}, Ann. of Math. (2) \textbf{192}
  (2020), no.~2, 551--581. \MR{4151084}

\bibitem{HNS2011}
Yaryong Heo, F\"edor Nazarov, and Andreas Seeger, \emph{Radial {F}ourier
  multipliers in high dimensions}, Acta Math. \textbf{206} (2011), no.~1,
  55--92. \MR{2784663}

\bibitem{hormander1960}
Lars H\"{o}rmander, \emph{Estimates for translation invariant operators in
  {$L^{p}$} spaces}, Acta Math. \textbf{104} (1960), 93--140. \MR{121655}

\bibitem{JonesKaufmanRosenblattWierdl}
Roger~L. Jones, Robert Kaufman, Joseph~M. Rosenblatt, and M\'{a}t\'{e} Wierdl,
  \emph{Oscillation in ergodic theory}, Ergodic Theory Dynam. Systems
  \textbf{18} (1998), no.~4, 889--935. \MR{1645330}

\bibitem{JSW}
Roger~L. Jones, Andreas Seeger, and James Wright, \emph{Strong variational and
  jump inequalities in harmonic analysis}, Trans. Amer. Math. Soc. \textbf{360}
  (2008), no.~12, 6711--6742. \MR{2434308}

\bibitem{KanekoSunouchi}
Makoto Kaneko and Gen-ichir\^{o} Sunouchi, \emph{On the {L}ittlewood-{P}aley
  and {M}arcinkiewicz functions in higher dimensions}, Tohoku Math. J. (2)
  \textbf{37} (1985), no.~3, 343--365. \MR{799527}

\bibitem{LaceySpherical}
Michael~T. Lacey, \emph{Sparse bounds for spherical maximal functions}, J.
  Anal. Math. \textbf{139} (2019), no.~2, 613--635. \MR{4041115}

\bibitem{Leckband1987}
Mark Leckband, \emph{A note on the spherical maximal operator for radial
  functions}, Proc. Amer. Math. Soc. \textbf{100} (1987), no.~4, 635--640.
  \MR{894429}

\bibitem{Lee2003}
Sanghyuk Lee, \emph{Endpoint estimates for the circular maximal function},
  Proc. Amer. Math. Soc. \textbf{131} (2003), no.~5, 1433--1442. \MR{1949873}

\bibitem{Lee-Sanghyuk2018}
\bysame, \emph{Square function estimates for the {B}ochner-{R}iesz means},
  Anal. PDE \textbf{11} (2018), no.~6, 1535--1586. \MR{3803718}

\bibitem{LRSw}
Sanghyuk Lee, Keith~M. Rogers, and Andreas Seeger, \emph{Improved bounds for
  {S}tein's square functions}, Proc. Lond. Math. Soc. (3) \textbf{104} (2012),
  no.~6, 1198--1234. \MR{2946085}

\bibitem{LeeRogersSeeger2014}
\bysame, \emph{Square functions and maximal operators associated with radial
  {F}ourier multipliers}, Advances in analysis: the legacy of {E}lias {M}.
  {S}tein, Princeton Math. Ser., vol.~50, Princeton Univ. Press, Princeton, NJ,
  2014, pp.~273--302. \MR{3329855}

\bibitem{Lepingle1976}
Dominique L\'{e}pingle, \emph{La variation d'ordre {$p$} des semi-martingales},
  Z. Wahrscheinlichkeitstheorie und Verw. Gebiete \textbf{36} (1976), no.~4,
  295--316. \MR{420837}

\bibitem{MirekSteinTrojan2017}
Mariusz Mirek, Elias~M. Stein, and Bartosz Trojan, \emph{{$\ell^p(\Bbb Z^d)
  $}-estimates for discrete operators of {R}adon type: variational estimates},
  Invent. Math. \textbf{209} (2017), no.~3, 665--748. \MR{3681393}

\bibitem{MSZK-apde}
Mariusz Mirek, Elias~M. Stein, and Pavel Zorin-Kranich, \emph{A bootstrapping
  approach to jump inequalities and their applications}, Anal. PDE \textbf{13}
  (2020), no.~2, 527--558. \MR{4078235}

\bibitem{MSZK-advances}
\bysame, \emph{Jump inequalities for translation-invariant operators of {R}adon
  type on {$\Bbb Z^d$}}, Adv. Math. \textbf{365} (2020), 107065, 57.
  \MR{4067360}

\bibitem{MirekSteinZorin-jumps}
\bysame, \emph{Jump inequalities via real interpolation}, Math. Ann.
  \textbf{376} (2020), no.~1-2, 797--819. \MR{4055178}

\bibitem{OSTTW}
Richard Oberlin, Andreas Seeger, Terence Tao, Christoph Thiele, and James
  Wright, \emph{A variation norm {C}arleson theorem}, J. Eur. Math. Soc. (JEMS)
  \textbf{14} (2012), no.~2, 421--464. \MR{2881301}

\bibitem{PisierXu}
Gilles Pisier and Quan~Hua Xu, \emph{The strong {$p$}-variation of martingales
  and orthogonal series}, Probab. Theory Related Fields \textbf{77} (1988),
  no.~4, 497--514. \MR{933985}

\bibitem{PramanikRogersSeeger2011}
Malabika Pramanik, Keith~M. Rogers, and Andreas Seeger, \emph{A
  {C}alder\'{o}n-{Z}ygmund estimate with applications to generalized {R}adon
  transforms and {F}ourier integral operators}, Studia Math. \textbf{202}
  (2011), no.~1, 1--15. \MR{2756010}

\bibitem{Qian1998}
Jinghua Qian, \emph{The {$p$}-variation of partial sum processes and the
  empirical process}, Ann. Probab. \textbf{26} (1998), no.~3, 1370--1383.
  \MR{1640349}

\bibitem{RS2010}
Keith~M. Rogers and Andreas Seeger, \emph{Endpoint maximal and smoothing
  estimates for {S}chr\"odinger equations}, J. Reine Angew. Math. \textbf{640}
  (2010), 47--66. \MR{2629687 (2011g:35381)}

\bibitem{Schlag1997}
Wilhelm Schlag, \emph{A generalization of {B}ourgain's circular maximal
  theorem}, J. Amer. Math. Soc. \textbf{10} (1997), no.~1, 103--122.
  \MR{1388870}

\bibitem{SS1997}
Wilhelm Schlag and Christopher~D. Sogge, \emph{Local smoothing estimates
  related to the circular maximal theorem}, Math. Res. Lett. \textbf{4} (1997),
  no.~1, 1--15. \MR{1432805}

\bibitem{Seeger-crelle1986}
Andreas Seeger, \emph{On quasiradial {F}ourier multipliers and their maximal
  functions}, J. Reine Angew. Math. \textbf{370} (1986), 61--73. \MR{852510}

\bibitem{STW}
Andreas Seeger, Terence Tao, and James Wright, \emph{Endpoint mapping
  properties of spherical maximal operators}, J. Inst. Math. Jussieu \textbf{2}
  (2003), no.~1, 109--144. \MR{1955209}

\bibitem{Sogge91}
Christopher~D. Sogge, \emph{Propagation of singularities and maximal functions
  in the plane}, Invent. Math. \textbf{104} (1991), no.~2, 349--376.
  \MR{1098614}

\bibitem{Stein70SI}
Elias~M. Stein, \emph{Singular integrals and differentiability properties of
  functions}, Princeton Mathematical Series, No. 30, Princeton University
  Press, Princeton, N.J., 1970. \MR{0290095 (44 \#7280)}

\bibitem{Stein1976}
\bysame, \emph{Maximal functions. {I}. {S}pherical means}, Proc. Nat. Acad.
  Sci. U.S.A. \textbf{73} (1976), no.~7, 2174--2175. \MR{0420116}

\bibitem{bigStein}
\bysame, \emph{Harmonic analysis: real-variable methods, orthogonality, and
  oscillatory integrals}, Princeton Mathematical Series, vol.~43, Princeton
  University Press, Princeton, NJ, 1993, With the assistance of Timothy S.
  Murphy, Monographs in Harmonic Analysis, III. \MR{1232192}

\bibitem{TaoCone}
Terence Tao, \emph{Endpoint bilinear restriction theorems for the cone, and
  some sharp null form estimates}, Math. Z. \textbf{238} (2001), no.~2,
  215--268. \MR{1865417}

\bibitem{Triebel1983}
Hans Triebel, \emph{Theory of function spaces}, Monographs in Mathematics,
  vol.~78, Birkh\"{a}user Verlag, Basel, 1983. \MR{781540}

\end{thebibliography}

\bibliographystyle{amsplain}

\end{document}